\numberwithin{equation}{section}
\newcommand{\bb}{\bm{b}}
\newcommand{\BB}{{\bf{B}}}
\newcommand{\Mm}{{\bf{M}}}
\newcommand{\Nn}{{\bf{N}}}
\newcommand{\Pp}{{\bf{P}}}
\newcommand{\NN}{{\bf{N}}}
\newcommand{\Qq}{\mathbb{Q}}
\newcommand{\Rr}{\mathbb{R}}
\newcommand{\Span}{\operatorname{Span}}
\newcommand{\vol}{\operatorname{vol}}
\newcommand{\Center}{\operatorname{center}}
\newcommand{\Exc}{\operatorname{Exc}}
\newcommand{\rk}{\operatorname{rank}}
\newcommand{\ninv}{\operatorname{ninv}}
\newcommand{\inv}{\operatorname{inv}}
\newcommand{\Proj}{{\operatorname{Proj}}}
\newcommand{\can}{{\operatorname{can}}}
\newcommand{\tmld}{{\operatorname{tmld}}}
\DeclareMathOperator{\HHom}{\mathscr{H}\text{\kern -3pt {\calligra\large om}}\,}
\newcommand{\Weil}{\operatorname{Weil}}
\newcommand{\Supp}{\operatorname{Supp}}
\newcommand{\mult}{\operatorname{mult}}
\newcommand{\cont}{\operatorname{cont}}
\newcommand{\Aa}{{\mathfrak{A}}}
\newcommand{\Bb}{{\mathfrak{B}}}
\newcommand{\Ff}{\mathcal{F}}
\newcommand{\MM}{{\mathfrak{M}}}
\newcommand{\Ii}{\Gamma}
\newcommand{\Ll}{{\bf{L}}}
\newcounter{parentnumber}
\newtheorem{thm}{Theorem}[section]
\newtheorem{thmsub}{Theorem}[subsection]
\newtheorem{defnsub}[thmsub]{Definition}
\newtheorem{conj}[thm]{Conjecture}
\newtheorem{lem}[thm]{Lemma}
\newtheorem{prop}[thm]{Proposition}
\newtheorem{alphthm}{Theorem}
\theoremstyle{definition}
\newtheorem{defn}[thm]{Definition}
\theoremstyle{definition}
\newtheorem{rem}[thm]{Remark}
\newtheorem{deflem}[thm]{Definition-Lemma}
\newtheorem{setup}[thm]{Set-up}
\newtheorem{nota}[thm]{Notation}
\theoremstyle{definition}
\begin{document}

\title{On finite generation and boundedness of adjoint foliated structures}
\author{Paolo Cascini}
\author{Jingjun Han}
\author{Jihao Liu}
\author{Fanjun Meng}
\author{Calum Spicer}
\author{Roberto Svaldi}
\author{Lingyao Xie}

\subjclass[2020]{14E30, 37F75}
\keywords{minimal model program, algebraically integrable foliation, adjoint foliated structure}
\date{\today}

\begin{abstract}
	We prove the existence of good minimal models for any klt algebraically integrable adjoint foliated structure of general type, and that Fano algebraically integrable adjoint foliated structures with total minimal log discrepancies and parameters bounded away from zero form a bounded family. These results serve as the algebraically integrable foliation analogues of the finite generation of the canonical rings proved by Birkar–Cascini–Hacon–M\textsuperscript{c}Kernan, and the Borisov–Alexeev–Borisov conjecture on the boundedness of Fano varieties proved by Birkar, respectively.

As an application, we prove that the ambient variety of any lc Fano algebraically integrable foliation is of Fano type, provided the ambient variety is potentially klt.
\end{abstract}

\address{Department of Mathematics, Imperial College London, 180 Queen’s
Gate, London SW7 2AZ, UK}
\email{p.cascini@imperial.ac.uk}

\address{Shanghai Center for Mathematical Sciences, Fudan University, 2005 Songhu Road, Shanghai, 200438, China}
\email{hanjingjun@fudan.edu.cn}

\address{Department of Mathematics, Peking University, No. 5 Yiheyuan Road, Haidian District, Peking 100871, China}
\email{liujihao@math.pku.edu.cn}

\address{Department of Mathematics, Johns Hopkins University, 3400 N. Charles Street, Baltimore, MD 21218, USA}
\email{fmeng3@jhu.edu}

\address{Department of Mathematics, King’s College London, Strand, London WC2R 2LS, UK}
\email{calum.spicer@kcl.ac.uk}

\address{Dipartimento di Matematica ``F. Enriques'', Universit\`a degli Studi di Milano, Via Saldini 50, 20133 Milano (MI), Italy}
\email{roberto.svaldi@unimi.it}
 
\address{Department of Mathematics, University of California, San Diego, 9500 Gilman Drive \# 0112, La Jolla, CA 92093-0112, USA}
\email{l6xie@ucsd.edu}

\maketitle

\pagestyle{myheadings}\markboth{\hfill P. Cascini, J. Han, J. Liu, F. Meng, C. Spicer, R. Svaldi, and L. Xie \hfill}{\hfill On finite generation and boundedness of adjoint foliated structures\hfill}

\tableofcontents

\section{Introduction}\label{sec:Introduction}
We work over the field of complex numbers $\mathbb{C}$.

In recent years, the study of the birational geometry of foliations has attracted considerable attention due to its deep connections with the geometry of tangent bundles, K\"ahler manifolds, and fibration structures. In particular, the minimal model program for foliations has been extensively developed \cite{McQ08,Bru15,CS20,Spi20,ACSS21,CS21,SS22,CHLX23,CS24,SS23,CHLMSSX24,LMX24,McQ24}.

The birational geometry of foliations differs from the birational geometry of varieties in many striking aspects, but there are two particularly notable divergences.
\begin{enumerate}
    \item {\it Failure of finite generation}: in contrast to the case of varieties, it is known that the canonical ring of a foliation $\mathcal F$ on a smooth variety $X$, $\bigoplus_{m = 0}^{+\infty} H^0(X, \mathcal O_X(mK_{\mathcal F}))$, may fail to be finitely generated, even if $K_{\mathcal F}$ is nef and big, cf. \cite[Theorem IV.2.2]{McQ08}.

    \item {\it Failure of boundedness}: smooth projective varieties $X$ of dimension $d$, with ample $K_X$ and fixed volume $\vol(X) = K_X^d$ are known to be bounded --
    cf. \cite[Theorem 1.1]{HMX18} -- 
    but this is false for foliations, see \cite[Example 4.1]{Pas24}.
    There are many other subtle cases where boundedness statements fail for foliations, especially for Fano foliations.
\end{enumerate}

To address these issues, and other challenges arising in the study of the birational geometry of foliations,  the notion of \emph{adjoint foliated structures} was introduced. An adjoint foliated structure consists of a triple $(X,\Ff,t)$ where $X$ is a normal variety, $\mathcal F$ is a foliation on $X$, and $t \in [0, 1]$. To an adjoint foliated structure we associate the canonical divisor $tK_{\Ff}+(1-t)K_X$. For previous results on adjoint foliated structures, we refer the reader to \cite{PS19,SS23,CHLMSSX24,LW24,CC25,LWX25}.

We expect that the birational geometry of adjoint foliated structures with $t<1$ behaves similarly to that of varieties. 
The main results of our paper confirm this expectation in two very important cases.

Our first main theorem confirms that finite generation holds for algebraically integrable adjoint foliated structures of general type and can be considered as an analogue of \cite[Theorem 1.1]{BCHM10}.

\begin{alphthm}\label{thm: bchm analogue}
    Let $X$ be a smooth projective variety and $\Ff$ an algebraically integrable foliation on $X$ with at worst log canonical singularities. Let $t\in [0,1)$ be a real number such that $K:=tK_{\Ff}+(1-t)K_X$ is big. 

    Then $K$ has a good minimal model. In particular, if $t$ is a rational number, then the canonical ring
    $$R(X,K):=\bigoplus_{m=0}^{+\infty}H^0(X,\mathcal O_X(\lfloor mK\rfloor))$$
    is finitely generated.
\end{alphthm}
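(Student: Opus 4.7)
The plan is to reduce the finite-generation statement for rational $t$ to the existence of a good minimal model for $K$, and then construct such a model by an MMP adapted to adjoint foliated structures. Once a good minimal model $Y$ is produced, the relevant pushforward of $K$ is semi-ample on $Y$, and section rings of semi-ample $\mathbb Q$-divisors are finitely generated; combined with the invariance of section rings under the MMP steps (up to passing to a Veronese if necessary), this gives the finite generation of $R(X,K)$ for rational $t$. Hence the heart of the theorem is the existence of a good minimal model.

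Before running any MMP, I would reduce to a foliated log resolution of $(X,\Ff)$ admitting an ACSS-type structure, so that $\Ff$ is induced by a proper equidimensional morphism $f:X\to Z$ with well-behaved non-reduced locus. On such a model, the difference $K_{\Ff}-K_{X/Z}$ is supported on $\Ff$-invariant divisors with explicit coefficients, which leads to an identity of the form
\[
K \;=\; K_X + \Delta - t\, f^*K_Z,
\]
where $\Delta$ is an effective boundary. The fact that $1-t>0$ is crucial here: it ensures that a genuine klt-type variety contribution survives this rewriting, and it is precisely what allows a canonical bundle formula adapted to algebraically integrable foliations to produce klt (rather than merely lc) boundary data on the base $Z$.

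Next I would run a $K$-MMP with scaling of an ample divisor, using the MMP machinery for adjoint foliated structures developed in the preceding literature \cite{CHLMSSX24,LMX24}. Termination would be established by combining special termination along lc centres of the adjoint foliated structure with induction on dimension transferred through $f:X\to Z$. Once a minimal model is obtained, abundance (semi-ampleness of the limit divisor) would be proved by descending via the canonical bundle formula to a pair on $Z$ of strictly smaller dimension, where classical BCHM applies, and then lifting the semi-ampleness back to $X$ along $f$ using the positive contribution from $(1-t)K_X$.

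The main obstacle is this combined termination-and-abundance step in the lc foliated setting, with the constraint that every MMP operation must preserve the ACSS structure. Both ingredients are already subtle for varieties, and their foliated counterparts require a delicate balance between foliation-theoretic positivity from $tK_{\Ff}$ and variety-theoretic positivity from $(1-t)K_X$. I expect the hypothesis $t<1$ to play an essential role at two points: in guaranteeing klt (not merely lc) boundary data on $Z$ after the canonical bundle formula, and in supplying enough variety-theoretic positivity to sustain the bigness of $K$ throughout the program. The rationality of $t$ enters only at the final descent from the good minimal model to finite generation of $R(X,K)$.
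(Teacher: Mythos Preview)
Your proposal is a research outline rather than a proof, and the strategy you sketch runs into a genuine obstruction that the paper's approach was specifically designed to circumvent.

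The central gap is in your termination-and-abundance step. You propose to descend via a canonical bundle formula along $f:X\to Z$ and apply classical BCHM on the base. But $K$ is big over a point, not over $Z$; after descending you would only obtain relative semi-ampleness over $Z$, which does not give absolute semi-ampleness. More fundamentally, there is no reason a step of the $K$-MMP (over a point) should preserve the fibration to $Z$ or the ACSS structure, so the descent mechanism is not stable under the program you want to run. The paper's sketch in Section~\ref{sec: proof sketch} makes this obstruction explicit: steps of a $(K_{\Ff}+B^{\ninv}+\Mm_X)$-MMP and steps of a $K_{\Aa}$-MMP are unrelated in general, and the surface example there shows that even the supports of the Nakayama--Zariski negative parts can differ. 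Your special-termination idea is likewise not available: no such result for adjoint foliated structures has been established, and you would need exactly that to make the induction work.

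The paper's route is entirely different and supplies the missing idea. One interpolates in the parameter: set $\Aa_s:=(X,\Ff,B^{\ninv}+\frac{1-s}{1-t}B^{\inv},\Mm,s)$ for $s\in[t,1]$, note that $\Aa_1/U$ already has a good minimal model by \cite[Theorem~1.5]{LMX24}, and then prove that the set of $s$ for which $\Aa_s/U$ admits a $\mathbb Q$-factorial good minimal model is both downward-open and closed from above in $[t,1]$. The openness step is an easy perturbation; the closedness step is the heart of the argument and uses a careful analysis of $N_\sigma(X/U,K_{\Aa_s})$ together with an MMP with scaling of $K_{\Aa''_{s_0}}$ for a suitable $s_0>s$, plus the extraction theorem (Theorem~\ref{thm: extract non-terminal place intro}) to upgrade a semi-ample model to a genuine minimal model. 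Termination of the full MMP with scaling then comes from finiteness of models (Theorem~\ref{thm: finiteness of minimal models intro}), not from special termination. None of these ingredients appear in your outline, and without the interpolation idea your plan does not close.
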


As we will explain in greater detail in Section \ref{sec: further discussion gmm}, this theorem has many important implications for the study of the birational geometry of foliations.

Our second main theorem proves that a natural class of algebraically integrable Fano adjoint foliated structures is bounded.  It should be viewed as a version of the Borisov-Alexeev-Borisov conjecture, proved by Birkar \cite[Theorem 1.1]{Bir21}, for adjoint foliated structures.

\begin{alphthm}\label{thm: BAB analogue}
Let $d$ be a positive integer and $\epsilon$ a positive real number. Then the projective varieties $X$ and algebraically integrable foliations $\Ff$ on $X$ such that 
\begin{itemize}
    \item $X$ is $\epsilon$-lc of dimension $d$, 
    \item $\Ff$ is log canonical, and
    \item $-(tK_{\Ff}+(1-t)K_X)$ is ample for some $\epsilon\leq t\leq 1-\epsilon$
\end{itemize}
form a bounded family.
\end{alphthm}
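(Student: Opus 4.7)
The plan is to adapt Birkar's proof of the Borisov--Alexeev--Borisov conjecture to the adjoint-foliated setting, exploiting that $\Ff$ is algebraically integrable. First I would replace $(X,\Ff)$ by an equidimensional fibration model: since $\Ff$ is algebraically integrable, it is induced by a dominant rational map $f:X\bir Z$, and after a log resolution together with an ACSS-type modification one obtains a birational model $(X',\Ff')$ with an equidimensional morphism $f':X'\to Z$ satisfying
$$K_{\Ff'}\sim_{\mathbb R} K_{X'/Z}+R',$$
where $R'$ is supported on the non-reduced vertical part of $f'$. Rewriting gives
$$tK_{\Ff'}+(1-t)K_{X'}\sim_{\mathbb R} K_{X'}-t\,f'^{*}K_Z+tR',$$
so the ampleness hypothesis becomes a positivity statement for the right-hand side, and bounding the original $(X,\Ff)$ reduces to bounding the equidimensional data $(X',f')$ together with the associated boundary divisors.

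Next I would bound the general fiber and the base separately. Restricting $-(tK_{\Ff'}+(1-t)K_{X'})$ to a general fiber $F$ of $f'$ kills the $f'^{*}K_Z$ and $R'$ terms and leaves $-K_F$ ample, so $F$ is Fano; the $\epsilon$-lc hypothesis on $X$ descends to an $\epsilon'$-lc bound on $F$ by generic restriction, with $\epsilon'$ depending only on $\epsilon$, so Birkar's BAB for $\epsilon$-lc Fano varieties bounds $F$ in a bounded family. For the base, a canonical bundle formula adapted to algebraically integrable adjoint foliated structures applied to the displayed divisor should produce a generalised pair $(Z,B_Z+{\bf M}_Z)$ with $-K_Z-B_Z-{\bf M}_Z$ ample and $\epsilon''$-generalised lc, where the two-sided bound $\epsilon\le t\le 1-\epsilon$ is essential to keep the coefficients of the cbf-decomposition uniformly bounded away from the degenerate values $0$ and $1$. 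The generalised version of BAB of Birkar--Zhang then bounds $Z$ in a bounded family.

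With both $Z$ and the general fiber in bounded families, I would apply boundedness-of-fibrations techniques in the style of Hacon--Xu and Birkar to bound $X'$, and hence $X$, in a bounded family. Finally, an algebraically integrable foliation on a fixed $X$ is determined by the corresponding closed subvariety of a relative Chow variety of $X$; the bounded data from the previous steps control its degree and cut $\Ff$ out of finitely many components, concluding the argument.

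The main obstacle is the base step: one needs a canonical bundle formula for adjoint foliated structures that is uniform in the real parameter $t\in[\epsilon,1-\epsilon]$ and outputs a generalised Fano-type pair on $Z$ with controlled singularities. Because $t$ is real, one cannot simply clear denominators; a Shokurov-polytope argument coupled with the global ACC for lc thresholds should allow one to handle the whole interval simultaneously, reducing to rational $t$ in the interior. Theorem~\ref{thm: bchm analogue} is expected to provide the finite-generation and good-minimal-model input needed both to justify the modifications of the first step (running the foliated MMP on the adjoint divisor) and to promote the numerical positivity obtained on $Z$ to honest boundedness via volume bounds.
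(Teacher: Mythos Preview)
Your strategy—bound the general fiber, then the base via a canonical bundle formula, then reassemble—is a natural first instinct, but the base step you flag as the ``main obstacle'' is a genuine gap, not a technicality. You would need a canonical bundle formula for the adjoint divisor $tK_{\Ff'}+(1-t)K_{X'}$ that outputs an $\epsilon''$-lc generalized pair on $Z$ with $\epsilon''$ depending only on $\epsilon$, uniformly over the real segment $t\in[\epsilon,1-\epsilon]$. No such result is available, and the Shokurov-polytope reduction you sketch does not obviously give it: the moduli part of the cbf is not linear in $t$, and the discriminant coefficients need not vary continuously in a way that keeps them bounded away from $1$. The subsequent ``boundedness of fibrations'' step is also far from routine once the base is only bounded as a generalized pair.

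The paper bypasses all of this with a single structural observation: the ambient variety $X$ is \emph{itself} of $\epsilon$-Fano type. Concretely, one shows (Proposition~\ref{prop: algint afs elc implies x elc} and Theorem~\ref{thm: precise fano type afs to x}) that if $(X,\Ff,B,\Mm,t)$ is $\epsilon$-lc with big generalized boundary and $K_{\Aa}\sim_{\mathbb R}0$, then there is an honest $\epsilon$-lc pair $(X,\Delta)$ with $\Delta$ big and $K_X+\Delta\sim_{\mathbb R}0$. The proof runs a $(-K_X)$-MMP using the Mori-dream-space structure coming from Theorem~\ref{thm: eogmm ai afs general case}, and controls the singularities via a discrepancy comparison between $\Aa$ and $(X,B,\Mm)$. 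Birkar's BAB then bounds $X$ directly—no fiber/base decomposition, no cbf. To bound $\Ff$ once $X$ is bounded, the paper does not use Chow varieties but instead bounds the intersection number $K_{\Ff}\cdot A^{d-1}$ from both sides (above from ampleness of $-K_{\Aa}$, below from pseudo-effectivity of $K_{\Ff}+sA$ via Theorem~\ref{thm: non-pseudo-effective for afs}), and then invokes a Pfaff-system parametrization (Proposition~\ref{prop_pfaff_to_fol}) to recover the foliation from its canonical class in families.
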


We emphasize that there are no smooth Fano foliations when $\mathcal F \neq T_X$. 
In fact, for $\epsilon >0$, a Fano foliation $\mathcal F \neq T_X$ is never $\epsilon$-lc, cf. \cite[Proposition 5.3]{AD13}. 
It is therefore unclear what the largest natural class of Fano type foliations where boundedness can be expected is, and one of our contributions here is proposing such a class.  
We remark that there are other proposals about what such a class of Fano objects might be, particularly when the ambient variety is smooth. 
For instance, Araujo \cite[5:30]{Ara20} asked whether all Fano foliations on smooth projective varieties of fixed dimension form a bounded family. 
This is true, for example, when the foliations are of rank $1$ by Wahl's result \cite[Theorem 1]{Wah83}: 
indeed, Wahl proved that the set of Fano foliations of rank $1$ on smooth projective varieties (up to isomorphism) is discrete.

A key ingredient in the proof of Theorem \ref{thm: BAB analogue} is the following result of independent interest which shows that any variety admitting a Fano algebraically integrable foliation is of Fano type. 
We remark that the statement of Theorem \ref{thm: ft ai is ft} is new already when $t=1$.

\begin{alphthm}
\label{thm: ft ai is ft}
   Let $X$ be a projective variety and let $\mathcal F$ be an algebraically integrable foliation on $X$ such that
   \begin{itemize}
       \item $X$ is potentially klt, i.e. $(X, D)$ is klt for some $\mathbb R$-divisor $D \ge 0$,
       \item $\mathcal F$ is log canonical, and 
       \item $-(tK_{\Ff}+(1-t)K_X)$ is ample for some $t \in [0, 1]$.
   \end{itemize}
   
   Then $X$ is of Fano type, i.e. $-(K_X+\Delta)$ is ample for some klt pair $(X,\Delta)$. In particular, if $X$ is $\mathbb Q$-factorial, then $X$ is a Mori dream space.
\end{alphthm}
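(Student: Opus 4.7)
The plan is to construct an effective $\mathbb Q$-divisor $\Delta\ge 0$ with $(X,\Delta)$ klt and $-(K_X+\Delta)$ ample, witnessing that $X$ is of Fano type. The strategy combines three ingredients: an ACSS foliated modification for $\mathcal F$, the existence of complements in the adjoint foliated setting, and the canonical bundle formula for the family of leaves of the algebraically integrable foliation $\mathcal F$.

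First, I would take a foliated log resolution $\pi\colon (Y,\mathcal F_Y)\to (X,\mathcal F)$ that is ACSS in the sense of \cite{ACSS21}, together with an equidimensional contraction $g\colon Y\to Z$ whose general fibers are the closures of leaves of $\mathcal F_Y$. Standard ACSS-type formulas express $K_{\mathcal F_Y}$ in terms of $K_Y$, $g^{*}K_Z$, and explicit combinatorial contributions supported on the $g$-invariant components of the non-foliated divisor; in particular, $-(tK_{\mathcal F_Y}+(1-t)K_Y)$ is identified with $\pi^{*}\bigl(-(tK_{\mathcal F}+(1-t)K_X)\bigr)$ up to a $\pi$-exceptional correction, and the ampleness hypothesis survives on $Y$ in the appropriate relative sense.

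Next, using existence of complements for lc adjoint foliated structures (extending the lines of \cite{CHLX23,CHLMSSX24,LW24,LWX25}), the ampleness of $-(tK_{\mathcal F}+(1-t)K_X)$ yields an effective $\mathbb Q$-divisor $B\ge 0$ on $X$ such that $(X,B,\mathcal F)$ is lc as an adjoint foliated triple for the parameter $t$, and $tK_{\mathcal F}+(1-t)K_X+B\sim_{\mathbb Q}0$. Applying the canonical bundle formula for the algebraically integrable foliation $\mathcal F_Y$ along $g$, the complement pulled back to the ACSS model takes the shape $K_Y+B_Y\sim_{\mathbb Q}g^{*}(K_Z+B_Z+M_Z)$ for a generalized lc pair $(Z,B_Z,M_Z)$. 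Combining this identity with the ampleness and pushing down through $\pi$, one obtains an effective $\mathbb R$-divisor $B_X\ge 0$ on $X$ such that $(X,B_X)$ is lc and $-(K_X+B_X)$ is big and nef.

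Finally, I would use the potential kltness of $X$ to upgrade to a genuine klt Fano structure: pick $D\ge 0$ with $(X,D)$ klt, and set $\Delta:=(1-\varepsilon)B_X+\varepsilon D$ for a sufficiently small $\varepsilon>0$. Convexity of log discrepancies makes $(X,\Delta)$ klt, while $-(K_X+\Delta)$ remains big and nef; a further small perturbation using Kodaira's lemma and the openness of klt-ness produces an honest ample class of the form $-(K_X+\Delta')$ with $(X,\Delta')$ klt, proving that $X$ is of Fano type. The assertion that $X$ is a Mori dream space under the $\mathbb Q$-factoriality hypothesis then follows from \cite{BCHM10}. The main obstacle will lie in the third paragraph: ensuring the compatibility of the adjoint-foliated complement with the canonical bundle formula along $g$, so that after descent through $g$ and pushdown via $\pi$ one ends up with a pair $(X,B_X)$ that is honestly lc (not merely generalized lc) and for which $-(K_X+B_X)$ has the claimed positivity; this requires careful bookkeeping of discrepancies along both $g$-vertical and $g$-horizontal components of the non-foliated divisor, in a manner compatible with the ACSS structure.
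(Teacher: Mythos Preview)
Your outline has a genuine gap at the crucial step, and the approach diverges substantially from the paper's argument.

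The problematic passage is: ``Combining this identity with the ampleness and pushing down through $\pi$, one obtains an effective $\mathbb R$-divisor $B_X\ge 0$ on $X$ such that $(X,B_X)$ is lc and $-(K_X+B_X)$ is big and nef.'' The canonical bundle formula along $g\colon Y\to Z$ produces a generalized pair on the \emph{base} $Z$, not on $X$. Concretely, using the ACSS identity $K_{\mathcal F_Y}=K_Y+G_Y-g^*K_Z$, your complement $tK_{\mathcal F_Y}+(1-t)K_Y+B_Y\sim_{\mathbb Q}0$ rewrites as $K_Y+tG_Y+B_Y\sim_{\mathbb Q} tg^*K_Z$. Pushing forward by $\pi$ gives $-(K_X+\pi_*(tG_Y+B_Y))\sim_{\mathbb Q}-t\,\pi_*g^*K_Z$, and there is no reason for the right-hand side to be big and nef: a priori $K_Z$ is arbitrary. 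The canonical bundle formula does yield $(1-t)K_Z+B_Z+M_Z\sim_{\mathbb Q}0$, so (when $t<1$) the base $Z$ is of Fano type in a generalized sense, but turning ``$Z$ Fano type and general fibre Fano'' into ``$X$ Fano type'' is itself a nontrivial theorem that you neither state nor prove, and it is essentially the content of what you are trying to show. You also need $(Y,tG_Y+B_Y)$ to be lc, which is not automatic since $G_Y$ can have reduced $\mathcal F$-invariant components that may combine badly with $B_Y$; and the $t=1$ case is not covered at all by this scheme.

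The paper's route is entirely different and avoids the canonical bundle formula and complements. After reducing to a klt adjoint structure $\mathfrak A$ with big generalized boundary and $K_{\mathfrak A}\sim_{\mathbb R}0$ (Definition--Lemma~\ref{deflem: fano type afs}), one passes to a small $\mathbb Q$-factorialization and observes, via Theorem~\ref{thm: eogmm ai afs general case}, that one may run a $(-K_X)$-MMP terminating in either a Mori fibre space or a good minimal model $X'\to Z$. The crux is then Theorem~\ref{thm: non-pseudo-effective for afs}: since $K_{X'}$ is pseudo-effective$/Z$ by construction of the MMP, the adjoint class $tK_{\mathcal F'}+(1-t)K_{X'}$ is pseudo-effective$/Z$ as well; but this class is also anti-big$/Z$ (it is pushed from the ample $-(tK_{\mathcal F}+(1-t)K_X)$), forcing $f$ to be birational. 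Hence $-K_{X'}$ is big and semi-ample, Proposition~\ref{prop: algint afs elc implies x elc} controls the singularities of $X'$, and a general element of $|-K_{X'}|_{\mathbb Q}$ plus the negativity lemma finishes the job on $X$. The two key inputs you are missing are the pseudo-effectivity transfer (Theorem~\ref{thm: non-pseudo-effective for afs}) and the discrepancy comparison (Proposition~\ref{prop: algint afs elc implies x elc}); neither is visible in your outline.
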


It is illuminating to compare Theorem \ref{thm: ft ai is ft} with an important result proven in  \cite{CP19}, namely, if $K_{\mathcal F}$ is not pseudo-effective then $K_X$ is not pseudo-effective, and in particular $X$ is uniruled. 
Theorem \ref{thm: ft ai is ft} implies that under the (stronger) hypothesis that $\mathcal F$ is of Fano type we get a much stronger conclusion, namely $X$ is of Fano type.  In particular, in this case $X$ is not only uniruled, but it is rationally connected.
When $t = 1$, we also expect that Theorem \ref{thm: ft ai is ft} will be useful in the classification of Fano foliations, begun in \cite{AD13}, which has generated considerable interest and activity in recent years (see Section \ref{s_fbab} for a further discussion on this point).

We remark that Theorem \ref{thm: ft ai is ft} requires that $\Ff$ is algebraically integrable, although we expect that Theorem \ref{thm: ft ai is ft} continues to hold without this hypothesis. We also remark that 
the proof of Theorem \ref{thm: ft ai is ft} does not rely on \cite{CP19} (and hence on the existence of MRC quotients), but rather follows from minimal model program methods, in particular, Miyaoka's foliated bend-and-break technique, cf. \cite[Corollary 2.28]{Spi20}.

\medskip 

\noindent\textbf{Acknowledgements.} The authors would like to thank Caucher Birkar for useful discussions, particularly a useful conversation in July 2024 when he suggested that the third author consider ``the threshold when MMP works" which turns out to be crucial for the proof of the main theorems. The authors would like to thank  Yen-An Chen, Dongchen Jiao, Jie Liu, James M\textsuperscript{c}Kernan, and Pascale Voegtli for several useful discussions.

The work is supported by the National Key R\&D Program of China (\#2024YFA1014400, \#2023YFA1010600, \#2020YFA0713200). The first author is partially funded by a Simons Collaboration Grant.  The second author is supported by NSFC for Excellent Young Scientists (\#12322102). The second author is a member of LMNS, Fudan University. The fourth author is partially supported by an AMS-Simons Travel Grant.
The fifth author is partially funded by EPSRC.
The sixth author is supported by the “Programma per giovani ricercatori Rita Levi Montalcini” of the Italian Ministry of University and Research and by PSR 2022 – Linea 4 of the University of Milan. He is a member of the GNSAGA group of INDAM. The last author is supported by a grant from the Simons Foundation.

\section{Further discussion of the main results and sketch of the proofs}

Our main results should be viewed in the context of a wider web of conjectures and results, which we now explain. In Section \ref{sec: proof sketch}, we will also give a sketch of the proof of Theorem \ref{thm: eogmm ai afs general case} which is the technical heart of our paper.

\subsection{MMP and good minimal models for adjoint foliated structures}
\label{sec: further discussion gmm}
In fact, we conjecture that something stronger than Theorem \ref{thm: bchm analogue} holds.

\begin{conj}[Existence of good minimal models]\label{conj: eogmm afs}
    Let $(X,\Ff,t)$ be a projective klt adjoint foliated structure with $K:=tK_{\Ff}+(1-t)K_X$ pseudo-effective and $t<1$.
    Then:
    \begin{enumerate}
        \item $(X,\Ff,t)$ has a good minimal model, i.e. there exists a $K$-negative birational contraction $\phi\colon X\dashrightarrow X_{\min}$ such that $\phi_*K$ is semi-ample.
        \item If $K$ is big, then $(X,\Ff,t)$ has a canonical model, i.e. there exists a $K$-non-positive birational contraction $\psi\colon X\dashrightarrow X_{\can}$ such that $\psi_*K$ is ample. Moreover, $X_{\can}$ is unique, and if $t$ is a rational number then $$X_{\can}=\Proj\bigoplus_{m=0}^{+\infty}H^0(X,\mathcal O_X(\lfloor m(tK_{\Ff}+(1-t)K_X)\rfloor)).$$
    \end{enumerate}
\end{conj}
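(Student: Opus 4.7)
The plan is to reduce Conjecture \ref{conj: eogmm afs} to Theorem \ref{thm: bchm analogue} through two independent reductions: a \emph{foliated reduction} from arbitrary foliations to algebraically integrable ones using the algebraic part of $\Ff$, and a \emph{positivity reduction} from pseudo-effective to big via the Iitaka fibration.

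For the foliated reduction, let $\Ff^{\operatorname{alg}} \subseteq \Ff$ denote the algebraic part of $\Ff$, whose leaves through a general point are the maximal algebraic subvarieties tangent to $\Ff$. Passing to a suitable equivariant resolution $X' \to X$, one obtains a fibration $f \colon X' \to Y$ whose general fiber is a leaf of $\Ff^{\operatorname{alg}}$, and $\Ff$ descends to a purely transcendental foliation $\Gg$ on $Y$. The plan is to establish a canonical bundle formula for adjoint foliated structures along the fibration $f$, extending the formulas already available for algebraically integrable foliations. This should express $tK_{\Ff} + (1-t)K_{X'}$, modulo $f$-vertical contributions, as the pullback of an adjoint foliated divisor on $(Y, \Gg, t)$ plus moduli and discriminant terms. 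A relative MMP over $Y$, combined with induction on the transcendence degree of $\Ff$ (equivalently, on $\dim X - \dim Y$), would then reduce the problem to the algebraically integrable case, which for big $K$ is handled by Theorem \ref{thm: bchm analogue}.

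For the positivity reduction, when $K$ is pseudo-effective but not big, the plan is first to prove a Shokurov-type non-vanishing statement producing an effective $\Rr$-divisor $D \sim_{\Rr} K$, and then to study the Iitaka fibration $g \colon X \bir Z$ of $K$. Resolving so that $g$ becomes a morphism and analyzing the horizontal and vertical components of $D$, one aims to realize $K$ as the pullback of a big adjoint foliated divisor on $Z$ for a naturally induced adjoint foliated structure; semi-ampleness of $K$ then descends from the canonical model on $Z$ produced by the big case. A good minimal model on $X$ is finally obtained by running a terminating $K$-MMP for adjoint foliated structures, using the cone/contraction/flip machinery developed in the adjoint foliated MMP literature cited in the introduction.

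The central obstacle lies in extending the foliated MMP beyond the algebraically integrable regime: the cone theorem, the length of extremal rays, adjunction to lc centers, and the canonical bundle formula are currently available essentially only in that setting. In particular, a canonical bundle formula for adjoint foliated structures along an algebraically integrable fibration — with a workable moduli divisor and a positivity theory simultaneously compatible with the foliated and the adjoint parameter $t$ — appears to be the key technical input still missing. A secondary but genuine difficulty is the non-vanishing/abundance step in the positivity reduction: even within the algebraically integrable case, passing from the big setting of Theorem \ref{thm: bchm analogue} to the pseudo-effective regime of part (1) is not merely perturbative, and will likely require new ideas beyond those used to prove Theorem \ref{thm: bchm analogue}.
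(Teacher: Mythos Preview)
This statement is a \emph{conjecture}, not a theorem, and the paper contains no proof of it. Immediately after stating it, the paper observes that by taking $\Ff = T_X$ the conjecture specializes to the abundance conjecture for klt varieties, which is wide open in dimension $\geq 4$; this is precisely why the paper restricts attention to the big case (part (2)) and to algebraically integrable foliations (Theorem \ref{thm: bchm analogue}, Theorem \ref{thm: eogmm ai afs general case}). So there is nothing to compare your proposal against.

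Your write-up is a research outline rather than a proof, and you correctly flag its two principal gaps. But it is worth being explicit that these gaps are not merely technical. Your ``positivity reduction'' presupposes non-vanishing and an Iitaka-fibration/canonical-bundle-formula argument that would, in the special case $\Ff = T_X$, amount to proving abundance for klt varieties; this is not a missing lemma but a major open problem, and the paper singles it out as the reason for focusing on part (2). Your ``foliated reduction'' via the algebraic part $\Ff^{\mathrm{alg}}$ is also more delicate than the sketch suggests: the quotient foliation $\Gg$ on $Y$ is purely transcendental, so ``induction on transcendence degree'' does not reduce to the algebraically integrable case but rather to a base case (transcendental $\Gg$ on $Y$) where essentially none of the MMP machinery used in the paper---cone theorem, contraction theorem, existence of flips, canonical bundle formula---is currently available. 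In short, your proposal correctly identifies the natural two-step strategy, but each step contains an open problem of at least the difficulty of the conjecture itself.
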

Note that Conjecture \ref{conj: eogmm afs}(2) is a special case of Conjecture \ref{conj: eogmm afs}(1). However, as Conjecture \ref{conj: eogmm afs} implies the abundance conjecture by taking $\Ff=T_X$, it is natural to focus on Conjecture \ref{conj: eogmm afs}(2) first. When $\dim X=2$, $X$ is smooth, and $\Ff$ is canonical, Conjecture \ref{conj: eogmm afs}(2) was proven in \cite[Theorem 1.2]{SS23} when $t>\frac{5}{6}$, while \cite[Problem 1.9]{LW24} asked whether Conjecture \ref{conj: eogmm afs}(1) is true when $t=\frac{1}{2}$.  We also remark that as examples show, the condition that $t<1$ is necessary.

We prove Theorem \ref{thm: bchm analogue} as a consequence of an analogous result in a more general setting:
throughout this paper, we will need the additional flexibility of working with an adjoint foliated structure together with a boundary $\Rr$-divisor $B$ and a nef $\bb$-divisor $\Mm$ in the relative setting. For the formal definition of adjoint foliated structures, we refer the reader to Definition \ref{defn: afs}. Due to the complexity of the notation, in the rest of this paper, we shall usually adopt the convention ``$\Aa:=(X,\Ff,B,\Mm,t)$" to simplify the notation of adjoint foliated structures. See Notation \ref{nota: simplified notation of afs} for details.

With this setting in mind, we see that Theorem \ref{thm: bchm analogue} is an easy consequence of the following.

\begin{thmsub}\label{thm: eogmm ai afs general case}
    Let $\Aa/U:=(X,\Ff,B,\Mm,t)/U$ be a klt algebraically integrable adjoint foliated structure such that $K_{\Aa}$ is pseudo-effective$/U$. Assume that either $K_{\Aa}$ is big$/U$ or $B+\Mm_X$ is big$/U$.
    
    Then we may run a $K_{\Aa}$-MMP$/U$ with scaling of an ample$/U$ $\Rr$-divisor on $X$ and any such MMP terminates with a good minimal model of $\Aa/U$. In particular, $\Aa/U$ has a $\mathbb Q$-factorial good minimal model.
\end{thmsub}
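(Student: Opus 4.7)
The plan is to reduce the problem to the existence of good minimal models for generalized klt pairs (with either big generalized boundary or big total divisor) on a suitable birational model of $\Aa/U$, and then to transfer the resulting MMP back to the adjoint foliated structure.

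First, I would pass to an ACSS-type model $\Aa'/U=(X',\Ff',B',\Mm',t)/U$ of $\Aa/U$, in the spirit of \cite{ACSS21,CHLMSSX24}, where the algebraically integrable foliation $\Ff'$ is induced by an equidimensional contraction $f\colon X'\to Y'$, the variety $X'$ is $\QQ$-factorial, and the foliation boundary is in log smooth position with respect to $f$. On such a model, any $K_{\Aa'}$-MMP$/U$ is known to be compatible with $f$ in a strong sense (extremal rays are horizontal over $Y'$, and each step descends to a birational modification of $Y'$) and, step by step, is equivalent to an MMP for a naturally associated generalized pair on $X'$.

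Next, using $t<1$ together with the algebraic integrability, I would rewrite
\[
\tfrac{1}{1-t}K_{\Aa'}=K_{X'}+\Delta'+\Nn'_{X'},
\]
where $(X',\Delta',\Nn')/U$ is a generalized klt pair. Concretely, the correction $\tfrac{t}{1-t}(K_{\Ff'}-K_{X'})$ becomes effective on the ACSS model and is supported on vertical divisors for $f$, so it is absorbed into $\Delta'$ together with $\tfrac{1}{1-t}B'$, while $\Nn'$ carries $\tfrac{1}{1-t}\Mm'$ plus the nef $\bb$-divisorial contribution coming from foliated adjunction along $f$. The klt condition on $\Aa'$, combined with the strict inequality $t<1$, forces every coefficient of $\Delta'$ to lie strictly below $1$.

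Under the bigness hypothesis, either $K_{\Aa'}$ is big$/U$, in which case $K_{X'}+\Delta'+\Nn'_{X'}$ is big$/U$, or $B'+\Mm'_{X'}$ is big$/U$, in which case the generalized boundary $\Delta'+\Nn'_{X'}$ itself is big$/U$. In either situation, the BCHM theorem for generalized klt pairs with big boundary (respectively, big total divisor of general type) applies, so a $(K_{X'}+\Delta'+\Nn'_{X'})$-MMP$/U$ with scaling of an ample$/U$ divisor terminates with a good minimal model. Transferring back via the compatibility established in the first step, any $K_{\Aa'}$-MMP$/U$ with scaling terminates with a good minimal model of $\Aa'/U$, which descends to a $\QQ$-factorial good minimal model of $\Aa/U$.

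The main obstacle will be the identification in the second step: showing that $tK_{\Ff'}+(1-t)K_{X'}+B'+\Mm'_{X'}$ really is, up to the positive factor $1-t$, a klt generalized pair on $X'$ with the required positivity transferred to its generalized boundary, and further that a $K_{\Aa'}$-MMP step is exactly a step of the associated generalized MMP (so that negativity, flip existence and termination all match up). This hinges on the foliated canonical bundle formula for the equidimensional contraction $f$ arising from the ACSS structure, the effectivity of the vertical correction term, and the strict inequality $t<1$ to preserve klt-ness. Once this dictionary is in place, the termination and semiampleness statements reduce to established results for generalized pairs.
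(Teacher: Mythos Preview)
Your approach has a genuine gap at the identification step. You want $\frac{1}{1-t}K_{\Aa'}=K_{X'}+\Delta'+\Nn'_{X'}$ with $(X',\Delta',\Nn')$ generalized klt and $\Nn'$ nef$/U$, which requires $K_{\Ff'}-K_{X'}$ to split as an effective divisor plus the trace of a nef$/U$ $\bb$-divisor. On an ACSS model one only has $K_{\Ff'}\sim_{Z'}K_{X'}+G$ with $G\ge0$; globally $K_{\Ff'}-K_{X'}$ involves $-f^*K_{Z'}$, which is not nef$/U$ in general, and the foliated adjunction you invoke produces a moduli part nef over $Z'$, not over $U$. The standard remedy is to first run a $(K_{\Ff'}+B'^{\ninv}+\Mm'_{X'})$-MMP$/U$ so that afterwards this divisor is genuinely nef$/U$ and can be absorbed into $\Nn'$. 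But that foliated MMP is \emph{not} a $K_{\Aa}$-MMP: on the minimal resolution of a $\frac{1}{n}(1,1)$ surface singularity the exceptional curve $E$ has $K_{\Ff}\cdot E=-1$ yet $K_{\Aa}\cdot E>0$ whenever $t<\frac{n-2}{n-1}$. So the foliated MMP can contract divisors not lying in $\Supp N_\sigma(X/U,K_{\Aa})$, and its output need not be a minimal model of $\Aa/U$. Your claimed compatibility of $K_{\Aa'}$-extremal rays with $f$ fails for exactly this reason---for $t<1$ the adjoint divisor behaves more like $K_X$ than like $K_{\Ff}$, and its negative rays need not respect the fibration.

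The paper circumvents this by an interpolation argument. Setting $\Aa_s=(X,\Ff,B^{\ninv}+\frac{1-s}{1-t}B^{\inv},\Mm,s)$, it shows that the set of $s\in[t,1]$ for which $\Aa_s/U$ admits a $\Qq$-factorial good minimal model is the whole interval, by proving it is open below each of its points and closed from above. Openness uses essentially your generalized-pair trick, but only locally near a parameter $s$ where a minimal model \emph{already} exists, so that $K_{\Aa'_s}$ is nef$/U$ and can legitimately enter the nef part. Closedness is the hard step: it requires controlling how $\Supp N_\sigma(X/U,K_{\Aa_s})$ varies with $s$, a sequence of MMPs with scaling designed so that the contracted divisors are exactly those in this support, and a separate extraction theorem (Theorem~\ref{thm: extract non-terminal place intro}) to upgrade the resulting semi-ample model to a genuine minimal model at the limit. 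Termination of the MMP with scaling on $X$ itself is then deduced from finiteness of models (Theorem~\ref{thm: finiteness of minimal models intro}), not directly from the generalized-pair MMP.
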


Theorem \ref{thm: eogmm ai afs general case} can be viewed as the analogue of \cite[Theorem 1.2]{BCHM10} for klt algebraically integrable adjoint foliated structures and plays a crucial role in the study of these structures. 
For a more general version of it, see Theorem \ref{thm: eogmm lc plus ample case}. Before turning to a sketch of the proof of Theorem \ref{thm: eogmm ai afs general case}, we explain some of its applications.

Theorem \ref{thm: eogmm ai afs general case} immediately implies the base-point-freeness theorem for klt algebraically integrable adjoint foliated structures.

\begin{thmsub}[Base-point-freeness theorem]\label{thm: bpf ai afs}
 Let $\Aa/U$ be a klt algebraically integrable adjoint foliated structure with ambient variety $X$ and $H$ a nef$/U$ $\Rr$-divisor on $X$ such that $aH-K_{\Aa}$ is big$/U$ and nef$/U$ for some positive real number $a$. Then: 
 \begin{enumerate}
     \item $H$ is semi-ample$/U$.
     \item If $H$ is Cartier, then $\mathcal{O}_X(mH)$ is globally generated$/U$ for any integer $m\gg 0$.
 \end{enumerate}
\end{thmsub}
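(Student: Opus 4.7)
The plan is to deduce Theorem~\ref{thm: bpf ai afs} from Theorem~\ref{thm: eogmm ai afs general case} by constructing a klt adjoint foliated structure $\Aa'$ on $X$ whose canonical divisor is $\Rr$-linearly equivalent to $aH$ over $U$, and then observing that the good minimal model produced by Theorem~\ref{thm: eogmm ai afs general case} must be $X$ itself. Part (2) will follow from (1) by a standard Stein factorization argument combined with effective basepoint-freeness on the image of the morphism defined by $H$.

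\emph{Main step.} Since $aH - K_{\Aa}$ is big and nef over $U$, by Kodaira's lemma we can write $aH - K_{\Aa} \sim_{\Rr} A + E$ over $U$ with $A$ an ample $\Rr$-divisor over $U$ and $E \geq 0$. Using the klt hypothesis on $\Aa$, a suitable general choice of $A' \sim_{\Rr} A$ together with a fractional decomposition of $A$ as a sum of many small ample divisors yields an effective $\Rr$-divisor $D \sim_{\Rr} aH - K_{\Aa}$ over $U$ such that
\[
\Aa' := (X, \Ff, B + D, \Mm, t)
\]
is still klt. Then
\[
K_{\Aa'} = K_{\Aa} + D \sim_{\Rr} aH
\]
over $U$ is nef over $U$ (since $H$ is), and $B + D + \Mm_X$ is big over $U$ (since $D$ is big over $U$). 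Theorem~\ref{thm: eogmm ai afs general case} applied to $\Aa'$ then tells us that any $K_{\Aa'}$-MMP$/U$ with scaling of an ample $\Rr$-divisor terminates with a good minimal model of $\Aa'/U$. But $K_{\Aa'}$ is already nef over $U$, so this MMP takes no steps: $X$ itself is a good minimal model for $\Aa'/U$, and hence $K_{\Aa'} \sim_{\Rr} aH$ is semi-ample$/U$. Dividing by $a > 0$, we conclude that $H$ is semi-ample$/U$, establishing (1).

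\emph{Main obstacle.} The main technical difficulty is ensuring that $\Aa'$ is klt in the construction of $D$. The coefficients of any effective $\Rr$-representative of the class $aH - K_{\Aa}$ are bounded below along divisorial components of the augmented base locus $\mathbb B_+(aH - K_{\Aa})$ by the corresponding asymptotic orders of vanishing, and these lower bounds may conflict with the klt ``room'' of $\Aa$ along those divisors. We expect to overcome this by combining Kodaira's lemma with a careful choice of $A$ that separates these bad contributions from the support of $B$, passing if necessary to a common log resolution of $B$, $\Mm_X$ and the relevant base loci. As a fallback, one may instead perturb to $\Aa_\epsilon := (X, \Ff, B + \epsilon(A' + E), \Mm, t)$ for small $\epsilon > 0$, so that $K_{\Aa_\epsilon} \sim_{\Rr} (1-\epsilon)K_{\Aa} + \epsilon aH$, run the $K_{\Aa_\epsilon}$-MMP with scaling of $H$ (which terminates by Theorem~\ref{thm: eogmm ai afs general case}), and descend the resulting semi-ampleness back to $H$ on $X$ via a common resolution argument.
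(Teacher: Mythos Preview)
Your approach to part~(1) is correct and essentially matches the paper's. The paper also reduces to Theorem~\ref{thm: eogmm ai afs general case} (via Theorems~\ref{thm: eogmm lc plus ample case} and~\ref{thm: semi-ampleness theorem}), with one cosmetic difference: after writing $aH-K_{\Aa}=A_k+\frac{1}{k}E$ and absorbing $\frac{1}{k}E$ into $B$, the paper places the remaining ample piece $A_k$ into the nef part as the $\bb$-divisor $\overline{A_k}$ rather than into the boundary. This sidesteps your Bertini step entirely, since enlarging $\Mm$ by an ample $\bb$-divisor that descends to $X$ does not change discrepancies. Your ``main obstacle'' is therefore not a real obstacle: either route works, and the Kodaira--plus--Bertini construction you sketch goes through via Lemma~\ref{lem: add divisor not containing lc center} and Theorem~\ref{thm: Bertini type theorem} (the case $t=1$ being vacuous, since klt then forces $\Ff=T_X$). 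Your fallback perturbation is unnecessary and, as written, does not produce a divisor $\Rr$-equivalent to $aH$, so it would not yield the conclusion anyway.

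Part~(2), however, has a genuine gap. Knowing that $H$ is semi-ample as an $\Rr$-divisor does \emph{not} by itself give that $\mathcal O_X(mH)$ is globally generated for $m\gg 0$: $\Rr$-semi-ampleness only says $H$ is an $\Rr_{\geq 0}$-combination of base-point-free divisors, and there is no automatic passage to base-point-freeness of $|mH|$. Your Stein-factorization idea would need the line bundle $\mathcal O_X(H)$ to descend along the contraction $g\colon X\to Z$ defined by $H$, but that is precisely a contraction-theorem-type statement for $g$, which you have not established. The paper's proof of~(2) is substantially different: after reducing to $A:=aH-K_{\Aa}$ ample, it sets $l:=2\dim X+1$, introduces the one-parameter family $\Bb_s:=(X,\Ff,B^{\ninv}+\frac{1-s}{1-t}B^{\inv},\Mm+\overline{A}+l\overline{H},s)$, and runs a $K_{\Bb_{t+\tau}}$-MMP$/U$ for small $\tau>0$. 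By the length of extremal rays this MMP is $H$-trivial; on the resulting model $X'$ one obtains an identity expressing a positive multiple of $H'$ as $K_{\Bb_0'}+\frac{t}{\tau}K_{\Bb_{t+\tau}'}$, where $\Bb_0'$ is a klt generalized pair with big generalized boundary and $K_{\Bb_{t+\tau}'}$ is nef. Classical base-point-freeness for generalized pairs then gives $\mathcal O_{X'}(mH')$ globally generated for $m\gg 0$, and $H$-triviality of the MMP transports this back to $X$.
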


A special case of the base-point-freeness theorem is the contraction theorem.

\begin{thmsub}[Contraction theorem]\label{thm: cont intro}
Let $\Aa/U$ be an lc algebraically integrable adjoint foliated structure associated with a morphism $\pi\colon X\rightarrow U$, and let $R$ be a $K_{\Aa}$-negative extremal ray$/U$. Assume that $X$ is potentially klt. Then there exists a contraction$/U$ $\cont_R\colon X\rightarrow T$ satisfying the following.
    \begin{enumerate}
        \item For any integral curve $C$ such that $\pi(C)$ is a point, $\cont_R(C)$ is a point if and only if $[C]\in R$.
        \item Let $L$ be a line bundle on $X$ such that $L\cdot R=0$. Then there exists a line bundle $L_T$ on $T$ such that $L\cong f^\ast L_T$.
    \end{enumerate}
\end{thmsub}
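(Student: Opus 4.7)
The plan is to deduce this from the base-point-freeness theorem (Theorem \ref{thm: bpf ai afs}) via the classical supporting-divisor strategy, combined with a perturbation that uses the potentially klt hypothesis on $X$ to reduce from the lc input to the klt input required by Theorem \ref{thm: bpf ai afs}.

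First I would produce a nef$/U$ $\Rr$-Cartier divisor $H$ on $X$ supporting the extremal ray $R$, i.e., $H\cdot R=0$ and $H\cdot C>0$ for every $[C]\in\overline{\NE}(X/U)\setminus R$. Starting from a $\pi$-ample divisor $A$ and rescaling along the rational slope cut out by $R$ gives such an $H$ in the standard Kawamata-style cone-theoretic fashion. Next, fix an $\Rr$-divisor $D\geq 0$ with $(X,D)$ klt, available by the potentially klt assumption. For sufficiently small $\delta>0$ I would build a klt perturbation $\Aa'$ of $\Aa$ by enlarging the variety-side boundary by $\delta D$ and correspondingly shifting the parameter $t$ slightly (to absorb the extra mass on the variety side while keeping the adjoint formula intact). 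Openness of the lc condition, combined with klt-ness of $(X,D)$, guarantees that $\Aa'$ is klt for $\delta\ll 1$, while $K_{\Aa'}$ remains close enough to $K_\Aa$ that $K_{\Aa'}\cdot R<0$ persists.

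For $a\gg 0$ the divisor $aH-K_{\Aa'}$ is then nef$/U$ and big$/U$: nefness because $H$ is strictly positive off $R$ on a compact slice of $\overline{\NE}(X/U)$ while $K_{\Aa'}$ is bounded on that slice, and bigness because $H$ retains a positive contribution from the $\pi$-ample $A$ used in its construction. Theorem \ref{thm: bpf ai afs} applied to $(\Aa',H)$ yields that $H$ is semi-ample$/U$. Let $\cont_R\colon X\to T$ be the Stein factorization of the induced morphism. Property (1) is then immediate: a curve $C$ over a point of $U$ is contracted by $\cont_R$ iff $H\cdot C=0$, iff $[C]\in H^{\perp}\cap\overline{\NE}(X/U)=R$. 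For property (2), given a line bundle $L$ on $X$ with $L\cdot R=0$, I would apply Theorem \ref{thm: bpf ai afs} separately to the two nef$/U$ divisors $mH+L$ and $mH-L$ for a sufficiently divisible $m\gg 0$; both are semi-ample$/U$, and since $L$ is numerically trivial on every fiber of $\cont_R$, the standard Kawamata-style descent argument along those fibers produces a line bundle $L_T$ on $T$ with $L\cong\cont_R^{\ast}L_T$.

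The main obstacle is the perturbation step: manufacturing a klt adjoint foliated structure $\Aa'$ from a merely lc $\Aa$ that still has $R$ as a $K_{\Aa'}$-negative extremal ray. This is exactly where the potentially klt hypothesis on the ambient $X$ is indispensable, as it permits a perturbation in the variety direction while the parameter $t$ absorbs the adjustment on the foliated side, keeping adjoint discrepancies strictly above $-1$. Algebraic integrability of $\Ff$ is preserved automatically since the foliation itself is left unchanged.
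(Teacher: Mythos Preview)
Your approach for (1) is essentially the same as the paper's: the paper also reduces from lc to klt by interpolating with a klt pair $(X,\Delta)$ (this is done inside Theorem~\ref{thm: semi-ampleness theorem}), and then produces a supporting function for $R$ and applies base-point-freeness. The paper makes the supporting-divisor step more explicit by invoking the cone theorem \cite[Theorem~1.2]{CHLMSSX24} (so that $R$ is exposed) together with \cite[Lemma~8.4.1]{CHLX23} to write the supporting function directly in the form $K_{\Aa}+A$ with $A$ ample$/U$; then $H-K_{\Aa'}$ is visibly ample for the perturbed $\Aa'$, and no separate ``$aH-K_{\Aa'}$ big and nef for $a\gg 0$'' argument is needed.

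For (2) the paper takes a different and cleaner route: rather than working over $U$ with the auxiliary divisors $mH\pm L$, it works over $T$. Over $T$ the line bundle $L$ is numerically trivial (since $L\cdot R=0$ and $\rho(X/T)=1$), hence nef$/T$, and $L-K_{\Aa}\equiv_T -K_{\Aa}$ is ample$/T$. The Cartier base-point-freeness theorem (Theorem~\ref{thm: bpf ai afs lc+a}) then yields that $\mathcal O_X(mL)$ is globally generated$/T$ for all $m\gg 0$, whence $mL\cong\cont_R^*L_{m,T}$ and one sets $L_T:=L_{m+1,T}-L_{m,T}$. Your approach over $U$ can be made to work, but you have glossed over two points: you need $H$ to be Cartier (so that $mH+L$ is a line bundle and the Cartier version of base-point-freeness applies, rather than mere $\Rr$-semi-ampleness which does not give line-bundle descent), and you need to argue that $mH\pm L$ are actually nef$/U$ for $m\gg 0$ (this uses that $H$ lies in the relative interior of the facet $R^{\perp}\cap\Nef(X/U)$, which in turn requires that $R$ be exposed). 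Working over $T$ sidesteps both issues entirely.
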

The only difference between  Theorem \ref{thm: cont intro} and \cite[Theorem 1.4]{CHLMSSX24} is the assumption on the singularities of $X$.  We recall that \cite[Theorem 1.4]{CHLMSSX24} requires that $X$ is klt, while Theorem \ref{thm: cont intro} only requires that $X$ is potentially klt. Although this is a seemingly small technical improvement, it will actually give us much more flexibility when we need to apply the theorem. For example, we are able to run MMP for lc algebraically integrable adjoint foliated structures on potentially klt varieties without any $\mathbb Q$-factoriality hypothesis, cf. \cite[Theorem 1.6]{CHLMSSX24} where such a hypothesis is necessary.
\begin{thmsub}[Existence of MMP]\label{thm: mmp potentially klt ambient algint afs}
Let $\Aa/U$ be an lc algebraically integrable adjoint foliated structure on a potentially klt variety. Then we may run a $K_{\Aa}$-MMP$/U$.
\end{thmsub}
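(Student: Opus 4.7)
The plan is to run the MMP one step at a time, following the standard cone--contraction--flip paradigm, with the improved contraction theorem (Theorem~\ref{thm: cont intro}) as the essential new input. Start from $\Aa/U$ with ambient variety $X$ potentially klt; if $K_\Aa$ is already nef$/U$ there is nothing to do, so assume otherwise. First, invoke the cone theorem for lc algebraically integrable adjoint foliated structures, as established in \cite{CHLMSSX24}, to extract a $K_\Aa$-negative extremal ray $R/U$. Note that the proof of the cone theorem in that setting does not use $\mathbb Q$-factoriality of the ambient variety, so it applies to our situation verbatim. Then Theorem~\ref{thm: cont intro} provides the contraction $\cont_R\colon X\to T$ over $U$. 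This is precisely the place where the improvement matters: \cite[Theorem~1.4]{CHLMSSX24} would have required $X$ to be klt, whereas Theorem~\ref{thm: cont intro} only requires $X$ to be potentially klt.

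Depending on $\cont_R$, three cases arise. If $\dim T<\dim X$ then the MMP terminates with a Mori fibre space. If $\cont_R$ is divisorial, I would verify that $T$ is again potentially klt and that the pushforward $(\cont_R)_*\Aa$ remains an lc algebraically integrable adjoint foliated structure on $T$, so that the argument can iterate. For the first claim, fix $D\geq 0$ with $(X,D)$ klt and, by the usual perturbation on the cone of curves, replace $D$ by a small perturbation $D'$ with $(X,D')$ still klt and $(K_X+D')\cdot R<0$; then $(T,(\cont_R)_*D')$ is klt. The compatibility of pushforward with the adjoint foliated structure formalism is routine given the framework developed in \cite{CHLMSSX24}. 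If $\cont_R$ is a small (flipping) contraction, I need to construct the flip $\cont_R^+\colon X^+\to T$. To do so I would pass to a small $\mathbb{Q}$-factorialization $\tilde\pi\colon \tilde X\to X$, which exists precisely because $X$ is potentially klt, pull back the adjoint foliated structure to an $\tilde\Aa$ on $\tilde X$, and run a $K_{\tilde\Aa}$-MMP$/T$ using the $\mathbb{Q}$-factorial-ambient MMP of \cite{CHLMSSX24} together with Theorem~\ref{thm: eogmm ai afs general case}. Since $K_{\tilde\Aa}$ is anti-ample over $T$ along the fibres being modified, this relative MMP terminates with a good minimal model $\tilde X^+\to T$, and its relative canonical model over $T$ is the desired flip $X^+$.

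The main obstacle is to preserve the potentially klt hypothesis on the ambient variety through each step so that the induction can continue. For divisorial contractions this is handled by the perturbation of $D$ sketched above. For flips the issue is more delicate: $X^+$ is small over $T$ and is isomorphic to $X$ in codimension one, so one transports a klt boundary from $X$ to $X^+$ by pulling back to a common resolution and pushing forward, but the exceptional divisors of the flip must be absorbed by a careful perturbation that uses the components of $B+\Mm_X$ and the $K_\Aa$-negativity of $R$. Verifying that this transported boundary is genuinely klt, and that $\cont_R^{+*}(\cont_R)_*\Aa$ coincides with the strict transform of $\Aa$ on $X^+$, is the technical heart of the step. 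Once these compatibilities are in place, the combination of cone theorem, Theorem~\ref{thm: cont intro}, and the flip construction above yields a well-defined single step of a $K_\Aa$-MMP$/U$, and iterating gives the conclusion.
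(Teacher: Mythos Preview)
Your overall strategy is exactly the paper's: cone theorem (\cite[Theorem~1.3]{CHLMSSX24}) + the new contraction theorem (Theorem~\ref{thm: cont intro}) + existence of flips, iterated. The paper's proof is in fact a one-line citation of these three ingredients together with \cite[Theorem~1.5]{CHLMSSX24} for flips, so your reconstruction of the flip via a small $\mathbb{Q}$-factorialization and Theorem~\ref{thm: eogmm ai afs general case} is correct but redundant---it essentially reproves what \cite[Theorem~1.5]{CHLMSSX24} already provides.

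There is, however, a genuine gap in your treatment of the iteration. In the divisorial case you propose to perturb the klt boundary $D$ to $D'$ with $(X,D')$ klt and $(K_X+D')\cdot R<0$. But $X$ is only \emph{potentially} klt, so $K_X$ need not be $\mathbb{R}$-Cartier; hence $D'-D$ must itself be $\mathbb{R}$-Cartier, and you would need an effective $\mathbb{R}$-Cartier divisor negative on $R$ whose addition keeps the pair klt. There is no obvious candidate (the exceptional divisor $E$ of $\cont_R$ need not be $\mathbb{R}$-Cartier on a non-$\mathbb{Q}$-factorial $X$), so the argument as written does not go through. Your flip case inherits the same difficulty.

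The clean fix, implicit in the paper's framework, is to perturb the \emph{adjoint foliated structure} rather than the pair. With $(X,\Delta)$ klt, set $\Aa'':=(1-\epsilon)\Aa+\epsilon(X,\Ff,\Delta,\bm{0},0)$; by Lemma~\ref{lem: combine lc afs} this is klt for $0<\epsilon\ll 1$, and $K_{\Aa''}\cdot R=(1-\epsilon)K_\Aa\cdot R+\epsilon(K_X+\Delta)\cdot R<0$ for $\epsilon$ small. The step of the $K_\Aa$-MMP along $R$ is then simultaneously a step of a $K_{\Aa''}$-MMP; since $\Aa''$ is klt, its image remains klt, and the new ambient variety is potentially klt by Theorem~\ref{thm: klt afs implies potentially klt}. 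This handles both the divisorial and flipping cases uniformly and closes the loop.
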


We also have the following result on the existence of Mori fiber spaces.

\begin{thmsub}[Existence of Mori fiber spaces]\label{thm: eomfs ai afs}
 Let $\Aa/U$ be an lc algebraically integrable adjoint foliated structure such that $X$ is potentially klt and $K_{\Aa}$ is not pseudo-effective$/U$. Then we may run a $K_{\Aa}$-MMP$/U$ with scaling of an ample$/U$ $\Rr$-divisor on $X$ and any such MMP terminates with a Mori fiber space of $\Aa/U$.
\end{thmsub}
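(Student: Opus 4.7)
The plan is to run the MMP with scaling using Theorem~\ref{thm: mmp potentially klt ambient algint afs}, rule out termination at a minimal model via the non-pseudo-effectiveness hypothesis, and rule out non-termination by perturbing $\Aa$ into the setting of Theorem~\ref{thm: eogmm ai afs general case}, or more precisely its lc enhancement Theorem~\ref{thm: eogmm lc plus ample case}.

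First I fix a general ample$/U$ $\Rr$-divisor $A$ on $X$ such that $K_\Aa + A$ is nef$/U$ and the structure $\Aa + A := (X,\Ff,B+A,\Mm,t)$ is still lc. By Theorem~\ref{thm: mmp potentially klt ambient algint afs} we may run a $K_\Aa$-MMP$/U$ with scaling of $A$; denote such a sequence by $X = X_0 \dashrightarrow X_1 \dashrightarrow \cdots$, with scaling numbers $1 = \lambda_0 \geq \lambda_1 \geq \cdots$ and $K_{\Aa_i} + \lambda_i A_i$ nef$/U$ at each step. If at some step $n$ the MMP terminates at a divisorial or flipping contraction with $K_{\Aa_n}$ nef$/U$, then the common-resolution negativity lemma applied to the $K_\Aa$-negative birational map $X \dashrightarrow X_n$ yields $K_\Aa$ pseudo-effective$/U$ on $X$, contradicting the hypothesis. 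So any termination must be at a fiber-type extremal contraction, i.e.\ at an MFS.

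The heart of the argument is to rule out non-termination. Suppose the MMP is infinite and set $\lambda_\infty := \lim_i \lambda_i$. Introduce the pseudo-effective threshold
\[
\tau := \inf\{\, s \geq 0 \,:\, K_\Aa + sA \text{ is pseudo-effective}/U \,\},
\]
which lies in $(0,1]$ because $K_\Aa$ is not pseudo-effective$/U$ while $K_\Aa + A$ is nef$/U$. The same negativity-lemma comparison as above transports nefness of $K_{\Aa_i} + \lambda_i A_i$ on $X_i$ to pseudo-effectiveness of $K_\Aa + \lambda_i A$ on $X$, yielding $\lambda_i \geq \tau$ for every $i$ and hence $\lambda_\infty \geq \tau$. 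A mild perturbation of $A$ by a small multiple of an auxiliary ample$/U$ divisor, chosen in general position relative to the countably many extremal rays encountered in the MMP, promotes this to the strict inequality $\lambda_\infty > \tau$. Now pick a rational $s$ with $\tau < s < \lambda_\infty$; then $s < \lambda_i$ for every $i$, and the perturbed adjoint foliated structure $\Aa + sA = (X,\Ff,B+sA,\Mm,t)$ is lc on the potentially klt $X$, with $K_{\Aa+sA} = K_\Aa + sA$ pseudo-effective$/U$ and $B + sA + \Mm_X$ big$/U$ (as $sA$ is ample$/U$). By Theorem~\ref{thm: eogmm lc plus ample case}, any $K_{\Aa+sA}$-MMP$/U$ with scaling of an ample terminates with a good minimal model. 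But by construction the given infinite $K_\Aa$-MMP is, step by step, a $K_{\Aa+sA}$-MMP$/U$ with scaling of $A$: each extremal ray $R_i$ satisfies
\[
R_i \cdot (K_{\Aa_i} + sA_i) = (s - \lambda_i)\, R_i \cdot A_i < 0,
\]
and $K_{\Aa_i+sA_i} + (\lambda_i - s) A_i = K_{\Aa_i} + \lambda_i A_i$ is nef$/U$. This contradicts the termination furnished by Theorem~\ref{thm: eogmm lc plus ample case}, completing the proof.

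The main obstacle is securing the strict comparison $\lambda_\infty > \tau$: this is where the non-pseudo-effectiveness hypothesis meets the structure of the scaling MMP, through a pullback-to-common-resolution argument together with a genericity perturbation. Once that gap is in place, the passage from the hypothetical infinite $K_\Aa$-MMP to an impossible $K_{\Aa+sA}$-MMP is the standard ``scaling MMP becomes a minimal-model-seeking MMP after boundary perturbation'' trick, and the reduction to Theorem~\ref{thm: eogmm lc plus ample case} is mechanical.
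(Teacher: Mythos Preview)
Your overall strategy matches the paper's: introduce the pseudo-effective threshold $\tau$ (the paper calls it $\mu$), show $\lambda_i \geq \tau$ for every $i$, reinterpret the given $K_\Aa$-MMP as a $(K_\Aa + sA)$-MMP for a suitable $s$, and invoke a termination result. However, there is a genuine gap in your execution.

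The perturbation step meant to secure $\lambda_\infty > \tau$ does not work. The theorem asserts that \emph{every} $K_\Aa$-MMP with scaling of an ample divisor terminates, so $A$ is prescribed and you are not free to replace it; even if you were, the MMP itself depends on the scaling divisor, so perturbing $A$ changes the sequence of rays and the argument becomes circular. More decisively, your own logic shows the strict inequality is unattainable in the bad case: if $\lambda_\infty > \tau$ held, then choosing $\tau < s < \lambda_\infty$ would already force termination, so an infinite MMP must have $\lambda_\infty = \tau$ and no ``general position'' choice can separate the two.

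The paper avoids the perturbation entirely by taking $s = \tau$: since $\lambda_i \geq \tau > 0$ for each $i$, the given $K_\Aa$-MMP with scaling of $H$ is itself a $(K_\Aa + \tau H)$-MMP with scaling of $H$, and one invokes Theorem~\ref{thm: eogmm ai afs +a case}. Note also that you cite the wrong result: Theorem~\ref{thm: eogmm lc plus ample case} only produces a single good minimal model and says nothing about termination of an arbitrary MMP with scaling; the statement you actually need is Theorem~\ref{thm: eogmm ai afs +a case}.
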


Another interesting corollary of these results is the following.

\begin{thmsub}\label{thm: gmm klt algint foliation gt on cy variety}
Let $(X,\Delta)$ be a klt Calabi--Yau pair. Then any algebraically integrable foliation of general type has a canonical model.
\end{thmsub}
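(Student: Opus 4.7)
The plan is to relate the canonical model of $K_{\mathcal{F}}$ to that of an algebraically integrable adjoint foliated structure with parameter $t<1$, and then invoke Theorem~\ref{thm: bchm analogue}.

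Fix a rational number $t\in(0,1)$ and consider the adjoint foliated structure $\Aa$ on $X$ with underlying foliation $\mathcal{F}$, trivial nef $\bb$-divisor part, and boundary chosen so that
\[
K_{\Aa}=tK_{\mathcal{F}}+(1-t)(K_X+\Delta).
\]
Using the Calabi--Yau condition $K_X+\Delta\sim_{\mathbb{R}} 0$, this yields $K_{\Aa}\sim_{\mathbb{R}} tK_{\mathcal{F}}$, so $K_{\Aa}$ is big since $K_{\mathcal{F}}$ is big.

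To apply Theorem~\ref{thm: bchm analogue}, we must arrange that $\Aa$ is klt. Because the canonical model of a foliation is a birational invariant, first pass to a foliated log resolution $\pi\colon Y\to X$ so that $\mathcal{F}_Y:=\pi^{-1}\mathcal{F}$ has at worst log canonical singularities; then replace $\Delta$ by the crepant pullback $\Delta_Y$ determined by $K_Y+\Delta_Y=\pi^{\ast}(K_X+\Delta)$, and perform an auxiliary small modification or terminalisation to ensure that $\Delta_Y$ is effective, yielding a klt Calabi--Yau pair $(Y,\Delta_Y)$. The adjoint log discrepancies of the resulting structure $\Aa_Y$ then interpolate linearly between the positive values coming from $(Y,\Delta_Y)$ klt and the non-negative values coming from $\mathcal{F}_Y$ lc, so $\Aa_Y$ is klt for every $t\in(0,1)$.

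With this setup, Theorem~\ref{thm: bchm analogue} produces a canonical model $X_{\can}$ of $\Aa_Y$, i.e.\ a $K_{\Aa_Y}$-non-positive birational contraction $\psi\colon Y\dashrightarrow X_{\can}$ with $\psi_{*}K_{\Aa_Y}$ ample. Because $K_{\Aa_Y}\sim_{\mathbb{R}} tK_{\mathcal{F}_Y}$ with $t>0$, the same $\psi$ is $K_{\mathcal{F}_Y}$-non-positive and $\psi_{*}K_{\mathcal{F}_Y}$ is ample on $X_{\can}$; hence $X_{\can}$ is the canonical model of $\mathcal{F}_Y$, and, by birational invariance, of $\mathcal{F}$. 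The hard part is the singularity reduction: the foliated log resolution of $\mathcal{F}$ may introduce negative exceptional contributions that spoil the klt-ness of $(Y,\Delta_Y)$, so one must modify both the foliation and the Calabi--Yau pair in a compatible way. Combining an ACSS-type foliated modification with a $\mathbb{Q}$-factorial dlt (or terminal) modification of $(X,\Delta)$ should achieve this, but verifying the compatibility of these two procedures is the technical heart of the argument.
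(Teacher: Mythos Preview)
Your overall strategy---interpolate between the klt Calabi--Yau pair and the foliation to obtain a klt adjoint foliated structure whose canonical divisor is $\mathbb{R}$-linearly equivalent to a positive multiple of $K_{\mathcal F}$---is exactly the paper's strategy. The difference is that the paper avoids your resolution detour entirely, and that detour is where your gap lies.

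The paper works directly on $X$: for $0<t\ll 1$ the structure $\Aa=(X,\Ff,(1-t)\Delta,t)$ is klt \emph{with no hypothesis on the singularities of $\Ff$}. Indeed, on a fixed foliated log resolution $h\colon W\to X$ there are only finitely many prime divisors $E$ in $\Exc(h)\cup\Supp h^{-1}_*\Delta$; for each such $E$ the klt inequality reads
\[
(1-t)\bigl(a(E,X,\Delta)+1\bigr)+t\bigl(a(E,X,\Ff)+\epsilon_{\Ff}(E)\bigr)>0,
\]
which holds at $t=0$ since $(X,\Delta)$ is klt and hence for all sufficiently small $t$. Foliated log smoothness of $h^*\Aa$ then handles all higher divisors (Lemma~\ref{lem: foliated log smooth imply lc}). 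Since $K_{\Aa}\sim_{\mathbb R}tK_{\Ff}$ is big, Theorem~\ref{thm: eogmm ai afs general case} gives a good minimal model and hence a canonical model, which is also the canonical model of $\Ff$.

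Your attempt to first pass to a foliated log resolution making $\mathcal F_Y$ lc, and then repair $(Y,\Delta_Y)$ to an effective klt Calabi--Yau pair, is therefore unnecessary: taking $t$ small absorbs arbitrarily bad foliation singularities. It is also genuinely incomplete, as you acknowledge: a foliated log resolution can extract divisors with $a(E,X,\Delta)>0$, forcing negative coefficients in $\Delta_Y$, and there is no general procedure that simultaneously terminalises $(X,\Delta)$ and foliated-log-resolves $\Ff$. Finally, note that Theorem~\ref{thm: bchm analogue} as stated treats smooth $X$ with no boundary; since your $\Aa_Y$ carries the boundary $(1-t)\Delta_Y$, the result you actually need is Theorem~\ref{thm: eogmm ai afs general case}.
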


\subsection{Birational geometry for algebraically integrable adjoint foliated structures}

Besides direct applications to the minimal model program,  Theorem \ref{thm: eogmm ai afs general case} also has several applications in birational geometry that are closely related to the minimal model program.

\begin{thmsub}\label{thm: abundance ai afs}
  Let $\Aa/U$ be a klt algebraically integrable adjoint foliated structure with big$/U$ generalized boundary. Then:
  \begin{enumerate}
      \item (Abundance) $\kappa_{\iota}(X/U,K_{\Aa})=\kappa_{\sigma}(X/U,K_{\Aa})$.
      \item (Non-vanishing) If $K_{\Aa}$ is pseudo-effective$/U$, then $K_{\Aa}\sim_{\mathbb R,U}D\geq 0$ for some $D$.
      \item (Finite generation) If $K_{\Aa}$ is a $\mathbb Q$-divisor, then the canonical ring
$$R(X/U,K_{\Aa}):=\Proj\bigoplus_{m=0}^{+\infty}\pi_*\mathcal{O}_X\left(\left\lfloor mK_{\Aa}\right\rfloor\right)$$
is finitely generated, where $\pi: X\rightarrow U$ is the associated morphism.
  \end{enumerate}
\end{thmsub}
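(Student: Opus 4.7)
The plan is to reduce all three statements to the existence of a good minimal model provided by Theorem \ref{thm: eogmm ai afs general case}, which is applicable here because $B+\Mm_X$ is big$/U$ (one of the two alternative bigness hypotheses in that theorem).

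\textbf{The pseudo-effective case.} Assume $K_{\Aa}$ is pseudo-effective$/U$. Theorem \ref{thm: eogmm ai afs general case} produces a $K_{\Aa}$-negative birational contraction $\phi\colon X\dashrightarrow X_{\min}$ onto a good minimal model $\Aa_{\min}/U$ whose canonical divisor $K_{\Aa_{\min}}$ is semi-ample$/U$. Choose a common resolution $p\colon W\to X$, $q\colon W\to X_{\min}$, so that by the negativity lemma
$$p^{*}K_{\Aa}\sim_{\mathbb R,U} q^{*}K_{\Aa_{\min}}+E,\qquad E\ge 0,\ E\ q\text{-exceptional}.$$
For (2), semi-ampleness yields an effective representative $D_{\min}\sim_{\mathbb R,U}K_{\Aa_{\min}}$, and pushing $q^{*}D_{\min}+E$ forward via $p$ gives $K_{\Aa}\sim_{\mathbb R,U}p_{*}(q^{*}D_{\min}+E)\ge 0$. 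For (1), $\kappa_{\iota}$ and $\kappa_{\sigma}$ agree on $X_{\min}$ because $K_{\Aa_{\min}}$ is semi-ample; pulling up to $W$, the presence of the effective $q$-exceptional term $E$ does not affect either invariant, and pushing down by the birational $p$ preserves them, so the equality descends to $X$.

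\textbf{Finite generation and the non-pseudo-effective case.} If $K_{\Aa}$ is a $\mathbb Q$-divisor, then $K_{\Aa_{\min}}$ is a semi-ample $\mathbb Q$-divisor, so it factors as $K_{\Aa_{\min}}\sim_{\mathbb Q,U}\psi^{*}A$ for a contraction $\psi\colon X_{\min}\to Z/U$ and an ample$/U$ $\mathbb Q$-divisor $A$ on $Z$. The relative section ring of $A$ on $Z/U$ is finitely generated by standard relative ampleness, and because $E$ is $q$-exceptional and effective one checks that the canonical rings of $\Aa$ and $\Aa_{\min}$ are isomorphic degree-by-degree via $p_{*}$ and $q_{*}$, so finite generation transfers to $R(X/U,K_{\Aa})$. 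When instead $K_{\Aa}$ is not pseudo-effective$/U$, $\kappa_{\sigma}(X/U,K_{\Aa})=-\infty$ by definition, and if $\kappa_{\iota}\ne -\infty$ then $K_{\Aa}$ would be $\mathbb R$-linearly equivalent$/U$ to an effective divisor, contradicting the assumption; for (3) the same contradiction rules out any nonzero relative section of $\lfloor mK_{\Aa}\rfloor$ for $m\ge 1$, so the canonical ring reduces to its degree-zero part and is trivially finitely generated.

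\textbf{Main obstacle.} All the real work is concentrated in Theorem \ref{thm: eogmm ai afs general case}; the present arguments only require knowing that both Iitaka dimensions and pluricanonical section rings behave compatibly with $K_{\Aa}$-negative birational contractions of adjoint foliated structures, incorporating the pushforward of the boundary $B$ and the b-divisor $\Mm$. These compatibilities are standard in the klt and generalized pair setting, and the verification in our framework is routine; the mild technical point is simply that the relative floor functions and the non-$\mathbb Q$-Cartier nature of intermediate objects do not introduce extra global sections, which follows from the exceptional character of $E$ together with birationality of $p$.
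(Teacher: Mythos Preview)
Your proposal is correct and follows essentially the same approach as the paper: both reduce everything to Theorem \ref{thm: eogmm ai afs general case} and use that $\kappa_\iota$, $\kappa_\sigma$, effectivity, and finite generation are preserved along the $K_{\Aa}$-negative birational contraction to the good minimal model. The paper's own proof is a two-sentence remark to this effect, whereas you have unpacked the invariance statements via a common resolution and the exceptional divisor $E$; this extra detail is fine but not strictly necessary.
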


We show that $\mathbb Q$-factorial minimal models of klt algebraically integrable adjoint foliated structures are connected via a sequence of flops, cf. \cite{JV23,CLW24, CM24}.

\begin{thmsub}[Decomposition in flops]\label{thm: flop ai afs}
  Let $\Aa/U$ be an NQC klt algebraically integrable adjoint foliated structure and $\Aa_1/U$, $\Aa_2/U$ two $\Qq$-factorial minimal models of $\Aa/U$. Then the induced birational map$/U$ $\phi: \Aa_1\dashrightarrow \Aa_2$ can be decomposed into a finite sequence of $K_{\Aa_1}$-flops$/U$.
\end{thmsub}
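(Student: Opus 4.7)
The plan is to adapt the classical Kawamata argument connecting two $\Qq$-factorial minimal models of a klt pair by a sequence of flops, using Theorem \ref{thm: eogmm ai afs general case} as the main MMP engine. The first step is to show $\phi\colon X_1\dashrightarrow X_2$ is small, i.e., an isomorphism in codimension one. Taking a common log resolution $p_i\colon W\to X_i$ ($i=1,2$) and writing $p_1^*K_{\Aa_1} - p_2^*K_{\Aa_2} = E$ for an $\Rr$-divisor $E$ on $W$, one applies the negativity lemma in the AFS setting to both $W\to X_1$ and $W\to X_2$: since $K_{\Aa_1}$ and $K_{\Aa_2}$ are both nef$/U$, one gets $E\ge 0$ and $-E\ge 0$, hence $E=0$ and $\phi$ contracts no divisor.

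Next, choose a general ample$/U$ $\Rr$-divisor $H_2$ on $X_2$ so that $K_{\Aa_2}+H_2$ is ample$/U$, and let $H_1:=\phi_*^{-1}H_2$ be its strict transform on $X_1$. For $\epsilon>0$ small enough the AFS $\Aa_1':=(X_1,\Ff_1,B_1+\epsilon H_1,\Mm,t)$ is still klt and algebraically integrable, and $K_{\Aa_1'}$ is big$/U$ because $\epsilon H_1$ is. Theorem \ref{thm: eogmm ai afs general case} then guarantees that a $K_{\Aa_1'}$-MMP$/U$ with scaling of an ample$/U$ $\Rr$-divisor exists and terminates with a good minimal model $\Aa_Y/U$.

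The core step is to show that each elementary step of this perturbed MMP is a $K_{\Aa_i}$-flop for the unperturbed AFS at that stage. The inductive claim is that $K_{\Aa_i}$ remains nef$/U$ at every step. Granting this, any $(K_{\Aa_i}+\epsilon H_i)$-negative extremal ray $R_i$ contracted at step $i$ must satisfy $K_{\Aa_i}\cdot R_i=0$ (and $H_i\cdot R_i>0$), provided $\epsilon$ is chosen once and for all smaller than the uniform length bound on extremal rays furnished by a cone theorem for algebraically integrable AFS; hence the induced map is a small $K_{\Aa_i}$-trivial birational modification, i.e., a $K_{\Aa_i}$-flop, and nefness of $K_{\Aa_{i+1}}$ is preserved across the flop by a standard argument. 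To identify the terminal model $Y$ with $X_2$, note that $K_{\Aa_Y}+\epsilon H_Y$ is semi-ample$/U$ while $K_{\Aa_2}+\epsilon H_2$ is ample$/U$, so both are ample models of $\Aa_1'/U$, and $Y\simeq X_2$ over $U$ by uniqueness of ample models, yielding the decomposition of $\phi$ into $K_{\Aa_1}$-flops.

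The main obstacle is the inductive preservation of nefness of $K_{\Aa_i}$ through each flip of the perturbed MMP, which requires a uniform lower bound on positive values of $K_{\Aa_i}\cdot R$ over extremal rays $R$ so that a single $\epsilon$ works throughout the MMP. Such uniformity should follow from a cone theorem for AFS together with a length-of-extremal-ray statement in the algebraically integrable setting, as developed earlier in the paper and in \cite{CHLMSSX24,CHLX23,LMX24}. The NQC hypothesis is used here to ensure that the associated nef $\bb$-divisor behaves well under the perturbation and under each flip, so that the MMP with scaling remains available throughout.
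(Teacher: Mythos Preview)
Your overall Kawamata-style strategy matches the paper's, and your argument that $\phi$ is small via the negativity lemma is fine (the paper instead appeals to $\Supp N_\sigma$ via Lemma \ref{lem: nz for lc divisor}, but this is equivalent). The genuine gap is exactly the one you flag as the ``main obstacle'': you need a single $\epsilon>0$ that forces every step of the $(K_{\Aa_1}+\epsilon H_1)$-MMP to be $K_{\Aa_i}$-trivial, but your inductive scheme does not supply the required uniform positive lower bound on $K_{\Aa_i}\cdot C$ over all extremal curves $C$ on all models $X_i$. The length of extremal rays alone gives an upper bound on $-K_{\Aa'}\cdot C$, not a lower bound on positive values of $K_{\Aa_i}\cdot C$, and as the models change there is no a priori reason such a bound should persist.

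The paper resolves this via Theorem \ref{thm: shokurov type polytope} (equivalently Theorem \ref{thm: nqc of ka}): from the NQC hypothesis on $\Aa$ one deduces that the nef divisor $K_{\Aa_1}$ is itself NQC$/U$, i.e.\ $K_{\Aa_1}=\sum a_jN_j$ with $a_j>0$ and $N_j$ nef$/U$ Cartier. One then rewrites
\[
\frac{\epsilon_0}{\epsilon}(K_{\Aa_1}+\epsilon L_1)=K_{\Aa_1}+\epsilon_0 L_1+\frac{\epsilon_0-\epsilon}{\epsilon}K_{\Aa_1},
\]
treating the large multiple of $K_{\Aa_1}$ as part of the NQC nef $\bb$-divisor. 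Now the length of extremal rays combined with the contraction theorem \cite[Theorems 1.3(2), 1.4(2)]{CHLMSSX24} (which preserves the Cartier property of the $N_j$ through each step) forces $N_j\cdot C=0$ for every extremal curve once $\frac{\epsilon_0-\epsilon}{\epsilon}a_j>2\dim X$, and hence $K_{\Aa_i}\cdot C=0$ at every step. This is the missing mechanism that converts the NQC hypothesis on the structure into the uniform $\epsilon$ you need; without it your inductive argument is circular, since preservation of nefness of $K_{\Aa_i}$ across a flip is exactly what you are trying to prove.
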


We have the following result which allows us to extract non-terminal places of algebraically integrable adjoint foliated structures.

\begin{thmsub}[Extraction of non-terminal places]\label{thm: extract non-terminal place intro}
   Let $\Aa$ be an lc algebraically integrable adjoint foliated structure such that $X$ is potentially klt. Let $\mathcal{S}$ be a finite set of exceptional$/X$ prime divisors, such that for any $D\in\mathcal{S}$,
   \begin{enumerate}
       \item $a(D,\Aa)\leq 0$, and
       \item if $a(D,\Aa)=0$, then $V\not\subset\Center_XD$ for any nklt center $V$ of $\Aa$.
   \end{enumerate}
   Then there exists a projective birational morphism $f: Y\rightarrow X$ such that $Y$ is $\Qq$-factorial and the divisors contracted by $f$ are exactly the divisors in $\mathcal{S}$.
\end{thmsub}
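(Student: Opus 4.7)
The plan is to mimic the Birkar--Cascini--Hacon--M\textsuperscript{c}Kernan extraction result (\cite[Corollary 1.4.3]{BCHM10}), replacing the classical MMP inputs with the algebraically integrable adjoint foliated MMP developed earlier in the paper. First, take a birational morphism $h\colon W \to X$ from a smooth projective variety which is a log resolution of $\Aa$ and which extracts every $D \in \mathcal S$; denote the remaining $h$-exceptional divisors by $E_1,\dots,E_m$. Pull $\Aa$ back to an lc adjoint foliated structure $\Aa_W$ on $W$; hypothesis (1) ensures that when $D \in \mathcal S$ is placed in the boundary with its natural coefficient, that coefficient is at most $1$.

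Second, design an auxiliary \emph{klt} adjoint foliated structure $\Aa_W'$ on $W$ whose boundary assigns each $D \in \mathcal S$ a coefficient strictly less than $1$ and each $E_j$ a coefficient in $(0,1)$, so that
\[
K_{\Aa_W'}\;\sim_{\mathbb R,\,X}\;A_W \;:=\; \sum_{D\in\mathcal S}\alpha_D\, D + \sum_j \beta_j\, E_j,
\]
with $\alpha_D\geq 0$ arbitrarily small and $\beta_j>0$. The borderline cases $a(D,\Aa)\in\{-1,0\}$ require a standard tie-breaking perturbation --- namely, one adds a small multiple of a general element of a sufficiently positive linear system vanishing along $D$ but not along any nklt center of $\Aa_W$ --- which is made possible exactly by hypothesis (2). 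Now run a $K_{\Aa_W'}$-MMP$/X$: existence follows from Theorem \ref{thm: mmp potentially klt ambient algint afs} (since $W$ is smooth, hence potentially klt), and termination with a good minimal model$/X$ follows from Theorem \ref{thm: eogmm ai afs general case} after an ample$/X$ perturbation that makes the generalized boundary big$/X$. By the negativity lemma applied to $A_W\sim_{\mathbb R,X}K_{\Aa_W'}$, every contraction in the MMP is supported on $\Supp A_W$; taking the $\alpha_D$'s sufficiently small relative to the $\beta_j$'s forces every $E_j$ to be contracted but no $D\in\mathcal S$ to be. The output $f\colon Y\to X$ is $\mathbb Q$-factorial with exceptional set equal to $\mathcal S$, as required.

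The main obstacle is the coefficient bookkeeping in the borderline cases $a(D,\Aa)\in\{-1,0\}$: a naive perturbation could create new nklt components and thereby break the klt hypothesis on which Theorem \ref{thm: eogmm ai afs general case} relies. Hypothesis (2) is precisely what permits the tie-breaking perturbation without spoiling the nklt locus of $\Aa$. A secondary technical point is that the MMP has to be run \emph{with scaling of an ample divisor} in the relative setting, a feature that is provided by the algebraically integrable adjoint foliated MMP infrastructure established in the earlier sections of the paper.
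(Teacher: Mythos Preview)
Your approach is the natural BCHM-style extraction argument, and the paper itself acknowledges (immediately after stating Theorem \ref{thm: extract non-terminal place intro}) that ``it is easy to deduce Theorem \ref{thm: extract non-terminal place intro} from Theorem \ref{thm: eogmm ai afs general case} --- cf.\ the proof of \cite[Corollary 1.4.3]{BCHM10}''. So as a self-contained mathematical argument your sketch is essentially fine. The problem is \emph{circularity}: in the paper's logical structure, Theorem \ref{thm: extract non-terminal place intro} is an \emph{input} to Theorem \ref{thm: eogmm ai afs general case}, not a consequence of it. Concretely, the chain is: Theorem \ref{thm: extract non-terminal places} $\Rightarrow$ Proposition \ref{prop: eowlm implies eomm} $\Rightarrow$ Theorem \ref{thm: log smooth existence of good minimal model} (Step 2, passing from a semi-ample model to a good minimal model) $\Rightarrow$ Theorem \ref{thm: eogmm boundary big case} $\Rightarrow$ Theorem \ref{thm: eogmm ai afs general case}. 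Your invocation of Theorems \ref{thm: mmp potentially klt ambient algint afs} and \ref{thm: eogmm ai afs general case} therefore assumes what has yet to be proved.

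The paper's actual proof avoids this by bootstrapping from results that are already available for \emph{generalized foliated quadruples} and \emph{generalized pairs}, not for adjoint foliated structures. After reducing (via hypothesis (2) and a perturbation exactly as you describe) to the klt case with $a(D,\Aa)<0$ for all $D\in\mathcal S$, the hard content is Theorem \ref{thm: extract non-terminal places}: one first tests whether $a(E,X,B,\Mm)\le 0$ for the generalized pair $(X,B,\Mm)$, in which case \cite[Lemma 4.6]{BZ16} applies directly. If not, one takes a foliated log resolution, runs a $K_{\Aa_{W,1}}$-MMP$/X$ (the $t=1$ foliation-part MMP, available from \cite[Theorem 16.1.4]{CHLX23}), then runs a generalized-pair MMP (available from \cite{BZ16,Bir12}) by absorbing the now-nef foliated canonical divisor into the nef part, and finally a delicate discrepancy computation shows that the resulting klt generalized pair on $X$ has $a(E,\cdot)\le 0$, so \cite[Lemma 4.6]{BZ16} extracts $E$. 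The point is that every MMP step used here is for structures (generalized foliated quadruples with ample polarization, or klt generalized pairs) whose MMP was established \emph{prior} to this paper. Two minor remarks on your write-up: the resolution should be a \emph{foliated} log resolution (so $W$ has quotient singularities, not smooth in general), and the borderline lc coefficient for a divisor $D$ is $t\epsilon_{\Ff}(D)+(1-t)$, not $1$.
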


While it is easy to deduce Theorem \ref{thm: extract non-terminal place intro} from Theorem \ref{thm: eogmm ai afs general case} -- 
cf. the proof of \cite[Corollary 1.4.3]{BCHM10} --
we do not prove Theorem \ref{thm: extract non-terminal place intro} as a consequence of Theorem \ref{thm: eogmm ai afs general case}. In fact, Theorem \ref{thm: extract non-terminal place intro} plays a crucial role in the proof of Theorem \ref{thm: eogmm ai afs general case}.

\subsection{MMP for algebraically integrable foliations on potentially klt varieties}

Our progress on the existence of minimal model program for adjoint foliated structures allows us to address the key technical challenges in proving the existence of minimal model program for algebraically integrable foliations. 

We recall that there has been a large amount of progress on the existence of the minimal model program (cone theorem, contraction theorem, and the existence of flips) for log canonical algebraically integrable foliations, see \cite{ACSS21,CHLX23,CS24,LMX24,CHLMSSX24}. Thanks to Theorem \ref{thm: eogmm ai afs general case}, we establish the existence of minimal model program for lc algebraically integrable foliations on potentially klt varieties unconditionally.

\begin{thmsub}\label{thm: mmp ai potentially klt ambient variety}
Let $(X,\Ff,B)/U$ be an lc algebraically integrable foliated triple such that $X$ is potentially klt. Let $A$ be an ample$/U$ $\Rr$-divisor on $X$. Then: 
\begin{enumerate}
    \item (Existence of MMP) The cone theorem, the contraction theorem, and the existence of flips hold for $K_{\Ff}+B$ over $U$. In particular, we can run a $(K_{\Ff}+B)$-MMP$/U$.
    \item (Existence of good minimal models after polarization) If $K_{\Ff}+B+A$ is pseudo-effective$/U$, then $(X,\Ff,B+A)/U$ admits a good minimal model. In particular:
    \begin{enumerate}
        \item (Base-point-freeness) If $K_{\Ff}+B+A$ is nef$/U$ then it is semi-ample$/U$.
        \item (Abundance) $\kappa_{\iota}(X/U,K_{\Ff}+B+A)=\kappa_{\sigma}(X/U,K_{\Ff}+B+A)$.
        \item (Non-vanishing) $K_{\Ff}+B+A\sim_{\mathbb R,U}D\geq 0$ for some $D$.
    \end{enumerate}
    \item (Existence of Mori fiber spaces) If $K_{\Ff}+B$ is not pseudo-effective$/U$, then $(X,\Ff,B)/U$ admits a Mori fiber space.
    \item (Existence of minimal models vs MMP with scaling) If $(X,\Ff,B)/U$ admits a minimal model in the sense of Birkar-Shokurov, then we may run a $(K_{\Ff}+B)$-MMP$/U$ with scaling of $A$ and any such MMP terminates.
\end{enumerate}
\end{thmsub}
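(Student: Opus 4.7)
The approach is to view $(X,\Ff,B)$ as the $t=1$ endpoint of a family of adjoint foliated structures and deduce each statement from its already-established AFS analogue (Theorems \ref{thm: cont intro}, \ref{thm: mmp potentially klt ambient algint afs}, \ref{thm: eogmm ai afs general case}, and \ref{thm: eomfs ai afs}), all of which apply on the potentially klt variety $X$. The passage from $t<1$ to $t=1$ will be a perturbation argument as $t\to 1^-$, combined with the flexibility provided by small auxiliary ample divisors needed to remain inside the klt AFS regime.

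For part (1), given a $(K_\Ff+B)$-negative extremal ray $R/U$, I would combine the identity $K_{\Aa_t}-(K_\Ff+B)=(1-t)(K_X-K_\Ff)$ with the standard bounds on the length of foliated extremal rays (cf.~\cite{CHLX23,CHLMSSX24}) to show that $R$ remains negative with respect to the AFS $\Aa_t:=(X,\Ff,B,\mathbf 0,t)$ for any $t$ sufficiently close to $1$. Hence the cone theorem, the contraction theorem (Theorem \ref{thm: cont intro}), and the existence of flips for $K_\Ff+B$ all descend from the AFS versions by invoking Theorem \ref{thm: mmp potentially klt ambient algint afs}. For part (2), the strategy is to absorb the ample divisor $A$ into the AFS data. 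Writing $A=A'+A''$ with $A',A''$ ample $\Rr$-divisors, and exploiting that $X$ is potentially klt, I would form a klt AFS $\Aa_t$ (for $t<1$ close to $1$) by adding $A'$ to the boundary and $A''$ to the nef part $\Mm$. The generalized boundary $B+A$ is big since $A$ is ample, and $K_{\Aa_t}$ is pseudo-effective$/U$ whenever $K_\Ff+B+A$ is. Theorem \ref{thm: eogmm ai afs general case} then produces a good minimal model of $\Aa_t/U$, which descends to a good minimal model of $(X,\Ff,B+A)/U$ in the limit $t\to 1^-$, immediately giving base-point-freeness, abundance, and non-vanishing in (a)--(c).

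Part (3) is analogous, taking $\Aa_t=(X,\Ff,B+\epsilon A',\epsilon A'',t)$ with a small ample klt perturbation so that $K_{\Aa_t}$ is still not pseudo-effective$/U$ for $t$ close to $1$ and $\epsilon$ small; Theorem \ref{thm: eomfs ai afs} produces a Mori fiber space that descends to $(X,\Ff,B)/U$. Part (4) is the foliated analogue of the Birkar--Shokurov principle that the existence of a minimal model is equivalent to the termination of MMP with scaling of an ample divisor; granted the MMP apparatus from part (1), the classical argument (cf.~\cite{BCHM10}) adapts through special termination and the length of rays.

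The principal obstacle I expect is to verify that the $t\to 1^-$ limit behaves well: namely, that on some half-open interval $(1-\epsilon,1)$ one may run the AFS MMPs compatibly and that their outputs (a GMM, a Mori fiber space, a flip) stabilize and do not genuinely depend on $t$ in that range. I plan to resolve this via a Shokurov-type polytope analysis of the Mori chamber structure in a neighborhood of the boundary $t=1$, relying on the finiteness of minimal models for klt algebraically integrable AFS furnished by Theorem \ref{thm: eogmm ai afs general case}.
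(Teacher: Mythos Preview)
Your high-level idea—deduce everything from the AFS versions—is exactly what the paper does, but you are making it much harder than necessary. A foliated triple $(X,\Ff,B)$ \emph{is} an lc algebraically integrable adjoint foliated structure with parameter $t=1$ and $\Mm=\bm{0}$, and none of Theorems \ref{thm: cont intro}, \ref{thm: mmp potentially klt ambient algint afs}, \ref{thm: eomfs ai afs} imposes any restriction $t<1$. Hence parts (1) and (3) are literally special cases of Theorems \ref{thm: mmp potentially klt ambient algint afs} and \ref{thm: eomfs ai afs}; there is nothing to perturb and no limit to take. Your proposed argument for (1)—matching a $(K_\Ff+B)$-negative ray to a $K_{\Aa_t}$-negative ray for $t$ near $1$—is correct but redundant, since that perturbation is already carried out inside the proofs of those AFS theorems.

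For part (2) you invoke Theorem \ref{thm: eogmm ai afs general case}, which does require klt, and then propose a limiting procedure $t\to 1^-$. This is where your approach genuinely diverges from the paper and where your worry (``the principal obstacle'') is real but self-imposed. The paper instead uses a single perturbation: since $X$ is potentially klt, one replaces $\Aa$ by a convex combination with a klt pair $(X,\Delta)$ and absorbs the discrepancy into the ample divisor, obtaining a klt AFS $\Aa'$ with $K_{\Aa'}+H\sim_{\mathbb R,U}K_\Ff+B+A$ \emph{exactly} (this is Lemma \ref{lem: perturbation to q coefficient no m}, packaged as Theorem \ref{thm: eogmm ai afs +a case}). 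Because the $\mathbb R$-linear equivalence class is preserved, a good minimal model for $(\Aa',\overline{H})/U$ is automatically one for $(X,\Ff,B,\overline{A})/U$; no Shokurov-polytope or stabilization argument is needed. Your limiting scheme would at best produce weak lc models whose compatibility as $t\to 1^-$ you would then have to justify separately.

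For part (4), the paper does not rerun a special-termination argument \`a la \cite{BCHM10}; it simply combines Lemma \ref{lem: ai afs can run mmp with scaling} (which lets you run the MMP with scaling) with \cite[Theorem 1.11(1)]{LMX24}, an already-established result that existence of a minimal model in the sense of Birkar--Shokurov forces termination of any MMP with scaling of an ample divisor. You should cite that rather than sketch a new proof.
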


It is interesting to ask that whether the condition that $X$ is potentially klt can be removed. It is known that this condition can be removed when $(X,\Ff,B)$ is induced by a locally stable family, cf. \cite{MZ23}, but this is already difficult to prove.

\subsection{Foliated BAB}
\label{s_fbab}

There has been much recent progress on the classification of Fano foliations via the study of (generalized) indices and Kobayashi-Ochiai type theorems, cf. \cite{ADK08,AD13,AD14,Hör14,AD19,Liu23}. These results often have few restrictions on the singularities of the foliations but usually require the ambient varieties to be smooth. 

Eventually, we would like to view these results in the context of a broader theory of moduli of Fano type foliations. The first step towards such a theory is to prove a boundedness statement for Fano foliations. As explained earlier in the introduction, it is not entirely clear what the correct boundedness statement should be. Theorem \ref{thm: BAB analogue} is one approach to such a statement. It is heavily influenced by Birkar's celebrated proof of the BAB conjecture \cite{Bir19,Bir21}. More precisely, since \cite{Bir19,Bir21} proved that for $\epsilon>0$,  $\epsilon$-lc Fano varieties of fixed dimension are bounded, we would like to show that Fano foliations of fixed dimension that are $\epsilon$-lc are bounded. However, as explained earlier, this is not a reasonable statement since the set of $\epsilon$-lc Fano foliations is empty.

It seems more natural to consider \emph{klt Fano adjoint foliated structures}, i.e. a klt adjoint foliated structure $\Aa$ such that $-K_{\Aa}$ is ample. Theorem \ref{thm: BAB analogue} can therefore be viewed as the natural restatement of BAB in the setting of klt Fano algebraically integrable adjoint foliated structures. We also remark that Theorem \ref{thm: BAB analogue} follows from a more general result:

\begin{thmsub}\label{thm: log foliated BAB}
Let $d$ be a positive integer and $\epsilon$ a positive real number. Then 
the set $\mathcal P$ of foliated projective pairs $(X,\mathcal F)$, where $\mathcal F$ is algebraically integrable and  
    $(X,\Ff,t)$ is of $\epsilon$-Fano type (see Definition-Lemma \ref{deflem: fano type afs}) for some $t\geq\epsilon$
is bounded. 
\end{thmsub}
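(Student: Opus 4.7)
My plan is to reduce the statement to Birkar's boundedness theorem for $\epsilon$-lc Fano pairs applied to the ambient variety, and then to parametrize the algebraically integrable foliations within the resulting bounded family.

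First, I apply Theorem~\ref{thm: ft ai is ft} to each $(X,\mathcal F) \in \mathcal P$ to conclude that $X$ is of Fano type; in particular, $-K_X$ is big. The task thus splits into two parts: (i) showing that the ambient varieties $X$ form a bounded family, and (ii) showing that, within this bounded family, the algebraically integrable foliations $\mathcal F \subset T_X$ vary in a bounded subfamily.

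For (i), by the $\epsilon$-Fano type hypothesis there exist $B \geq 0$ and a nef $\bb$-divisor $\Mm$ such that $\Aa = (X,\mathcal F, B, \Mm, t)$ is $\epsilon$-lc and $-K_{\Aa}$ is ample, with $t \geq \epsilon$. Using $K_{\Aa} = tK_{\mathcal F} + (1-t)K_X + B + \Mm_X$ together with the foliated adjunction formula for algebraically integrable foliations, I would pass to an ACSS-type model $\widetilde X \to X$ on which $K_{\mathcal F}$ decomposes cleanly as $K_{\widetilde X} + \Delta_{\mathcal F}$, with $\Delta_{\mathcal F}$ an effective boundary supported on the non-invariant divisors. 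Substituting this into the formula for $K_{\Aa}$ produces an effective boundary $\Delta$ on $\widetilde X$ such that $(\widetilde X, \Delta + \Mm)$ is generalized $\epsilon'$-lc with $-(K_{\widetilde X} + \Delta + \Mm_{\widetilde X})$ big and nef, for some $\epsilon' = \epsilon'(d,\epsilon) > 0$; the hypothesis $t \geq \epsilon$ is essential, since it guarantees that the coefficient of the foliated contribution is bounded away from zero, preventing degeneration of discrepancies along foliation-invariant divisors. Running a $(K_{\widetilde X} + \Delta + \Mm_{\widetilde X})$-MMP (existence justified by Theorem~\ref{thm: mmp potentially klt ambient algint afs}) produces a generalized $\epsilon'$-lc log Fano pair whose ambient variety dominates $X$, and Birkar's generalized BAB then places $X$ in a bounded family, along with a uniform polarization $H$ with effective very ampleness.

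For (ii), once $(X, H)$ varies in a bounded family, I would bound $\mathcal F$ by bounding it as a subsheaf of $T_X$. The degree $(-K_{\mathcal F}) \cdot H^{d-1}$ is uniformly bounded since $-K_{\mathcal F}$ is bounded in terms of $-K_{\Aa}$ and $-K_X$, both of which have bounded intersection with $H^{d-1}$ on the bounded family. Hence $\mathcal F$ lives in a relative Quot scheme of bounded Hilbert polynomial, which is of finite type over the base parametrizing $(X, H)$. Intersecting with the locally closed loci where the quotient is torsion-free, involutive (to guarantee integrability), algebraically integrable, and log canonical gives a finite type parameter space, i.e.\ a bounded family for $\mathcal P$.

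The main obstacle I anticipate is step (i), specifically the transfer of the $\epsilon$-lc condition from the adjoint foliated structure $\Aa$ to an honest $\epsilon'$-lc generalized log Fano pair on the ambient variety. Foliated and ambient discrepancies behave differently along invariant versus transverse divisors, and the convex combination $tK_{\mathcal F} + (1-t)K_X$ mixes these contributions in a way that is delicate near $t=1$, where the pure-variety term becomes small. A clean implementation almost certainly requires the foliated adjunction machinery and careful bookkeeping to ensure that $\epsilon'$ depends only on $d$ and $\epsilon$ and not on the particular $\Aa$; this is where the bulk of the technical work will lie.
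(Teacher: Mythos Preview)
Your two-step strategy (bound $X$ via BAB, then bound $\Ff$ within the family) matches the paper's, but each step diverges in execution, and step (ii) has a genuine gap.

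For step (i), the paper does not go through ACSS models or produce a weaker $\epsilon'$. It invokes Theorem~\ref{thm: precise fano type afs to x} (built on Proposition~\ref{prop: algint afs elc implies x elc}), which says directly that an $\epsilon$-lc algebraically integrable adjoint foliated structure with big generalized boundary and $K_{\Aa}\sim_{\Rr,U}0$ forces the existence of an $\epsilon$-lc pair $(X,\Delta)$ with $\Delta$ big and $K_X+\Delta\sim_{\Rr}0$. So the ``transfer of the $\epsilon$-lc condition'' that you flag as the main obstacle is already packaged in the paper with no loss in $\epsilon$; your proposed route via foliated adjunction on an ACSS model is essentially how Proposition~\ref{prop: algint afs elc implies x elc} is proved, so you are reinventing that result. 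Note also that invoking Theorem~\ref{thm: ft ai is ft} alone (as you do first) gives only Fano type, not $\epsilon$-lc Fano type, which is insufficient for BAB.

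For step (ii), there is a real gap in your degree bound. You assert that $(-K_{\Ff})\cdot H^{d-1}$ is bounded because ``$-K_{\Ff}$ is bounded in terms of $-K_{\Aa}$ and $-K_X$''. The upper bound on $K_{\Ff}\cdot H^{d-1}$ does follow from bigness of $-tK_{\Ff}-(1-t)K_X$ together with $t\geq\epsilon$ and the bound on $-K_X\cdot H^{d-1}$. But the \emph{lower} bound does not: from $K_{\Aa}\sim_{\Rr}0$ one gets $tK_{\Ff}\cdot H^{d-1}=-(1-t)K_X\cdot H^{d-1}-(B+\Mm_X)\cdot H^{d-1}$, and $(B+\Mm_X)\cdot H^{d-1}\geq 0$ has no a~priori upper bound. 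The paper obtains the lower bound by a different mechanism: boundedness of $X$ yields $s>0$, independent of $X$, with $K_X+sA$ pseudo-effective; then Theorem~\ref{thm: non-pseudo-effective for afs} (the foliated Campana--P\u{a}un-type statement) forces $K_{\Ff}+sA$ to be pseudo-effective, whence $K_{\Ff}\cdot A^{d-1}\geq -sA^d$. This is the non-trivial input you are missing. Once the two-sided bound on $K_{\Ff}\cdot A^{d-1}$ is in hand, the paper concludes via Proposition~\ref{prop_pfaff_to_fol} (parametrizing foliations with prescribed canonical class); your Quot-scheme approach would also work at that point, but not before.
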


The first step in our proof of Theorem \ref{thm: log foliated BAB}
is the following generalization of Theorem \ref{thm: ft ai is ft}.

\begin{thmsub}\label{thm: ft ai afs mds}
   Let $\Aa/U$ be an lc algebraically integrable adjoint foliated structure such that $X$ is potentially klt and $-K_{\Aa}$ is ample$/U$.
   
   Then $X$ is of Fano type$/U$, i.e. $-(K_X+\Delta)$ is ample$/U$ for some klt pair $(X,\Delta)$. In particular, if $X$ is $\mathbb Q$-factorial, then $X$ is a Mori dream space.
\end{thmsub}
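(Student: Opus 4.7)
The plan is to reduce Theorem~\ref{thm: ft ai afs mds} to Theorem~\ref{thm: ft ai is ft} (the boundaryless case) by explicitly constructing a klt boundary $\Delta$ with $-(K_X+\Delta)$ ample$/U$. Since $-K_{\Aa}$ is ample$/U$, one may choose a general effective $\mathbb{R}$-divisor $\Gamma \sim_{\mathbb R,U} -K_{\Aa}$. When $t<1$, I would set
\[
\Delta_\epsilon \;:=\; \frac{tK_{\Ff}+B+\Mm_X+(1-\epsilon)\Gamma}{1-t}
\]
for small $\epsilon>0$; a direct computation gives $(1-t)(K_X+\Delta_\epsilon) \sim_{\mathbb R,U} \epsilon K_{\Aa}$, so $-(K_X+\Delta_\epsilon)$ is automatically ample$/U$. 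The remaining tasks are to realize $\Delta_\epsilon$ by a genuine effective $\mathbb R$-divisor and to verify that $(X,\Delta_\epsilon)$ is klt.

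First I would reduce to the case that $X$ is $\mathbb{Q}$-factorial klt by passing to a small $\mathbb{Q}$-factorial modification provided by the potentially klt hypothesis; the adjoint foliated structure lifts to this modification, and Fano type will descend back to $X$. I then split into cases based on the pseudo-effectivity of $K_{\Ff}$ over $U$.

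\emph{Case 1: $t<1$ and $K_{\Ff}$ is pseudo-effective$/U$.} After a small ample perturbation, $K_{\Ff}$ is $\mathbb{R}$-linearly equivalent over $U$ to an effective divisor, so $\Delta_\epsilon$ can be represented by an effective $\mathbb{R}$-divisor. For $\Gamma$ sufficiently general and $\epsilon$ small, the pair $(X,\Delta_\epsilon)$ is klt by a Bertini-type general position argument (using that $X$ is $\mathbb{Q}$-factorial klt and $\Gamma$ is a general member of a free linear system up to scale), yielding that $X$ is of Fano type$/U$.

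\emph{Case 2: $t=1$, or $K_{\Ff}$ is not pseudo-effective$/U$.} I would apply Theorem~\ref{thm: mmp ai potentially klt ambient variety}(3) to produce a Mori fiber space $\psi\colon X'\to T$ of the lc algebraically integrable foliated triple $(X,\Ff,B)/U$, on which $-K_{\Ff'}$ is ample$/T$. Theorem~\ref{thm: ft ai is ft} applied to $(X',\Ff')/T$ then shows $X'$ is of Fano type$/T$. Descending back to $X/U$ uses the ampleness of $-K_{\Aa}$ together with Theorem~\ref{thm: extract non-terminal place intro} to pull back the klt boundary witnessing Fano type on $X'/T$ to a klt boundary on $X$ making $-(K_X+\Delta)$ ample$/U$. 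The Mori dream space conclusion when $X$ is $\mathbb{Q}$-factorial follows from \cite[Corollary 1.3.2]{BCHM10}. The main obstacle I anticipate is Case 2: one must carefully track the nef $\bb$-divisor $\Mm$ through the foliated MMP of $(X,\Ff,B)$ (which is an ordinary foliated MMP, not an adjoint foliated MMP), and then bridge the base $T$ of the Mori fiber space back to the base $U$ of the original structure, making sure that the klt perturbation produced downstairs assembles into a klt boundary on $X$ compatible with the ambient klt boundary coming from the potential klt hypothesis.
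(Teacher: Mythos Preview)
Your plan has two genuine gaps.

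First, it is circular: in the paper, Theorem~\ref{thm: ft ai is ft} is deduced \emph{from} Theorem~\ref{thm: ft ai afs mds} (it is literally the special case $B=0$, $\Mm=\bm 0$), so you cannot invoke it in Case~2. Moreover, even granting Theorem~\ref{thm: ft ai is ft}, the Mori fiber space you produce has $-(K_{\Ff'}+B')$ ample$/T$, not $-K_{\Ff'}$ ample$/T$, so you are back in the boundaried setting over $T$; and the passage from ``$X'$ of Fano type$/T$'' to ``$X$ of Fano type$/U$'' is exactly the hard part and is not addressed.

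Second, Case~1 does not work. Your divisor $\Delta_\epsilon$ contains the summand $\frac{1}{1-t}B$ (and $\frac{1}{1-t}\Mm_X$, and a representative of $\frac{t}{1-t}K_{\Ff}$), and $\frac{1}{1-t}$ is unbounded as $t\to 1$. Bertini only lets you choose $\Gamma$ generally; the fixed pieces $B$, $\Mm_X$, and any effective representative of $K_\Ff$ already force $(X,\Delta_\epsilon)$ to fail to be klt once $t$ is close to $1$. No general-position argument rescues this because the problematic coefficients are in the fixed part.

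The paper's route is quite different and avoids both problems. After the same perturbation to make $\Aa$ klt with $K_{\Aa}\sim_{\mathbb R,U}0$ and big$/U$ generalized boundary (Definition--Lemma~\ref{deflem: fano type afs}), one runs a $(-K_X)$-MMP$/U$ on the \emph{ambient variety}, which is legal because any divisor admits a terminating MMP on a variety with a klt Calabi--Yau adjoint structure (Theorem~\ref{thm: eogmm ai afs general case}). The output $X'$ is crepant to $\Aa$, hence still klt; Proposition~\ref{prop: algint afs elc implies x elc} then gives that $X'$ itself is klt. The key new input is Theorem~\ref{thm: non-pseudo-effective for afs}: since $K_{\Aa'}\sim_{\mathbb R,U}0$ and the generalized boundary is big$/U$, one has $-tK_{\Ff'}-(1-t)K_{X'}$ big$/Z$, while $K_{X'}$ is pseudo-effective$/Z$; Theorem~\ref{thm: non-pseudo-effective for afs} forces $tK_{\Ff'}+(1-t)K_{X'}$ pseudo-effective$/Z$, so the contraction $X'\to Z$ must be birational, i.e.\ $-K_{X'}$ is big and semi-ample$/U$. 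One then picks a general $\Delta'\in|-K_{X'}|_{\mathbb R/U}$ and pulls back via the negativity lemma. The point you are missing is that the algebraic integrability of $\Ff$ enters through Theorem~\ref{thm: non-pseudo-effective for afs} and Proposition~\ref{prop: algint afs elc implies x elc}, not through a direct formula for $\Delta$.
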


Theorem  \ref{thm: ft ai afs mds} is also a strengthening of \cite[Theorem 1.9]{LMX24} which shows that, for any lc algebraically integrable foliated triple $(X,\Ff,B)$ such that $X$ is $\mathbb Q$-factorial klt and $-(K_{\Ff}+B)$ is ample, $X$ is a Mori dream space.
 
We also remark that the proof of Theorem \ref{thm: log foliated BAB} relies on \cite{Bir21}. Indeed, by letting $\Ff=T_X$ and $t=1$, Theorem \ref{thm: log foliated BAB} is equivalent to \cite[Theorem 1.1]{Bir21}.

\subsection{Geography of minimal models}

Similar to the case of klt pairs, a key ingredient in the proof of Theorem \ref{thm: eogmm ai afs general case} is the finiteness of models, cf. \cite[Theorem E]{BCHM10}. 
First we provide the following definition:

\begin{defnsub}\label{defn: polytope afs}
Let $\pi\colon X\rightarrow U$ be a projective morphism between normal quasi-projective varieties. A \emph{tuple} of nef$/U$ $\bb$-Cartier $\bb$-divisors on $X$ is of the form $\MM:=(\Mm_1,\dots,\Mm_n)$ for some positive integer $n$ where each $\Mm_i$ is a nef$/U$ $\bb$-Cartier $\bb$-divisor. We define $\dim\MM:=n$. We define
$$\Span_{\mathbb R}(\MM):=\left\{\sum_{i=1}^n a_i\Mm_i\middle| a_i\in\mathbb R\right\}, \Span_{\mathbb R_{\geq 0}}(\MM):=\left\{\sum_{i=1}^n a_i\Mm_i\middle| a_i\in\mathbb R_{\geq 0}\right\}.$$
For any nef$/U$ $\bb$-Cartier $\bb$-divisor $\NN$, we define $(\MM,\NN):=(\Mm_1,\dots,\Mm_n,\NN)$.

Given a tuple $\MM$ of nef$/U$ $\bb$-Cartier $\bb$-divisors on $X$, let $V\subset\Weil_{\mathbb R}(X)\times\Span_{\mathbb R}(\MM)$ be a finite dimensional affine subspace, $\Ff$ a foliation on $X$, and $t\in [0,1]$ a real number. We define:
    $$V_{(X,\Ff,t)}:=\{\Aa\mid \Aa=(X,\Ff,B,\Mm,t),(B,\Mm)\in V\}$$
    $$\mathcal{L}(V_{(X,\Ff,t)}):=\{\Aa \mid \Aa=(X,\Ff,B,\Mm,t)\in V_{(X,\Ff,t)},\Aa\text{ is lc},\Mm\in\Span_{\mathbb R_{\geq 0}}(\MM)\}.$$
    $$\mathcal{E}_{\pi}(V_{(X,\Ff,t)}):=\{\Aa\mid\Aa\in \mathcal{L}(V_{(X,\Ff,t)}), K_{\Aa}\text{ is pseudo-effective}/U\}.$$
\end{defnsub}

We then have the following result on the finiteness of $\mathbb Q$-factorial minimal models (see \cite[Theorem 3.2]{Mas24} for a related result for foliations on threefolds). 

\begin{thmsub}[Finiteness of $\mathbb Q$-factorial minimal models]\label{thm: finiteness of minimal models intro}
Assume that
\begin{itemize}
    \item $\pi\colon X\rightarrow U$ is a projective morphism between normal quasi-projective varieties,
    \item $\MM$ is a tuple of nef$/U$ $\bb$-Cartier $\bb$-divisors,
    \item $V$ is a finite dimensional rational affine subspace of $\Weil_{\mathbb R}(X)\times\Span_{\mathbb R}(\MM)$,
    \item $\Ff$ is an algebraically integrable foliation on $X$,
    \item $t\in [0,1]$ is a rational number. 
    \item $\mathcal{C}\subset\mathcal{L}(V_{(X,\Ff,t)})$ is a rational polytope, such that for any $\Aa:=(X,\Ff,B,\Mm,t)\in\mathcal{C}$, $\Aa$ is klt and $B+\Mm_X$ is big$/U$.
\end{itemize}
Then there are finitely many birational maps$/U$ $\phi_j: X\dashrightarrow Y_j$, $1\leq j\leq k$ satisfying the following. 
\begin{enumerate}
\item For any $\Aa\in\mathcal{C}$ such that $K_{\Aa}$ is pseudo-effective$/U$, there exists an index $1\leq j\leq k$ such that $(\phi_j)_*\Aa/U$ is a $\mathbb Q$-factorial good minimal model of $\Aa/U$.
\item For any $\Aa\in\mathcal{C}$ and any $\mathbb Q$-factorial minimal model $\Aa_Y/U$ of $\Aa/U$ with induced birational map $\phi\colon X\dashrightarrow Y$, there exists an index $1\leq j\leq k$ such that $\phi_j\circ\phi^{-1}\colon Y\dashrightarrow Y_j$ is an isomorphism.
\end{enumerate}
\end{thmsub}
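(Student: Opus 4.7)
The plan is to adapt the argument of Birkar--Cascini--Hacon--M\textsuperscript{c}Kernan for klt pairs, cf. \cite{BCHM10}, Theorem E and Corollary 1.1.5, to our setting, with Theorem \ref{thm: eogmm ai afs general case} as the central new input. The strategy is to establish local finiteness around every point $\Aa_0 \in \mathcal{C}$ and then invoke compactness of the rational polytope $\mathcal{C}$.

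First, I would fix an arbitrary $\Aa_0 \in \mathcal{C}$. If $K_{\Aa_0}$ is not pseudo-effective$/U$, then by the openness of the non-pseudo-effective locus in $\mathcal{C}$ (with respect to the natural Euclidean topology on the affine space $V$), a neighborhood of $\Aa_0$ contributes no data to our list. If instead $K_{\Aa_0}$ is pseudo-effective$/U$, then, since the generalized boundary is big$/U$, Theorem \ref{thm: eogmm ai afs general case} yields a $\mathbb{Q}$-factorial good minimal model $\phi_0 \colon X \dashrightarrow Y_0$ of $\Aa_0/U$, with $(\phi_0)_* K_{\Aa_0}$ semi-ample$/U$. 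For $\Aa \in \mathcal{C}$ close to $\Aa_0$, consider the push-forward $(\phi_0)_*\Aa/U$. By continuity of the parameters, $(\phi_0)_* K_{\Aa}$ lies close to the semi-ample$/U$ divisor $(\phi_0)_* K_{\Aa_0}$, and running a $K_{(\phi_0)_*\Aa}$-MMP$/U$ on $Y_0$ with scaling of an ample$/U$ divisor produces a $\mathbb{Q}$-factorial good minimal model of $\Aa/U$, again by Theorem \ref{thm: eogmm ai afs general case}. The key point is that, because the relevant cones on $Y_0$ are rational polyhedral in a neighborhood of $(\phi_0)_* K_{\Aa_0}$ (a consequence of the base-point-freeness theorem, Theorem \ref{thm: bpf ai afs}, and the contraction theorem, Theorem \ref{thm: cont intro}, applied on $Y_0$), only finitely many different sequences of MMP steps can occur as $\Aa$ varies in a sufficiently small neighborhood of $\Aa_0$. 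This yields a finite list of birational maps valid on that neighborhood; covering $\mathcal{C}$ by finitely many such neighborhoods and taking the union gives part (1).

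For part (2), given any $\mathbb{Q}$-factorial minimal model $\Aa_Y/U$ of some $\Aa \in \mathcal{C}$, observe that we may assume, after replacing $\Aa$ by an NQC approximation inside the polytope, that $\Aa$ is NQC. By the decomposition in flops, Theorem \ref{thm: flop ai afs}, $\Aa_Y$ is then connected to $(\phi_j)_*\Aa$ (for the appropriate index $j$ produced in part (1)) by a finite sequence of $K_{(\phi_j)_*\Aa}$-flops. To bound the totality of such flops over all $\Aa \in \mathcal{C}$ and all $j$, one argues that the movable cone of the relevant rank decomposes into finitely many Shokurov chambers, each corresponding to a single $\mathbb{Q}$-factorial minimal model; this decomposition follows from Theorem \ref{thm: eogmm ai afs general case} applied to auxiliary adjoint foliated structures on each $Y_j$ whose parameter polytope exhausts a neighborhood of the pertinent portion of the movable cone. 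Adjoining these finitely many small modifications to our original list completes the proof.

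The main obstacle is the local finiteness step: controlling the finiteness of MMP outcomes as $\Aa$ varies in a neighborhood of $\Aa_0$. In the classical setting, this rests on the polyhedrality of the nef cone on a good minimal model, together with termination of MMP with scaling within a fixed isomorphism class of such structure. In the foliated setting, one must substitute the foliated analogues throughout -- the cone and base-point-freeness theorems for algebraically integrable adjoint foliated structures on potentially klt varieties, and the fact that every MMP with scaling terminates (Theorem \ref{thm: eogmm ai afs general case}) -- and then verify that the Shokurov polytope decomposition argument carries over verbatim. I expect that, modulo routine translation of definitions, the BCHM-style polyhedrality argument goes through once these foliated tools are in place, with no essentially new obstacle beyond those already resolved by the results collected in this paper.
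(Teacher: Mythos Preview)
Your proposal has a circularity problem. You repeatedly invoke Theorem \ref{thm: eogmm ai afs general case} (termination of the MMP with scaling) and Theorem \ref{thm: flop ai afs} (flops), but in the paper's logical order both of these are proved \emph{after} finiteness of models and depend on it: termination is deduced from finiteness in Section~\ref{sec: mmp with scaling}, and the flop theorem in turn uses termination. The only existence result available at this stage is Theorem \ref{thm: eogmm boundary big case}, which gives a $\mathbb Q$-factorial good minimal model but says nothing about termination of a chosen MMP. Your Part~(1) argument, which wants to run an MMP on $Y_0$ for nearby $\Aa$ and then claim ``only finitely many sequences of steps occur'', cannot be carried out without termination already in hand; and even if one grants termination, the finiteness claim is exactly the content of the theorem and is not justified by your appeal to polyhedrality.

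The paper's actual arguments avoid both issues. For Part~(1) it does induction on $\dim\mathcal{C}$: after passing to a good minimal model of $\Aa_0$ (Theorem \ref{thm: eogmm boundary big case}) and then to the base $Z$ of the induced contraction so that $K_{\Aa_0}\sim_{\mathbb R,Z}0$, every $\Aa$ near $\Aa_0$ satisfies $K_{\Aa}\sim_{\mathbb R,Z}\lambda K_{\Aa'}$ for some $\Aa'\in\partial\mathcal{C}$, reducing to a lower-dimensional polytope. For Part~(2) the paper does not use flops at all. Instead it enlarges $\mathcal{C}$ by allowing perturbations supported on a reduced divisor $S$ whose components span $N^1(X/U)$; then for any $\mathbb Q$-factorial minimal model $\phi\colon X\dashrightarrow Y$ one adds a small ample $H$ on $Y$, observes that $(\phi^{-1})_*H$ is numerically a combination of components of $S$, and hence the resulting structure lies in the enlarged polytope and has $\phi$ as its \emph{unique} minimal model. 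Part~(1) applied to the enlarged polytope then forces $\phi$ to appear in a fixed finite list. Your chamber-decomposition sketch for Part~(2) is in the right spirit but, as written, both relies on results not yet available and is essentially a restatement of the goal.
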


Of course, one can obtain the finiteness of weak lc models, finiteness of ample models, and the Shokurov-type polytope via similar arguments for pairs as in \cite[Corollary 1.1.5]{BCHM10}. We do not need these results in this paper. Since they are technical, we omit them and leave them to the reader.

We also note that the finiteness of models arguments in Theorem \ref{thm: finiteness of minimal models intro} have the nef$/U$ part $\Mm$. Therefore, by letting $\Ff:=T_X$, we obtain the finiteness of minimal models for generalized pairs. This seems to be a folklore result, but we cannot find any reference on it. Hence, Theorem \ref{thm: finiteness of minimal models intro} could be useful for the study of generalized pairs as well.

We need the following important theorem on the Shokurov-type polytope to prove that minimal models are connected by flops, cf. Theorem \ref{thm: flop ai afs}.

\begin{thmsub}[Shokurov-type polytope]\label{thm: shokurov type polytope}
Let 
$$\Aa:=\left(X,\Ff,B:=\sum_{i=1}^mv_{0,i}B_i,\Mm:=\sum_{i=1}^n\mu_{0,i}\Mm_i,t_0\right)\Bigg/U$$
be an lc algebraically integrable adjoint foliated structure, where $B_i\geq 0$ are Weil divisors and $\Mm_i$ are nef$/U$ $\bb$-Cartier $\bb$-divisors. For any $\bm{v}:=(v_1,\dots,v_m,\mu_1,\dots,\mu_n,t)\in\mathbb R^{m+n+1}$, we define $$\Aa(\bm{v}):=\left(X,\Ff,\sum_{i=1}^mv_{i}B_i,\sum_{i=1}^n\mu_{i}\Mm_i,t\right).$$
Let $V$ be the rational envelope of $\bm{v}_0:=(v_{0,1},\dots,v_{0,m},\mu_{0,1},\dots,\mu_{0,n},t_0)\in\mathbb R^{m+n+1}$. Then there exists an open subset $V_0\ni\bm{v}_0$ in $V$, such that for any $\bm{v}\in V_0$,
\begin{enumerate}
    \item  $\Aa(\bm{v})$ is lc, and
    \item if $K_{\Aa}$ is nef$/U$, then $K_{\Aa(\bm{v})}$ is nef$/U$.
\end{enumerate}
\end{thmsub}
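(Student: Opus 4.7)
The approach is to adapt the classical Shokurov polytope argument --- as carried out for pairs in \cite{BCHM10} and subsequently extended to generalized pairs --- to algebraically integrable adjoint foliated structures, by feeding in the cone theorem and length-of-extremal-rays estimates for such afs established earlier in the paper (cf. Theorem \ref{thm: cont intro} and its proof, and \cite{ACSS21,CHLX23,CHLMSSX24,LMX24,CS24}).

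For part (1), I would fix a simultaneous log resolution $h\colon Y\to X$ of $(X,\Ff,\sum B_i)$ on which every $\bb$-divisor $\Mm_i$ descends. For each prime divisor $E$ on $Y$, the log discrepancy $a(E,\Aa(\bm{v}))$ is a rational affine function of the parameter $\bm{v}=(v_1,\dots,v_m,\mu_1,\dots,\mu_n,t)$, with coefficients determined by $h^{*}B_i$, the descended $\Mm_i$, and the foliation-discrepancy of $E$ along $\Ff$. The lc condition for $\Aa(\bm{v})$ is equivalent to requiring the finitely many affine forms $\ell_E(\bm{v}):=a(E,\Aa(\bm{v}))+\epsilon_E$ (where $\epsilon_E$ is the appropriate lc bound, depending on whether $E$ is $\Ff$-invariant) to be nonnegative. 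At $\bm{v}_0$ each $\ell_E(\bm{v}_0)\ge 0$; the strict ones persist under small Euclidean perturbation, while any $\ell_E$ with $\ell_E(\bm{v}_0)=0$ is a rational affine form vanishing at $\bm{v}_0$ and hence vanishes identically on the rational envelope $V$ by its defining property. This yields an open $V_0\ni\bm{v}_0$ in $V$ on which (1) holds.

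For part (2), assume $K_{\Aa}$ is nef$/U$ and fix an ample$/U$ $\mathbb R$-divisor $A$. By the cone theorem together with the boundedness of length of extremal rays for lc algebraically integrable adjoint foliated structures, for any fixed $\delta>0$ the $(K_{\Aa}+\delta A)$-negative part of $\overline{\NE}(X/U)$ is spanned by finitely many rational extremal rays $R_1,\dots,R_k$, each representable by a curve with $-K_{\Aa}$-degree bounded in terms of $\dim X$. After possibly shrinking $V_0$, nefness of $K_{\Aa(\bm{v})}$ reduces to nonnegativity of $K_{\Aa(\bm{v})}\cdot R_i$ for $1\le i\le k$, since all other classes in $\overline{\NE}(X/U)$ pair nonnegatively with a small perturbation of $K_{\Aa}$. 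For each $i$ we have $K_{\Aa}\cdot R_i\ge 0$; if $K_{\Aa}\cdot R_i>0$ then continuity gives $K_{\Aa(\bm{v})}\cdot R_i>0$ near $\bm{v}_0$, while if $K_{\Aa}\cdot R_i=0$ then $\bm{v}\mapsto K_{\Aa(\bm{v})}\cdot R_i$ is a rational affine form (using rationality of the class of $R_i$) vanishing at $\bm{v}_0$, hence identically zero on $V$ by the rational envelope property. Either way $K_{\Aa(\bm{v})}\cdot R_i\ge 0$ on $V_0$, proving (2).

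The main obstacle is supplying a uniform cone theorem with boundedness of length and rationality of extremal rays for lc algebraically integrable adjoint foliated structures --- in particular, arguing that one may reduce nefness to finitely many test rays in a manner insensitive to small perturbations of $\bm{v}$. Once this input is available from the cone and contraction machinery developed earlier in the paper, the perturbation argument is the standard Shokurov polytope. The only subtlety specific to the afs setting is that the parameter $t$ is perturbed on equal footing with the coefficients of $B$ and $\Mm$; this is harmless because $K_{\Aa(\bm{v})}$ depends rationally affinely on all entries of $\bm{v}$ including $t$, so the rational envelope property applies uniformly.
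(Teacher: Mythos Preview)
Your treatment of part (1) is correct and matches the paper's argument: one takes a foliated log resolution on which all $\Mm_i$ descend, observes that each log discrepancy is a $\mathbb Q$-affine function of $\bm v$, and uses the defining property of the rational envelope to conclude that any lc-threshold equality at $\bm v_0$ persists on all of $V$.

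Part (2), however, has a genuine gap. You write that the $(K_{\Aa}+\delta A)$-negative part of $\overline{\NE}(X/U)$ is spanned by finitely many extremal rays $R_1,\dots,R_k$, and then reduce nefness of $K_{\Aa(\bm v)}$ to testing on these. But $K_{\Aa}$ is nef and $A$ is ample, so $K_{\Aa}+\delta A$ is ample and this list is \emph{empty}; your reduction is vacuous. The real difficulty is that there may be infinitely many extremal rays $R$ on which $K_{\Aa}\cdot R>0$ is arbitrarily small, and you have given no mechanism preventing $K_{\Aa(\bm v)}\cdot R$ from becoming negative on some of them for every neighborhood of $\bm v_0$. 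The cone theorem for $\Aa$ itself gives no information here, since it only controls $K_{\Aa}$-negative rays.

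The paper's proof supplies the missing uniform gap via a discreteness argument. One chooses rational points $\bm v_1,\dots,\bm v_{c+1}\in V$ spanning a simplex containing $\bm v_0$, with each $\Aa(\bm v_i)$ lc and $IK_{\Aa(\bm v_i)}$ Cartier for a fixed integer $I$. Now for an arbitrary extremal ray $R$: if $K_{\Aa(\bm v_i)}\cdot R\ge 0$ for all $i$ one is done by convexity; otherwise some $K_{\Aa(\bm v_j)}\cdot R<0$, so the cone theorem applied to the lc structure $\Aa(\bm v_j)$ produces a rational curve $C$ spanning $R$ with $IK_{\Aa(\bm v_i)}\cdot C\in\mathbb Z\cap[-2dI,\infty)$ for every $i$. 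Writing $\bm v_0=\sum a_i\bm v_i$, the positive number $IK_{\Aa}\cdot C$ lies in the set $\{\sum a_i\gamma_i:\gamma_i\in\mathbb Z\cap[-2dI,\infty)\}\cap(0,\infty)$, which has a strictly positive infimum $\gamma_0$. This uniform lower bound on $K_{\Aa}\cdot C$, combined with the upper bound $2d$ on $-K_{\Aa(\bm v_i)}\cdot C$, is what allows one to shrink the simplex around $\bm v_0$ so that $K_{\Aa(\bm v)}\cdot C\ge 0$ for \emph{all} such rays simultaneously. Your sketch does not contain this idea, and without it the perturbation argument cannot close.
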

The following more practical result is a direct consequence of Theorem \ref{thm: shokurov type polytope}:

\begin{thmsub}\label{thm: nqc of ka}
    Let $\Aa/U$ be an NQC lc algebraically integrable adjoint foliated structure. If $K_{\Aa}$ is nef$/U$ then $K_{\Aa}$ is NQC$/U$. 
\end{thmsub}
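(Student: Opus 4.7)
The plan is to deduce Theorem \ref{thm: nqc of ka} as a direct consequence of the Shokurov-type polytope Theorem \ref{thm: shokurov type polytope} via the standard convex-combination argument with the rational envelope, mirroring the analogous derivation in the theory of NQC generalized pairs. Write $\Aa = (X, \Ff, B, \Mm, t_0)$ with $B = \sum_{i=1}^m v_{0,i} B_i$ for prime Weil divisors $B_i$ and, by the NQC hypothesis, $\Mm = \sum_{j=1}^n \mu_{0,j} \Mm_j$ with $\mu_{0,j} \geq 0$ and each $\Mm_j$ a nef$/U$ $\mathbb{Q}$-$\bb$-Cartier $\bb$-divisor. Set $\bm{v}_0 := (v_{0,1}, \dots, v_{0,m}, \mu_{0,1}, \dots, \mu_{0,n}, t_0) \in \mathbb{R}^{m+n+1}$.

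First, I would feed this data into Theorem \ref{thm: shokurov type polytope}, together with the hypothesis that $K_{\Aa}$ is nef$/U$, to produce an open neighborhood $V_0 \ni \bm{v}_0$ inside the rational envelope $V$ of $\bm{v}_0$ such that for every $\bm{v} \in V_0$ the structure $\Aa(\bm{v})$ is lc and $K_{\Aa(\bm{v})}$ is nef$/U$. Since $V$ is a rational affine subspace and $V_0$ is open in $V$, rational points are dense in $V_0$, so I can choose finitely many rational $\bm{v}_1, \dots, \bm{v}_k \in V_0$ together with positive rational weights $\lambda_1, \dots, \lambda_k$ summing to $1$ such that $\bm{v}_0 = \sum_{l=1}^k \lambda_l \bm{v}_l$. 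Because $\bm{v} \mapsto K_{\Aa(\bm{v})} = (1-t)K_X + tK_{\Ff} + \sum v_i B_i + \sum \mu_j \Mm_{j,X}$ is affine in $\bm{v}$ and $\sum_l \lambda_l = 1$, a direct expansion yields
$$K_{\Aa} \;=\; K_{\Aa(\bm{v}_0)} \;=\; \sum_{l=1}^k \lambda_l\, K_{\Aa(\bm{v}_l)}.$$

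For each rational point $\bm{v}_l \in V$, the divisor $K_{\Aa(\bm{v}_l)}$ has rational coefficients and is $\mathbb{R}$-Cartier by the defining property of the rational envelope, hence is $\mathbb{Q}$-Cartier (a $\mathbb{Q}$-Weil divisor that is $\mathbb{R}$-Cartier is automatically $\mathbb{Q}$-Cartier, since the space of Cartier expressions for a fixed Weil divisor is a $\mathbb{Q}$-rational affine subspace); it is moreover nef$/U$ by the first step. Hence the displayed identity presents $K_{\Aa}$ as a non-negative real linear combination of nef$/U$ $\mathbb{Q}$-Cartier divisors, which is precisely the NQC$/U$ condition. The only delicate point is this $\mathbb{Q}$-Cartier-ness of the $K_{\Aa(\bm{v}_l)}$, which is exactly the feature the rational envelope is designed to guarantee; modulo that routine verification, the proof is formal, with Theorem \ref{thm: shokurov type polytope} carrying all the geometric content.
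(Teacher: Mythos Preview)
Your proof is correct and matches the paper's approach (the paper simply writes that the result follows immediately from Theorem \ref{thm: shokurov type polytope}). One minor slip: the weights $\lambda_l$ should be positive \emph{real} numbers rather than rational---an irrational $\bm{v}_0$ cannot be a rational convex combination of rational points---but this is harmless since the NQC condition only requires non-negative real coefficients on nef Cartier divisors.
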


\subsection{Sketch of the proof of Theorem \ref{thm: eogmm ai afs general case}}\label{sec: proof sketch}
We now sketch the proof of Theorem \ref{thm: eogmm ai afs general case}.
First we recall the methodology of \cite{CHLMSSX24} where the existence of flips (a special case of the existence of good minimal models) was proven. By passing to a foliated log resolution, we may assume that $\Aa$ is foliated log smooth and $\Mm=\Mm'+\overline{A}$ for some ample$/U$ $\Rr$-divisor $A$ (here $\overline{A}$ stands for the $\bb$-divisor which descends to $A$). Now we can first run a $(K_{\Ff}+B^{\ninv}+\Mm_X)$-MMP$/U$ $\phi: X\dashrightarrow X'$ with scaling of an ample divisor which terminates by \cite[Theorem 1.5]{LMX24}.  We may then run a $K_{\Aa'}$-MMP$/U$ $\psi: X'\dashrightarrow X''$ where $\Aa':=\phi_*\Aa$ by running a $(K_{X'}+B'+\Nn_{X'})$-MMP$/U$ where $\Nn_{X'}= \frac{1}{1-t}(K_{\mathcal F'}+(B^{\ninv})'+\Mm_{X'})$. This is an MMP of a klt generalized pair whose generalized boundary is big$/U$, so the MMP terminates by \cite[Lemma 4.4]{BZ16}. In the case that $X \to U$ is a flipping contraction, one can show that the output of this MMP is the flip.  However, in general, it is unclear if the output of this MMP is our desired good minimal model.  

The basic issue is that there is no clear relation between steps of a
$(K_{\Ff}+B^{\ninv}+\Mm_X)$-MMP$/U$ and steps of a $K_{\Aa}$-MMP$/U$.  This can be seen most clearly on surfaces when $0<t$ is small.  For an integer $r\ge 3$, consider the resolution of a quotient singularity of type $\frac{1}{n}(1, 1)$ by a blow up $b\colon X \to \mathbb C^2/\mu_r$ which extracts a single curve $E$.  As is well known, the quotient of the foliation generated by $\partial_x$ on $\mathbb C^2$ descends to a foliation on $\mathbb C^2/\mu_n$.
Moreover, if we denote by $\mathcal F$ the transform of this foliation on $X$, then $K_{\mathcal F}\cdot E = -1$, but if $t< \frac{n-2}{n-1}$ then $K_{\Aa}\cdot E >0$ where $\Aa=(X,\Ff,t)$.  Another way to phrase this issue is that in general the negative part of the Nakayama-Zariski decomposition of $K_{\mathcal F}$
is not contained in the negative part of the Nakayama-Zariski decomposition of $K_{\Aa}$.

To address this issue we need a way to interpolate between 
$K_{\Ff}+B^{\ninv}+\Mm_X$ and $K_{\Aa}$.
More precisely, assume that $(X,\Ff,B,\Mm)$ is foliated log smooth and let $\Aa_s:=(X,\Ff,B^{\ninv}+\frac{1-s}{1-t}B^{\inv},\Mm,s)$. 
Consider the following subset of $[t, 1]$ 
\[\mathcal P(t) := \{s \in [t, 1]: \Aa_s/U \text{ has a } \mathbb Q\text{-factorial good minimal model}\}.\]

As noted, we can ensure that $1 \in \mathcal P(t)$ by \cite[Theorem 1.5]{LMX24}. What we would like to show is that $t \in \mathcal P(t)$.
We do this by showing the following two statements.

\begin{enumerate}
\item If $s>t$, $s\in \mathcal P(t)$ and $\Aa_{s-\epsilon_0}$ is pseudo-effective for some $\epsilon_0>0$, then $s-\epsilon \in \mathcal P(t)$ for all $0<\epsilon \ll 1$.
\item If $s+\epsilon \in \mathcal P(t)$ for all $0<\epsilon \ll 1$ then $s \in \mathcal P(t)$.
\end{enumerate}
These two statements easily imply that if $1 \in \mathcal P(t)$, then $t \in \mathcal P(t)$.

It is easy to prove (1). We need to show that if $\Aa_s/U$ has a good minimal model, then $\Aa_{s-\epsilon}/U$ has a good minimal model for any $0<\epsilon\ll 1$.  Let $\Aa'_s/U$ be a good minimal model of $\Aa_s/U$ with induced map $\phi\colon X\dashrightarrow X'$ and let $\Aa_r':=\phi_*\Aa_r$ for any $r$. Since $\phi$ is $K_{\Aa_s}$-negative, it is also $K_{\Aa_{s-\epsilon}}$-negative for any $0<\epsilon\ll 1$. Now
$$\frac{s}{\epsilon}K_{\Aa'_{s-\epsilon}}=K_{\Aa'_0}+\frac{s-\epsilon}{\epsilon}K_{\Aa_s'}$$
has the structure of a klt generalized pair (\cite[Proposition 3.3]{CHLMSSX24}) with big$/U$ generalized boundary. Thus, $\Aa'_{s-\epsilon}/U$ has a good minimal model which is also a good minimal model of $\Aa_{s-\epsilon}/U$ since $\phi$ is $K_{\Aa_{s-\epsilon}}$-negative and then we can apply the negativity lemma to conclude.

To prove (2) we need to show that if $\Aa_{s+\epsilon}/U$ has a good minimal model for any $0<\epsilon\ll 1$, then $\Aa_s/U$ has a good minimal model.
As we saw in the example above, the challenge can be stated in terms of the negative part of the Nakayama-Zariski decomposition.  If one could show that 
the $(K_{\Ff}+B^{\ninv}+\Mm_X)$-MMP$/U$ $\phi\colon X \dashrightarrow X'$ only contracts divisors that are contained in $\Supp N_{\sigma}(X/U,K_{\Aa})$, then the output of the MMP $\psi\colon X' \dashrightarrow X''$ will be a minimal model of $\Aa/U$.

Considering that, we observe the following: there exists $0<\delta\ll 1$ such that $E:=\Supp N_{\sigma}(X/U,K_{\Aa_r})$ is fixed for any $s<r\leq s+\delta$, and $\Supp N_{\sigma}(X/U,K_{\Aa_{s}})\subset E$. By assumption there exists a good minimal model of $\Aa_{s+\delta}/U$ with induced map $\phi\colon X\dashrightarrow X'$.  For any $s<r\leq s+\delta$, let $\Aa_r':=\phi_*\Aa_r$. Now pick $0<\tau\ll\delta$. Since
$$\frac{s+\delta}{\tau}K_{\Aa'_{s+\delta-\tau}}=K_{\Aa'_{0}}+\frac{s+\delta-\tau}{\tau}K_{\Aa'_{s+\delta}}$$
has the structure of a klt generalized pair with big$/U$ generalized boundary and $K_{\Aa'_{s+\delta}}$ is semi-ample$/U$, by the length of extremal rays, we may run a $K_{\Aa'_{s+\delta-\tau}}$-MMP$/U$ with scaling which terminates with a good minimal model and the MMP is $K_{\Aa'_{s+\delta}}$-trivial. Let $\psi\colon X'\dashrightarrow X''$ be the induced map and let $\Aa''_r:=\psi_*\Aa'_r$ for any $r$. Finally, we run a $K_{\Aa''_0}$-MMP$/U$ with scaling of $K_{\Aa''_{s+\delta}}$. This makes sense since $\Aa''_0$ is a klt generalized pair with big$/U$ generalized boundary and
$$K_{\Aa''_0}+\frac{s+\delta-\tau}{\tau}K_{\Aa''_{s+\delta}}=\frac{s+\delta}{\tau}K_{\Aa''_{s+\delta-\tau}}$$
is nef$/U$.

Let $\lambda_i$ be the scaling number of the $i$-th step of this MMP $X_i\dashrightarrow X_{i+1}$ with $X_1:=X''$. Then there exists a minimal positive integer $i$ such that
$\lambda_{i}\leq\frac{s}{\delta}$. Let $\Aa^i_r$ be the image of $\Aa_r$ on $X_i$ for any $r$. Then there exists $u>s$ such that $K_{\Aa^i_{r}}$ is nef$/U$ for any $r\in [s,u]$.

By our construction, the divisors contracted by $X\dashrightarrow X_i$ are exactly the components of $E$, hence $\Aa^i_r/U$ and the good minimal model of $\Aa_r/U$ are isomorphic in codimension $1$ for any $r\in (s,u]$. By the negativity lemma, $\Aa^i_r/U$ is a good minimal model of $\Aa_r/U$ for any $r\in (s,u]$. Let $p\colon W\rightarrow X$ and $q\colon  W\rightarrow X_i$ be a resolution of indeterminacy. Then for any $r\in (s,u]$, we have
$$p^*K_{\Aa_r}=q^*K_{\Aa^i_r}+F_r$$
for some $F_r\geq 0$. Thus
$$p^*K_{\Aa_{s}}=\lim_{r\rightarrow s^+}p^*K_{\Aa_r}=\lim_{r\rightarrow s^+}(q^*K_{\Aa^i_r}+F_r)=q^*K_{\Aa^i_{s}}+F_{s},$$
where $F_{s}=\lim_{r\rightarrow s^+}F_r\geq 0$. 

This almost completes the proof of the existence of good minimal models. However, at this point, we can only show that $\Aa^i_s/U$ is a weak lc model of $\Aa_s/U$ rather than a minimal model of $\Aa_s/U$ (because the support of $F_s$ may not be the same as that of $F_r$ for $r>s$). So we still need to show the existence of a minimal model of $\Aa^i_s/U$. The idea seems natural: we simply extract all 
the exceptional$/X_i$ prime divisors $E$ on $X$ such that $a(E,\Aa_s)=a(E,\Aa^i_s)$. To do this, we need the klt case of Theorem \ref{thm: extract non-terminal place intro}, that is, Theorem \ref{thm: 
extract non-terminal places}: given a $\mathbb Q$-factorial klt $\Aa=(X,\Ff,B,\Mm,t)$ and a prime divisor $E$ over $X$, there exists a birational morphism $f\colon  Y\rightarrow X$ where $Y$ is 
$\mathbb Q$-factorial and $E$ is the only divisor contracted by $f$. If $a(E,X,B^{\ninv}+\frac{1}{1-t}B^{\inv},\Mm)\leq 0$, then \cite[Lemma 4.6]{BZ16} applies directly and we are done. But if not, 
then the proof is not straightforward and we first need to apply the methodology of \cite{CHLMSSX24}: 
\begin{itemize}
    \item take a foliated log resolution $W\rightarrow X$, and increase all the coefficients of the exceptional divisors to some positive value strictly less than $1$ except $E$ but keep the coefficient of $E$ unchanged;
    \item run an MMP$/X$ of the foliation part $W\dashrightarrow V$; and
    \item run an MMP$/X$ of the whole structure on $V$ by considering it as a generalized pair$/X$.
\end{itemize}
If $E$ is not exceptional$/V$ then by the very exceptional MMP -- cf. \cite[Lemma 3.4]{Bir12} -- the output of the process above induces the extraction we desire and we are left to deal with the case when $E$ is exceptional$/V$. Now the key idea is to consider the generalized pair structure on $V$, where we put the log canonical $\mathbb R$-divisor of the foliation into the nef part$/X$ of this generalized pair. This gives us a klt generalized pair$/X$ structure on $V$, and by running the MMP, we know that the output of the MMP is a klt generalized pair on $X$. Via a complicated calculation of discrepancies, one can show that the discrepancy of $E$ with respect to this generalized pair is $\leq 0$, so we can extract it by \cite[Lemma 4.6]{BZ16} again.

Finally, to prove Theorem \ref{thm: eogmm ai afs general case}, we are only left to show the termination of MMP with scaling. Since we have the existence of good minimal models, we could apply the same lines of the arguments of \cite{BCHM10} to prove the finiteness of models (Theorem \ref{thm: finiteness of minimal models intro}), and use it to directly deduce the termination of MMP with scaling (Theorem \ref{thm: nqc mmp with scaling posssible not ample}). In practice, we apply the simplified arguments as in \cite[Section 10]{HK10}. We also emphasize that we cannot apply the same lines of the arguments of \cite{BCHM10} to the proof of the existence of good minimal models. This is because \cite[Proof of Theorem D]{BCHM10} essentially uses the Kawamata-Viehweg vanishing theorem and the machinery of the extension theorems which we do not have for adjoint foliated structures.

\medskip

\noindent\textit{Structure of the paper.} In Section \ref{sec: preliminaries} we recall some preliminary results and prove some perturbation results that will be used later. In Section \ref{sec: extract non-terminal place} we prove the extractability of non-terminal places (Theorem \ref{thm: extract non-terminal place intro}). In Section \ref{sec: eogmm} we prove the ``existence of good minimal models" part of the first main theorem (Theorem \ref{thm: eogmm boundary big case}). In Section \ref{sec: fom} we prove the finiteness of minimal models (Theorem \ref{thm: finiteness of minimal models intro}). In Section \ref{sec: mmp with scaling} we consider the minimal model program with scaling. In Section \ref{sec: proof of the main theorems} we prove Theorem \ref{thm: bchm analogue} and provide applications. Actually, we prove all the main theorems except those related to Fano foliations. In Section \ref{sec: bdd fano} we study Fano algebraically integrable adjoint foliated structures and prove Theorems \ref{thm: BAB analogue} and \ref{thm: ft ai is ft}.

\section{Preliminaries}\label{sec: preliminaries}

We will adopt the standard notation and definitions on MMP in \cite{KM98,BCHM10} and use them freely. For adjoint foliated structures, generalized foliated quadruples, foliated triples, foliations, and generalized pairs, we adopt the notation and definitions in \cite{CHLMSSX24}  which generally align with \cite{LLM23,CHLX23} (for generalized foliated quadruples), \cite{CS20, ACSS21, CS21} (for foliations and foliated triples), and \cite{BZ16,HL23} (for generalized pairs and $\bb$-divisors), possibly with minor differences. 

\subsection{Notation}

\begin{defn}
    A \emph{contraction} is a projective morphism of varieties
    $f \colon X \to Y$
    such that 
    $f_\ast \mathcal{O}_X=\mathcal{O}_Y$.
\end{defn}

When $X, Y$ are normal, the condition 
$f_\ast \mathcal{O}_X=\mathcal{O}_Y$
is equivalent to the fibers of $f$ being connected.

\begin{nota}
    Let $h \colon  X\dashrightarrow X'$ be a birational map between normal varieties. We denote by $\Exc(h)$ the reduced divisor supported on the codimension one part of the exceptional locus of $h$. 
\end{nota}

\begin{defn}\label{defn: klt cy type}
A \emph{klt Calabi--Yau pair} $(X,\Delta)$ is a klt pair such that $K_X+\Delta\equiv 0$ for some $\Rr$-divisor $\Delta$. By \cite[Theorem 0.1]{Amb05} and \cite[Lemma 5.3]{HLS24} this is equivalent to say that $(X,D)$ is klt and $K_X+D\sim_{\mathbb Q}0$ for some $\mathbb Q$-divisor $D$.
\end{defn}

\begin{defn}[NQC]
    Let $X\rightarrow U$ be a projective morphism between normal quasi-projective varieties. Let $D$ be a nef $\Rr$-Cartier $\Rr$-divisor on $X$ and $\Mm$ a nef $\bb$-divisor on $X$. 
        We say that $D$ is \emph{NQC}$/U$ if $D=\sum d_iD_i$, where each $d_i\geq 0$ and each $D_i$ is a nef$/U$ Cartier divisor. Here NQC stands for ``Nef $\mathbb Q$-Cartier Combinations", cf. \cite[Definition 2.15]{HL22}. We say that $\Mm$ is \emph{NQC}$/U$ if $\Mm=\sum \mu_i\Mm_i$, where each $\mu_i\geq 0$ and each $\Mm_i$ is a nef$/U$ $\bb$-Cartier $\bb$-divisor.
\end{defn}

\subsection{Adjoint foliated structures}

\begin{defn}[Foliations, {cf. \cite{ACSS21,CS21}}]\label{defn: foliation}
Let $X$ be a normal variety. A \emph{foliation} on $X$ is a coherent sheaf $T_{\Ff}\subset T_X$ such that
\begin{enumerate}
    \item $T_{\Ff}$ is saturated in $T_X$, i.e. $T_X/T_{\Ff}$ is torsion free, and
    \item $T_{\Ff}$ is closed under the Lie bracket.
\end{enumerate}

The \emph{rank} of a foliation $\Ff$ on a variety $X$ is the rank of $\Ff$ as a sheaf and is denoted by $\rk\Ff$. 
The \emph{co-rank} of $\Ff$ is $\dim X-\rk\Ff$. The \emph{canonical divisor} of $\Ff$ is a divisor $K_\Ff$ such that $\mathcal{O}_X(-K_{\mathcal{F}})\cong\mathrm{det}(T_\Ff)$. If $T_\Ff=0$, then we say that $\Ff$ is a \emph{foliation by points}.

Given any dominant map 
$h: Y\dashrightarrow X$ and a foliation $\mathcal F$ on $X$, we denote by $h^{-1}\Ff$ the \emph{pullback} of $\Ff$ on $Y$ as constructed in \cite[3.2]{Dru21} and say that $h^{-1}\Ff$ is \emph{induced by} $\Ff$. Given any birational map $g: X\dashrightarrow X'$, we denote by $g_\ast \Ff:=(g^{-1})^{-1}\Ff$ the \emph{pushforward} of $\Ff$ on $X'$ and also say that $g_\ast \Ff$ is \emph{induced by} $\Ff$. We say that $\Ff$ is an \emph{algebraically integrable foliation} if there exists a dominant map $f: X\dashrightarrow Z$ such that $\Ff=f^{-1}\Ff_Z$, where $\Ff_Z$ is the foliation by points on $Z$, and we say that $\Ff$ is \emph{induced by} $f$.

A subvariety $S\subset X$ is called \emph{$\Ff$-invariant} if for any open subset $U\subset X$ and any section $\partial\in H^0(U,\Ff)$, we have $\partial(\mathcal{I}_{S\cap U})\subset \mathcal{I}_{S\cap U}$,  where $\mathcal{I}_{S\cap U}$ denotes the ideal sheaf of $S\cap U$ in $U$.  
For any prime divisor $P$ on $X$, we define $\epsilon_{\Ff}(P):=1$ if $P$ is not $\Ff$-invariant and $\epsilon_{\Ff}(P):=0$ if $P$ is $\Ff$-invariant. For any prime divisor $E$ over $X$, we define $\epsilon_{\Ff}(E):=\epsilon_{\Ff_Y}(E)$ where $h: Y\dashrightarrow X$ is a birational map such that $E$ is on $Y$ and $\Ff_Y:=h^{-1}\Ff$.

Suppose that the foliation structure $\Ff$ on $X$ is clear in the context. Then, given an $\mathbb R$-divisor $D = \sum_{i = 1}^k a_iD_i$ where each $D_i$ is a prime Weil divisor,
we denote by $D^{{\rm ninv}} \coloneqq \sum \epsilon_{\mathcal F}(D_i)a_iD_i$ the \emph{non-$\Ff$-invariant part} of $D$ and $D^{{\rm inv}} \coloneqq D-D^{{\rm ninv}}$ the \emph{$\Ff$-invariant part} of $D$.
\end{defn}

\begin{defn}\label{defn: afs}
An \emph{adjoint foliated structure} $(X,\Ff,B,\Mm,t)/U$ is the datum of a normal quasi-projective variety $X$ and a projective morphism $X\rightarrow U$, a foliation $\Ff$ on $X$, an $\Rr$-divisor $B\geq 0$ on $X$, a $\bb$-divisor $\Mm$ nef$/U$, and a real number $t\in [0,1]$ such that $K_{(X,\Ff,B,\Mm,t)}:=tK_{\Ff}+(1-t)K_X+B+\Mm_X$ is $\Rr$-Cartier. 

When $t=0$ or $\Ff=T_X$, we call $(X,B,\Mm)/U$ a \emph{generalized pair}, and in addition, if $\Mm=\bm{0}$, then we call $(X,B)/U$ a pair. 
When $t=1$, we call $(X,\Ff,B,\Mm)/U$ a \emph{generalized foliated quadruple}, and in addition, if $\Mm=\bm{0}$, then we call $(X,\Ff,B)/U$ a \emph{foliated triple}. We say that $(X,\Ff,B,\Mm,t)/U$ is \emph{NQC} if $\Mm$ is NQC$/U$, i.e. $\Mm$ is an $\mathbb R_{\geq 0}$-combination of nef$/U$ $\bb$-Cartier $\bb$-divisors.

If $B=0$, or if $\Mm=\bm{0}$, or if $U$ is not important, then we may drop $B,\Mm,U$ respectively. If $U=\{pt\}$ then we also drop $U$ and say that $(X,\Ff,B,\Mm,t)$ is \emph{projective}. 
In particular, we refer to $(X,\mathcal F)$ as a \emph{projective foliated pair}. 
If we allow $B$ to have negative coefficients, then we shall add the prefix ``sub-". If $B$ is a $\Qq$-divisor, $\Mm$ is a $\Qq$-$\bb$-divisor, and $t\in\mathbb Q$, then we shall add the prefix ``$\Qq$-".
\end{defn}

\begin{nota}\label{nota: simplified notation of afs}
In many scenarios, the notation $(X,\Ff,B,\Mm,t)/U$ which represents adjoint foliated structures is too long and messes up the arguments. One important thing is that sometimes we do not need the detailed information of adjoint foliated structures. 
Considering that, in this paper, we shall usually write adjoint foliated structures in the form of ``$\Aa:=(X,\Ff,B,\Mm,t)$" or we shall simply say that ``$\Aa/U$ is an adjoint foliated structure" without mentioning $X,\Ff,B,\Mm,t$ at all.  
For any $\Rr$-divisor $D$ on $X$ and nef$/U$ $\bb$-divisor $\Nn$ on $X$ such that $D+\Nn_X$ is $\Rr$-Cartier, we denote by $(\Aa,D,\Nn):=(X,\Ff,B+D,\Mm+\Nn,t)$. If $D=0$ then we may drop $D$, and if $\Nn=\bm{0}$ then we may drop $\Nn$.

Given an adjoint foliated structure $\Aa/U$ with $\Aa=(X,\Ff,B,\Mm,t)$, we define
$$K_{\Aa}:=K_{(X,\Ff,B,\Mm,t)}:=tK_{\Ff}+(1-t)K_X+B+\Mm_X$$
to be the \emph{canonical $\Rr$-divisor} of $\Aa$. 
Moreover, 
$X,\Ff,B,\Mm,t$ are called the \emph{ambient variety}, \emph{foliation part}, \emph{boundary part}, \emph{nef part} (or \emph{moduli part}), and \emph{parameter} of $\Aa$ respectively. 
The divisor
$B+\Mm_X$ is called the \emph{generalized boundary} of $\Aa$. We say that $\Aa/U$ is \emph{of general type} if $K_{\Aa}$ is big$/U$. 

For any birational map$/U$ $\phi: X\dashrightarrow X'$ which does not extract any divisor and any adjoint foliated structure $\Aa/U$ on $X$, we define $\phi_*\Aa/U$ to be the adjoint foliated structure such that $\phi_*\Aa:=(X',\phi_*\Ff,\phi_*B,\Mm,t)$ and say that $\phi_*\Aa$ is the \emph{image} of $\Aa$ on $X'$. 

For any projective birational morphism $h: X'\rightarrow X$, we define 
$$h^*\Aa:=(X',\Ff',B',\Mm,t)$$
where $\Ff':=h^{-1}\Ff$ and $B'$ is the unique $\Rr$-divisor such that $K_{h^*\Aa}=h^*K_{\Aa}$. For any prime divisor $E$ on $X'$, we denote by
$$a(E,\Aa):=-\mult_EB'$$
the \emph{discrepancy} of $E$ with respect to $\Aa$. The \emph{total minimal log discrepancy} of $\Aa$ is
$$\tmld(\Aa):=\inf\{a(E,\Aa)+t\epsilon_{\Ff}(E)+(1-t)\mid E\text{ is over }X\}.$$
One can easily shows that $\tmld(\Aa)\in [0,+\infty)\cup\{-\infty\}$ by using similar arguments as \cite[Remark 2.3]{CS21}.

For any non-negative real number $\epsilon$, we say that $\Aa$ is \emph{$\epsilon$-lc} (resp. \emph{$\epsilon$-klt}) if $\tmld(\Aa)\geq\epsilon$ (resp. $>\epsilon$). We say that $\Aa$ is \emph{lc} (resp. \emph{klt}) if $\Aa$ is $0$-lc (resp. $0$-klt). If we allow $B$ to have negative coefficients, then we add the prefix ``sub-" for the types of singularities above.

An \emph{nklt place} of $\Aa$ is a prime divisor $E$ over $X$ such that $a(E,\Aa)=-t\epsilon_{\Ff}(E)-(1-t)$. An \emph{nklt center} of $\Aa$ is the image of an nklt place of $\Aa$ on $X$.

For any sub-adjoint foliated structures $\Aa_i/U=(X,\Ff,B_i,\Mm_i,t_i)/U$ and real numbers $a_i\in [0,1]$ such that $\sum a_i=1$, we denote by
$$\sum a_i\Aa_i:=\left(X,\Ff,\sum a_iB_i,\sum a_i\Mm_i,\sum a_it_i\right).$$
\end{nota}

\begin{defn}[Potentially klt]\label{defn: potentially klt}
Let $X$ be a normal quasi-projective variety. We say that $X$ is \emph{potentially klt} if $(X,\Delta)$ is klt for some $\Rr$-divisor $\Delta\geq 0$. 
\end{defn}

We will often use the following two results in this paper:

\begin{thm}[{\cite[Theorem 1.10(1)]{CHLMSSX24}}]\label{thm: klt afs implies potentially klt}
    Let $\Aa/U$ be a klt algebraically integrable adjoint foliated structure with ambient variety $X$. Then $X$ is potentially klt. In particular, there exists a small $\mathbb Q$-factorialization of $X$.
\end{thm}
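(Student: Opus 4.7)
The plan is to produce a klt pair on $X$ by combining a foliated log resolution adapted to the algebraically integrable structure with a canonical bundle formula for the associated family of leaves; the existence of a small $\mathbb{Q}$-factorialization then follows from \cite{BCHM10} applied to the resulting klt pair.

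First, I would pass to a foliated log resolution $\pi\colon Y\to X$ on which $\Ff_Y:=\pi^{-1}\Ff$ is induced by an equidimensional contraction $g\colon Y\to Z$; such a model is available for algebraically integrable foliations by the ACSS-type construction. Pulling back, write $\pi^{\ast}\Aa=(Y,\Ff_Y,B_Y,\Mm,t)$. The klt hypothesis on $\Aa$ yields the coefficient bound $\mult_E B_Y<t\epsilon_{\Ff_Y}(E)+(1-t)$ for every prime divisor $E$ on $Y$; in particular, $\Ff_Y$-invariant components of $B_Y$ have coefficient strictly less than $1-t$, while non-invariant components have coefficient strictly less than $1$.

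Second, using the standard comparison formula for an equidimensional contraction inducing an algebraically integrable foliation, namely $K_{\Ff_Y}=K_{Y/Z}-R$ where $R$ is an effective divisor supported on the $\Ff_Y$-invariant vertical components and records the multiplicities of the $g$-fibers, I rewrite
\[
K_{\pi^{\ast}\Aa}=K_Y+(B_Y-tR)+\Mm_Y-tg^{\ast}K_Z.
\]
A coefficient check shows $B_Y-tR$ has every coefficient strictly less than $1$: on non-invariant divisors the coefficients agree with those of $B_Y$, while on invariant divisors they are bounded by $(1-t)+t\cdot 1=1$ using that the invariant coefficients of $R$ are at most $1$. Thus $(Y,B_Y-tR,\Mm)$ is a sub-klt generalized pair up to the contribution of $-tg^{\ast}K_Z$.

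Third, a general fiber $F$ of $g$ is a leaf closure on which $\Ff_Y|_F=T_F$, so $(F,B_Y|_F)$ is klt by foliated adjunction. Ambro's canonical bundle formula then produces a klt generalized pair $(Z,B_Z,\Pp)$ such that $-tg^{\ast}K_Z$ is $\mathbb{R}$-linearly equivalent to $tg^{\ast}(B_Z+\Pp_Z)$ modulo an effective foliated correction supported over $Z$. Substituting this into the expression above and absorbing the nef contributions $\Mm+tg^{\ast}\Pp$ by a small multiple of an ample divisor yields an honest klt pair $(Y,\Delta_Y)$. Since $\pi$ is a birational morphism and $\Delta_Y$ has all coefficients strictly less than $1$, pushing forward gives a klt pair $(X,\pi_{\ast}\Delta_Y)$, establishing that $X$ is potentially klt. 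The main obstacle is the coefficient bookkeeping on $\Ff_Y$-invariant vertical divisors after combining foliated adjunction, the canonical bundle formula on $Z$, and the push-down to $X$: one must exploit the sharp klt bound $\mult_E B<1-t$ for invariant $E$ — a direct consequence of the klt hypothesis on $\Aa$ — to ensure that every coefficient in the final boundary $\pi_{\ast}\Delta_Y$ remains strictly below $1$.
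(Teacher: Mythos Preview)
The paper does not prove this statement; it simply quotes it from \cite[Theorem~1.10(1)]{CHLMSSX24}. So there is no ``paper's own proof'' to compare against, and your proposal must be evaluated on its own merits.

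Your Step~2 is essentially correct: the formula $K_{\Ff_Y}=K_{Y/Z}-R$ with $R=\sum_E(m_E-1)E$ (summed over vertical primes with fibre multiplicity $m_E$) does hold on an ACSS model, and the coefficient bound $\mult_E(B_Y-tR)<1$ follows since on invariant $E$ one has $\mult_E B_Y<1-t$ and $\mult_E R\ge 0$. (Your parenthetical ``invariant coefficients of $R$ are at most $1$'' is false --- they can be arbitrarily large --- but fortunately you only need the opposite inequality $\mult_E R\ge 0$.)

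Step~3 is where the argument breaks down. Ambro's canonical bundle formula applies to an lc-trivial fibration, i.e.\ to a situation where $K_Y+\Delta_Y\sim_{\mathbb R,Z}0$; it then expresses $K_Y+\Delta_Y$ as the pullback of $K_Z+B_Z+\Pp_Z$. You have no such triviality here: the expression $K_Y+(B_Y-tR)+\Mm_Y-tg^*K_Z$ equals $\pi^*K_{\Aa}$, which has no reason to be trivial over $Z$. In particular there is no mechanism that would produce a relation of the form $-K_Z\sim_{\mathbb R}B_Z+\Pp_Z$ on the base. The subsequent steps compound the problem: ``absorbing the nef contributions $\Mm+tg^*\Pp$ by a small multiple of an ample divisor'' changes the $\mathbb R$-linear equivalence class and so does not yield a klt pair with the same canonical divisor, and the final pushforward step is also unjustified --- even if $(Y,\Delta_Y)$ were klt, $(X,\pi_*\Delta_Y)$ need not be, unless you have verified that $K_Y+\Delta_Y\ge\pi^*(K_X+\pi_*\Delta_Y)$.

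The argument actually used in \cite{CHLMSSX24} (and reflected in this paper's Proposition~\ref{prop: lc Aat implies lc Aa0} and the discrepancy computation in Proposition~\ref{prop: algint afs elc implies x elc}) avoids the canonical bundle formula entirely. The key input is the ACSS relation $K_{\Ff_W}\sim_Z K_W+G_W$ with $G_W\ge 0$ effective and $\Ff$-invariant, combined with running a $(K_{\Ff_W}+B_W^{\ninv}+\Mm_W)$-MMP over $X$ and comparing discrepancies via the negativity lemma. This directly produces a klt generalized pair structure on $X$, from which ``potentially klt'' follows. If you want to repair your approach, replace Step~3 by this MMP/negativity-lemma comparison rather than invoking a base-change formula.
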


\begin{prop}[{\cite[Proposition 3.3]{CHLMSSX24}}]\label{prop: lc Aat implies lc Aa0}
    Let $(X,\Ff,B^{\ninv}+(1-t)B^{\inv},\Mm,t)/U$ be a $\mathbb Q$-factorial lc (resp. klt) algebraically integrable adjoint foliated structure such that $t<1$. Then $(X,B,\Mm)$ is lc (resp. klt).
\end{prop}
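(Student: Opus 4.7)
The plan is to exploit the algebraic identity
\[
K_\Aa \;=\; (1-t)(K_X + B + \Mm_X) \;+\; t(K_\Ff + B^{\ninv} + \Mm_X),
\]
obtained by substituting $B = B^{\ninv} + B^{\inv}$ into $K_\Aa = tK_\Ff + (1-t)K_X + B^{\ninv} + (1-t)B^{\inv} + \Mm_X$ and reshuffling $(1-t)B + tB^{\ninv} = B^{\ninv} + (1-t)B^{\inv}$. This realizes $K_\Aa$ as a $t$-convex combination of the canonical classes of the generalized pair $(X, B, \Mm)$ and the generalized foliated quadruple $(X, \Ff, B^{\ninv}, \Mm)$. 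First, I would pass to a $\Qq$-factorial foliated log resolution $h\colon X' \to X$ on which $\Ff' := h^{-1}\Ff$ is induced by an equidimensional contraction $f\colon X'\to Z$, available since $\Ff$ is algebraically integrable (ACSS-type setup); since $X$ is $\Qq$-factorial, $h$ simultaneously resolves the pair $(X, B)$. Pulling back the identity and comparing coefficients gives, for every prime divisor $E$ over $X$, the fundamental discrepancy formula
\[
a(E, \Aa) \;=\; (1-t)\, a(E, (X, B, \Mm)) \;+\; t\, a(E, (X, \Ff, B^{\ninv}, \Mm)),
\]
where the last summand is the generalized foliated discrepancy.

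The key input, which I expect to be the main obstacle, is the discrepancy comparison
\[
a(E, (X, \Ff, B^{\ninv}, \Mm)) + \epsilon_{\Ff}(E) \;\leq\; a(E, (X, B, \Mm)) + 1 \qquad (\star)
\]
asserting that the foliated total log discrepancy is dominated by the pair total log discrepancy. I would prove $(\star)$ on the model $X'$ by splitting on $\Ff$-invariance. For $\Ff$-invariant (necessarily vertical over $Z$) $E$, the canonical bundle formula $K_{\Ff'} = K_{X'/Z}$, up to non-negative corrections from non-reduced fibers, gives the inequality directly, noting that $B^{\ninv}$ places no mass on invariant components. For non-invariant (hence horizontal) $E$, the restriction of $K_{\Ff}$ to a general leaf equals the canonical class of the leaf and $B^{\ninv}$ agrees with $B$ on horizontal components, so $(\star)$ reduces to a standard pair discrepancy comparison on the smooth leaf.

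Granting $(\star)$, the conclusion follows by a short manipulation. Setting $x := a(E, (X, B, \Mm)) + 1$ and $y := a(E, (X, \Ff, B^{\ninv}, \Mm)) + \epsilon_{\Ff}(E)$, the lc hypothesis $a(E, \Aa) \geq -t\epsilon_{\Ff}(E) - (1-t)$ rearranges to $(1-t)x + ty \geq 0$, while $(\star)$ says $y \leq x$. Since $t < 1$, we deduce $x \geq (1-t)x + ty \geq 0$, i.e., $a(E, (X, B, \Mm)) \geq -1$; the klt case is the strict version of the same argument. The hypothesis of algebraic integrability is crucial precisely for the comparison $(\star)$: without a fibration structure, foliated and pair discrepancies cannot be related in this way, and invariant divisors can support foliated singularities essentially unrelated to the ambient pair.
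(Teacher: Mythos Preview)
Your identity and the reduction to the inequality $(\star)$ are correct, and the algebra in the last paragraph is clean; in fact the same manipulation with $(\star)$ proves the stronger $\epsilon$-lc version (Proposition~\ref{prop: algint afs elc implies x elc}) with no extra work, so $(\star)$ is really the whole content. The present paper does not itself prove Proposition~\ref{prop: lc Aat implies lc Aa0} but cites it; however, it does prove the $\epsilon$-lc strengthening, and Cases~2--3 of that proof amount precisely to establishing $(\star)$ for exceptional divisors. So your overall architecture matches the paper's.

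The gap is in your proof of $(\star)$. On a foliated log resolution $h\colon W\to X$ you do get an equidimensional $f\colon W\to Z$ and a relation $K_{\Ff_W}\sim_Z K_W+G_W$ with $G_W\ge 0$ invariant, but this is a relation \emph{over $Z$}, whereas $(\star)$ compares discrepancies \emph{over $X$}. Concretely, subtracting the two pullback formulas shows that $(\star)$ asks you to bound $\mult_E\bigl(h^*(K_X-K_\Ff+B^{\inv})-(K_W-K_{\Ff_W})\bigr)$, and the formula $K_{\Ff_W}-K_W\sim_Z G_W$ determines this only up to an unknown $f^*(\text{divisor on }Z)$, which is useless unless $h$ is a morphism over $Z$---and in general it is not. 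Your ``restrict to a leaf'' suggestion for non-invariant $E$ likewise does not compute a discrepancy over $X$. The paper closes exactly this gap by running a $(K_{\Ff_W}+B_W^{\ninv}+\Mm_W)$-MMP over $X$ (which, by \cite[Proposition~7.3.6, Lemma~9.1.4]{CHLX23}, is simultaneously an MMP over $Z$) to reach a model $g\colon Y\to X$ on which one has both $K_{\Ff_Y}+B_Y^{\ninv}+\Mm_Y\le g^*(K_\Ff+B^{\ninv}+\Mm_X)$ and the pair comparison $B_Y^{\ninv}+G_Y\ge B_Y'$ with $K_Y+B_Y'+\Mm_Y=g^*(K_X+B+\Mm_X)$; chaining these gives $(\star)$. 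Without this MMP step, or some equivalent device aligning the fibration to $Z$ with the map to $X$, your argument for $(\star)$ does not go through.
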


\begin{defn}[{cf. \cite[3.2 Log canonical foliated pairs]{ACSS21}, \cite[Definition 6.2.1]{CHLX23}}]\label{defn: foliated log smooth}
Let $\Aa/U:=(X,\Ff,B,\Mm,t)/U$ be an algebraically integrable adjoint foliated structure. We say that $\Aa$ is \emph{foliated log smooth} if there exists a contraction $f: X\rightarrow Z$ satisfying the following.
\begin{enumerate}
  \item $X$ has at most quotient singularities.
  \item $\Ff$ is induced by $f$.
  \item $(X,\Sigma_X)$ is toroidal for some reduced divisor $\Sigma_X$ such that $\Supp B\subset\Sigma_X$.  In particular, $(X,\Supp B)$ is toroidal, and $X$ is $\Qq$-factorial klt.
  \item There exists a log smooth pair $(Z,\Sigma_Z)$ such that $$f: (X,\Sigma_X)\rightarrow (Z,\Sigma_Z)$$ is an equidimensional toroidal contraction.
  \item $\Mm$ descends to $X$.
\end{enumerate}
We say that $f: (X,\Sigma_X)\rightarrow (Z,\Sigma_Z)$ is \emph{associated with} $(X,\Ff,B,\Mm,t)$. It is important to remark that $f$ may not be a contraction$/U$. In particular, $\Mm$ may not be nef$/Z$.

Note that the definition of foliated log smooth has nothing to do with $t$. In other words, as long as $(X,\Ff,B,\Mm)$ is foliated log smooth, then $(X,\Ff,B,\Mm,t)$ is foliated log smooth for any 
$t \in [0, 1]$.
\end{defn}

\begin{defn}[Foliated log resolutions]\label{defn: log resolution}
Let $\Aa/U:=(X,\Ff,B,\Mm,t)/U$ be an algebraically integrable sub-adjoint foliated structure. A \emph{foliated log resolution} of $\Aa$ is a birational morphism $h: X'\rightarrow X$ such that 
$$(X',\Ff':=h^{-1}\Ff,B':=h^{-1}_\ast B+\Exc(h),\Mm,t)$$ 
is foliated log smooth. By \cite[Lemma 6.2.4]{CHLX23}, foliated log resolution for $\Aa$ always exists.
\end{defn}

\subsection{Birational maps in the MMP}

\begin{defn}
    Let $X\rightarrow U$ be a projective morphism from a normal quasi-projective variety to a variety.  Let $D$ be an $\Rr$-Cartier $\Rr$-divisor on $X$ and $\phi: X\dashrightarrow X'$ a birational map$/U$. Then we say that $X'$ is a \emph{birational model} of $X$. We say that $\phi$ is $D$-non-positive (resp. $D$-negative, $D$-trivial, $D$-non-negative, $D$-positive) if the following conditions hold:
    \begin{enumerate}
    \item $\phi$ does not extract any divisor.
    \item $D':=\phi_\ast D$ is $\Rr$-Cartier.
    \item There exists a resolution of indeterminacy $p: W\rightarrow X$ and $q: W\rightarrow X'$, such that
    $$p^\ast D=q^\ast D'+F$$
    where $F\geq 0$ (resp. $F\geq 0$ and $\Supp p_\ast F=\Exc(\phi)$, $F=0$, $0\geq F$, $0\geq F$ and $\Supp p_\ast F=\Exc(\phi)$).
    \end{enumerate}
\end{defn}

\begin{defn} Let $X\rightarrow U$ be a projective morphism from a normal quasi-projective variety to a variety. Let $D$ be an $\Rr$-Cartier $\Rr$-divisor on $X$, $\phi: X\dashrightarrow X'$ a $D$-non-positive birational map$/U$, and $D':=\phi_\ast D$. Assume that $D'$ is nef$/U$.
\begin{enumerate}
    \item We say that $X'$ is a \emph{weak lc model}$/U$ of $D$.
    \item If $\phi$ is $D$-negative, then we say that $X'$ is a \emph{minimal model}$/U$ of $D$.
    \item If $D'$ is semi-ample$/U$, then we say that $X'$ is a \emph{semi-ample model}$/U$ of $D$.
    \item If $\phi$ is $D$-negative and $D'$ is semi-ample$/U$, then we say that $X'$ is a \emph{good minimal model}$/U$ of $D$.
\end{enumerate}
\end{defn}

\begin{defn}\label{defn: minimal model}
Let $\Aa/U$ be an adjoint foliated structure with ambient variety $X$ and let $\phi: X\dashrightarrow X'$ be a birational map$/U$. Let $\Aa':=\phi_*\Aa$. We say that $\Aa'/U$ is weak lc model (resp. minimal model, semi-ample model, good minimal model) of $\Aa/U$ if $X'$ is a weak lc model$/U$ (resp. minimal model$/U$, semi-ample model$/U$, good minimal model$/U$) of $K_{\Aa}$.
\end{defn}

We will use the following lemmata many times:

\begin{lem}\label{lem: weak lc model only check codim 1}
 Let $\Aa/U$ be an adjoint foliated structure with ambient variety $X$ and let $\phi: X\dashrightarrow X'$ be a birational map$/U$ which does not extract any divisor. Let $\Aa':=\phi_*\Aa$. Assume that $K_{\Aa'}$ is nef$/U$. Then:
 \begin{enumerate}
     \item If $a(D,\Aa)\leq a(D,\Aa')$ for any prime divisor $D$ on $X$ that is exceptional$/X'$, then $\Aa'/U$ is a weak lc model of $\Aa/U$.
     \item If $a(D,\Aa)<a(D,\Aa')$ for any prime divisor $D$ on $X$ that is exceptional$/X'$, then $\Aa'/U$ is a minimal model of $\Aa/U$.
 \end{enumerate}
\end{lem}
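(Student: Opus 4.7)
The plan is to apply the negativity lemma on a common resolution of $\phi$. We take a resolution of indeterminacy $p\colon W \to X$, $q\colon W \to X'$ of $\phi$, with both $p$ and $q$ morphisms over $U$, and set $E := p^* K_\Aa - q^* K_{\Aa'}$. By the definition of discrepancy, $\mult_D E = a(D,\Aa') - a(D,\Aa)$ for every prime divisor $D$ on $W$. Showing $E \geq 0$ proves (1); showing in addition that $\Supp(p_*E) = \Exc(\phi)$ proves (2).

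We classify prime divisors $D$ on $W$ into three cases. (i) If $D$ is non-exceptional over both $X$ and $X'$, then the corresponding prime divisors $D_0 = p(D)$ and $D'_0 = q(D) = \phi_* D_0$ have equal coefficients in $B$ and $B'$ respectively (using $\Aa' = \phi_*\Aa$ and that $\Mm$ is a $\bb$-divisor), so $\mult_D E = 0$. (ii) If $D$ is non-exceptional over $X$ but exceptional over $X'$, then $D_0 := p(D) \in \Exc(\phi)$, and the hypothesis of (1) (resp.\ (2)) gives $\mult_D E = a(D_0,\Aa') - a(D_0,\Aa) \geq 0$ (resp.\ $> 0$). (iii) If $D$ is exceptional over $X$, then it must also be exceptional over $X'$: since $\phi$ does not extract divisors, every prime divisor on $X'$ has a strict transform on $X$, whose further strict transform to $W$ is non-$p$-exceptional; hence any $p$-exceptional $D$ is $q$-exceptional as well.

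The core of the proof is to handle case (iii) via the negativity lemma applied to $p\colon W \to X$. We claim that $-E$ is $p$-nef: for any curve $C \subset W$ contracted by $p$, the image $p(C)$ is a point of $X$ mapping to a point in $U$; as $q$ is a morphism$/U$, $q_*C$ lies in a fiber of $X' \to U$; the nef$/U$ property of $K_{\Aa'}$ then yields $q^* K_{\Aa'} \cdot C = K_{\Aa'} \cdot q_*C \geq 0$, while $p^* K_\Aa \cdot C = 0$, so $-E \cdot C \geq 0$. Moreover, the case analysis shows $p_*E \geq 0$: $p$-exceptional divisors contribute $0$, and non-$p$-exceptional components of $E$ are non-negative by (i) and (ii). The negativity lemma then forces $E \geq 0$ globally, proving (1). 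For (2), the strict inequalities in case (ii) give $\mult_{D_0}(p_*E) > 0$ for every $D_0 \in \Exc(\phi)$, while divisors outside $\Exc(\phi)$ contribute $0$ to $p_*E$; hence $\Supp(p_*E) = \Exc(\phi)$, upgrading the conclusion to $K_\Aa$-negativity.

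The main technical obstacle is case (iii): divisors on $W$ exceptional over both $X$ and $X'$ are not constrained directly by the hypothesis, which concerns prime divisors on $X$. These are handled precisely by the negativity lemma via the $p$-nefness of $-E$, which crucially uses the nef$/U$ hypothesis on $K_{\Aa'}$ together with $p$ and $q$ being morphisms over $U$. Everything else reduces to routine bookkeeping of discrepancies under $\phi$.
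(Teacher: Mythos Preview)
Your proof is correct and follows essentially the same approach as the paper: take a common resolution, set $E = p^*K_{\Aa} - q^*K_{\Aa'}$, observe $p_*E \geq 0$ from the discrepancy hypothesis, use nefness of $K_{\Aa'}$ over $U$ to see $-E$ is $p$-nef, and apply the negativity lemma. Your case analysis and the explicit check that $p$-exceptional implies $q$-exceptional are a slightly more detailed unpacking of what the paper leaves implicit, but the argument is the same.
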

\begin{proof}
Let $p: W\rightarrow X$ and $q: W\rightarrow X'$ be a common resolution and write
$$p^*K_{\Aa}:=q^*K_{\Aa'}+E$$
for some $E$.
(1) We only need to show that $E\geq 0$. For any prime divisor $D$ that is an irreducible component of $E$, $$\mult_DE=a(D,\Aa')-a(D,\Aa).$$ 
By our assumption, $p_*E\geq 0$ and
$\Supp p_*E$ is the union of all prime divisors $D$ on $X$ such that $a(D,\Aa')\not=a(D,\Aa)$. Since $K_{\Aa'}$ is nef$/U$, $q^*K_{\Aa'}$ is nef$/X$, hence $E$ is anti-nef$/X$. By the negativity lemma, $E\geq 0$.

(2) By (1), $E\geq 0$. Since $\Supp p_*E$ is the union of all prime divisors $D$ on $X$ such that $a(D,\Aa')\not=a(D,\Aa)$, by our assumption, $\Supp p_*E\supset\Exc(\phi)$. Since $\phi_*\Aa=\Aa'$, $\Supp p_*E\subset\Exc(\phi)$. Thus $\Supp p_*E=\Exc(\phi)$ and we are done.
\end{proof}

\begin{lem}\label{lem: g-pair version bir12 2.7}
Let $\Aa/U$ be an lc adjoint foliated structure and let $\Aa_1/U$, $\Aa_2/U$ be two weak lc models of $\Aa/U$ with induced birational map $\phi: X_1\dashrightarrow X_2$. Let $h_1: W\rightarrow X_1$ and $h_2: W\rightarrow X_2$ be two birational morphisms such that $\phi\circ h_1=h_2$. Then:
\begin{enumerate}
    \item $$h_1^*K_{\Aa_1}=h_2^*K_{\Aa_2}.$$
    \item If $K_{\Aa_2}$ is semi-ample$/U$, then $K_{\Aa_1}$ is semi-ample$/U$.
    \item If $K_{\Aa_2}$ is ample$/U$, then $\phi$ is a morphism.
\end{enumerate}
\end{lem}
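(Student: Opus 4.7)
The strategy is to prove part (1) by a standard two-sided negativity lemma argument; parts (2) and (3) then follow by descent and rigidity. First I would reduce to the case where $W$ also admits a birational morphism $g\colon W\to X$ with $g=\phi_i\circ h_i$ as rational maps, where $\phi_i\colon X\dashrightarrow X_i$ realizes $\Aa_i$ as a weak lc model of $\Aa$. Passing from the given $W$ to a common resolution of $W$ and $X$ is harmless, since any equality on the higher model pushes down to $W$ via $\rho_{*}\rho^{*} = \id$ for the comparison morphism $\rho$. With $g$ in hand, the weak lc model hypothesis provides effective $\mathbb R$-divisors $E_i\geq 0$ with
\[
g^{*}K_{\Aa} = h_i^{*}K_{\Aa_i} + E_i.
\]
A direct discrepancy calculation, using that $\phi_i$ extracts no divisor and $\Aa_i = (\phi_i)_{*}\Aa$, shows that $a(D,\Aa) = a(D,\Aa_i)$ for any prime divisor $D$ on $W$ that is neither $g$- nor $h_i$-exceptional; combined with the observation that $g$-exceptional divisors are automatically $h_i$-exceptional (since $\phi_i$ does not extract), this forces $E_i$ to be $h_i$-exceptional.

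The core of the argument for (1) is then the identity
\[
E_1 - E_2 = h_2^{*}K_{\Aa_2} - h_1^{*}K_{\Aa_1}.
\]
Since $K_{\Aa_1}$ is nef$/U$, $h_1^{*}K_{\Aa_1}$ is nef over $X_2$, so $E_1-E_2$ is anti-nef over $X_2$; moreover $(h_2)_{*}(E_1-E_2) = (h_2)_{*}E_1 \geq 0$ because $E_2$ is $h_2$-exceptional and $E_1\geq 0$. The negativity lemma then yields $E_1\geq E_2$, and swapping the roles of $\Aa_1$ and $\Aa_2$ gives $E_2\geq E_1$, hence $E_1=E_2$; this proves (1).

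Given (1), part (2) follows because semi-ampleness$/U$ of $K_{\Aa_2}$ pulls back to semi-ampleness$/U$ of $h_2^{*}K_{\Aa_2} = h_1^{*}K_{\Aa_1}$, which descends along the proper birational morphism $h_1$ (using $(h_1)_{*}\cO_W = \cO_{X_1}$, so Cartier multiples of $K_{\Aa_1}$ have matching global sections on $W$ and $X_1$). For (3), if $K_{\Aa_2}$ is ample$/U$ then every curve contracted by $h_1$ meets $h_1^{*}K_{\Aa_1} = h_2^{*}K_{\Aa_2}$ trivially and is therefore contracted by $h_2$; connectedness of the fibers of $h_1$ forces $h_2$ to be constant on each such fiber, so the rigidity lemma produces a morphism $\psi\colon X_1\to X_2$ with $\psi\circ h_1 = h_2$, and this $\psi$ agrees with $\phi$ on the open locus where $h_1$ is an isomorphism. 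The only nontrivial verification in this plan is the discrepancy bookkeeping showing $E_i$ is $h_i$-exceptional, but this follows directly from the definition of discrepancies for adjoint foliated structures and the fact that $\phi_i$ does not extract divisors.
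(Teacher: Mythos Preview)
Your proof is correct and follows essentially the same approach as the paper: reduce to a common $W$ dominating $X$, write $g^{*}K_{\Aa}=h_i^{*}K_{\Aa_i}+E_i$ with $E_i\geq 0$ exceptional over $X_i$, and apply the negativity lemma twice to conclude $E_1=E_2$. The only cosmetic difference is in (3): the paper observes that $X_2$ is the ample model$/U$ of $K_{\Aa_1}$ (which is semi-ample$/U$ by (2)) and hence $\phi$ is a morphism, whereas you invoke the rigidity lemma directly; both are standard one-line arguments.
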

\begin{proof}
Let $\phi_1: X\dashrightarrow X_1$ and $\phi_2: X\dashrightarrow X_2$ be the induced birational maps. Possibly replacing $W$ with a higher model, we may assume that the induced birational map $h: W\rightarrow X$ is a morphism. Let $$E_i:=h^*K_{\Aa}-h_i^*K_{\Aa_i}$$
for $i\in\{1,2\}$. Then $E_i\geq 0$ and is exceptional over $X_i$ for $i\in\{1,2\}$. Thus $h_{1,*}(E_2-E_1)\geq 0$ and $E_1-E_2$ is nef$/X_1$, and $h_{2,*}(E_1-E_2)\geq 0$ and $E_2-E_1$ is nef$/X_2$. By the negativity lemma, $E_2-E_1\geq 0$ and $E_1-E_2\geq 0$. Thus $E_1=E_2$, which implies (1). (2) immediately follows from (1). By (1), if $K_{\Aa_2}$ is ample$/U$, then $h_2: W\rightarrow X_2$ is the ample model$/U$ of $h^*K_{\Aa_1}$, hence $\phi$ is the ample model$/U$ of $K_{\Aa_1}$. Since $K_{\Aa_1}$ is semi-ample$/U$, $\phi$ is a morphism. This implies (3).
\end{proof}

\subsection{Relative Nakayama-Zariski decompositions}

\begin{defn}
    Let $\pi\colon X\rightarrow U$ be a projective morphism from a normal quasi-projective variety to a quasi-projective variety, $D$ a pseudo-effective$/U$ $\Rr$-Cartier $\Rr$-divisor on $X$, and $P$ a prime divisor on $X$. We define $\sigma_{P}(X/U,D)$ as in \cite[Definition 3.1]{LX23} by considering $\sigma_{P}(X/U,D)$ as a number in  $[0,+\infty)\cup\{+\infty\}$. We define $N_{\sigma}(X/U,D)=\sum_Q\sigma_Q(X/U,D)Q$
    where the sum runs through all prime divisors on $X$ and consider it as a formal sum of divisors with coefficients in $[0,+\infty)\cup\{+\infty\}$. We say that $D$ is \emph{movable$/U$} if $N_{\sigma}(X/U,D)=0$.
\end{defn}

\begin{lem}[{\cite[Lemma 3.5]{LX23}}]\label{lem: finiteness of sigmap}
Let $\pi: X\rightarrow U$ be a projective morphism from a normal quasi-projective variety to a quasi-projective variety and let $D$ be a pseudo-effective$/U$ $\Rr$-Cartier $\Rr$-divisor on $X$. Then $N_{\sigma}(X/U,D)$ has finitely many irreducible components. 
\end{lem}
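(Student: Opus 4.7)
The plan is to bound the number of prime components of $N_\sigma(X/U,D)$ by the relative Picard number $\rho(X/U):=\dim_{\mathbb{R}}N^1(X/U)_{\mathbb{R}}$, which is finite because $\pi$ is projective. Concretely, I will show the stronger statement that if $P_1,\dots,P_n$ are pairwise distinct prime divisors on $X$ with $\sigma_{P_i}(X/U,D)>0$, then their classes $[P_1],\dots,[P_n]$ are linearly independent in $N^1(X/U)_{\mathbb{R}}$. This is the relative version of Nakayama's classical argument (cf. Nakayama, \emph{Zariski-decomposition and abundance}, Ch.~III).

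First I would set up the standard approximation. Fix an ample$/U$ $\mathbb{R}$-Cartier $\mathbb{R}$-divisor $A$ on $X$. By the definition of $\sigma$ in the relative setting one has, for every prime divisor $P$,
$$\sigma_P(X/U,D)=\lim_{\epsilon\to 0^{+}}\sigma_P(X/U,D+\epsilon A),$$
where each $D+\epsilon A$ is big$/U$. For a big$/U$ divisor $D'$, writing $D'\sim_{\mathbb{R},U}A_0+E_0$ with $A_0$ ample$/U$ and $E_0\geq 0$ yields $N_\sigma(X/U,D')\leq E_0$, so $N_\sigma(X/U,D')$ has only finitely many irreducible components. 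The substance of the proof is therefore to prevent new components from accumulating as $\epsilon\to 0^{+}$, and linear independence of the classes $[P_i]$ achieves exactly this.

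To prove linear independence, suppose for contradiction that a nontrivial relation $\sum_{i=1}^{n} c_i [P_i]=0$ holds in $N^1(X/U)_{\mathbb{R}}$. For each $\epsilon>0$ pick an effective $\mathbb{R}$-divisor $E_\epsilon\sim_{\mathbb{R},U}D+\epsilon A$ with $\mult_{P_i}E_\epsilon$ arbitrarily close to $\sigma_{P_i}(X/U,D+\epsilon A)$ for every $i$. Using that $\sum c_i P_i$ is numerically trivial$/U$ together with relative Kodaira-type lemmas (to promote the numerical relation to an $\mathbb{R}$-linear equivalence over $U$ after a small perturbation by $A$), I can translate $E_\epsilon$ by a suitable real multiple $t\sum c_i P_i$ while staying inside the class of $D+\epsilon A$ over $U$. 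Choosing $t$ to be the largest value for which the resulting divisor remains effective strictly decreases the multiplicity along at least one $P_{i_0}$ with $c_{i_0}>0$ (or, after reversing signs, $c_{i_0}<0$), contradicting the defining infimum of $\sigma_{P_{i_0}}(X/U,D+\epsilon A)$ when $\epsilon$ is small enough that the $\sigma_{P_{i_0}}$ at level $\epsilon$ is still positive. Hence the $[P_i]$ are independent, and $n\leq\rho(X/U)<\infty$.

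The main obstacle is the step of upgrading a relative numerical triviality $\sum c_i P_i\equiv_U 0$ to a statement allowing effective modification of $E_\epsilon$ within the $\mathbb{R}$-linear equivalence class of $D+\epsilon A$ over $U$: over a non-proper base, numerical and $\mathbb{R}$-linear equivalences can differ by the pull-back of a class from $U$, so one must perturb by a small multiple of $A$ (which is ample$/U$, hence allows absorbing arbitrary relative Néron–Severi contributions) before performing the translation. Once this technicality is handled, the argument reduces cleanly to Nakayama's projective case applied in the relative Néron–Severi space, and finiteness of $\rho(X/U)$ concludes.
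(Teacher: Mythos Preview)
The paper does not prove this lemma; it is quoted from \cite[Lemma~3.5]{LX23}. Your strategy---show that distinct primes $P_i$ with $\sigma_{P_i}(X/U,D)>0$ have linearly independent classes in $N^1(X/U)_{\mathbb{R}}$, hence number at most $\rho(X/U)$---is the standard Nakayama argument and is presumably what the cited reference does.

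That said, your handling of the central step is off. You correctly flag that $\sum c_iP_i\equiv_U 0$ need not imply $\sum c_iP_i\sim_{\mathbb{R},U}0$, so translating an effective representative $E_\epsilon$ by $t\sum c_iP_i$ does not keep it in the $\mathbb{R}$-linear equivalence class of $D+\epsilon A$ over $U$. But your proposed fix---``absorbing arbitrary relative N\'eron--Severi contributions'' into a multiple of $A$---is too vague to constitute a proof and is not the right manoeuvre anyway. The clean resolution is to bypass effective representatives entirely and prove that $\sigma_P(X/U,\,\cdot\,)$ depends only on the class in $N^1(X/U)$ when the argument is big$/U$: if $N\equiv_U0$ then $N+\delta A$ is ample$/U$ for every $\delta>0$ by the relative Kleiman criterion, whence $\sigma_P(X/U,N+\delta A)=0$; subadditivity then gives $\sigma_P(X/U,B+N)\le\sigma_P(X/U,B-\delta A)$ for any big$/U$ divisor $B$, and letting $\delta\to 0^+$ (together with the symmetric inequality for $-N$) yields $\sigma_P(X/U,B+N)=\sigma_P(X/U,B)$. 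Once $\sigma_P$ is known to be a numerical invariant on big$/U$ classes, Nakayama's derivation of a contradiction from a nontrivial relation $\sum c_i[P_i]=0$ goes through verbatim with $B=D+\epsilon A$, and no manipulation of effective divisors or ``Kodaira-type lemmas'' is needed.
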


\begin{lem}[{\cite[Lemma 2.25]{LMX24}}]\label{lem: nz for lc divisor}
Let $X\rightarrow U$ be a projective morphism from a normal quasi-projective variety to a quasi-projective variety and $\phi\colon X\dashrightarrow X'$ a birational map$/U$. Let $D$ be an $\Rr$-Cartier $\Rr$-divisor on $X$ such that $\phi$ is $D$-negative and $D':=\phi_\ast D$. Then:
\begin{enumerate}
    \item The divisors contracted by $\phi$ are contained in $\Supp N_{\sigma}(X/U,D)$.
    \item If $D'$ is movable$/U$, then $\Supp N_{\sigma}(X/U,D)$ is the set of all $\phi$-exceptional divisors.
    \end{enumerate}
\end{lem}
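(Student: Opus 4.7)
The plan is to work on a common resolution $p\colon W\to X$, $q\colon W\to X'$ of $\phi$. By the $D$-negativity hypothesis, we may fix such a resolution admitting a decomposition
\[p^*D=q^*D'+F,\qquad F\geq 0,\qquad \Supp p_*F=\Exc(\phi).\]

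For part (1), let $E$ be any $\phi$-exceptional prime divisor on $X$ and let $\tilde E:=p_*^{-1}E$ be its strict transform on $W$, which is $q$-exceptional but not $p$-exceptional. Since $\Supp p_*F=\Exc(\phi)$, we have $c:=\mult_{\tilde E}F=\mult_E p_*F>0$. I would show $\sigma_E(X/U,D)\geq c$, which places $E$ in $\Supp N_\sigma(X/U,D)$. Fix an ample$/U$ $\mathbb R$-divisor $A$ on $X$, and for $\epsilon>0$ small let $G\geq 0$ be any effective representative of $D+\epsilon A$. The key geometric input is a generic covering family $\{C_t\}$ of curves on $W$ lying in $\tilde E$ and contracted by $q$: since $\tilde E$ is $q$-exceptional such a family exists, and I would choose it generic enough that each projection $p_*C_t\subseteq E$ avoids every other $\phi$-exceptional divisor of $X$. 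From the resolution equation, $D\cdot p_*C_t=F\cdot C_t$, which is strictly negative for generic $C_t$ with magnitude comparable to $c$; since $G\sim_{\mathbb R,U}D+\epsilon A$, we have $G\cdot p_*C_t=D\cdot p_*C_t+O(\epsilon)<0$. Decomposing $G$ into its prime components and intersecting with $p_*C_t$, the only possible negative contribution comes from the $E$-part of $G$, forcing $\mult_E G\geq c-O(\epsilon)$. Taking $\epsilon\to 0^+$ yields $\sigma_E(X/U,D)\geq c>0$, as required.

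For part (2), the containment $\Exc(\phi)\subseteq\Supp N_\sigma(X/U,D)$ follows from (1); for the reverse, let $P$ be a prime divisor on $X$ that is not $\phi$-exceptional, and set $\tilde P:=p_*^{-1}P$. Since $\tilde P$ is not $p$-exceptional, the standard pullback formula for Nakayama-Zariski multiplicities gives $\sigma_P(X/U,D)=\sigma_{\tilde P}(W/U,p^*D)$. Moreover $\mult_{\tilde P}F=0$ since $\Supp p_*F=\Exc(\phi)\not\ni P$, so by sub-additivity of $\sigma$,
\[\sigma_{\tilde P}(W/U,p^*D)=\sigma_{\tilde P}(W/U,q^*D'+F)\leq\sigma_{\tilde P}(W/U,q^*D')+\mult_{\tilde P}F=\sigma_{\tilde P}(W/U,q^*D').\]
Since $\tilde P$ is not $q$-exceptional and $q(\tilde P)=\phi_*P=:P'$ is a prime divisor on $X'$, the pullback formula again yields $\sigma_{\tilde P}(W/U,q^*D')=\sigma_{P'}(X'/U,D')$, which vanishes by the movability of $D'$. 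Hence $\sigma_P(X/U,D)=0$, i.e., $P\notin\Supp N_\sigma(X/U,D)$.

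The main technical obstacle lies in part (1): producing a generic covering family of $q$-contracted curves in $\tilde E$ whose images $p_*C_t$ avoid all other $\phi$-exceptional divisors, so that the inequality $G\cdot p_*C_t<0$ isolates positivity of $\mult_E G$ rather than only of the aggregate $\sum_i\mult_{E_i}G$ over all $\phi$-exceptional $E_i$. Existence of such a family follows from the fact that $\tilde E$ is $q$-exceptional and hence dominated by curves contracted by $q$, but carrying out the genericity argument carefully and extracting a lower bound uniform in $\epsilon\to 0^+$ is the delicate point; the remaining steps are coefficient bookkeeping and standard properties of Nakayama-Zariski decompositions.
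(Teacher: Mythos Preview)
Your argument for part~(2) is correct and is the standard route: pull back to $W$, use sub-additivity $\sigma_{\tilde P}(q^*D'+F)\le\sigma_{\tilde P}(q^*D')+\mult_{\tilde P}F$, and then the pullback formula along $q$ together with movability of $D'$.

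Part~(1), however, has a genuine gap. The crucial assertion is that $D\cdot p_*C_t=F\cdot C_t$ is strictly negative for a covering family of $q$-contracted curves $C_t\subset\tilde E$ whose images avoid the other exceptional loci. But such a family need not exist. The $q$-contracted curves in $\tilde E$ are (contained in) fibres of $q|_{\tilde E}\colon\tilde E\to q(\tilde E)$, and if another component $\tilde E'$ of $F$ satisfies $q(\tilde E\cap\tilde E')=q(\tilde E)$, then \emph{every} such fibre meets $\tilde E'$, so the positive contribution $(\mult_{\tilde E'}F)(\tilde E'\cdot C_t)$ cannot be avoided. This already happens on surfaces: take $p=\mathrm{id}$, let $q\colon X\to X'$ contract a chain $E_1,E_2$ with $E_1^2=-2$, $E_2^2=-1$, $E_1\cdot E_2=1$, and set $F=aE_1+2aE_2$ with $a>0$. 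The only curve in $E_1$ is $E_1$ itself, and $F\cdot E_1=0$; hence $G\cdot E_1=\epsilon A\cdot E_1>0$ for every $G\sim D+\epsilon A$, and your inequality gives nothing. (The lemma is still true here --- one checks $\sigma_{E_1}(X/U,D)=a$ directly --- but the curve argument does not see it.) Note also that the obstacle you single out (isolating the $E$-summand of $G$) is not the real issue: once $p_*C_t$ is a general curve in $E$, only the $E$-component of $G$ can intersect it negatively. The failure is one step earlier, in producing any curve with $F\cdot C_t<0$.

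The paper simply cites this lemma from \cite{LMX24} and gives no proof. The argument there (and the standard one) bypasses curves entirely and uses the negativity lemma globally on $W$. For $G\ge 0$ with $G\sim_{\mathbb R,U}D+\epsilon A$ one shows, assuming $X'$ is $\mathbb Q$-factorial, that
\[
p^*G-F+\epsilon\Gamma \;=\; q^*\bigl(q_*p^*G\bigr)\;\ge 0,
\]
where $\Gamma:=q^*q_*A-A\ge 0$ is a fixed effective $q$-exceptional divisor: indeed $p^*G-F+\epsilon\Gamma-q^*(q_*p^*G)$ is $q$-exceptional and $\equiv_{X'}0$, hence vanishes by the negativity lemma. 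This yields $\mult_E G\ge c-\epsilon\,\mult_{\tilde E}\Gamma$, and letting $\epsilon\to 0$ gives $\sigma_E(X/U,D)\ge c>0$. When $X'$ is not $\mathbb Q$-factorial one passes to a small $\mathbb Q$-factorialization; in all applications of the lemma in the present paper the target is $\mathbb Q$-factorial anyway.
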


\begin{lem}\label{lem: if contract n then movable}
Let $X\rightarrow U$ be a projective morphism from a normal quasi-projective variety to a quasi-projective variety and $\phi: X\dashrightarrow X'$ a birational map$/U$ which does not extract any divisor. Let $D$ be an $\Rr$-Cartier $\Rr$-divisor on $X$ such that $D':=\phi_*D$ is $\Rr$-Cartier and $\phi$ contracts all divisors that are contained in  $\Supp N_{\sigma}(X/U,D)$. Then $D'$ is movable$/U$.
\end{lem}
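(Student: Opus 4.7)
The plan is to verify that $\sigma_{Q'}(X'/U, D') = 0$ for every prime divisor $Q'$ on $X'$, which by definition gives $N_\sigma(X'/U, D') = 0$, i.e., $D'$ is movable$/U$.

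First, I would observe that since $\phi$ does not extract any divisor, any such $Q'$ admits a prime strict transform $Q$ on $X$, and $\phi$ does not contract $Q$ because $\phi_*Q = Q'\neq 0$; hence $Q\notin\Exc(\phi)$. By the hypothesis $\Supp N_\sigma(X/U, D)\subseteq\Exc(\phi)$, we obtain $Q\notin \Supp N_\sigma(X/U,D)$, and therefore $\sigma_Q(X/U, D) = 0$.

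Next, to transfer this vanishing across $\phi$, I would pass to a common resolution $p\colon W\to X$ and $q\colon W\to X'$ of $\phi$ (so that $q = \phi\circ p$ as rational maps), and let $R\subset W$ be the strict transform of $Q$, equivalently of $Q'$. Then $R$ is exceptional for neither $p$ nor $q$, with $p(R) = Q$ and $q(R) = Q'$. The standard functoriality of the relative Nakayama--Zariski $\sigma$-invariant under birational pullback by a morphism (cf.\ the discussion in \cite[Section 3]{LX23}) then yields
\[
\sigma_R(W/U, p^*D) \;=\; \sigma_Q(X/U, D) \;=\; 0.
\]

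Finally, I would bound $\sigma_{Q'}(X'/U, D')$ by $\sigma_R(W/U, p^*D)$ via a pushforward argument. Fix an ample$/U$ divisor $H$ on $X'$; by Kodaira's lemma write $q^*H \sim_{\mathbb R,U} A+E$, where $A$ is ample$/U$ on $W$ and $E\geq 0$. For any $\epsilon>0$ and any effective $M''\sim_{\mathbb R,U} p^*D+\epsilon A$ on $W$, the divisor $M := M''+\epsilon E$ is effective with $M\sim_{\mathbb R,U} p^*D+\epsilon q^*H$; its pushforward $q_*M$ is an effective $\mathbb R$-divisor on $X'$ with $q_*M\sim_{\mathbb R,U} D'+\epsilon H$ (using $q_*p^*D = \phi_*D = D'$) and $\mult_{Q'}(q_*M) = \mult_R M = \mult_R M''+\epsilon\mult_R E$. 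Taking the infimum over $M''$ and the limit $\epsilon\to 0^+$ yields
\[
\sigma_{Q'}(X'/U, D')\;\leq\; \sigma_R(W/U, p^*D) \;=\; 0,
\]
so $\sigma_{Q'}(X'/U, D') = 0$, as desired. The main technical input is the pullback equality $\sigma_R(W/U, p^*D) = \sigma_Q(X/U, D)$ for non-exceptional primes, which is a standard property of the (relative) Nakayama--Zariski decomposition; the rest of the argument amounts to careful bookkeeping about birational pushforwards together with Kodaira's lemma.
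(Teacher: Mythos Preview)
Your proposal is correct and follows essentially the same route as the paper: pass to a common resolution and use the birational functoriality of the relative Nakayama--Zariski decomposition to conclude that $N_\sigma(X'/U,D')=\phi_*N_\sigma(X/U,D)=0$. The only difference is presentational---the paper packages the comparison into a single citation of \cite[Lemma~3.4(3)]{LX23} (applied to both $q$ and $p$ after writing $p^*D+F=q^*D'+E$ with $E,F\geq 0$ $q$-exceptional), whereas you work prime-by-prime and prove the pushforward inequality $\sigma_{Q'}(X'/U,D')\leq\sigma_R(W/U,p^*D)$ by hand via Kodaira's lemma, which is essentially the content of that cited result.
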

\begin{proof}
Let $p: W\rightarrow X$ and $q\colon W\rightarrow X'$ be a common resolution. Then we have $p^*D+F=q^*D'+E$ for some $E,F\geq 0$ such that $E\wedge F=0$. Since $\phi$ does not extract any divisor, $E,F$ are exceptional$/X'$. By \cite[Lemma 3.4(3)]{LMX24}, 
$$N_{\sigma}(X'/U,D')=q_*N_{\sigma}(W/U,q^*D'+E)=\phi_*p_*N_{\sigma}(W/U,p^*D+F)=\phi_*N_{\sigma}(X/U,D)=0.$$
\end{proof}

\begin{lem}\label{lem: limit of nakayama-zariski decomposition}
    Let $X\rightarrow U$ be a projective morphism between normal quasi-projective varieties and let $C,D$ be two pseudo-effective$/U$ $\Rr$-Cartier $\Rr$-divisors on $X$. Assume that $N_{\sigma}(X/U,D)$ is an $\Rr$-divisor, i.e. it does not have $+\infty$ as a coefficient. Then there exists a positive real number $s_0$ and a reduced divisor $E$ satisfying the following.
    \begin{enumerate}
        \item $\Supp N_{\sigma}(X/U,C+sD)=E$ for any $0<s\leq s_0$.
        \item $\Supp N_{\sigma}(X/U,C)\subset E$.
    \end{enumerate}
\end{lem}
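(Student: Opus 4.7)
The plan is, for each prime divisor $P$ on $X$, to study the function $f_P\colon[0,\infty)\to[0,\infty]$ defined by $f_P(s) := \sigma_P(X/U, C+sD)$ and to show that $\Supp N_\sigma(X/U, C+sD)$ stabilizes as $s \to 0^+$.

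First I would reduce to finitely many candidate primes. By subadditivity and $\mathbb R_{>0}$-homogeneity of $\sigma_P$ (standard properties of the relative Nakayama--Zariski decomposition, cf.\ \cite{LX23}), one has
\[\sigma_P(X/U, C+sD)\;\le\;\sigma_P(X/U,C)+s\,\sigma_P(X/U,D)\]
in $[0,\infty]$. Thus if $P\in\Supp N_\sigma(X/U, C+sD)$ for some $s>0$, then $P\in\Supp N_\sigma(X/U, C)\cup\Supp N_\sigma(X/U, D)$, which is a finite set $\mathcal P=\{P_1,\dots,P_k\}$ by Lemma \ref{lem: finiteness of sigmap} applied to $C$ and to $D$ (note that the hypothesis that $N_\sigma(X/U,D)$ is an honest $\mathbb R$-divisor is not really needed for finiteness of its support, only that $D$ is pseudo-effective$/U$).

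Next, subadditivity plus homogeneity makes $f_P$ convex on $[0,\infty)$, and lower semicontinuity of $\sigma_P$ on the pseudo-effective cone forces $f_P$ to be LSC. Hence the zero set $Z_P:=\{s>0:f_P(s)=0\}$ is a closed convex subset of $(0,\infty)$, i.e.\ an interval of one of the forms $\emptyset,\{a\},[a,b],[a,\infty),(0,b]$, or $(0,\infty)$. In particular, exactly one of the following two alternatives holds for each $P\in\mathcal P$: either $Z_P\supset (0,\epsilon)$ for some $\epsilon>0$, in which case $P\notin\Supp N_\sigma(X/U, C+sD)$ for all small $s>0$; or $a_P:=\inf Z_P>0$ (with $a_P=\infty$ if $Z_P=\emptyset$), in which case $P\in\Supp N_\sigma(X/U, C+sD)$ for all $s\in(0,a_P)$.

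Let $E$ be the reduced divisor on $X$ consisting of the primes of $\mathcal P$ of the second type. Since $\mathcal P$ is finite, one may choose a single $s_0>0$ strictly smaller than every $a_P$ (for primes of the second type) and than every finite right endpoint $b_P$ of $Z_P=(0,b_P]$ (for primes of the first type). Then $\Supp N_\sigma(X/U, C+sD)=E$ for every $s\in(0,s_0]$, proving (1). For (2), if $P\in\Supp N_\sigma(X/U, C)$ then $f_P(0)>0$, and LSC yields $\liminf_{s\to 0^+}f_P(s)\ge f_P(0)>0$; hence $P$ lies in $\Supp N_\sigma(X/U, C+sD)$ for all small $s>0$, so $P\in E$. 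The main obstacle is the a priori possibility that infinitely many primes could enter $\Supp N_\sigma(X/U, C+sD)$ as $s$ varies; the reduction to the finite set $\mathcal P$ via subadditivity and Lemma \ref{lem: finiteness of sigmap} is the decisive step, after which the convex-analytic classification of each $Z_P$ makes the stabilization near $s=0^+$ essentially combinatorial.
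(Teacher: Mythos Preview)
Your proof is correct and follows essentially the same approach as the paper: reduce to the finite set $\Supp N_\sigma(X/U,C)\cup\Supp N_\sigma(X/U,D)$ via subadditivity, then analyze each prime separately using convexity of $s\mapsto\sigma_P(X/U,C+sD)$ and continuity at $s=0$. The paper carries this out by an explicit trichotomy of the candidate primes (its $P_i,Q_j,R_k$) rather than your convex-analytic classification of the zero sets $Z_P$, and where you invoke lower semicontinuity it cites the specific ray-continuity statement \cite[Lemma 3.3(3)]{LX23} (indeed your closedness of $Z_P$ already follows from convexity alone, so the only place semicontinuity is genuinely needed is part (2), which is exactly what that lemma provides).
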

\begin{proof}
By Lemma \ref{lem: finiteness of sigmap}, there are only finitely many components of $N_{\sigma}(X/U,C)$ and $N_{\sigma}(X/U,D)$, so we may let
\begin{enumerate}
    \item $P_1,\dots,P_m$ be all components of $N_{\sigma}(X/U,C)$,
    \item  $Q_1,\dots,Q_n$ be all components of $N_{\sigma}(X/U,D)$ that are not components of $N_{\sigma}(X/U,C)$ but are components of $N_{\sigma}(X/U,C+sD)$ for any $s>0$, and
    \item $R_1,\dots,R_l$ be all components of $N_{\sigma}(X/U,D)$ that are not components of $N_{\sigma}(X/U,C)$, and for any $1\leq k\leq l$, $R_k$ is not a component $N_{\sigma}(X/U,C+t_kD)$ for some $t_k>0$.
\end{enumerate}
By \cite[Lemma 3.3(2)]{LX23}, we have
$$N_{\sigma}(X/U,C+sD)\leq N_{\sigma}(X/U,C)+sN_{\sigma}(X/U,D),$$
hence for any $0\leq s<\min\{t_k\}$, all possible divisors that are contained in $\Supp N_{\sigma}(X/U,C+sD)$ are $P_1,\dots,P_m,Q_1,\dots,Q_n$.

By \cite[Lemma 3.3(3)]{LX23}, we have
$$\lim_{\epsilon\rightarrow 0^+}\sigma_{P_i}(X/U,C+\epsilon D)=\sigma_{P_i}(X/U,C).$$
Therefore, there exists $s_0>0$,  such that for any $i$ such that $\sigma_{P_i}(X/U,C)>0$, $\sigma_{P_i}(X/U,C+sD)>0$ for any $s\in [0,s_0]$. Possibly replacing $s_0$ with a smaller number, we may assume that $s_0<t_k$ for any $k$. 

In this case, $s_0$ and $E:=\sum_{i=1}^m P_i+\sum_{j=1}^n Q_j$ satisfy our requirements.
\end{proof}

\begin{lem}\label{lem: minimal model implies nsigma not infinity}
    Let $X\rightarrow U$ be a projective morphism between normal quasi-projective varieties and let $D$ be a pseudo-effective$/U$ $\Rr$-Cartier $\Rr$-divisor on $X$. Assume that $D$ has a minimal model$/U$. Then $N_{\sigma}(X/U,D)$ is an $\Rr$-divisor, i.e. it does not have $+\infty$ as a coefficient.
\end{lem}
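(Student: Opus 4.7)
The plan is to transport the Nakayama--Zariski decomposition of $D$ to a minimal model, where the positive part is nef (hence contributes nothing to $\sigma_P$) and the negative part is an honest effective $\mathbb R$-divisor.

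Let $\phi\colon X\dashrightarrow X'$ be the given minimal model$/U$, so that $D':=\phi_\ast D$ is nef$/U$ and $\phi$ is $D$-negative. Choose a common resolution
\[
\begin{tikzcd}
 & W \ar[dl,"p"'] \ar[dr,"q"] & \\
 X \ar[rr,dashed,"\phi"'] & & X'
\end{tikzcd}
\]
and write
\[
p^\ast D = q^\ast D' + F,
\]
where $F\geq 0$ is an $\mathbb R$-Cartier $\mathbb R$-divisor on $W$ (this is the defining property of $D$-negativity together with the fact that any $p$-exceptional contribution can be absorbed in $F$). In particular $F$ has finite coefficients, so it is an $\mathbb R$-divisor.

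Since $D'$ is nef$/U$, the pullback $q^\ast D'$ is also nef$/U$; therefore $\sigma_P(W/U,q^\ast D')=0$ for every prime divisor $P$ on $W$. By the subadditivity of $\sigma_P$ (\cite[Lemma 3.3]{LX23}) and the trivial bound $\sigma_P(W/U,F)\leq \mathrm{mult}_P F$ for an effective $\mathbb R$-divisor $F$, we obtain
\[
\sigma_P(W/U,p^\ast D)\;=\;\sigma_P(W/U,q^\ast D'+F)\;\leq\;\sigma_P(W/U,q^\ast D')+\sigma_P(W/U,F)\;\leq\;\mathrm{mult}_P F
\]
for every prime divisor $P$ on $W$. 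In particular $N_{\sigma}(W/U,p^\ast D)\leq F$ has no $+\infty$ coefficient, so it is an honest effective $\mathbb R$-divisor.

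Finally, by the behaviour of $N_\sigma$ under birational morphisms (\cite[Lemma 3.4(3)]{LMX24}),
\[
N_{\sigma}(X/U,D)\;=\;p_\ast N_{\sigma}(W/U,p^\ast D)\;\leq\;p_\ast F,
\]
which is an $\mathbb R$-divisor. Hence $N_{\sigma}(X/U,D)$ has no coefficient equal to $+\infty$, as required. The only step where one must be careful is to verify that $F$ is genuinely effective and $\mathbb R$-Cartier on $W$; this is exactly the content of the $D$-negativity of $\phi$, applied on a common resolution, so no further obstacle arises.
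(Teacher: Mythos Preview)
Your proof is correct and follows essentially the same approach as the paper: pull back to a common resolution, use subadditivity of $\sigma_P$ together with nefness of $q^\ast D'$ to bound $N_\sigma(W/U,p^\ast D)\leq F$, then push forward. The paper cites \cite[Lemma 3.4(3)]{LX23} rather than \cite[Lemma 3.4(3)]{LMX24} for the pushforward step, but the argument is the same.
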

\begin{proof}
Let $\phi\colon X\dashrightarrow X'$ be the minimal model$/U$ of $D$ and let $D':=\phi_*D$. Let $p: W\rightarrow X$ and $q: W\rightarrow Y$ be a common resolution. Then
$$p^*D=q^*D'+F$$
for some $F\geq 0$. Since $D'$ is nef$/U$, by \cite[Lemma 3.3(1)(2)]{LX23},
$$N_{\sigma}(p^*D)=N_{\sigma}(q^*D+F)\leq N_{\sigma}(q^*D)+N_{\sigma}(F)\leq F.$$
By \cite[Lemma 3.4(3)]{LX23}, $N_{\sigma}(D)\leq p_*F$. The lemma follows.
\end{proof}

\subsection{Perturbation results}

\begin{lem}[{\cite[Lemma 2.14]{CHLMSSX24}}]\label{lem: foliated log smooth imply lc}
Let $(X,\Ff,B,\Mm,t)/U$ be a foliated log smooth algebraically integrable sub-adjoint foliated structure. Then $(X,\Ff,\Supp B^{\ninv}+(1-t)\Supp B^{\inv},\Mm,t)$ is lc.
\end{lem}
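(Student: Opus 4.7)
The plan is to realize the canonical divisor of $\Aa' := (X,\Ff,\Supp B^{\ninv}+(1-t)\Supp B^{\inv},\Mm,t)$ as a convex combination of the canonical divisors of a generalized foliated quadruple and a generalized pair, and then exploit the fact that discrepancies and the lc threshold $-t\epsilon_{\Ff}(E)-(1-t)$ are both linear in $t$. Concretely, set
$$\Gg:=(X,\Ff,\Supp B^{\ninv},\Mm), \qquad \Pp:=(X,\Supp B^{\ninv}+\Supp B^{\inv},\Mm).$$
A direct computation (using $\Supp B=\Supp B^{\ninv}+\Supp B^{\inv}$, which are disjoint) shows
$$K_{\Aa'}=t\,K_{\Gg}+(1-t)\,K_{\Pp}.$$

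First I would verify that $\Pp$ is lc. By foliated log smoothness, $(X,\Sigma_X)$ is log smooth toroidal with $\Supp B\subset\Sigma_X$, so the underlying pair $(X,\Supp B^{\ninv}+\Supp B^{\inv})$ is toroidal with reduced boundary supported on the toroidal divisor, hence lc; since $\Mm$ descends to $X$ and is nef$/U$, the generalized pair $\Pp$ is lc. Next I would check that $\Gg$ is lc as a generalized foliated quadruple: foliated log smoothness provides the equidimensional toroidal contraction $f\colon (X,\Sigma_X)\to(Z,\Sigma_Z)$ inducing $\Ff$, and in this toroidal setup the boundary $\Supp B^{\ninv}$ (reduced and non-invariant, contained in $\Sigma_X$) is precisely the lc-threshold configuration; this is the standard foliated log smooth lc statement from the ACSS/CHLX framework (e.g., the analogue of \cite[Lemma 3.5]{ACSS21} or its generalization to gfqs in \cite{CHLX23}), and the contribution of $\Mm$ is harmless since it descends to $X$.

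Finally, for any prime divisor $E$ over $X$, pulling back to a common foliated log resolution $h\colon Y\to X$ and splitting $h^\ast K_{\Aa'}=t\,h^\ast K_{\Gg}+(1-t)\,h^\ast K_{\Pp}$ gives
$$a(E,\Aa')=t\,a(E,\Gg)+(1-t)\,a(E,\Pp)\ \geq\ t\cdot(-\epsilon_{\Ff}(E))+(1-t)\cdot(-1)=-t\epsilon_{\Ff}(E)-(1-t),$$
which is exactly the lc condition for $\Aa'$.

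The only step I expect to require nontrivial input is the lc-ness of the generalized foliated quadruple $\Gg$: this is where the toroidal/equidimensional geometry of $f$ is essential, since discrepancies of foliation-invariant versus non-invariant exceptional divisors behave differently and must be computed via toroidal charts. Once that step is granted (as it is in the cited foliation MMP literature), the convex combination argument above is essentially formal.
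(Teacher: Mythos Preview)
Your proof is correct. The paper itself gives no proof of this lemma but simply cites \cite[Lemma~2.14]{CHLMSSX24}; your convex-combination argument (reducing to the endpoint cases $t=1$, the generalized foliated quadruple $\Gg$, and $t=0$, the generalized pair $\Pp$, then using linearity of discrepancies) is exactly the mechanism the present paper later formalizes as Lemma~\ref{lem: combine lc afs} and deploys, e.g., in the proof of Theorem~\ref{thm: Bertini type theorem}. The two substantive inputs you flag are the right ones: lc-ness of the toroidal pair $(X,\Supp B)$ is classical, and lc-ness of the foliated log smooth generalized foliated quadruple $(X,\Ff,\Supp B^{\ninv},\Mm)$ is the standard statement from the ACSS/CHLX framework, so there is no circularity.
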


The following lemma is a variation of \cite[Lemma 3.2]{CHLMSSX24}.

\begin{lem}\label{lem: add divisor not containing lc center}
Let $\Aa/U$ be a sub-lc (resp. sub-klt) algebraically integrable sub-adjoint foliated structure such that $t<1$. Let $D\geq 0$ be an $\Rr$-Cartier $\Rr$-divisor on $X$, such that for any nklt center $V$ of $\Aa$, $V\not\subset\Supp D$. Then there exists $\epsilon>0$ such that $(\Aa,\epsilon D)$ is sub-lc (resp. sub-klt)
\end{lem}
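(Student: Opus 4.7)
The plan is to reduce the singularity check to a finite coefficient condition on a foliated log resolution.

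First, I would take a foliated log resolution $h\colon X'\to X$ of $(\Aa,D)$ in the sense of Definition~\ref{defn: log resolution}, and set $\Aa':=h^*\Aa=(X',\Ff',B',\Mm,t)$. Since discrepancies are birational invariants, $(\Aa,\epsilon D)$ is sub-lc (resp.\ sub-klt) if and only if $(\Aa',\epsilon h^*D)$ is. Because $(\Aa',\epsilon h^*D)$ is foliated log smooth, the foliated log smooth criterion underlying Lemma~\ref{lem: foliated log smooth imply lc} reduces this to the coefficient bound $\mult_E(B'+\epsilon h^*D)\leq t\,\epsilon_{\Ff'}(E)+(1-t)$ (resp.\ strict) for every prime divisor $E$ on $X'$.

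Next, write $B'=\sum_E b_E E$ and $h^*D=\sum_E d_E E$ as finite sums over prime divisors on $X'$; note that $d_E\geq 0$ because $D\geq 0$ is $\mathbb R$-Cartier. Sub-lc (resp.\ sub-klt) of $\Aa$ gives $b_E\leq t\,\epsilon_{\Ff'}(E)+(1-t)$ (resp.\ strict) for each $E$. The key observation is: if equality $b_E=t\,\epsilon_{\Ff'}(E)+(1-t)$ holds for some $E$, then $E$ is an nklt place of $\Aa$, so its image $h(E)$ is an nklt center; by hypothesis $h(E)\not\subset\Supp D$, which forces $E\not\subset h^{-1}(\Supp D)$ and hence $d_E=0$. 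Consequently, on the finite set of prime divisors $E$ on $X'$ with $d_E>0$, the inequality $b_E<t\,\epsilon_{\Ff'}(E)+(1-t)$ is strict in both cases, so choosing $\epsilon>0$ smaller than $\min\{(t\,\epsilon_{\Ff'}(E)+(1-t)-b_E)/d_E : d_E>0\}$ (with an additional slack in the sub-klt case) yields the desired bound everywhere on $X'$.

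The main technical hurdle is the reduction step: one needs to know that for a foliated log smooth algebraically integrable (sub-)adjoint foliated structure, sub-lc (resp.\ sub-klt) can be tested using only the prime divisors lying on $X'$, rather than all valuations over $X'$. This is the natural foliated analogue of the classical log smooth criterion for pairs and is implicit in Lemma~\ref{lem: foliated log smooth imply lc}; once this is granted, the argument above is a straightforward finite perturbation. The hypothesis $t<1$ enters through the fact that the threshold $t\,\epsilon_{\Ff'}(E)+(1-t)$ is then strictly positive along every prime divisor on $X'$, which is precisely what guarantees the quantitative slack $t\,\epsilon_{\Ff'}(E)+(1-t)-b_E$ is strictly positive away from nklt places.
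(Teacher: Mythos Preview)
Your proposal is correct and follows essentially the same approach as the paper's proof: pass to a foliated log resolution of $(\Aa,D)$, reduce the (sub-)lc condition to the finite coefficient inequality via Lemma~\ref{lem: foliated log smooth imply lc}, observe that any prime divisor on the resolution achieving the threshold is an nklt place whose image is therefore not contained in $\Supp D$, and choose $\epsilon$ by finiteness. The only minor remark is that your closing explanation of the role of $t<1$ is not quite the operative one, but this does not affect the argument.
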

\begin{proof}
    Let $h: W\rightarrow X$ be a foliated log resolution of $(\Aa,lD)$ for some real number $l$ general in $\mathbb R$.
    Possibly replacing $\Aa$ with $h^*\Aa$ and $D$ with $h^*D$, we may assume that $\Aa$ is foliated log smooth. Let $B$ be the boundary of $\Aa$. By Lemma \ref{lem: foliated log smooth imply lc}, for any positive real number $s$, $(\Aa,sD)$ is lc if and only if
    $$\mult_P(B+sD)\leq t\epsilon_{\Ff}(P)+(1-t)$$
    for any irreducible component $P$ of $\Supp B\cup\Supp D$. Since $D$ does not contain any lc center of $\Aa$, $\mult_PD=0$ if $\mult_PB=t\epsilon_{\Ff}(P)+(1-t)$. Since there are only finitely many  irreducible components of $\Supp B\cup\Supp D$, the lemma follows.
\end{proof}

\begin{lem}\label{lem: perturbation for q-factorial klt big boundary to +ample}
Let $\Aa/U:=(X,\Ff,B,\Mm,t)/U$ be a $\mathbb Q$-factorial klt algebraically integrable adjoint foliated structure such that $B+\Mm_X$ is big$/U$ and let $A$ be a nef$/U$ $\Rr$-divisor on $X$. Then there exist two positive rational numbers $\epsilon,\delta$ and a klt algebraically integrable adjoint foliated structure $\Aa'/U:=(X,\Ff,B',(1-\epsilon)\Mm,t)/U$, such that
\begin{enumerate}
\item $K_{\Aa}\sim_{\mathbb R,U}K_{\Aa'}+\delta A$, and
\item if $B,A$ and $\Mm_X$ have rational coefficients, then $B'$ has rational coefficients and $K_{\Aa}\sim_{\mathbb Q,U}K_{\Aa'}+\delta A.$
\end{enumerate}
\end{lem}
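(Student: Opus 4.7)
The plan is to use the bigness of $B+\Mm_X$ to run a standard Kodaira trick: trade a small piece of the generalized boundary for an ample divisor, then absorb the prescribed nef divisor $A$ into that ample divisor (using openness of the ample cone). Concretely, I would begin by invoking Kodaira's lemma to write
\[ B+\Mm_X \sim_{\mathbb R, U} A_0+E_0, \]
where $A_0$ is an ample$/U$ $\mathbb R$-divisor and $E_0\geq 0$. If $B$ and $\Mm_X$ are $\mathbb Q$-Cartier, one applies the rational Kodaira lemma instead and obtains $A_0,E_0$ with rational coefficients. Since $A$ is nef$/U$ and $A_0$ is ample$/U$, there exists a positive rational $\delta_0$ such that $A_0-\delta_0 A$ is still ample$/U$; hence one may pick a general effective $\mathbb R$-divisor $G\sim_{\mathbb R,U} A_0-\delta_0 A$ (again rational in the rational setting). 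This yields $B+\Mm_X \sim_{\mathbb R,U} \delta_0 A + G + E_0$.

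Next, for a small positive rational $\epsilon$ to be chosen in the last step, I would set
\[ B' := (1-\epsilon)B + \epsilon(G+E_0), \qquad \delta := \epsilon\delta_0, \qquad \Aa' := (X,\Ff,B',(1-\epsilon)\Mm,t). \]
A direct computation gives
\[ K_{\Aa'} = K_\Aa - \epsilon(B+\Mm_X) + \epsilon(G+E_0) \sim_{\mathbb R,U} K_\Aa - \epsilon\delta_0 A, \]
so the linear equivalence in (1) is automatic, and the analogous identity in $\sim_{\mathbb Q,U}$ holds in the rational case by construction.

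The remaining task, which I expect to be the main obstacle, is verifying that $\Aa'$ is klt for all sufficiently small $\epsilon>0$; this is the reason $\epsilon$ is only chosen at the end. I would pass to a foliated log resolution $h\colon Y\to X$ of $(X,\Ff,\Supp(B+G+E_0),\Mm)$ on which $\Mm$ descends, and observe that the discrepancy $a(E,\Aa(\epsilon))$ of any prime divisor $E$ on $Y$ depends linearly (hence continuously) on $\epsilon$, with $\Aa(0)=\Aa$ klt. Since the klt condition is a finite collection of strict inequalities on $Y$, there exists $\epsilon_0>0$ such that $\Aa(\epsilon)$ remains klt for all $\epsilon\in(0,\epsilon_0)$. (Alternatively, one may apply Lemma \ref{lem: add divisor not containing lc center} to $(X,\Ff,(1-\epsilon)B,(1-\epsilon)\Mm,t)$, which has better singularities than $\Aa$ by the usual monotonicity, combined with the generality of $G$; the $\mathcal F$-invariance of components of $E_0$ is irrelevant because klt holds vacuously at $\epsilon=0$.) Choosing finally any rational $\epsilon\in(0,\epsilon_0)$ produces the desired $\Aa'$ and $\delta$, with rational data in the rational setting.
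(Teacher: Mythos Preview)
Your proposal is correct and follows essentially the same approach as the paper: the paper writes $B+\Mm_X\sim_{\mathbb R,U}\tau A+E$ with $E\geq 0$ (your $\delta_0 A+G+E_0$ combined into one step), sets $B':=(1-\epsilon)B+\epsilon E$ and $\delta:=\tau\epsilon$, and invokes Lemma~\ref{lem: add divisor not containing lc center} to ensure $\Aa'$ is klt for $0<\epsilon\ll 1$. Your direct continuity argument on a foliated log resolution is exactly the content of that lemma's proof, so the two arguments coincide.
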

\begin{proof}
Write $B+\Mm_X\sim_{\mathbb R,U}\tau A+E$ for some $E\geq 0$ and a rational number $0<\tau\ll 1$. Moreover, if $B,A$ and $\Mm_X$ have rational coefficients, then we may assume that $B+\Mm_X\sim_{\mathbb Q,U}\tau A+E$ and $E$ is a $\mathbb Q$-divisor. By Lemma \ref{lem: add divisor not containing lc center}, there exists a rational number $0<\epsilon\ll 1$ such that $$\Aa':=(X,\Ff,B':=(1-\epsilon)B+\epsilon E,(1-\epsilon)\Mm,t)$$ is klt. We may let $\delta:=\tau\epsilon$.
\end{proof}

\begin{lem}\label{lem: combine lc afs}
    Let $\Aa_i:=(X,\Ff,B_i,\Mm_i,t_i)/U$ be a (sub-)lc (sub-)adjoint foliated structures with $i=1,\dots,k$ and let $a_i\in [0,1]$ real numbers such that $\sum a_i=1$. Then:
    \begin{enumerate}
   \item $\sum a_i\Aa_i$ is (sub-)lc.
   \item If $a_j>0$ and $\Aa_j$ is sub-klt for some $j$, then $\Aa$ is sub-klt.
    \end{enumerate}
\end{lem}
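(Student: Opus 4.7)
The plan is to exploit the fact that both the canonical divisor and discrepancies depend linearly on the weights defining an adjoint foliated structure. First, unwinding the definition, one computes
\[K_{\sum a_i\Aa_i}=\Bigl(\sum a_it_i\Bigr)K_{\Ff}+\Bigl(1-\sum a_it_i\Bigr)K_X+\sum a_iB_i+\sum a_i\Mm_{i,X}=\sum a_iK_{\Aa_i},\]
so $K_{\sum a_i\Aa_i}$ is automatically $\Rr$-Cartier and $\sum a_i\Aa_i$ is a well-defined (sub-)adjoint foliated structure.

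Next, I would check that discrepancies behave linearly under this combination. Given a prime divisor $E$ over $X$, choose a birational morphism $h\colon Y\rightarrow X$ on which $E$ appears, and write $h^*\Aa_i=(Y,\Ff_Y,B_i',\Mm_i,t_i)$ with $K_{h^*\Aa_i}=h^*K_{\Aa_i}$. Pulling back the identity above along $h$ and comparing with the defining equation for $h^*\!\bigl(\sum a_i\Aa_i\bigr)$ shows that its boundary on $Y$ equals $\sum a_iB_i'$. Consequently
\[a\Bigl(E,\sum a_i\Aa_i\Bigr)=-\mult_E\Bigl(\sum a_iB_i'\Bigr)=\sum a_i\,a(E,\Aa_i).\]

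Part (1) now follows by taking the $a_i$-weighted sum of the sub-lc inequalities $a(E,\Aa_i)\geq -t_i\epsilon_{\Ff}(E)-(1-t_i)$, which yields
\[a\Bigl(E,\sum a_i\Aa_i\Bigr)\geq -\Bigl(\sum a_it_i\Bigr)\epsilon_{\Ff}(E)-\Bigl(1-\sum a_it_i\Bigr),\]
exactly the sub-lc condition for $\sum a_i\Aa_i$. For part (2), the assumption that $\Aa_j$ is sub-klt makes the $j$-th inequality strict; since $a_j>0$, the weighted sum remains strict, yielding sub-klt. In the non-sub case, one additionally observes that $\sum a_iB_i\geq 0$, which is immediate from $B_i\geq 0$ and $a_i\geq 0$.

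No serious obstacle is expected: the argument is essentially bookkeeping based on the discrepancy-linearity formula, which itself reduces to pulling back the defining identity $K_{\Aa}=tK_{\Ff}+(1-t)K_X+B+\Mm_X$ along a common birational model. The one point that needs a little care is the matching of parameters and nef parts when pulling back $\sum a_i\Aa_i$ versus $\sum a_ih^*\Aa_i$, but both yield the same ambient foliation $\Ff_Y=h^{-1}\Ff$, the same nef $\bb$-divisor $\sum a_i\Mm_i$, and the same parameter $\sum a_it_i$, so the comparison is formal.
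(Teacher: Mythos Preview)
Your proposal is correct and follows essentially the same approach as the paper: both arguments reduce to the linearity of discrepancies $a(E,\sum a_i\Aa_i)=\sum a_i\,a(E,\Aa_i)$ together with the weighted sum of the (sub-)lc inequalities $a(E,\Aa_i)+t_i\epsilon_{\Ff}(E)+(1-t_i)\geq 0$. The paper simply states the key chain of equalities in one line, while you spell out the verification of discrepancy-linearity more explicitly via the identity $K_{\sum a_i\Aa_i}=\sum a_iK_{\Aa_i}$.
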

\begin{proof}
Let $t:=\sum a_it_i$.For any prime divisor $E$ over $X$, we have
\begin{align*}
a(E,\Aa)+(t\epsilon_{\Ff}(E)+(1-t))&=\sum a_ia(E,\Aa_i)+\sum a_i(t_i\epsilon_{\Ff}(E)+(1-t_i))\\
&=\sum a_i\left(a(E,\Aa_i)+(t_i\epsilon_{\Ff}(E)+(1-t_i))\right)\geq 0
\end{align*}
and the last inequality is a strict inequality if $a_j>0$ and $a(E,\Aa_j)+(t_i\epsilon_{\Ff}(E)+(1-t_j))>0$ for some $j$. The lemma follows.
\end{proof}

\begin{thm}[Bertini-type theorem]\label{thm: Bertini type theorem}
    Let $\Aa/U$ be a sub-lc (resp. sub-klt) algebraically integrable sub-adjoint foliated structure such that the parameter $t$ of $\Aa$ is not $1$. Let $D$ be a semi-ample$/U$ $\Rr$-divisor on $X$. Then:
    \begin{enumerate}
      \item  There exists an $\Rr$-divisor $0\leq L\sim_{\mathbb R,U}D$ such that $(\Aa,L)$ is sub-lc (resp. sub-klt).
      \item If $D$ is base-point-free$/U$, then for any integer $m\geq\frac{1}{1-t}$ and any general element $G\in |mD/U|$, $(\Aa,\frac{1}{m}G)$ is sub-lc  (resp. sub-klt).
    \end{enumerate}
\end{thm}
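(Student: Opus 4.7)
The plan is to prove (2) first and then derive (1) as a consequence. For (2), I begin by reducing to the foliated log smooth case: taking a foliated log resolution $h\colon W\to X$ of $\Aa$, the divisor $h^*D$ is still base-point-free$/U$ and the projection formula identifies $|mD/U|$ with $|mh^*D/U|$ via pullback, so general members correspond. Hence one may assume that $(X,\Ff,B,\Mm,t)$ is already foliated log smooth. Applying Bertini's theorem to $|mD/U|$ on $X$, a general element $G$ is reduced, shares no irreducible component with $\Supp B$, and avoids every fixed proper closed subvariety of codimension $\geq 2$; in particular, $G$ does not contain any codimension-$\geq 2$ nklt center of $\Aa$.

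The core computation is carried out on a foliated log resolution $\pi\colon Y\to X$ of $(\Aa,\tfrac{1}{m}G)$: for each prime divisor $P$ of $Y$ one checks the coefficient of $\pi^*(\Aa,\tfrac{1}{m}G)$ along $P$. When $P$ is the strict transform of a component of $G$, the coefficient equals $\tfrac{1}{m}\leq 1-t\leq t\epsilon_{\Ff_Y}(P)+(1-t)$, using that $G$ shares no component with $\Supp B$. When $P$ is the strict transform of a component of $\Supp B$ or of the toroidal boundary $\Sigma_X$, the genericity of $G$ gives $\mult_P(\pi^*G)=0$, so the coefficient is inherited from $\pi^*\Aa$ and is bounded by $t\epsilon_{\Ff_Y}(P)+(1-t)$ by the sub-lc hypothesis on $\Aa$. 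When $P$ is $\pi$-exceptional with center contained in $\Supp G$ (the delicate case arising from making $\Sigma_X+G$ fit into a toroidal structure on $Y$), a discrepancy computation on the blow-up -- analogous to the classical Bertini-for-lc-pairs argument -- shows that the coefficient of $P$ is still bounded by $t\epsilon_{\Ff_Y}(P)+(1-t)$, thanks to the numerical bound $\tfrac{1}{m}\leq 1-t$ and the transversality of $G$ with $\Sigma_X$ provided by Bertini. Applying Lemma \ref{lem: foliated log smooth imply lc} on the foliated log smooth model $Y$ then yields that $(\Aa,\tfrac{1}{m}G)$ is sub-lc; the sub-klt statement follows from the strict versions of the same inequalities.

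For (1), use semi-ampleness of $D$ to write $D\sim_{\mathbb R,U}\sum_{i=1}^{k}a_iD_i$ with $a_i>0$ and each $D_i$ base-point-free$/U$ Cartier. Fix a positive integer $m$ with $m\geq\max_i a_i/(1-t)$, and for each $i$ choose a general $G_i\in|mD_i/U|$ for which the argument of (2) -- run with the coefficient $\tfrac{a_i}{m}\leq 1-t$ in place of $\tfrac{1}{m}$ -- shows that one may add $\tfrac{a_i}{m}G_i$ while preserving sub-lc. Setting $L:=\sum_{i=1}^{k}\tfrac{a_i}{m}G_i\geq 0$, one has $L\sim_{\mathbb R,U}D$; the genericity of the independently chosen $G_i$'s ensures they share no codimension-one components pairwise, so the coefficient of $L$ at any prime divisor $P$ is $\tfrac{a_i}{m}\leq 1-t$ for the unique index $i$ with $P\subset\Supp G_i$, or $0$. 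A simultaneous repetition of the analysis in (2) across the finitely many linear systems then yields that $(\Aa,L)$ is sub-lc.

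The principal obstacle I anticipate is the third case in the verification of (2): bounding the coefficients along $\pi$-exceptional divisors with center in $\Supp G$ that arise from toroidalising the pair $(X,\Sigma_X+G)$. In the purely pair case this is the classical Bertini-for-lc argument -- transversality of $G$ with $\Sigma_X$ keeps the discrepancies of the new exceptional divisors within the lc range. Adapting this to the algebraically integrable foliated setting requires tracking the $\Ff$-invariance data $\epsilon_{\Ff}(P)$ through the sequence of blow-ups needed to achieve foliated log smoothness; the numerical bound $\tfrac{1}{m}\leq 1-t$ turns out to be the precise input that makes the foliated discrepancy computation close at the boundary of the lc range, mirroring the classical bound $\tfrac{1}{m}\leq 1$ in the pair case.
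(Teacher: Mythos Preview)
Your reduction to the foliated log smooth case matches the paper. For part~(2), however, the paper takes a much shorter route than your direct discrepancy computation. Once $\Aa=(X,\Ff,B,\Mm,t)$ is foliated log smooth, the paper simply observes that
\begin{itemize}
\item the structure $(X,\Ff,B^{\ninv},\Mm)$ with parameter $1$ is sub-lc (immediate from foliated log smoothness), and
\item the generalized pair $\bigl(X,\,B^{\ninv}+\tfrac{1}{1-t}B^{\inv}+\tfrac{1}{m(1-t)}G,\,\Mm\bigr)$ with parameter $0$ is sub-lc by the classical Bertini theorem for generalized pairs, since $m\geq\tfrac{1}{1-t}$ forces the coefficient $\tfrac{1}{m(1-t)}\leq 1$.
\end{itemize}
Because $(\Aa,\tfrac{1}{m}G)$ is precisely the convex combination
\[
t\cdot(X,\Ff,B^{\ninv},\Mm,1)\;+\;(1-t)\cdot\Bigl(X,\,B^{\ninv}+\tfrac{1}{1-t}B^{\inv}+\tfrac{1}{m(1-t)}G,\,\Mm,\,0\Bigr),
\]
Lemma~\ref{lem: combine lc afs} gives the conclusion in one line. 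This completely bypasses the ``principal obstacle'' you flag: there is no second foliated resolution and no tracking of $\epsilon_{\Ff}(P)$ through toroidal blow-ups, because the foliated contribution is isolated at parameter $1$ (where $G$ does not appear) and the $G$-contribution lives entirely in the generalized-pair world at parameter $0$ (where classical Bertini applies).

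Your direct approach is plausible in outline, but the exceptional-divisor case is only sketched, and carrying it out would require a genuine analysis of how $\epsilon_{\Ff}$ behaves along the exceptional divisors of a foliated log resolution of $(X,\Sigma_X+G)$---not impossible, but substantially more work than the paper's decomposition trick. Your derivation of (1) from (2) is essentially the same as the paper's.
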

\begin{proof}
Let $h: X'\rightarrow X$ be a foliated log resolution of $\Aa$. Possibly replacing $\Aa$ with $h^*\Aa$ and $D$ with $h^*D$, we may assume that $\Aa$ is foliated log smooth. 

Let $\Aa:=(X,\Ff,B,\Mm,t)$. Then $(X,\Ff,B^{\ninv},\Mm)$ is sub-lc. Under the condition of (2), 
$$\left(X,B^{\ninv}+\frac{1}{1-t}B^{\inv}+\frac{1}{m(1-t)}G,\Mm\right)$$
is sub-lc (resp. sub-klt). By Lemma \ref{lem: combine lc afs}, $(\Aa,\frac{1}{m}G)$ is sub-lc (resp. sub-klt). 

To prove (1), we may write $D=\sum a_iD_i$ where each $a_i>0$ and each $D_i$ is base-point-free$/U$. (1) follows by repeatedly applying (2).
\end{proof}

\begin{lem}\label{lem: perturbation to q coefficient no m}
Let $\Aa/U:=(X,\Ff,B,\Mm,t)/U$ be an lc algebraically integrable adjoint foliated structure such that $X$ is potentially klt. Let $A$ be an ample$/U$ $\Rr$-divisor on $X$. Then there exist an ample$/U$ $\Rr$-divisor $H$ on $X$ and a klt algebraically integrable adjoint foliated structure $\Aa'/U:=(X,\Ff,B',t')/U$ such that $B'$ has rational coefficients, $t'\in\mathbb Q\cap [0,1)$, and
$$K_{\Aa}+A\sim_{\mathbb R,U}K_{\Aa'}+H.$$
\end{lem}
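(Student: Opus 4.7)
The plan is to produce $\Aa'$ and $H$ by perturbing $\Aa$ in three successive stages, each consuming a small fraction of the ample divisor $A$: (i) absorb the $\bb$-divisor $\Mm$ into the boundary via Bertini, (ii) move the parameter $t$ to a nearby rational number $t' \in [0,1)$, and (iii) round the coefficients of the boundary to rational numbers and promote the singularities from lc to klt. Combining the three stages, all but an ample remainder of $A$ has been consumed, and that remainder will be $H$.

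Concretely, I would fix small positive rational numbers $\eta_1, \eta_2, \eta_3$ whose sum is less than $1$, and write $A$ as the sum of the four pieces $\eta_i A$ ($i=1,2,3$) plus $H := (1-\eta_1-\eta_2-\eta_3)A$, which is still ample$/U$. In Stage (i), I apply the Bertini-type theorem (Theorem \ref{thm: Bertini type theorem}) to the semi-ample$/U$ divisor representing $\Mm + \eta_1 \bar A$ on a foliated log resolution where $\Mm$ descends, producing an effective $\mathbb R$-divisor $L$ on $X$ with $L \sim_{\mathbb R, U} \Mm_X + \eta_1 A$ such that the enlarged structure $(X,\Ff,B+L,\bm 0,t)$ remains lc; this replaces $(B,\Mm)$ by $(B+L,\bm 0)$ at the cost of $\eta_1 A$. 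In Stage (ii), I choose $t' \in \mathbb Q \cap [0,1)$ close to $t$ and absorb the contribution $(t-t')(K_{\Ff}-K_X)$ into the boundary, using $\eta_2 A$ to provide the ample slack needed so that the modified boundary is effective and the new structure is still lc; here I exploit that $X$ is potentially klt so that a klt auxiliary divisor $\Delta_0$ on $X$ can be used to express the relevant differences as $\mathbb R$-Cartier divisors. In Stage (iii), I round the boundary to have rational coefficients and pass from lc to klt, using $\eta_3 A$ together with Lemma \ref{lem: add divisor not containing lc center} to avoid nklt centers.

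The main obstacle will be the $\mathbb R$-Cartier bookkeeping: neither $\Mm_X$ nor $K_{\Ff}-K_X$ need be $\mathbb R$-Cartier on $X$ in general, so each perturbation must be arranged so that the \emph{net} change in $K_{\Aa}$ is $\mathbb R$-Cartier and $\mathbb R$-linearly equivalent to an effective $\mathbb R$-divisor with rational coefficients. I would resolve this by carrying out the constructions on a foliated log resolution (or a small $\mathbb Q$-factorialization of $X$, which exists since $X$ is potentially klt), where every Weil divisor is $\mathbb R$-Cartier, performing the perturbation there, and finally verifying via the discrepancy identity $h^*K_{\Aa}=K_{h^*\Aa}$ that the perturbed data descends to an AFS on $X$ with the required properties.
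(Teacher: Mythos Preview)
Your three-stage outline has the right general shape, but each stage has a gap, and the serious one is Stage~(iii).

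In Stage~(i), on the log resolution $Y$ the divisor $\Mm_Y+\eta_1 h^*A$ is only nef$/U$ (nef plus the pullback of ample), not semi-ample$/U$, so Theorem~\ref{thm: Bertini type theorem} does not apply directly. The paper fixes this by observing that, since $A$ is ample on $X$, one has $\BB_+(\Mm_Y+\tfrac12 h^*A)\subset\Exc(h)$; hence $\Mm_Y+\tfrac12 h^*A\sim_{\mathbb R,U}A_{k,Y}+\tfrac1k E$ with $A_{k,Y}$ ample$/U$ and $E\ge0$ exceptional$/X$, and Bertini is applied to $A_{k,Y}$.

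Stage~(ii) is essentially correct if interpreted as the convex-combination trick $(1-\epsilon)\Aa+\epsilon(X,\Ff,\Delta_0,\bm 0,0)$ with $(X,\Delta_0)$ klt and $(1-\epsilon)t\in\mathbb Q$; this is what the paper does, and it does it first, simultaneously making $t$ rational and $\Aa$ klt.

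The real gap is Stage~(iii). ``Rounding'' the coefficients of $B''$ to rational $B'$ requires $B''-B'$ to be $\mathbb R$-Cartier on $X$; otherwise $K_{\Aa'}$ is not $\mathbb R$-Cartier and $H$ is undefined. On a non-$\mathbb Q$-factorial $X$ this is a genuine constraint: an arbitrary perturbation on a small $\mathbb Q$-factorialization does not push down to something $\mathbb R$-Cartier, so your proposed ``descend via $h^*K_{\Aa}=K_{h^*\Aa}$'' does not go through. The paper's device is as follows. After the Bertini step one has on $Y$ a divisor $\Delta_Y$ with $tK_{\Ff_Y}+(1-t)K_Y+\Delta_Y\sim_{\mathbb R,X}0$ (and $t\in\mathbb Q$). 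Writing $\Delta_Y=\sum_{i=0}^n r_i\Delta_{Y,i}$ with $\Delta_{Y,i}$ $\mathbb Q$-divisors and $r_0=1,r_1,\dots,r_n$ $\mathbb Q$-linearly independent, one invokes \cite[Lemma~5.3]{HLS24} to deduce $\Delta_{Y,i}\sim_{\mathbb R,X}0$ for every $i\ge1$. Replacing each $r_i$ by a nearby rational $r_i'$ then changes the canonical divisor only by $\sum_{i\ge1}(r_i'-r_i)h_*\Delta_{Y,i}$, which is $\mathbb R$-Cartier on $X$ (indeed $\sim_{\mathbb R}0$) and small, so it can be absorbed into the remaining $\tfrac12 A$ to produce $H$. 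This $\mathbb Q$-linear-independence decomposition is the key idea you are missing; without it, Lemma~\ref{lem: add divisor not containing lc center} alone does not let you rationalize the boundary.
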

\begin{proof}
Let $(X,\Delta)$ be a klt pair. By Lemma \ref{lem: combine lc afs}, possibly replacing $\Aa$ with $(1-\epsilon)\Aa+\epsilon(X,\Ff,\Delta,\bm{0},0)$
and $A$ with $A+\epsilon(K_{\Aa}-(K_X+\Delta))$ for some $0<\epsilon\ll 1$ such that $t(1-\epsilon)$ is rational, we may assume that $t$ is rational and $\Aa$ is klt.

Let $h: Y\rightarrow X$ be a foliated log resolution of $\Aa$ and let $\Aa_Y:=h^*\Aa$. Since $\Mm_Y$ is nef$/U$, $\Mm_Y+\frac{1}{2}h^*A$ is big$/U$ and nef$/U$. Since $A$ is ample, $\BB_+(\Mm_Y+\frac{1}{2}h^*A)\subset\Exc(h)$. Thus for any integer $k\geq 1$, 
we may write
$$\Mm_Y+\frac{1}{2}h^*A\sim_{\mathbb R,U}A_{k,Y}+\frac{1}{k}E$$
for some $E\geq 0$ that is exceptional$/X$ and ample$/U$ $\mathbb R$-divisors $A_{k,Y}$. By Lemma \ref{lem: add divisor not containing lc center}, for any integer $k\gg 0$, $(\Aa_Y,\frac{1}{k}E)$ is sub-klt. By Lemma \ref{thm: Bertini type theorem}, there exists $0\leq L\sim_{\mathbb R,U}A_{k,Y}$, such that $(\Aa_Y,\frac{1}{k}E+L)$ is sub-klt. Let $B_Y$ be the boundary of $\Aa_Y$, $\Delta_Y:=B_Y+\frac{1}{k}E+L$, and write $\Delta_Y=\sum_{i=0}^n r_i\Delta_{Y,i}$ where each $\Delta_{Y,i}$ is a $\mathbb Q$-divisor, $r_0=1$, and $r_0,\dots,r_n$ are $\mathbb Q$-linearly independent. Since $tK_{\Ff_Y}+(1-t)K_Y+\Delta_Y\sim_{\mathbb R,X}0$ and $t$ is rational, by \cite[Lemma 5.3]{HLS24}, $\Delta_{Y,i}\sim_{\mathbb R,X}0$ for any $1\leq i\leq n$.

Let $r_i'$ be rational numbers such that $||r_i'-r_i||\ll 1$, then $(Y,\Ff_Y,r_0\Delta_{Y,0}+\sum_{i=1}^n r_i'\Delta_{Y,i},t)$ is foliated log smooth and sub-klt, hence $$\Aa':=\left(X,\Ff,h_*\left(r_0\Delta_{Y,0}+\sum_{i=1}^n r_i'\Delta_{Y,i}\right),t\right)$$
is klt. Moreover, 
$$K_{\Aa'}\sim_{\mathbb R,U}K_{\Aa}+\frac{1}{2}A+\sum_{i=1}^n(r_i'-r_i)h_*\Delta_{Y,i}.$$
Let $H:=\frac{1}{2}A-\sum_{i=1}^n(r_i'-r_i)h_*\Delta_{Y,i}$. Then $\Aa'$ and $H$ satisfy our requirements.
\end{proof}

\begin{lem}\label{lem: perturb gklt to klt}
    Let $(X,B,\Mm)/U$ be a $\mathbb Q$-factorial klt generalized pair such that $B+\Mm_X$ is big$/U$. Then there exists a klt pair $(X,\Delta)$ such that $K_X+\Delta\sim_{\mathbb R,U}K_X+B+\Mm_X$.
\end{lem}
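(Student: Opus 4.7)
The plan is to eliminate the nef $\bb$-divisor $\Mm$ through a perturbation argument, by combining Kodaira's lemma with Lemma \ref{lem: perturbation for q-factorial klt big boundary to +ample} and Theorem \ref{thm: Bertini type theorem}.

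First, using that $B+\Mm_X$ is big$/U$, by Kodaira's lemma I would write $B+\Mm_X\sim_{\Rr,U}A+E$ with $A$ ample$/U$ and $E\geq 0$ on $X$. Viewing $(X,B,\Mm)$ as a $\Qq$-factorial klt algebraically integrable adjoint foliated structure with $\Ff=T_X$ and $t=0$, Lemma \ref{lem: perturbation for q-factorial klt big boundary to +ample} applied with this $A$ produces $\epsilon,\delta\in\Qq_{>0}$ and a klt generalized pair $(X,B_1,(1-\epsilon)\Mm)$ with
\[K_X+B+\Mm_X\sim_{\Rr,U}K_X+B_1+(1-\epsilon)\Mm_X+\delta A.\]
Since $\delta A$ is ample and hence semi-ample$/U$, Theorem \ref{thm: Bertini type theorem} then yields $0\leq L\sim_{\Rr,U}\delta A$ with $(X,B_1+L,(1-\epsilon)\Mm)$ still klt. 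This single step replaces $\Mm$ by $(1-\epsilon)\Mm$ while absorbing the ample correction $\delta A$ into the boundary.

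To eliminate the residual nef $\bb$-divisor $(1-\epsilon)\Mm$ and land in the world of pairs, I would pass to a log resolution $h\colon X'\to X$ on which $\Mm$ descends. On $X'$, $\Mm_{X'}$ is nef$/U$, and combining it with an ample divisor on $X'$ obtained from $h^*A$ corrected by a small $h$-exceptional divisor $-\eta G$ (with $G\geq 0$ and $-G$ relatively $h$-ample) produces an ample$/U$ divisor on $X'$. A further application of Theorem \ref{thm: Bertini type theorem} to this ample divisor, against the pullback of the sub-klt structure of $(X,B_1+L,(1-\epsilon)\Mm)$, yields a sub-klt pair $(X',\Delta_{X'})$ with $K_{X'}+\Delta_{X'}\sim_{\Rr,U}h^*(K_X+B+\Mm_X)$. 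Setting $\Delta:=h_*\Delta_{X'}$ produces an effective $\Rr$-divisor on $X$ satisfying $K_X+\Delta\sim_{\Rr,U}K_X+B+\Mm_X$.

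The hard part will be confirming that $(X,\Delta)$ is klt on $X$, rather than merely that its pullback $(X',\Delta_{X'})$ is sub-klt: the $h$-exceptional difference $h^*(K_X+\Delta)-(K_{X'}+\Delta_{X'})$ is $\Rr$-linearly trivial$/U$ and pushes forward to zero on $X$, but is not automatically the zero divisor. The resolution is to set up the ample divisor and the Bertini perturbation on $X'$ to descend compatibly from $X$, so that this difference becomes $\Rr$-linearly trivial$/X$; the negativity lemma then forces it to vanish, and the klt property of $(X,\Delta)$ follows from the sub-klt property of $(X',\Delta_{X'})$.
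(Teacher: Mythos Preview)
Your proposal has a book-keeping error that breaks the argument. After absorbing $\delta A$ into the boundary via Theorem~\ref{thm: Bertini type theorem}, you are left with the klt generalized pair $(X,B_1+L,(1-\epsilon)\Mm)$ and \emph{no ample divisor in hand}. Your second paragraph then invokes ``an ample divisor on $X'$ obtained from $h^*A$'', but $\delta A$ has already been traded for $L$; there is nothing left to combine with $(1-\epsilon)\Mm_{X'}$ on the resolution. The first Bertini application is thus not merely redundant but harmful: it spends the only ample class on a reduction $\Mm\leadsto(1-\epsilon)\Mm$ that cannot be iterated to zero.

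The fix is to omit that Bertini step and keep $\delta A$. You are then exactly where the paper is after its one-line perturbation, and the paper simply cites \cite[Lemma~3.4]{HL22} to absorb $(1-\epsilon)\Mm_X+\epsilon A$ into an effective boundary while preserving klt. Your log-resolution paragraph is essentially a sketch of a proof of that cited lemma. The strategy (pass to $X'$ where $\Mm$ descends, make $(1-\epsilon)\Mm_{X'}+\delta h^*A-\eta G$ ample, apply Bertini there, push down) is the standard one, but the descent you flag is more delicate than a bare negativity-lemma argument: one needs $K_{X'}+\Delta_{X'}$ to equal $h^*(K_X+\Delta)$, not merely to be $\Rr$-linearly equivalent to it over $U$, and arranging this requires running the Bertini step over $X$ rather than over $U$ and tracking the exceptional contributions explicitly. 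The paper sidesteps all of this by citing the literature.
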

\begin{proof}
    We write $B+\Mm_X=A+E$ where $A$ is ample$/U$ and $E\geq 0$. Take $0<\epsilon\ll 1$ such that $(X,(1-\epsilon)B+\epsilon E,(1-\epsilon)\Mm)$ is klt. By \cite[Lemma 3.4]{HL22}, there exists a klt pair $(X,\Delta)$ such that 
    $$K_X+\Delta\sim_{\mathbb R,U}K_X+(1-\epsilon)B+\epsilon E+(1-\epsilon)\Mm_X+\epsilon A\sim_{\mathbb R,U}K_X+B+\Mm_X.$$
\end{proof}

\subsection{Families of foliations}

\begin{defn}\label{d_bounded}
    We say that a set $\mathcal P$ of projective foliated pairs $(X,\mathcal F)$ is \emph{bounded} if there exist finitely many flat projective morphisms $f^i\colon X^i\to T^i$ of normal varieties with normal fibers and foliations $\mathcal G^i$ of rank $r_i$ on $X^i$ with $i=1,\dots,N$, such that 
    \begin{enumerate}
    \item for any closed point $t\in T^i$, if $X^i_t$ denotes the fiber of $f^i$ over $t$, then $T_{X^i/T^i}|_{X^i_t}\simeq T_{X^i_t}$; 
    \item $T_{\mathcal G^i}\subset T_{X^i/T^i}$ and for any closed point 
    $t\in T^i$, we have that $T_{\mathcal G^i}|_{X^i_t}\subset T_{X^i_t}$ defines a foliation ${\mathcal F^i_t}$ of rank $r_i$ on $X_t^i$; and 
    \item for any  $(X,\mathcal F)\in \mathcal P$, there exist $i=1,\dots,N$, a closed point $t\in T^i$ and an isomorphism $\phi\colon X\to X^i_t$ such that $\phi_*\mathcal F\simeq \mathcal F^i_t$.
    \end{enumerate}
\end{defn}

\begin{lem}
\label{lem_linear_algebra}
Let $k$ be a field and let $V$ be a $k$-vector space of dimension $n$.  Fix an integer $0<r\leq n$ and let $L \subset \bigwedge^r V$ be a one 
dimensional subspace spanned by a non-zero vector $w$.
Consider the map $\alpha\colon V \to \bigwedge^{r+1} V$ defined by $v \mapsto v\wedge w$.
Then:
\begin{enumerate}
\item $\bigwedge^r \ker \alpha \subset L$.  In particular, if $\dim \ker \alpha = r$ then $\bigwedge^r \ker \alpha = L$.

\item If $W\subset V$ is a $r$-dimensional subspace such that $\wedge^r W=L$ then $W=\ker \alpha$.
\end{enumerate}
\end{lem}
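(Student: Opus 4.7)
The plan is to reduce both parts to a direct computation in a basis adapted to $\ker\alpha$, using that $w\ne 0$ forces strong vanishing on the coefficients.

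For part (1), I would set $K:=\ker\alpha$ and $s:=\dim K$, pick a basis $v_1,\dots,v_s$ of $K$, and extend it to a basis $v_1,\dots,v_n$ of $V$. Expanding $w=\sum_{|I|=r}a_I\,v_I$ in the induced basis of $\bigwedge^r V$, the condition $v_i\wedge w=0$ for $i=1,\dots,s$ translates into the vanishing $a_I=0$ whenever some $i\in\{1,\dots,s\}$ lies outside $I$. In other words, $a_I\ne 0$ only when $\{1,\dots,s\}\subset I$. Since $w\ne 0$ and $|I|=r$, this already forces $s\le r$. If $s<r$ then $\bigwedge^r K=0\subset L$ trivially; if $s=r$ the only surviving index is $I=\{1,\dots,r\}$, so $w$ is a nonzero scalar multiple of $v_1\wedge\cdots\wedge v_r$, and hence $L=\bigwedge^r K$ since both sides are one-dimensional.

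For part (2), I would choose a basis $w_1,\dots,w_r$ of $W$, so that $w_1\wedge\cdots\wedge w_r$ is a nonzero generator of $\bigwedge^r W=L$ and therefore equals a nonzero scalar multiple of $w$. For each $i$, the element $w_i\wedge w$ is then a scalar multiple of $w_i\wedge w_1\wedge\cdots\wedge w_r$, which vanishes by antisymmetry. Hence $w_i\in\ker\alpha$ for all $i$, so $W\subset\ker\alpha$. Combining this with the inequality $\dim\ker\alpha\le r$ extracted along the way in part (1) and the equality $\dim W=r$, one concludes $W=\ker\alpha$.

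The only substantive point is to make sure part (1) is carried out so as to yield the uniform bound $\dim\ker\alpha\le r$ (not just the conditional statement about $\bigwedge^r\ker\alpha$), since this inequality is precisely what upgrades the inclusion $W\subset\ker\alpha$ in part (2) into an equality. Beyond this bookkeeping, no real obstacle is expected: everything is elementary multilinear algebra once a compatible basis is fixed.
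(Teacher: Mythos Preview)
Your proof is correct and essentially parallel to the paper's, though more explicit. The paper argues in one line via the divisibility criterion in exterior algebra---if $v\neq 0$ and $v\wedge w=0$ then $w=w_1\wedge v$ for some $w_1\in\bigwedge^{r-1}V$---and then says ``the Lemma follows easily''; iterating over a basis of $\ker\alpha$ gives $w=w'\wedge v_1\wedge\cdots\wedge v_s$ and hence $s\le r$, with equality forcing $\bigwedge^r\ker\alpha=L$. Your coordinate computation is precisely an unwinding of that divisibility fact in a chosen basis, so the two arguments have the same content; yours is simply more self-contained, while the paper's is terser but assumes the reader knows the divisibility criterion.
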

\begin{proof}
Note that if $v\in V$ is a non-zero vector such that $v\wedge w=0$, then there exists $w_1\in \bigwedge^{r-1} V$ such that $w= w_1\wedge v$. Thus, the Lemma follows easily. 
\end{proof}

\begin{lem}
Let $f\colon X \to T$ be a flat morphism between normal varieties with normal fibers.  Let $E$ be a coherent sheaf on $X$ and suppose that 
$E\vert_{X_t}$ is reflexive for all closed point $t \in T$, where $X_t$ denotes the fiber of $f$ over $t$.  Then:
\begin{enumerate}
    \item $E$ is a reflexive sheaf; and 
    \item if $S \to T$ is a morphism such that $X_S:= X\times_TS$ is normal, then $E_S := q^*E$ is reflexive,
    where $q\colon X_S \to X$ is the natural projection.
\end{enumerate}
\end{lem}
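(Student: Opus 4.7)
The plan is to use the standard characterization that on a normal Noetherian scheme a coherent sheaf is reflexive if and only if it is torsion-free and satisfies Serre's condition $S_2$. The key technical tool is the depth formula for flat local homomorphisms: if $(A,\mathfrak m) \to (B,\mathfrak n)$ is a flat local map and $M$ is a finitely generated $B$-module flat over $A$, then
\[
\operatorname{depth}_B M = \operatorname{depth}_A A + \operatorname{depth}_{B/\mathfrak m B}\bigl(M/\mathfrak m M\bigr).
\]

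For part (1), I would first establish that $E$ is flat over $T$. Since each fiber $E|_{X_t}$ is torsion-free (being reflexive), the local criterion of flatness yields that $E$ itself is $T$-flat. Torsion-freeness of $E$ on $X$ is then immediate, since any nontrivial torsion subsheaf of $E$ would restrict nontrivially to some fiber. For Serre's $S_2$ condition, I apply the depth formula at each $x \in X$ with $t = f(x)$: the normality of $T$ gives $\operatorname{depth} \mathcal{O}_{T,t} \geq \min(2, \dim \mathcal{O}_{T,t})$, and the reflexivity of $E|_{X_t}$ on the normal variety $X_t$ gives $\operatorname{depth}(E|_{X_t})_x \geq \min(2, \dim \mathcal{O}_{X_t,x})$. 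A short case analysis, combined with the flatness dimension identity $\dim \mathcal{O}_{X,x} = \dim \mathcal{O}_{T,t} + \dim \mathcal{O}_{X_t, x}$, yields $\operatorname{depth} E_x \geq \min(2, \dim \mathcal{O}_{X,x})$, which gives reflexivity of $E$ via the characterization above.

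For part (2), the sheaf $q^*E$ is flat over $S$ (flatness is preserved by base change) and its fibers over closed points $s \in S$ with image $t \in T$ coincide, after a field extension, with the reflexive sheaves $E|_{X_t}$; reflexivity of these extended fibers follows from the known behavior of reflexivity under flat base change of fields. Torsion-freeness of $q^*E$ on $X_S$ is again immediate from the fiberwise statement. The delicate point is controlling depth when $S$ itself is not assumed to be normal or even $S_2$. Here one bootstraps using the depth formula applied to the structure sheaves: at any $x' \in X_S$ mapping to a generic point of its fiber over $s \in S$ one has $\dim \mathcal{O}_{(X_S)_s, x'} = 0$, so the depth formula gives $\operatorname{depth} \mathcal{O}_{S,s} = \operatorname{depth} \mathcal{O}_{X_S, x'} \geq \min(2, \dim \mathcal{O}_{X_S, x'}) = \min(2, \dim \mathcal{O}_{S,s})$, where the first inequality uses the normality of $X_S$. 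Consequently $\mathcal{O}_{S,s}$ satisfies $S_2$ at every $s$ in the image of $X_S$, and the same depth argument as in (1) -- now applied to the flat map $X_S \to S$ -- establishes that $q^*E$ is $S_2$, hence reflexive.

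The main obstacle I anticipate is the handling of the possibly non-normal base $S$ in part (2); the argument hinges on the bootstrap just described, which transmits the normality of $X_S$ (and of the fibers of $f$) back along the flat map $X_S \to S$ to produce enough depth on $\mathcal{O}_{S,s}$ for the reflexivity argument to close. Once this is in place, both parts are formally parallel applications of the $S_2$ criterion plus the depth formula.
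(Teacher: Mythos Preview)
Your deduction of $T$-flatness of $E$ is the weak link: torsion-freeness of $E|_{X_t}$ as a sheaf on $X_t$ says nothing about $\operatorname{Tor}_1^{\mathcal O_{T,t}}(E_x,k(t))$, which is what the local criterion of flatness actually controls. Take $f=\operatorname{id}\colon \mathbb A^2\to\mathbb A^2$ and $E$ the ideal sheaf of the origin: every fiber is a reduced point, so $E|_{X_t}$ is a finite-dimensional vector space and hence trivially reflexive, yet $E$ is not flat over $T$ (the fiber rank jumps) and $E^{**}=\mathcal O_X\neq E$. This breaks your flatness step and simultaneously shows the lemma, read literally, needs an extra hypothesis such as $T$-flatness of $E$. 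Once flatness is in hand, the depth formula you quote becomes available (it genuinely requires $M$ flat over $A$) and your $S_2$ argument goes through; note too that the hypothesis is only on closed fibers, so you must still argue reflexivity of $E|_{X_t}$ propagates to non-closed $t$, which again uses flatness.

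For comparison, the paper does not argue via depth and $S_2$. It takes $Z\subset X$ to be the non-locally-free locus of $E$, asserts that $Z\cap X_t$ has codimension $\ge 2$ for every $t$, and then argues that the natural map $E\to j_*(E|_{X\setminus Z})$ is an isomorphism on each fiber (by reflexivity of $E|_{X_t}$), hence an isomorphism; part (2) is dispatched as ``an easy consequence of (1)''. This route also tacitly uses $T$-flatness---both to identify the global non-locally-free locus with the union of the fiberwise ones and to promote a fiberwise isomorphism to a global one---so the underlying lacuna is shared. Granting flatness, both arguments are valid and essentially equivalent; your bootstrap in (2), transferring $S_2$ from the normal $X_S$ back to $\mathcal O_{S,s}$ along the flat structure map and then rerunning the depth argument, is a cleaner justification than the paper's one-liner.
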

\begin{proof}
We first prove (1).  Since $E\vert_{X_t}$ is reflexive, it is locally free away from a subset of codimension at least 2.  It follows that there exists a closed subset $Z \subset X$ such that
$Z \cap X_t$ is of codimension at least 2 for all $t \in T$ and $E\vert_{X\setminus Z}$ is locally free. 
The assumption that $E\vert_{X_t}$ is reflexive implies that the natural morphism $f\colon E \to j_*(E\vert_{X\setminus Z})$
is an isomorphism when restricted to any fibre (here $j\colon X\setminus Z \to X$ is the inclusion).  We therefore deduce that $f$ is in fact an isomorphism, i.e. $E \cong j_*(E\vert_{X\setminus Z})$, which implies that $E$ is reflexive. 

    (2) is an easy consequence of (1).
\end{proof}

\begin{defn}
Let $X$ be a variety. A \emph{stratification} of $X$ is a finite set of pairwise disjoint locally closed subsets $X_1,\dots,X_N$ of $X$ such that $\bigcup_{i=1}^N X_i=X$. The subset $X_i$ is called  \emph{stratum} of $\tilde X$.
\end{defn}

Given a sheaf $E$ on a variety $X$ and a point $x\in X$, we denote by $E(x) := E \otimes_{\mathcal O_X} \kappa(x)$ the fiber of $E$ at $x$.

\begin{lem}
\label{lem_reconstruction}
Fix integers $0<r \leq s$. Let $f\colon X \to T$ be a flat projective morphism between normal varieties with normal fibers and let $Z \subset X$ be a closed subset 
such that $Z \cap X_t \subset X_t$ is codimension at least 2 for all closed point $t \in T$, where $X_t$ is the fiber of $f$ over $t$.  Set $X^\circ := X \setminus Z$ and assume that $X_t\cap X^\circ$ is smooth for all $t\in T$.  
Consider the following set-up:
\begin{itemize}
\item let $E$ (resp. $L$) be a reflexive sheaf of rank $s$
(resp. rank 1) on $X$ such that $E\vert_{X_t}$ (resp. $L\vert_{X_t}$) is a reflexive sheaf for all $t \in T$; 

\item suppose that  $E^\circ := E\vert_{X^\circ}$ (resp. 
$L^\circ := L\vert_{X^\circ}$) is a vector bundle of rank $s$ (resp. rank 1); and 
\item suppose that $L^\circ \subset \bigwedge^{r}E^\circ$ is a sub-line bundle.
\end{itemize} 
Consider the subset of $T$ given by 
\[T_r\coloneqq \{ t \in T \mid \text{ there exists a rank $r$ sub-bundle }  F^\circ_t \subset E\vert_{X^\circ_t} \text{ such that } 
\det F^\circ_t = L\vert_{X^\circ_t}\}.\]

Then $T_r$ is a closed subset.  Moreover, 
there exists a stratification of $T_r$ such that if $i\colon \tilde{T} \to T_r$ is a stratum and
$j\colon \tilde{X} := X\times_T\tilde{T} \to X$ is the  projection, then: 
\begin{enumerate}
    \item $\tilde{X}$ is normal; 
    \item there exists a reflexive sub-sheaf
$F \subset j^*E$ which is a sub-bundle away from $j^{-1}(Z)$; and 
\item $\det F = j^*L$.
\end{enumerate}
\end{lem}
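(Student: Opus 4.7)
The plan is to reduce the question to the pointwise linear-algebraic criterion of Lemma \ref{lem_linear_algebra}. Over $X^\circ$, the wedge product with a local generator of $L^\circ \subset \bigwedge^r E^\circ$ defines a morphism of vector bundles
\[\alpha\colon E^\circ \longrightarrow G^\circ := \bigl(\textstyle\bigwedge^{r+1} E^\circ\bigr)\otimes (L^\circ)^{-1}, \qquad v\mapsto v\wedge w,\]
whose kernel has fiber dimension at most $r$ everywhere by Lemma \ref{lem_linear_algebra}(1). First I would identify the closed maximal-degeneracy locus
\[Y:=\{\,x\in X^\circ\mid \rank\alpha(x)= s-r\,\}\subset X^\circ,\]
and check that $t\in T_r$ if and only if $X_t^\circ\subset Y$: one direction is Lemma \ref{lem_linear_algebra}(2), which forces any candidate $F_t^\circ$ on $X^\circ_t$ to agree with $\ker\alpha_t$ pointwise; the other direction produces the desired $F_t^\circ$ as $\ker\alpha_t$ (constant-rank kernel is automatically a sub-bundle), with determinant $L|_{X_t^\circ}$ supplied by Lemma \ref{lem_linear_algebra}(1).

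To prove that $T_r$ is closed in $T$, I would let $\overline Y$ be the closure of $Y$ in $X$ and observe that $X_t^\circ\subset Y$ is equivalent to $X_t\subset \overline Y\cup Z$, since $X_t\setminus X_t^\circ\subset Z$. Hence
\[T_r = T\setminus f\bigl(X\setminus (\overline Y\cup Z)\bigr).\]
Because $f$ is flat and of finite type it is universally open, so the right-hand side is the complement of an open subset of $T$, and therefore closed.

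For the stratification and the construction of $F$, I would run a Noetherian induction inside (the reduction of) $T_r$: the regular locus $(T_r)_{\reg}$ is open and dense, and the flat base change $X\times_T (T_r)_{\reg}\to (T_r)_{\reg}$ has normal fibers over a regular base, hence its total space is normal by a standard result. Iterating yields a finite stratification $\{\tilde T_i\}$ of $T_r$ such that each $\tilde X_i:=X\times_T\tilde T_i$ is normal. Fix one such stratum $\tilde T$, set $\tilde X := X\times_T\tilde T$ and $\tilde X^\circ:=j^{-1}(X^\circ)$. Every point of $\tilde X^\circ$ lies in some fiber $\tilde X^\circ_{\tilde t}=X^\circ_t$ with $t\in T_r$, so $j^*\alpha$ has constant fiber-rank $s-r$ on $\tilde X^\circ$; thus $\tilde F^\circ:=\ker(j^*\alpha)$ is a rank-$r$ vector sub-bundle of $j^*E^\circ$, and Lemma \ref{lem_linear_algebra}(1) applied pointwise gives $\det\tilde F^\circ = j^*L^\circ$ inside $\bigwedge^r j^*E^\circ$. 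Since $\tilde X\setminus\tilde X^\circ=j^{-1}(Z)$ has fiberwise codimension at least $2$ over $\tilde T$, it has codimension at least $2$ in the normal scheme $\tilde X$; setting $\iota\colon\tilde X^\circ\hookrightarrow\tilde X$ and $\tilde F:=\iota_*\tilde F^\circ$ produces a reflexive sheaf on $\tilde X$. By the preceding lemma of the paper, $j^*E$ and $j^*L$ are reflexive on $\tilde X$, so the inclusion $\tilde F^\circ\subset j^*E^\circ$ and the identification $\det\tilde F^\circ=j^*L^\circ$ both extend across the codimension-two locus to give $\tilde F\subset j^*E$ with $\det\tilde F=j^*L$.

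The main obstacle I expect is the closedness of $T_r$: the pointwise characterization via Lemma \ref{lem_linear_algebra} only provides a constructible description of its complement, and upgrading this to a closed description requires both the closure trick $Y\rightsquigarrow \overline Y$ (to absorb the codimension-$2$ locus $Z$ where $\alpha$ is not defined) and the flatness of $f$ (to invoke openness of its image). Once $T_r$ is shown to be closed and a normalizing stratification is in place, the construction of $F$ is a routine combination of constant-rank kernel arguments with reflexive extension across a codimension-two locus.
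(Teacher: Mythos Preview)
Your proposal is correct and follows essentially the same approach as the paper: both reduce to the pointwise kernel of the wedge map $\alpha$ (the paper calls it $\psi$) and invoke Lemma \ref{lem_linear_algebra}. Your treatment is in fact slightly cleaner in two places: you observe directly that $\dim\ker\alpha(x)\le r$ everywhere (so $Y$ is closed in $X^\circ$, not merely locally closed) and you argue pointwise via Lemma \ref{lem_linear_algebra}(2) to get the sub-bundle property on fibers, whereas the paper passes through the generic point and cites \cite[Lemma 1.20]{DPS94}; likewise, your closedness argument for $T_r$ via openness of the flat map $f$ makes explicit what the paper leaves implicit.
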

\begin{proof}
We have a natural morphism $E^\circ \otimes L^\circ \to \bigwedge^{r+1}E^\circ$
which induces a morphism 
$\psi\colon E^\circ \to \bigwedge^{r+1}E^\circ \otimes (L^\circ)^*$.  Let 
\[W'_r :=\left\{ x \in X^\circ \middle| \dim \ker\left(\psi(x) \colon E^\circ(x) \to \left(\bigwedge^{r+1}E^\circ \otimes (L^\circ)^*\right)(x)\right) = r\right\}.\]
Notice that $W'_r \subset X^\circ$ is a locally closed subset. Let $W_r$ be its Zariski closure in $X$.

We claim that if $t \in T$ is a point such that $X_t \subset W_r$, then $t\in T_r$.
Indeed, let $\eta \in X_t$ be the generic point.  Then by Lemma \ref{lem_linear_algebra} (1), 
$F(\eta)\coloneqq \ker \psi(\eta) \subset E\vert_{X_t}(\eta)$
is a  $r$-dimensional subspace such that $\det F(\eta) = L\vert_{X_t}(\eta)$.  We define $F_t:=i_*F(\eta) \cap E\vert_{X_t}$ where 
$i\colon \{\eta\} \to X_t$ is the inclusion.
Note that $F_t \subset E\vert_{X_t}$ is a coherent saturated sub-sheaf, and is therefore reflexive.  
The fact that 
$\det F_t = L\vert_{X_t}$ 
follows because this equality may be checked at the generic point
of $X_t$.  
Finally, 
\cite[Lemma 1.20]{DPS94} implies that $F_t\vert_{X^\circ_t}$ is in fact a sub-bundle of $E\vert_{X^\circ_t}$.

Conversely, if $t \in T$ is a point such that there exists a sub-bundle $F^\circ_t \subset  E\vert_{X^\circ_t}$ with $\det F^\circ_t = L\vert_{X^\circ_t}$
then Lemma \ref{lem_linear_algebra} (2) implies that $\dim \ker \psi(\eta) = r$, where $\eta \in X_t$ is the generic point. Therefore, $X^\circ_t \subset W_r$.

It follows that $T_r$ is the largest subset of $T$ such that $f^{-1}(T_r) \subset W_r$, in particular it is a closed subset.

Now, consider a stratification of $T_r$ so that if $i\colon \tilde{T} \to T_r$ is a stratum and
$j\colon \tilde{X} := X\times_T\tilde{T}\to X$ is the projection then $\tilde X$ is normal.
It follows that  $F\coloneqq \ker \psi \subset E$ is a reflexive sub-sheaf of rank $r$.
By our previous observations, we see that $\det F\vert_{X^\circ_t} = L\vert_{X^\circ_t}$ for all $t \in T$, and so $\det F = j^*L$.  Likewise, since $F\vert_{X^\circ_t}$ 
is a sub-bundle for all $t \in T$, we deduce that $F \subset E$ is a sub-bundle away from $j^{-1}(Z)$.
\end{proof}

\begin{prop}
\label{prop_pfaff_to_fol}
Let $f\colon X \to T$ be a flat projective morphism between normal varieties with normal 
fibers and let $K$ be a $\mathbb Q$-Cartier divisor on $X$. 

Consider the set 
$\mathcal P$ of projective foliated pairs $(Y,\mathcal F)$ such that there exist a closed point $t\in T$ and an isomorphism $\phi\colon Y\to X_t$, where $X_t$ is the fiber of $f$ over $t$, such that $K_{\mathcal F}\sim \phi^*(K|_{X_t})$.

Then $\mathcal P$ is bounded. 
\end{prop}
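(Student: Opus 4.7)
The plan is to parameterize each $(Y,\mathcal F)\in\mathcal P$ by its Pfaff field, reconstruct the underlying distribution via Lemma~\ref{lem_reconstruction}, and impose the closed condition that the distribution be involutive. I first reduce to foliations of a fixed rank $r$, which takes only finitely many values since $\dim X_t$ is locally constant on $T$ and a finite union of bounded families is bounded. Let $X^\circ\subset X$ be the locus where $f$ is smooth; by the normal-fiber hypothesis, $X^\circ\cap X_t$ has complement of codimension $\geq 2$ in $X_t$ for every closed point $t\in T$. Set $E:=j_\ast T_{X^\circ/T}$ (with $j\colon X^\circ\hookrightarrow X$) and $L:=\mathcal O_X(-K)$; by the preceding reflexivity lemma, $E$ is reflexive with $E|_{X_t}\simeq T_{X_t}$ for every $t$. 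After replacing $K$ by a suitable Cartier multiple and working with the tensor powers of Pfaff fields (or equivalently passing to an index-one cover), we may assume $L$ is a line bundle. Any $(Y,\mathcal F,\phi)\in\mathcal P$ of rank $r$ then gives, via the Pfaff field $\det T_{\mathcal F}\hookrightarrow(\bigwedge^r T_Y)^{\vee\vee}$ and the isomorphism $\det T_{\mathcal F}\simeq \phi^\ast L|_{X_t}$, a non-zero section (up to scalar) of $\mathcal H:=(\bigwedge^r E)^{\vee\vee}\otimes L^{-1}$ on $X_t$.

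I would then parameterize these Pfaff fields universally. By generic freeness and cohomology-and-base-change, there is a finite stratification $T=\bigsqcup T^\alpha$ such that $\mathcal G_\alpha:=(f|_{X^\alpha})_\ast(\mathcal H|_{X^\alpha})$ is locally free on each stratum and its formation commutes with base change. Let $T'_\alpha:=\mathbb P_{T^\alpha}(\mathcal G_\alpha^\vee)\to T^\alpha$ and $X'_\alpha:=X^\alpha\times_{T^\alpha}T'_\alpha$; on $X'_\alpha$ there is a tautological sub-line-bundle inclusion $\mathcal L'\hookrightarrow(\bigwedge^r E')^{\vee\vee}$ whose restriction to every closed fiber $X'_{\alpha,t'}$ (with $t'\mapsto t$) is the Pfaff field corresponding to the chosen line in $\mathcal G_\alpha|_{t'}$, with $\mathcal L'|_{X'_{\alpha,t'}}\simeq L|_{X_t}$. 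After further stratifying $T'_\alpha$ so that each stratum has a normal base-changed total space, I would apply Lemma~\ref{lem_reconstruction}: its output (after one more stratification) is a reflexive rank-$r$ sub-sheaf $F$ of $E$ on each stratum $\widetilde T$, a sub-bundle off the relative non-smooth locus, with the correct determinant on fibers.

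Finally, $F$ is a foliation on a closed fiber if and only if the induced $\mathcal O$-linear Lie-bracket morphism $\bigwedge^2 F|_{X^\circ_{\widetilde T}}\to(E/F)|_{X^\circ_{\widetilde T}}$ vanishes, which is a closed condition on $\widetilde T$. Restricting to this closed subscheme yields finitely many flat projective families carrying foliations whose closed fibers realize every element of $\mathcal P$ of rank $r$; summing over $r$ verifies Definition~\ref{d_bounded}. The main technical obstacle is the handling of the $\mathbb Q$-Cartier divisor $K$ — reducing to a setting where $\mathcal O_X(-K)$ behaves as a line bundle on fibers, so that the Pfaff-field inclusion itself (rather than only a tensor power) is available — together with the orchestration of the successive stratifications dictated by generic freeness, cohomology-and-base-change, and Lemma~\ref{lem_reconstruction}.
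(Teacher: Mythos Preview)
Your proposal is correct and follows essentially the same route as the paper: parameterize Pfaff fields by projectivizing the pushforward of $\HHom(\mathcal O_X(-K),(\bigwedge^r T_{X/T})^{**})$, apply Lemma~\ref{lem_reconstruction} over a stratification, and cut down to the locus of genuine foliations. One caveat on the $\mathbb Q$-Cartier reduction you flag at the end: replacing $K$ by a Cartier multiple $mK$ and working with tensor powers of the Pfaff field lands you in $(\bigwedge^r E)^{\otimes m}$ rather than $\bigwedge^r E$, so Lemma~\ref{lem_reconstruction} no longer applies as stated, and an index-one cover is not a global construction; the paper's fix is simpler---keep $L=\mathcal O_X(-K)$ as a reflexive rank-one sheaf and arrange via stratification and the choice of $Z$ that $L$ is invertible on the fibrewise smooth locus $X^\circ$, which is automatic since a reflexive rank-one sheaf on a normal variety is invertible on the smooth locus. (The paper, conversely, does not explicitly impose the Frobenius closed condition as you do.)
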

\begin{proof}
Fix a positive integer $r$. We may assume that if $(Y,\mathcal F)\in \mathcal P$ then $\mathcal F$ has rank $r$.
Up to replacing $T$ by a stratum of a  stratification of $T$, we may  assume that
\begin{itemize}
\item $\mathcal O_X(-K)\vert_{X_t}$
and 
$(\bigwedge^rT_{X/T})^{**})\vert_{X_t}$ are reflexive sheaves for all $t \in T$;
\item $\HHom (\mathcal O_X(-K), (\bigwedge^rT_{X/T})^{**})\vert_{X_t}$ is a reflexive sheaf for all $t \in T$; and 
\item $E := f_*\HHom (\mathcal O_X(-K), (\bigwedge^rT_{X/T})^{**})$ is locally free. 
\end{itemize}

Let $P := \mathbb P(E)$, let 
$W := X \times_T P$, let $pr_1\colon W \to X$ be the projection  and let 
$\Psi\colon pr_1^*\mathcal O_X(-K) \to (pr_1^*\bigwedge^rT_{X/T})^{**}$ be the tautological morphism.
Note that $pr_1^*(\bigwedge^rT_{X/T})^{**} \simeq (\bigwedge^rT_{W/P})^{**}$ and $pr_1^*\mathcal O_X(-K) = \mathcal O_X(-K_P)$ where $K_P = pr_1^* K$. 

For any $s\in P$, we denote by $W_s$ the fiber of $W\to P$ over $s$. Consider the subset $P_1 \subset P$ given by 
\begin{align*}
P_1 :=\Bigg\{s \in P\Bigg| \Psi_s\colon \mathcal O_{W}(-K_P)\vert_{W_s} \to \left(\bigwedge^rT_{W/P}\right)^{**}\Bigg\vert_{W_s} \text{ is a sub-line bundle} \\  \text{away from a subset of codimension } \ge 2\Bigg\}.
\end{align*}
Note that $P_1$ is an open subset of $P$.
We may find a closed subset $Z \subset W_1\coloneqq W\times_PP_1$ such that $Z \cap W_s$ is codimension at least two for all $s \in P_1$ and 
\begin{itemize}
    \item $W_s \setminus Z$ is smooth; and 
    \item $\mathcal O_{W}(-K_P)\vert_{W_s\setminus Z} \to (\bigwedge^rT_{W_1/P_1})^{**})\vert_{W_s\setminus Z}$
    is a sub-line bundle.
    \end{itemize}
By Lemma \ref{lem_reconstruction}, we may construct a  stratification of $P_1$,
such that
 if $i\colon \tilde{T} \to P_1$ is a stratum, then there exist 
 a reflexive sheaf $F$ on $\tilde{W} := W_1\times_{P_1}\tilde{T}$
and a  morphism $F \to j^*T_{W_1/P_1}$
where $j\colon \tilde{W} \to W_1$ is the natural projection  such that 
\begin{itemize}
\item if $t \in T$ is a closed point and $\mathcal G$ is a rank $r$ foliation on $X_t$ such that
 $K_{\mathcal G} \sim K\vert_{X_t}$ then
taking the dual of the $r$-th wedge power 
of the inclusion $T_{\mathcal G} \subset T_{X_t}$ gives a map $\psi \colon \mathcal O_{X_t}(-K_{\mathcal G}) \to (\bigwedge^rT_{X_t})^{**}$; 
\item there exist $s \in i(\tilde T)$ and an isomorphism $\varphi\colon X_t\to W_s$ such that $K_{\mathcal G}\sim \varphi^*K_P\vert_{W_s}$ and $\psi = \Psi\vert_{X_t}$; and
\item  the restriction of the morphism 
$E \to j^*T_{X_1/P_1}$ to the fiber over $i^{-1}(s)$
 is precisely $T_{\mathcal G} \to T_{X_t}$.
\end{itemize}

Perhaps stratifying $\tilde{T}$ further, we may assume that the morphism 
$\tilde{W} \to \tilde{T}$ admits normal fibers and satisfies condition (1) of Definition \ref{d_bounded}.
To show that (2) holds and that the morphism $E \to j^*T_{W_1/P_1}$ defines a foliation, it suffices to show that
$j^*T_{W_1/P_1} \cong T_{\tilde{W}/\tilde{T}}$. Indeed, we have a natural morphism $T_{\tilde{W}/\tilde{T}} \to j^*T_{W_1/P_1}$ and since this map is an isomorphism fiberwise, it is an isomorphism.
The discussion of the previous paragraph shows that (3) holds.
\end{proof}

\section{Extraction of non-terminal places}\label{sec: extract non-terminal place}

The goal of this section is to prove Theorem \ref{thm: extract non-terminal place intro}, which immediately implies that the existence of a semi-ample model is equivalent to the existence of a $\mathbb Q$-factorial good minimal model for klt algebraically integrable adjoint foliated structures (Proposition \ref{prop: eowlm implies eomm}). We begin with the following weaker version of Theorem \ref{thm: extract non-terminal place intro}.

\begin{thm}\label{thm: extract non-terminal places}
    Let $\Aa/U:=(X,\Ff,B^{\ninv}+(1-t)B^{\inv},\Mm,t)/U$ be a klt algebraically integrable adjoint foliated structure and $\mathcal{S}$ a finite set of prime divisors over $X$ such that $a(D,\Aa)\leq 0$ for any $D\in\mathcal{S}$. Then there exists a projective birational morphism $f: Y\rightarrow X$ such that $Y$ is $\mathbb Q$-factorial, and the divisors contracted by $f$ are exactly the divisors in $\mathcal{S}$.
\end{thm}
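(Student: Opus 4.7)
My plan is to reduce to extracting a single divisor and then split the argument according to the discrepancy of $E$ with respect to the associated klt generalized pair $(X,B,\Mm)$. Since discrepancies are pullback-invariant and the klt property of $\Aa$ is preserved when we pass to an intermediate extracted model (the new boundary coefficients $-a(S_i,\Aa)$ lie in $[0,\,t\epsilon_\Ff(S_i)+(1-t))$, hence still klt), it suffices to treat the case $\mathcal S=\{E\}$ for a single prime divisor $E$ over $X$ and then induct on $|\mathcal S|$.

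With $\mathcal S=\{E\}$, Proposition \ref{prop: lc Aat implies lc Aa0} produces a klt generalized pair $(X,B,\Mm)$ from $\Aa$ whenever $t<1$ (the case $t=1$ reduces to existing foliation results). A direct discrepancy computation on a foliated log resolution shows that $a(E,\Aa)\le 0$ does \emph{not} in general force $a(E,X,B,\Mm)\le 0$, since variety and foliation discrepancies of $E$ differ by a foliation-geometric term. I therefore distinguish two cases. In Case A, $a(E,X,B,\Mm)\le 0$, and I apply \cite[Lemma~4.6]{BZ16} to the klt generalized pair $(X,B,\Mm)$ directly to obtain a $\mathbb Q$-factorial $f\colon Y\to X$ whose only exceptional divisor is $E$. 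In Case B, $a(E,X,B,\Mm)>0$ while $a(E,\Aa)\le 0$, and I follow the strategy sketched in Section~\ref{sec: proof sketch}.

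For Case B, take a foliated log resolution $h\colon W\to X$ extracting $E$ together with auxiliary exceptional divisors $F_1,\dots,F_n$, and build a klt foliated log smooth structure $\Aa'_W=(W,\Ff_W,B'_W,\Mm,t)$ by raising each $F_j$'s coefficient in $B'_W$ to $(1-\delta)(t\epsilon_{\Ff_W}(F_j)+(1-t))$ for small $\delta>0$, while keeping the coefficient of $E$ equal to $-a(E,\Aa)$. For $\delta$ small enough, $\Aa'_W$ is klt and $K_{\Aa'_W}=h^\ast K_\Aa+G$ with $G\ge 0$ supported exactly on $F_1+\cdots+F_n$. I then run two MMPs over $X$: first a $(K_{\Ff_W}+(B'_W)^{\ninv}+\Mm_W)$-MMP$/X$ with scaling, which terminates by \cite[Theorem~1.5]{LMX24}, yielding $\phi\colon W\dashrightarrow V$; then on $V$ a MMP$/X$ for the klt generalized pair structure on $\phi_\ast\Aa'_W$ obtained by absorbing the foliation contribution into a nef $\bb$-divisor over $X$, whose generalized boundary is big$/X$, terminating by \cite[Lemma~4.4]{BZ16}. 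Because $G$ is very exceptional$/X$, \cite[Lemma~3.4]{Bir12} ensures that the composite $W\dashrightarrow V'$ contracts precisely the $F_j$'s. If $E$ survives on the final model, the composite $V'\to X$ is the desired extraction. If instead $E$ happens to be contracted along the way, a discrepancy computation on the induced klt generalized pair $(X,B'',\Mm'')$ (where $\Mm''$ absorbs the foliation contribution to $K_\Aa$) shows that $a(E,X,B'',\Mm'')\le 0$, and a final application of \cite[Lemma~4.6]{BZ16} to this generalized pair produces the desired $Y\to X$.

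The principal obstacle is the bookkeeping in Case B when $E$ is contracted during the MMPs: one must carefully verify that the klt generalized pair on $X$ induced by the output of the two MMPs indeed has $a(E,X,B'',\Mm'')\le 0$. This requires a delicate discrepancy tracking through both MMP steps, keeping close watch of how the foliation component of $K_\Aa$ contributes to the nef $\bb$-divisor on the output and how that shifts the discrepancy of $E$ into the range where \cite[Lemma~4.6]{BZ16} becomes applicable.
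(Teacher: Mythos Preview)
Your proposal is essentially the paper's own approach: reduce to a single divisor $E$, split on the sign of $a(E,X,B,\Mm)$, and in Case~B pass to a foliated log resolution, bump the auxiliary exceptional coefficients, run the foliation MMP then the generalized-pair MMP over $X$, and finally do the discrepancy bookkeeping when $E$ is contracted. You have even correctly identified that last computation as the crux.

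One technical device you are missing and should build in: the paper chooses the resolution $h\colon W\to X$ so that there is an anti-ample$/X$ divisor $H_W\ge 0$ supported in $\Exc(h)$, sets $\Ll:=\overline{-H_W}$, and adds $\Ll$ to the nef part (with a small multiple of $H_W$ added to the boundary). This is what makes the first MMP terminate --- the paper invokes \cite[Theorem~16.1.4]{CHLX23}, which needs the ample contribution, rather than \cite[Theorem~1.5]{LMX24} as you cite --- and it also guarantees that $\Aa_{V,0}$ remains klt so that the second MMP is a genuine klt generalized-pair MMP. Moreover, in the final discrepancy computation the $\Ll$-term shows up (as the inequality $b_1\ge -a(E,\Aa)$), so you will need to track it there as well. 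Also note a small structural point: the case split is on whether $E$ is contracted by the \emph{first} (foliation) MMP $\phi$; the second MMP then contracts exactly $\Supp G$, so \cite[Lemma~3.3]{Bir12} is applied to the second MMP alone, not to the composite.
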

\begin{proof}
By Theorem \ref{thm: klt afs implies potentially klt}, there exists a small $\mathbb Q$-factorialization $h: X'\rightarrow X$. Possibly replacing $\Aa$ with $h^*\Aa$, we may assume that $X$ is $\mathbb Q$-factorial. By induction on the number of divisors in $\mathcal{S}$, we may assume that $\mathcal{S}$ contains exactly one prime divisor $E$. 

By Proposition \ref{prop: lc Aat implies lc Aa0}, $(X,B,\Mm)$ is klt. If $a(E,X,B,\Mm)\leq 0$, then the existence of $f: Y\rightarrow X$ follows from \cite[Lemma 4.6]{BZ16}, so we may assume that $a(E,X,B,\Mm)>0$. Since $a(E,\Aa)\leq 0$, by linearity of discrepancies, $a(E,\Ff,B^{\ninv},\Mm)<0$. Since $\Aa$ is klt, possibly replacing $t$ with $t+\delta_1$ for some $0<\delta_1\ll 1$, we may assume that $a(E,\Aa)<0$. Let $b:=-a(E,\Aa)$, then $b>0$.

Let $h: W\rightarrow X$ be a foliated log resolution of $\Aa$ such that $E$ is on $X$ and there exists an anti-ample$/X$ $\Rr$-divisor $H_W\geq 0$ on $W$ such that $H_W\subset\Exc(h)$.

Since $\Aa$ is klt, there exists $\delta_2>0$ such that $a(D,\Aa)>-(t\epsilon_{\Ff}(D)+(1-t))(1-2\delta_2)$
for any prime divisor $D$ on $W$ that is exceptional$/X$.

We let $B_W,F_W$ be the unique $\Rr$-divisors on $W$ such that
\begin{itemize}
\item $B_W-h^{-1}_*B:=F_W$ is exceptional$/X$,
\item $\mult_DF_W=1-\delta_2$ if $D\not=E$ and $D$ is exceptional$/X$, and 
\item $(t\epsilon_{\Ff}(E)+(1-t))\mult_EF_W=b$.
\end{itemize}
Since $t<1$, possibly replacing $H_W$ with $\delta_3H_W$ for some $0<\delta_3\ll 1$, we may assume that $\lfloor B_W+H_W^{\ninv}+\frac{1}{1-t}H_W^{\inv}\rfloor=0$ and $B_W^{\ninv}\geq t(B_W^{\ninv}+H_W^{\ninv})$.

Let $\Ll:=\overline{-H_W}$ an let 
$$\Aa_{W,s}/X:=\left(W,\Ff_W,B_W^{\ninv}+(1-s)B_W^{\inv}+H_W^{\ninv}+\frac{1-s}{1-t}H_W^{\inv},\Mm+\Ll,s\right)\Bigg/X$$
for any $s\in [0,1]$. Then each $\Aa_{W,s}$ is foliated log smooth, and $\Aa_{W,s}$ is klt for any $s<1$.

Since $\Ll_X$ is ample$/X$, by \cite[Theorem 16.1.4]{CHLX23}, we may run a $K_{\Aa_{W,1}}$-MMP$/X$ with scaling of an ample divisor $\phi: W\dashrightarrow V$ which terminates with a $\mathbb Q$-factorial good minimal model $\Aa_{V,1}/X$ of $\Aa_{W,1}/X$ such that $\Aa_{V,0}$ is klt, where $\Aa_{V,s}:=\phi_*\Aa_{W,s}$ for any $s\in [0,1]$. Let $g: V\rightarrow X$ be the induced birational morphism. By our construction, we have
$$K_{\Aa_{V,t}}=g^*K_{\Aa}+\sum_D((t\epsilon_{\Ff}(D)+(1-t))(1-\delta_2)+a(D,\Aa))D:=g^*K_{\Aa}+G$$
where the sum runs through all $g$-exceptional prime divisors that are not $\phi_*E$. 
By our construction of $\delta_2$, $G\geq 0$, $\Supp G=\Exc(g)$ if $E$ is contracted by $\phi$, and $\Supp G$ is the closure of $\Exc(g)\backslash\{\phi_*E\}$ if $E$ is not contracted by $\phi$. 

Let $\Pp:=\overline{K_{\Aa_{V,1}}}$. Then $\Pp$ is nef$/X$. Since $\Aa_{V,0}$ is klt, $\Bb_{V}/X:=(\Aa_{V,0},\frac{t}{1-t}\Pp)/X$ is a $\mathbb Q$-factorial generalized pair, and $K_{\Aa_{V,t}}=(1-t)K_{\Bb_V}.$ By \cite[Lemma 4.4]{BZ16} and \cite[Lemma 3.3]{Bir12}, we may run a $K_{\Bb_V}$-MMP$/U$ with scaling of an ample divisor that is also a $K_{\Aa_{V,t}}$-MMP$/U$ with scaling of an ample divisor, which terminates with a model $T$ such that the divisors contracted by the induced birational map $\psi: V\dashrightarrow T$ are exactly the divisors contained in $\Supp G$.

If $E$ is not contracted by $\phi$, then since $\phi_*E$ is not a component of $\Supp G$, $E$ is also not contracted by $\psi$, hence we may let $T:=Y$ and the induced birational morphism $f: Y\rightarrow X$ satisfies our requirements. Therefore, we may assume that $E$ is contracted by $\phi$. In this case, since $X$ is $\mathbb Q$-factorial and the induced birational map $T\rightarrow X$ does not contract 
any divisor, we have $T=X$ hence $\psi: V\rightarrow X$ is a morphism. Let
$$\Bb:=\left(X,B,\Mm+\Ll+\frac{t}{1-t}\Pp\right),$$
then since $H_W\subset\Exc(h)$, $\Bb=\psi_*\Bb_V$, hence $\Bb$ is a klt generalized pair. Let $\Aa':=\psi_*\Aa_{V,t}$. Note that $K_{\Aa'}=K_{\Aa}$ but $\Aa'\not=\Aa$ because of the $\bb$-divisor $\Ll$.

We set $b_1:=-a(E,\Aa')$, $b_2:=-a(E,\Bb)$, $c_1:=-a(E,\Aa_{V,t})$, and $c_2:=-a(E,\Bb_V)$. Since $\psi_*K_{\Bb_V}=K_{\Bb}$, $\psi_*K_{\Aa_{V,t}}=K_{\Aa'}$, and $K_{\Aa_{V,t}}=(1-t)K_{\Bb_V}$, we have
$$b_1-c_1=(1-t)(b_2-c_2).$$

Let $h: Z\rightarrow V$ be a projective birational morphism such that $E$ is on $Z$. Let $\Aa_{Z,s}:=h^{-1}_*\Aa_{V,s}$ for any $s\in [0,1]$ and $\Bb_Z:=h^{-1}_*\Bb_V$.
Then we have $\mult_E(h^*K_{\Aa_{V,t}}-K_{\Aa_{Z,t}})=c_1$ and $\mult_E(h^*K_{\Bb_V}-K_{\Bb_Z})=c_2$.  Since $E$ is contracted by $\phi$, we have
\begin{align*}
s:=&\mult_E(h^*\Pp_V-h^{-1}_*\Pp_V)=\mult_E(h^*K_{\Aa_{V,1}}-K_{\Aa_{Z,1}})\\
=&-a(E,\Aa_{V,1})<-a(E,\Aa_{W,1})=\mult_E(B_W^{\ninv}+H_W^{\ninv}).
\end{align*}
By linearity of discrepancies, we have
$$c_2=-a(E,\Aa_{V,0})=\frac{1}{1-t}(-a(E,\Aa_{V,t})+ta(E,\Aa_{V,1}))=\frac{1}{1-t}(c_1-ts),$$
hence
$$c_1=(1-t)c_2+ts.$$
This implies that
$$b_2=\frac{1}{1-t}(b_1-c_1)+c_2=\frac{1}{1-t}(b_1-ts).$$
By our construction of $B_W$ and $H_W$, we have
\begin{align*}
   b_1&=-a(E,X,\Ff,B^{\ninv}+(1-t)B^{\inv},\Mm+\Ll,t)\geq-a(E,X,\Ff,B^{\ninv}+(1-t)B^{\inv},\Mm,t)\\
   &=-a(E,\Aa)=\mult_E(B_W^{\ninv}+(1-t)B_W^{\inv})\geq\mult_EB_W^{\ninv}\geq t\mult_E(B_W^{\ninv}+H_W^{\ninv})>ts.
\end{align*}
Therefore, $b_2>0$. Since $\Bb$ is a klt generalized pair, the existence of $f$ follows from \cite[Lemma 4.6]{BZ16}.
\end{proof}

\begin{proof}[Proof of Theorem \ref{thm: extract non-terminal place intro}]
 Since $X$ is potentially klt, possibly replacing $X$ with a small $\mathbb Q$-factorialization, we may assume that $X$ is $\mathbb Q$-factorial.
   
Let $H\geq 0$ be an ample $\Rr$-divisor on $X$ such that $\Center_XD\subset\Supp H$ for any $D\in\mathcal{S}$ such that $a(D,\Aa)=0$ and $\Supp H$ does not contain any nklt center of $\Aa$. By Lemma \ref{lem: add divisor not containing lc center}, there exists $0<\epsilon\ll 1$ such that $(\Aa,\epsilon H)$ is lc. Possibly replacing $\Aa$ with $(\Aa,\epsilon H)$, we may assume that $a(D,\Aa)<0$ for any $D\in\mathcal{S}$. Possibly replacing $\Aa$ with $(X,\Ff,(1-\delta)B,(1-\delta)\Mm,t(1-\delta))$ for some $0<\delta\ll 1$, we may assume that $\Aa$ is klt. We are done by Theorem \ref{thm: extract non-terminal places}.
\end{proof}

\begin{prop}\label{prop: eowlm implies eomm}
  Let $\Aa/U$ be a klt algebraically integrable adjoint foliated structure. If $\Aa/U$ has a weak lc model (resp. a semi-ample model) then $\Aa/U$ has a $\mathbb Q$-factorial minimal model (resp. $\mathbb Q$-factorial good minimal model).
\end{prop}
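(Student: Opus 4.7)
The strategy is to take a weak lc model $\phi\colon X\dashrightarrow X'$ of $\Aa/U$ and upgrade it to a $\mathbb Q$-factorial minimal model by first passing to a small $\mathbb Q$-factorialization and then using Theorem \ref{thm: extract non-terminal places} to extract those $\phi$-exceptional prime divisors of $X$ on which $\phi$ is $K_{\Aa}$-trivial. The point is that the resulting birational map will be $K_{\Aa}$-negative on \emph{all} of its exceptional divisors, which is exactly what is needed to go from ``weak lc'' to ``minimal''.

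First, since $\phi$ does not extract divisors and is $K_{\Aa}$-non-positive, for every prime divisor $E$ over $X'$ one has $a(E,\Aa)\le a(E,\Aa')$, so $\Aa'$ is still klt. By Theorem \ref{thm: klt afs implies potentially klt}, $X'$ admits a small $\mathbb Q$-factorialization $g\colon Y\to X'$, and setting $\Aa_Y:=g^*\Aa'$ yields a $\mathbb Q$-factorial klt algebraically integrable adjoint foliated structure with $K_{\Aa_Y}=g^*K_{\Aa'}$ nef$/U$ (and semi-ample$/U$ if $K_{\Aa'}$ is). Let $\psi\colon X\dashrightarrow Y$ be the induced birational map and define
\[
\mathcal{S}:=\{D\text{ prime divisor on }X \mid D\text{ is }\psi\text{-exceptional and } a(D,\Aa)=a(D,\Aa_Y)\}.
\]
Every $D\in\mathcal{S}$ is a prime divisor on $X$, so $a(D,\Aa)\le 0$, and hence $a(D,\Aa_Y)\le 0$. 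Applying Theorem \ref{thm: extract non-terminal places} to $\Aa_Y$ and $\mathcal{S}$ produces a projective birational morphism $f\colon Y'\to Y$ such that $Y'$ is $\mathbb Q$-factorial and the divisors contracted by $f$ are exactly the elements of $\mathcal{S}$. Set $\Aa_{Y'}:=f^*\Aa_Y$, so that $K_{\Aa_{Y'}}=f^*K_{\Aa_Y}$ remains nef$/U$ (semi-ample$/U$ in the semi-ample case).

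Finally, let $\psi'\colon X\dashrightarrow Y'$ be the induced map. The $\psi'$-exceptional prime divisors on $X$ are precisely the $\psi$-exceptional divisors not lying in $\mathcal{S}$, and for each such $D$ we have $a(D,\Aa)<a(D,\Aa_Y)=a(D,\Aa_{Y'})$, since pullback under $f$ preserves discrepancies over $Y$. By Lemma \ref{lem: weak lc model only check codim 1}(2), $\Aa_{Y'}/U$ is a $\mathbb Q$-factorial minimal model of $\Aa/U$, and a good minimal model whenever $K_{\Aa'}$ was semi-ample$/U$. The only delicate point is identifying the right set $\mathcal{S}$: one must extract exactly the $K_{\Aa}$-trivial $\phi$-exceptional divisors, i.e. those whose discrepancy does not change between $\Aa$ and $\Aa_Y$, and then verify that Theorem \ref{thm: extract non-terminal places} applies, which follows from the automatic bound $a(D,\Aa_Y)\le 0$ on such divisors.
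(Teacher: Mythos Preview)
Your proof is correct and follows essentially the same approach as the paper: define $\mathcal{S}$ to be the set of exceptional prime divisors on $X$ where the discrepancy is unchanged, extract exactly these via Theorem~\ref{thm: extract non-terminal places}, and conclude by Lemma~\ref{lem: weak lc model only check codim 1}(2). The only difference is that you insert an intermediate small $\mathbb Q$-factorialization $g\colon Y\to X'$ before invoking the extraction theorem, whereas the paper applies Theorem~\ref{thm: extract non-terminal places} directly to $\Aa'$ on $X'$; this extra step is harmless but unnecessary, since the extraction theorem already outputs a $\mathbb Q$-factorial model (and its proof begins by taking a small $\mathbb Q$-factorialization anyway).
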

\begin{proof}
    Let $\Aa'/U$ be a weak lc model (resp. semi-ample model) of $\Aa/U$ and let $X,X'$ be the ambient varieties of $\Aa,\Aa'$ respectively. Let 
    $$\mathcal{S}:=\{E\mid E\text{ is an exceptional}/X'\text{ prime divisor on }X, a(E,\Aa)=a(E,\Aa')\}.$$
    Then $\mathcal{S}$ is a finite set. Moreover, for any $E\in\mathcal{S}$, $a(E,\Aa')\leq 0$. By Theorem \ref{thm: extract non-terminal places}, there exists a birational morphism $f: Y\rightarrow X'$ such that $Y$ is $\mathbb Q$-factorial and the divisors contracted by $f$ are exactly the divisors in $\mathcal{S}$. Let $\phi: X\dashrightarrow Y$ be the induced birational map. By Lemma \ref{lem: weak lc model only check codim 1}, $\phi_*\Aa/U$ is a $\mathbb Q$-factorial minimal model (resp. $\mathbb Q$-factorial good minimal model) of $\Aa/U$.
\end{proof}

\section{Existence of good minimal models}\label{sec: eogmm}

The goal of this section is to prove the ``existence of the $\mathbb Q$-factorial good minimal models" part of the main theorem, Theorem \ref{thm: eogmm ai afs general case} (see Theorem \ref{thm: eogmm boundary big case}). Note that at this point we cannot prove that the good minimal models are obtained by running an MMP.

We shall also prove the first part of the base-point-freeness theorem, Theorem \ref{thm: bpf ai afs} (see Theorem \ref{thm: semi-ampleness theorem}) and the first part of the contraction theorem, Theorem \ref{thm: cont intro} (see Theorem \ref{thm: contraciton theorem potentially klt}).

The following theorem can be considered as an algebraically integrable adjoint foliated structure version of \cite[Theorems 1.2, 1.3]{CP19}, \cite[Theorem 1.2]{ACSS21}. 

\begin{thm}\label{thm: non-pseudo-effective for afs}
    Let $(X,\Ff,B,\Mm)/U$ be a $\mathbb Q$-factorial algebraically integrable generalized foliated quadruple and let $\Aa_s:=(X,\Ff,B^{\ninv}+(1-s)B^{\inv},\Mm,s)$ for any $s\in [0,1]$. Assume that $\Aa_{t}$ is lc and $K_{\Aa_{t}}$ is pseudo-effective$/U$ for some $t\in [0,1]$. Then $K_{\Aa_s}$ is pseudo-effective$/U$ for any $s\in [t,1]$.
\end{thm}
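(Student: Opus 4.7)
The plan is to combine a linearity-and-convexity reduction with a foliated bend-and-break argument to derive a contradiction.

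First I observe that $s \mapsto K_{\Aa_s}$ is affine-linear in $s$: expanding the definitions gives
$$K_{\Aa_s} = s\,K_{\Aa_1} + (1-s)\,K_{\Aa_0}, \qquad K_{\Aa_1}=K_{\Ff}+B^{\ninv}+\Mm_X, \quad K_{\Aa_0}=K_X+B+\Mm_X.$$
Since the pseudo-effective cone is convex, the subset of $[0,1]$ on which $K_{\Aa_s}$ is pseudo-effective$/U$ is convex. Consequently, it is enough to prove that $K_{\Aa_1}$ is pseudo-effective$/U$; the pseudo-effectivity on the whole interval $[t,1]$ will then follow by interpolation between $s=t$ and $s=1$.

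I may assume $t<1$. A direct coefficient check shows that the hypothesis "$\Aa_t$ is lc" forces $(X,\Ff,B^{\ninv},\Mm)$ to be an lc algebraically integrable generalized foliated quadruple on the $\mathbb Q$-factorial variety $X$: the invariant part $(1-t)B^{\inv}$ in $\Aa_t$ contributes coefficients bounded by $1-t$, hence $\coeff_PB^{\inv}\leq 1$ for invariant primes $P$, while $B^{\ninv}$ has coefficients at most $1$ from the non-invariant discrepancy check. Suppose now for contradiction that $K_{\Aa_1}=K_{\Ff}+B^{\ninv}+\Mm_X$ is not pseudo-effective$/U$. By the existence of Mori fiber spaces for $\mathbb Q$-factorial lc algebraically integrable generalized foliated quadruples (cf.\ \cite[Theorem 1.5]{LMX24}, relying on Miyaoka's foliated bend-and-break \cite[Corollary 2.28]{Spi20}), there exists a covering family $\{C_\lambda\}$ of $X$ over $U$ consisting of rational curves tangent to $\Ff$ with $K_{\Aa_1}\cdot C_\lambda<0$. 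Since $\Ff$ is algebraically integrable, each such curve may be assumed to lie in a general fiber of a rational map $f\colon X\dashrightarrow Z$ inducing $\Ff$.

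From the identity
$$K_{\Aa_t}-K_{\Aa_1}=(1-t)\bigl(K_X+B^{\inv}-K_{\Ff}\bigr),$$
I compute the intersection with such a curve $C:=C_\lambda$ contained in a general fiber $X_z$ of $f$. Since $X_z$ is general, it is smooth and $K_\Ff|_{X_z}=K_X|_{X_z}=K_{X_z}$, so $K_\Ff\cdot C=K_X\cdot C$; moreover $B^{\inv}$ is a sum of vertical divisors (pullbacks from $Z$) which do not meet the general fiber $X_z$, so $B^{\inv}\cdot C=0$. Therefore $K_{\Aa_t}\cdot C = K_{\Aa_1}\cdot C<0$. Since $\{C_\lambda\}$ is a covering family of $X$ over $U$, this contradicts the pseudo-effectivity$/U$ of $K_{\Aa_t}$, yielding the contradiction and hence the theorem.

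The main obstacle is the geometric input in the second paragraph, namely extracting a covering family of $\Ff$-tangent curves with negative intersection out of the failure of pseudo-effectivity for an lc algebraically integrable generalized foliated quadruple; this is precisely the point where foliated Miyaoka bend-and-break and the existing foliated MMP machinery are invoked. The computational core, by contrast, is the short intersection comparison on a general fiber, which exploits the essentially tautological fact that both the invariant boundary and the discrepancy between $K_\Ff$ and $K_X$ vanish along a general leaf of an algebraically integrable foliation.
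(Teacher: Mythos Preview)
Your convexity reduction to $s=1$ matches the paper. The gap is in the second paragraph: you cannot extract a covering family of curves on $X$ with $K_{\Aa_1}\cdot C_\lambda<0$ from the cited results. Running a $K_{\Aa_1}$-MMP to a Mori fiber space $\pi\colon X'\to T$ gives such curves $C'$ on $X'$, but for the strict transform $C$ on $X$ the comparison $p^*K_{\Aa_1}=q^*K_{\Aa_1'}+E$ with $E\geq 0$ yields only $K_{\Aa_1}\cdot C\geq K_{\Aa_1'}\cdot C'$, which is the wrong direction. Even if you work on $X'$ instead (noting that pushforward preserves pseudo-effectivity, so $K_{\Aa_t'}$ is still pseudo-effective$/U$), the fiber computation breaks down: $X'$ is only $\mathbb Q$-factorial, the induced map $X'\dashrightarrow Z$ is only rational, and nothing forces the fibers of $\pi$ to lie in leaves of $\Ff'$, so you cannot invoke $K_{\Ff'}\cdot C'=K_{X'}\cdot C'$ or $B'^{\inv}\cdot C'=0$.

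The paper repairs this by first replacing $X$ with a foliated log resolution, where one has an equidimensional toroidal contraction $f\colon X\to Z$ and an explicit $\Ff$-invariant divisor $G\geq\Supp B^{\inv}$ satisfying $K_{\Ff}\sim_Z K_X+G$; this linear equivalence over $Z$ is the rigorous substitute for your pointwise identity $K_\Ff|_{X_z}=K_X|_{X_z}$. One then runs a $K_{\Aa_1}$-MMP$/U$ that is \emph{simultaneously} an MMP$/Z$ (via \cite[Proposition 9.3.3]{CHLX23}), so the resulting Mori fiber space $\pi\colon X'\to T$ is over $Z$ and the relation $K_{\Ff'}\sim_{\mathbb R,Z}K_{X'}+G'$ with $G'\geq B'^{\inv}$ persists. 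Since $\pi$ is over $Z$, this makes $\pi$ a Mori fiber space for $tK_{\Ff'}+(1-t)K_{X'}+B'^{\ninv}+(1-t)G'+\Mm_{X'}\geq K_{\Aa_t'}$, and the contradiction follows without any curve-by-curve computation.
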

\begin{proof}
We only need to show that $K_{\Aa_1}$ is pseudo-effective$/U$ as $$K_{\Aa_s}=\frac{1-s}{1-t}K_{\Aa_t}+\frac{s-t}{1-t}K_{\Aa_1}.$$
Let $h: W\rightarrow X$ be a foliated log resolution of $(X,\Ff,B,\Mm)$ and let $$\Aa_{W,s}:=(h^{-1}_*\Aa_s,\Exc(h)^{\ninv}+(1-s)\Exc(h)^{\inv})$$
for any $s\in [0,1]$. Since $\Aa_t$ is lc,
$$K_{\Aa_{W,t}}\geq h^*K_{\Aa_t},$$
so $K_{\Aa_{W,t}}$ is pseudo-effective$/U$. Since $$h_*K_{\Aa_{W,1}}=K_{\Aa_1},$$
we may replace $\Aa_s$ with $\Aa_{W,s}$ for any $s$ and assume that each $\Aa_s/U$ is foliated log smooth. By \cite[Proposition 3.6]{ACSS21} and \cite[Proposition 7.3.6]{CHLX23}, there exists a divisor $G\geq\Supp B^{\inv}$ on $X$ and an equidimensional contraction $f: X\rightarrow Z$ such that $K_{\Ff}\sim_ZK_X+G$. 

Suppose for sake of contradiction  that $K_{\Aa_1}$ is not pseudo-effective$/U$. By \cite[Proposition 9.3.3]{CHLX23} we may run a $K_{\Aa_1}$-MMP$/U$ with scaling of an ample$/U$ $\Rr$-divisor such that the MMP is also an MMP$/Z$ and the MMP terminates with a Mori fiber space$/U$ $\pi: X'\rightarrow T$ which is also a Mori fiber space$/Z$. Let $\phi: X\dashrightarrow X'$ be the induced birational map, $B':=\phi_*B,G':=\phi_*G$, and let $\Aa'_t:=\phi_*\Aa_t$ for any $t\in [0,1]$, then 
$$K_{\Aa_1'}\sim_{\mathbb R,Z}K_{X'}+B'^{\ninv}+G+\Mm_{X'}.$$
Thus, $\pi$ is a $(K_{X'}+B'^{\ninv}+G'+\Mm_{X'})$-Mori fiber space$/U$, hence a 
$$(tK_{\Ff'}+(1-t)K_{X'}+B'^{\ninv}+(1-t)G'+\Mm_{X'})\text{-Mori fiber space}/U,$$
and so $tK_{\Ff'}+(1-t)K_{X'}+B'^{\ninv}+(1-t)G'+\Mm_{X'}$ is not pseudo-effective$/U$. Since $G\geq B^{\inv}$, we have that $G'\geq B'^{\inv}$, and hence $K_{\Aa'_t}$ is not pseudo-effective$/U$. Since $\phi$ does not extract any divisor, $K_{\Aa_t}$ is not pseudo-effective$/U$. This contradicts our assumption.
\end{proof}

The following theorem is the core of this paper.

\begin{thm}\label{thm: log smooth existence of good minimal model}
    Let $\Aa/U:=(X,\Ff,B^{\ninv}+(1-t)B^{\inv},\Mm+\overline{A},t)/U$ be a foliated log smooth klt algebraically integrable adjoint foliated structure, where $\Mm$ is nef$/U$ and $A$ is ample$/U$. Assume that $K_{\Aa}$ is pseudo-effective$/U$. 
    Then $\Aa/U$ has a $\mathbb Q$-factorial good minimal model.
\end{thm}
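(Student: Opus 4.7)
My plan is to follow the threshold/deformation strategy outlined in Section \ref{sec: proof sketch}. Because $\Aa$ is foliated log smooth and the nef part contains an ample divisor $A$ on $X$, I can interpolate the parameter: for $s\in[t,1]$ set
\[
\Aa_s := \left(X,\Ff,B^{\ninv}+(1-s)B^{\inv},\Mm+\overline{A},s\right),
\]
and define
\[
\mathcal{P}(t):=\{\,s\in[t,1] \mid \Aa_s/U \text{ has a } \mathbb{Q}\text{-factorial good minimal model}\,\}.
\]
The goal is to prove $t\in\mathcal{P}(t)$. First I verify that $1\in\mathcal P(t)$: $\Aa_1=(X,\Ff,B^{\ninv},\Mm+\overline{A})$ is a $\mathbb Q$-factorial lc generalized foliated quadruple whose nef part contains the polarization $\overline{A}$, so by \cite[Theorem 1.5]{LMX24} we may run a terminating $K_{\Aa_1}$-MMP with scaling that produces a $\mathbb Q$-factorial good minimal model. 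By Theorem \ref{thm: non-pseudo-effective for afs}, since $K_{\Aa_t}=K_\Aa$ is pseudo-effective$/U$, every $K_{\Aa_s}$ with $s\in[t,1]$ is pseudo-effective$/U$, so asking for a minimal model is well-posed on the whole interval.

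Next I establish two deformation claims. \textbf{(Openness to the left.)} If $s\in\mathcal P(t)\cap(t,1]$ with good minimal model $\phi\colon X\dashrightarrow X'$ and $\Aa_s':=\phi_*\Aa_s$, then for any $0<\epsilon\ll 1$ the map $\phi$ remains $K_{\Aa_{s-\epsilon}}$-negative, and by the generalized pair identity of \cite[Proposition 3.3]{CHLMSSX24}
\[
\tfrac{s}{\epsilon}K_{\Aa'_{s-\epsilon}}=K_{\Aa'_0}+\tfrac{s-\epsilon}{\epsilon}K_{\Aa'_s}
\]
has the structure of a klt generalized pair with big$/U$ boundary (it contains the ample $A$) and nef$/U$ polarization $K_{\Aa'_s}$. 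By \cite[Lemma 4.4]{BZ16} this generalized pair has a good minimal model; using the negativity lemma this pulls back to a good minimal model of $\Aa_{s-\epsilon}/U$, hence $s-\epsilon\in\mathcal P(t)$.

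\textbf{(Closedness from the right.)} This is the main obstacle. Suppose $s+\epsilon\in\mathcal P(t)$ for all $0<\epsilon\ll 1$. By Lemma \ref{lem: limit of nakayama-zariski decomposition} (applied to $C=K_{\Aa_s}$ and $D$ proportional to $K_{\Aa_1}-K_{\Aa_s}$), there exist $0<\delta\ll 1$ and a reduced divisor $E$ such that $\Supp N_\sigma(X/U,K_{\Aa_r})=E$ for all $r\in(s,s+\delta]$ while $\Supp N_\sigma(X/U,K_{\Aa_s})\subset E$. Pick a good minimal model $\phi\colon X\dashrightarrow X'$ of $\Aa_{s+\delta}$ given by the hypothesis, and transport everything: $\Aa'_r:=\phi_*\Aa_r$. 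For $0<\tau\ll\delta$ the identity
\[
\tfrac{s+\delta}{\tau}K_{\Aa'_{s+\delta-\tau}}=K_{\Aa'_0}+\tfrac{s+\delta-\tau}{\tau}K_{\Aa'_{s+\delta}}
\]
gives a klt generalized pair with big$/U$ boundary polarized by the semi-ample$/U$ divisor $K_{\Aa'_{s+\delta}}$, so by \cite[Lemma 4.4]{BZ16} and the length-of-extremal-rays argument I can run a $K_{\Aa'_{s+\delta-\tau}}$-MMP that is $K_{\Aa'_{s+\delta}}$-trivial, say $\psi\colon X'\dashrightarrow X''$, ending in a good minimal model. Then I run a further $K_{\Aa''_0}$-MMP with scaling of $K_{\Aa''_{s+\delta}}$ and track the scaling numbers $\lambda_i$: the first index $i$ with $\lambda_i\le s/\delta$ produces a model $X_i$ on which $K_{\Aa^i_r}$ is nef$/U$ for all $r\in[s,u]$ for some $u>s$. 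The crucial combinatorial point, using the behaviour of $E$ above, is that the divisors contracted by $X\dashrightarrow X_i$ are exactly the components of $E$, so $\Aa^i_r$ is isomorphic in codimension one to the good minimal model of $\Aa_r$ for $r\in(s,u]$; taking $r\to s^+$ in the identities $p^*K_{\Aa_r}=q^*K_{\Aa^i_r}+F_r$ on a common resolution yields $p^*K_{\Aa_s}=q^*K_{\Aa^i_s}+F_s$ with $F_s\ge 0$, showing that $\Aa^i_s/U$ is at least a weak lc model of $\Aa_s/U$.

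Finally, to upgrade this weak lc model to a $\mathbb Q$-factorial good minimal model, I invoke Theorem \ref{thm: extract non-terminal places} (the klt case of non-terminal place extraction) to extract precisely the exceptional$/X_i$ prime divisors $D$ on some high model with $a(D,\Aa_s)=a(D,\Aa^i_s)$; this is legitimate because these are finitely many divisors with non-positive discrepancy, and Proposition \ref{prop: eowlm implies eomm} then supplies the $\mathbb Q$-factorial good minimal model of $\Aa_s/U$, proving $s\in\mathcal P(t)$. Combining the two deformation statements with $1\in\mathcal P(t)$ via a standard connectedness argument on $[t,1]$ gives $t\in\mathcal P(t)$, i.e. $\Aa/U$ has a $\mathbb Q$-factorial good minimal model. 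The main technical difficulty is the closedness claim, particularly the control of $\Supp N_\sigma(X/U,K_{\Aa_r})$ in the limit $r\to s^+$ and the subsequent extraction step, which relies crucially on the klt extraction Theorem \ref{thm: extract non-terminal places} proved in the previous section.
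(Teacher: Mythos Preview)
Your proposal is correct and follows essentially the same two-step deformation argument as the paper's proof: openness to the left via the generalized pair identity and \cite[Lemma 4.4]{BZ16}, and closedness from the right via the Nakayama--Zariski stability Lemma~\ref{lem: limit of nakayama-zariski decomposition}, a chain of MMPs tracking scaling numbers, and the upgrade from a semi-ample model to a $\mathbb Q$-factorial good minimal model through Proposition~\ref{prop: eowlm implies eomm}. One very small technical point: in your application of Lemma~\ref{lem: limit of nakayama-zariski decomposition} you take $D$ proportional to $K_{\Aa_1}-K_{\Aa_s}$, which need not be pseudo-effective; the paper instead uses $C=K_{\Aa_\mu}$ and $D=K_{\Aa_1}$ (both pseudo-effective, the latter with finite $N_\sigma$ by Lemma~\ref{lem: minimal model implies nsigma not infinity}) and reparametrizes, and you should also note that termination of the $K_{\Aa''_0}$-MMP with scaling of $K_{\Aa''_{s+\delta}}$ requires Lemma~\ref{lem: perturb gklt to klt} plus \cite[Corollary 1.4.2]{BCHM10}.
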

\begin{proof}
    Let $\Aa_s:=(X,\Ff,B^{\ninv}+(1-s)B^{\inv},\Mm+\overline{A},s)$ for any $s\in [0,1]$. By Theorem \ref{thm: non-pseudo-effective for afs}, $K_{\Aa_s}$ is pseudo-effective$/U$ for any $s\in [t,1]$.
Let $\mathcal P \subset [t, 1]$ be the set \[\{s \in [t, 1]: \Aa_s/U \text{ has a } \mathbb Q \text{-factorial good minimal model}\}.\]
    We will show that $t \in \mathcal P$ in two steps.

\medskip

\noindent\textbf{Step  1.} 
We first show that for $\mu \in (t, 1]$, if $\Aa_\mu/U$ has a $\mathbb Q$-factorial minimal model then 
$\Aa_{\mu-\epsilon}/U$ has a $\mathbb Q$-factorial good minimal model for all $0<\epsilon \ll 1$.

Consider the $\mathbb Q$-factorial minimal model of $\Aa_\mu/U$, which we will denote by $\phi\colon X \dashrightarrow X'$.  For any $s \in [0, 1]$ let $\Aa'_s:=\phi_*\Aa_s.$
Since $\phi$ is $K_{\Aa_{\mu}}$-negative, for any $0<\epsilon\ll 1$ and any prime divisor $E$ on $X$, we have
$$a(E,\Aa_{\mu-\epsilon})\leq a(E,\Aa'_{\mu-\epsilon})$$
and strict inequality holds if $E$ is exceptional$/X'$.
Since $\Aa'_{\mu}$ is klt, by Proposition \ref{prop: lc Aat implies lc Aa0}, $\Aa'_0/U$ is a klt generalized pair whose generalized boundary is big$/U$. Since $K_{\Aa'_{\mu}}$ is nef$/U$, by \cite[Lemma 4.4]{BZ16}, we may run a
$$\left(K_{\Aa'_0}+\frac{\mu-\epsilon}{\epsilon}K_{\Aa'_{\mu}}\right)\text{-MMP}/U$$
which terminates with a good minimal model$/U$ of $(\Aa_0',\frac{\mu-\epsilon}{\epsilon}\overline{K_{\Aa'_{\mu}}})/U$ with induced birational map $\psi: X'\dashrightarrow X''$. Since
$$K_{\Aa'_0}+\frac{\mu-\epsilon}{\epsilon}K_{\Aa'_{\mu}}\sim_{\mathbb R,U}\frac{\mu}{\epsilon}K_{\Aa'_{\mu-\epsilon}},$$
this MMP is also a $K_{\Aa'_{\mu-\epsilon}}$-MMP$/U$. In particular, $\psi_*\Aa'_{\mu-\epsilon}/U$ is a $\mathbb Q$-factorial good minimal model of $\Aa'_{\mu-\epsilon}/U$. Since $\phi$ is $K_{\Aa_{\mu-\epsilon}}$-negative, 
$$a(E,\Aa'_{\mu-\epsilon})\leq a(E,\psi_*\Aa'_{\mu-\epsilon})$$
for any prime divisor $E$ on $X'$, and strict inequality holds if $E$ is exceptional$/X''$, we deduce that $a(E,\Aa_{\mu-\epsilon})<a(E,\psi_*\Aa'_{\mu-\epsilon})$ for any prime divisor $E$ on $X$ that is exceptional$/X''$. By Lemma \ref{lem: weak lc model only check codim 1}(2), $\psi\circ\phi$ is $K_{\Aa_{\mu-\epsilon}}$-negative, hence $\psi_*\Aa'_{\mu-\epsilon}/U$ is a $\mathbb Q$-factorial good minimal model of $\Aa_{\mu-\epsilon}/U$.

\medskip

\noindent\textbf{Step 2.} 
   Suppose for $\mu \in [t, 1]$ that $\Aa_s/U$ has a $\mathbb Q$-factorial good minimal model for all $s \in (\mu, 1]$.  We will show that $\Aa_\mu/U$ has a $\mathbb Q$-factorial good minimal model.

Let $N_s:=N_{\sigma}(X/U,\Aa_s)$ for any $s\in [\mu,1]$. Since $\Aa_{s}/U$ has a good minimal model for any $s\in (\mu,1]$, by Lemma \ref{lem: minimal model implies nsigma not infinity}, $N_s$ is an $\Rr$-divisor for any $s\in (\mu,1]$. By Lemma \ref{lem: limit of nakayama-zariski decomposition}, there exists $s_0>\mu$ and a reduced divisor $E$ on $X$, such that $\Supp N_\mu\subset E$ and $\Supp N_s=E$ for any $s\in (\mu,s_0]$.

We let $\phi: X\dashrightarrow X'$ be a $\mathbb Q$-factorial good minimal model of $\Aa_{s_0}/U$ and let $\Aa'_s:=\phi_*\Aa_s$ for any $s\in [0,1]$. By Lemma \ref{lem: nz for lc divisor}, $E=\Exc(\phi)$. By Lemma \ref{lem: if contract n then movable}, $K_{\Aa_s'}$ is movable$/U$ for any $s\in [\mu,s_0]$.

Since $K_{\Aa'_{s_0}}$ is semi-ample$/U$, we may write $K_{\Aa'_{s_0}}=\sum a_iN_i$ where each $a_i>0$ and each $N_i$ is a base-point-free$/U$ divisor. Let $0<\delta_0\ll s_0-\mu$ be a real number, such that $a_i\frac{s_0-\delta_0}{\delta_0}>2\dim X$ for any $i$.

We run a $K_{\Aa'_{s_0-\delta_0}}$-MMP$/U$ with scaling of an ample$/U$ divisor. Since $\Aa'_{s_0}$ is klt, by Proposition \ref{prop: lc Aat implies lc Aa0}, $\Aa'_0/U$ is a klt generalized pair whose generalized boundary is big$/U$. Since
$$K_{\Aa'_0}+\frac{s_0-\delta_0}{\delta_0}K_{\Aa'_{s_0}}\sim_{\mathbb R,U}\frac{s_0}{\delta_0}K_{\Aa'_{s_0-\delta_0}},$$
this MMP is also a $(K_{\Aa'_0}+\frac{s_0-\delta_0}{\delta_0}K_{\Aa'_{s_0}})$-MMP$/U$ with scaling of an ample divisor, and is $K_{\Aa'_{s_0}}$-trivial by \cite[Lemma 4.4(3)]{BZ16}.  Thus the MMP terminates with a good minimal model $\Aa''_{s_0-\delta_0}/U$ of $\Aa'_{s_0-\delta_0}/U$. Let $\psi: X'\dashrightarrow X''$ be the induced birational map and let $\Aa''_s:=\psi_*\Aa'_s$ for any $s\in [0,1]$. Since $K_{\Aa'_{s_0-\delta_0}}$ is movable$/U$, by Lemma \ref{lem: nz for lc divisor}, $\psi$ does not contract any divisor, and $K_{\Aa_s''}$
is movable$/U$ for any $s\in [\mu,s_0]$. Since $\psi$ is $K_{\Aa'_{s_0}}$-trivial, $K_{\Aa''_{s_0}}$ is semi-ample$/U$ and $\Aa''_{s_0}$ is klt. By Proposition \ref{prop: lc Aat implies lc Aa0}, $\Aa''_{0}/U$ is a klt generalized pair whose generalized boundary is big$/U$.

We have that $$K_{\Aa''_0}+\frac{s_0-\delta_0}{\delta_0}K_{\Aa''_{s_0}}\sim_{\mathbb R,U}\frac{s_0}{\delta_0}K_{\Aa''_{s_0-\delta_0}}$$
is nef$/U$ and therefore we may
run a
$K_{\Aa''_0}$-MMP$/U$ with scaling of $K_{\Aa''_{s_0}}$. 
  Let $f_i: X_i\dashrightarrow X_{i+1}$ be the steps of this MMP where $X_1:=X''$ and with scaling constant $\lambda_i$. We claim that this MMP terminates. By Lemma \ref{lem: perturb gklt to klt}, there exists a klt pair $(X'',\Delta'')$ such that $\Delta''$ is big and $K_{X''}+\Delta''\sim_{\mathbb R,U}K_{\Aa''_0}$, so the MMP $f_i: X_i\dashrightarrow X_{i+1}$ is also a $(K_{X''}+\Delta'')$-MMP$/U$ with scaling of $K_{\Aa''_{s_0}}$. Since $K_{\Aa''_{s_0}}$ is semi-ample$/U$, the MMP terminates by \cite[Corollary 1.4.2]{BCHM10}.

Let $n$ be the minimal positive integer such that $\lambda_n\leq \frac{\mu}{\delta_0}$. Such $n$ exists because the MMP terminates. By definition of the MMP with scaling, if $\mu' = \delta_0\lambda_{n-1}$, the induced birational map $\tau: X''\dashrightarrow \bar X:=X_n$ is a $K_{\Aa''_s}$-MMP$/U$ with scaling of $K_{\Aa''_{s_0}}$ for any $s\in [\mu,\mu')$. Let $\bar\Aa_s:=\tau_*\Aa''_s$ for any $s\in [0,1]$. Since
$$K_{\Aa''_0}+\frac{s}{s_0-s}K_{\Aa''_{s_0}}\sim_{\mathbb R,U}\frac{s_0}{s_0-s}K_{\Aa''_{s}},$$
$\Aa_s''/U$ has a klt generalized pair structure with big$/U$ generalized boundary, hence by the base-point-freeness theorem for klt generalized pairs, $K_{\bar{\Aa}_{s}}$ is semi-ample$/U$ for any $s\in [\mu,\mu')$. Since $K_{\Aa''_s}$ is movable$/U$ for any $s\in [\mu,s_0]$,  by Lemma \ref{lem: nz for lc divisor}, $\tau$ does not contract any divisor. 

For any $s\in (\mu,\mu')$, let $\Aa^s_s/U$ be a good minimal model of $\Aa_s/U$ whose existence is guaranteed by our assumption. Let $\phi_s: X\dashrightarrow X^s$ be the induced birational map. By Lemma \ref{lem: nz for lc divisor}, $\Exc(\phi_s)=\Supp N_s=E$. Thus $X_s$ and $\bar X$ are isomorphic in codimension $1$. Let $p_s: W_s\rightarrow X^s$ and $q_s: W_s\rightarrow\bar X$ be a common resolution and let
$$p_s^*K_{\Aa^s_s}=q_s^*K_{\bar\Aa_s}+E_s,$$
then $E_s$ is exceptional$/X^s$ and exceptional$/\bar X$. Since $K_{\Aa^s_s}$ and $K_{\bar\Aa_s}$ are both nef$/U$, by applying the negativity lemma $p_s^*K_{\Aa^s_s}-q_s^*K_{\bar\Aa_s}$ with respect to $p_s$, we deduce $E_s \ge 0$.  Applying the negativity lemma to $p_s^*K_{\Aa^s_s}-q_s^*K_{\bar\Aa_s}$ with respect to $q_s$ we deduce that $E_s \le 0$.  Thus $E_s = 0$ and therefore $\bar\Aa_s/U$ is a weak lc model of $\Aa_s/U$ for any $s\in (\mu,\mu')$. Let $p: W\rightarrow X$ and $q: W\rightarrow\bar X$ be a common resolution, then for any $s\in (\mu,\mu')$,
$$p^*K_{\Aa_s}=q^*K_{\bar\Aa_s}+F_s$$
for some $F_s\geq 0$. Therefore,
$$p^*K_{\Aa_\mu}=\lim_{s\rightarrow 0^+}p^*K_{\Aa_s}=\lim_{s\rightarrow 0^+}(q^*K_{\bar\Aa_s}+F_s)=q^*K_{\bar\Aa_{\mu}}+\lim_{s\rightarrow 0^+}F_s$$
and $\lim_{s\rightarrow 0^+}F_s\geq 0$. 

Therefore, $\bar\Aa_{\mu}/U$ is a semi-ample model of $\Aa_{\mu}/U$. By Proposition \ref{prop: eowlm implies eomm}, $\Aa_{\mu}/U$ has a $\mathbb Q$-factorial good minimal model.

\medskip

By \cite[Theorem 1.5]{LMX24} we see that $1 \in \mathcal P$. Then \textbf{Steps 1} and \textbf{2} easily imply that $\mathcal P = [t, 1]$. Therefore $\Aa_t/U$ has a $\mathbb Q$-factorial good minimal model.
\end{proof}

\begin{thm}\label{thm: eogmm lc plus ample case}
    Let $\Aa/U:=(X,\Ff,B,\Mm,t)/U$ be an lc algebraically integrable adjoint foliated structure such that $X$ is potentially klt and let $A$ be an ample$/U$ $\Rr$-divisor on $X$ such that $K_{\Aa}+A$ is pseudo-effective$/U$. Then $(\Aa,\overline{A})/U$ has a $\mathbb Q$-factorial good minimal model.
\end{thm}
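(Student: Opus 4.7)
The plan is to reduce to the foliated log smooth klt setting of Theorem~\ref{thm: log smooth existence of good minimal model} in two stages. First I would apply Lemma~\ref{lem: perturbation to q coefficient no m} to produce an ample$/U$ $\mathbb R$-divisor $H$ on $X$ and a klt algebraically integrable adjoint foliated structure $\Aa'/U=(X,\Ff,B',t')/U$ with $B'$ rational and $t'\in\mathbb Q\cap[0,1)$, satisfying $K_{\Aa}+A\sim_{\mathbb R,U}K_{\Aa'}+H$. Checking on a common resolution and using that rational-function differences and pullbacks from $U$ transform identically under pullback and pushforward, $\mathbb R$-linear equivalence over $U$ preserves both semi-ampleness and the $K$-negativity condition for a birational contraction. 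Therefore any $\mathbb Q$-factorial good minimal model of $(\Aa',\overline{H})/U$ is automatically one of $(\Aa,\overline{A})/U$. Since $(\Aa',\overline{H})$ is klt with pseudo-effective canonical divisor, it suffices to produce a $\mathbb Q$-factorial good minimal model of $(\Aa',\overline{H})/U$.

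Next I would take a foliated log resolution $h\colon Y\to X$ of $(\Aa',\overline{H})$. By Kodaira's lemma, choose an effective $h$-exceptional divisor $F\geq 0$ with $-F$ being $h$-ample, and a small rational $\epsilon>0$ so that $A_Y := h^*H-\epsilon F$ is ample$/U$. For every $h$-exceptional prime $D$ on $Y$, the klt-ness of $\Aa'$ gives $-a(D,\Aa')<t'\epsilon_{\Ff'}(D)+(1-t')$, so after possibly shrinking $\epsilon$ I may pick coefficients $c_D$ with $\max\{0,-a(D,\Aa')\}<c_D<t'\epsilon_{\Ff'}(D)+(1-t')-\epsilon\mult_D F$, and put
\[
\Aa_Y := \left(Y,\Ff_Y,\ h^{-1}_*B'+\sum_{D\text{ $h$-exc}}(c_D+\epsilon\mult_D F)\,D,\ \overline{A_Y},\ t'\right).
\]
Then $\Aa_Y$ is foliated log smooth klt and of the form required by Theorem~\ref{thm: log smooth existence of good minimal model}, and a direct calculation yields
\[
K_{\Aa_Y}=h^*(K_{\Aa'}+H)+G,\qquad G:=\sum_D(c_D+a(D,\Aa'))D\geq 0,
\]
an effective $h$-exceptional divisor with strictly positive coefficient on every $h$-exceptional prime. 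In particular, $K_{\Aa_Y}$ is pseudo-effective$/U$.

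Finally, Theorem~\ref{thm: log smooth existence of good minimal model} produces a $\mathbb Q$-factorial good minimal model $\phi_Y\colon Y\dashrightarrow Y_{\min}$ of $\Aa_Y/U$. Since $\phi_{Y*}K_{\Aa_Y}$ is semi-ample and therefore movable$/U$, Lemma~\ref{lem: nz for lc divisor}(2) gives $\Exc(\phi_Y)=\Supp N_{\sigma}(Y/U,K_{\Aa_Y})$; moreover, as $h^*(K_{\Aa'}+H)$ is pulled back from $X$ and $G$ is effective exceptional with positive coefficient on every $h$-exceptional prime, each such prime lies in $\Supp N_{\sigma}(Y/U,K_{\Aa_Y})$ and is therefore contracted by $\phi_Y$. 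Consequently $\phi:=\phi_Y\circ h^{-1}\colon X\dashrightarrow Y_{\min}$ is a birational contraction, and since $\phi_{Y*}G=0$ we obtain $\phi_*K_{(\Aa',\overline{H})}=\phi_{Y*}K_{\Aa_Y}$, which is semi-ample$/U$. Hence $\phi$ is a semi-ample model of the klt $(\Aa',\overline{H})/U$, and by Proposition~\ref{prop: eowlm implies eomm} it upgrades to a $\mathbb Q$-factorial good minimal model of $(\Aa',\overline{H})/U$, which by the first paragraph is also one of $(\Aa,\overline{A})/U$. The main technical obstacle will be the Nakayama--Zariski step: to show that every $h$-exceptional prime really lies in $\Supp N_{\sigma}(Y/U,K_{\Aa_Y})$ requires a careful balancing of the choices of $F$, $\epsilon$, and the $c_D$ so that the effective exceptional contribution $G$ cannot be absorbed into movable perturbations of the pullback $h^*(K_{\Aa'}+H)$.
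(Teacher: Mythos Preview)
Your approach is essentially the paper's own: reduce from lc to klt (you invoke Lemma~\ref{lem: perturbation to q coefficient no m}, the paper uses a direct convex combination with a klt pair $(X,\Delta)$), pass to a foliated log resolution with exceptional coefficients chosen so that $K_{\Aa_Y}=h^*(K_{\Aa'}+H)+G$ with $G>0$ supported exactly on $\Exc(h)$, apply Theorem~\ref{thm: log smooth existence of good minimal model}, and descend.

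There is, however, one genuine gap. You assert that $\phi=\phi_Y\circ h^{-1}$ is a \emph{semi-ample model} of $(\Aa',\overline{H})/U$, but by the paper's definition this requires $\phi$ to be $K_{(\Aa',\overline{H})}$-non-positive, and you only verify that $\phi$ is a birational contraction with semi-ample image. On a common resolution $p\colon W\to X$, $r\colon W\to Y$, $q\colon W\to Y_{\min}$ one has
\[
p^*K_{(\Aa',\overline{H})}-q^*\phi_*K_{(\Aa',\overline{H})}
= \bigl(r^*K_{\Aa_Y}-q^*\phi_{Y*}K_{\Aa_Y}\bigr)-r^*G,
\]
and it is not automatic that this is $\ge 0$. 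The paper closes this by comparing discrepancies along primes $D$ on $X$ that are exceptional over $Y_{\min}$: since $\overline{H}$ and $\overline{A_Y}$ differ only by the $h$-exceptional $\epsilon F$, one checks $a(D,(\Aa',\overline{H}))=a(D,\Aa_Y)<a(D,\phi_{Y*}\Aa_Y)=a(D,\phi_*(\Aa',\overline{H}))$ for every such $D$, and Lemma~\ref{lem: weak lc model only check codim 1}(2) then shows $\phi$ is in fact $K_{(\Aa',\overline{H})}$-\emph{negative}. This already gives a good minimal model directly, making your appeal to Proposition~\ref{prop: eowlm implies eomm} superfluous. Finally, the Nakayama--Zariski containment $\Supp G\subset\Supp N_\sigma(Y/U,h^*(K_{\Aa'}+H)+G)$ that you flag is not about balancing $F,\epsilon,c_D$; it is a general fact about effective $h$-exceptional divisors added to a pullback of a pseudo-effective class, and the paper invokes it without further comment.
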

\begin{proof}
 Let $(X,\Delta)$ be a klt pair. Possibly replacing $\Aa$ with $(1-s)\Aa+s(X,\Ff,\Delta,\bm{0},0)$ and $A$ with $A+s(K_{\Aa}-(K_X+\Delta))$ for some $0<s\ll 1$, we may assume that $\Aa$ is klt and the parameter $t$ of $\Aa$ is $<1$.
 
 Let $h: X'\rightarrow X$ be a foliated log resolution of $\Aa$ such that there exists an anti-ample$/X$ $\Rr$-divisor $H\geq 0$ such that $\Supp H\subset\Exc(h)$. Let $0<\delta\ll 1$ be a real number such that $$a(D,\Aa)>-(1-\delta)(t\epsilon_{\Ff}(D)+(1-t))$$
for any prime divisor $D$ on $X'$ that is exceptional$/X$, and let $0<\epsilon\ll\delta$ be a real number such that $A':=h^*A-\epsilon H$ is ample$/U$ and 
$$\left\lfloor\frac{\epsilon}{\delta}\left(H^{\ninv}+\frac{1}{1-t}H^{\inv}\right)\right\rfloor=0.$$
Then
 $$\Aa':=(h^{-1}_*\Aa,(1-\delta)(\Exc(h)^{\ninv}+(1-t)\Exc(h)^{\inv})+\epsilon H)$$
 is a foliated log smooth klt algebraically integrable adjoint foliated structure and
 $$K_{\Aa'}+A'\sim_{\mathbb R,U}h^*(K_{\Aa}+A)+G$$
 for some $G\geq 0$ such that $\Supp G=\Exc(h)$. In particular, $K_{\Aa'}+A'$ is pseudo-effective$/U$. By Theorem \ref{thm: log smooth existence of good minimal model}, $(\Aa',\overline{A'})/U$ has a $\mathbb Q$-factorial good minimal model $(\Aa'',\overline{A'})/U$ with induced birational map $\phi: X'\dashrightarrow X''$. By Lemma \ref{lem: nz for lc divisor},
 $$\Exc(\phi)=\Supp N_{\sigma}(X'/U,K_{\Aa'}+A')=\Supp N_{\sigma}(X'/U,h^*(K_{\Aa}+A)+G)\supset\Supp G=\Exc(h),$$ so the induced birational map $\psi: X\dashrightarrow X''$ does not extract any divisor. Moreover, for any prime divisor $D$ on $X$ that is exceptional$/X''$, 
 $$a(D,(\Aa,\overline{A'}))=a(D,(\Aa',\overline{A'}))<a(D,(\Aa'',\overline{A'})).$$ 
 Therefore, $(\Aa'',\overline{A'})/U$ is a $\mathbb Q$-factorial good minimal model of $(\Aa,\overline{A'})/U$. In particular, $\psi$ is $(K_{\Aa}+(\overline{A'})_{X})$-negative. Since $h_*A'=A$, we have
 $$K_{\Aa}+A=K_{\Aa}+(\overline{A'})_{X}\text{ and }K_{\Aa''}+(\overline{A'})_{X''}=\psi_*(K_{\Aa}+A),$$
 hence $\psi$ is $(K_{\Aa}+A)$-negative and $\psi_*(K_{\Aa}+A)$ is nef. Since $(\Aa,\overline{A})$ is klt, $(\Aa'',\overline{A})$ is klt. Thus, $(\Aa'',\overline{A})/U$ is a good minimal model of $(\Aa,\overline{A})/U$. 
\end{proof}

 \begin{thm}\label{thm: eogmm boundary big case}
    Let $\Aa/U$ be a klt algebraically integrable adjoint foliated structure with big$/U$ generalized boundary and $K_{\Aa}$ is pseudo-effective$/U$. Then $\Aa/U$ has a $\mathbb Q$-factorial good minimal model.
\end{thm}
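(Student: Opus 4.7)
The plan is to reduce to Theorem \ref{thm: eogmm lc plus ample case} by perturbing $\Aa$ so that its nef part absorbs an ample$/U$ divisor. First, since $\Aa/U$ is klt and algebraically integrable, Theorem \ref{thm: klt afs implies potentially klt} provides a small $\mathbb Q$-factorialization $h\colon\tilde X\to X$; pulling back $\Aa$ gives a klt algebraically integrable AFS $\tilde{\Aa}/U$ on the $\mathbb Q$-factorial variety $\tilde X$ with big$/U$ generalized boundary and $K_{\tilde{\Aa}}=h^{\ast} K_{\Aa}$ pseudo-effective$/U$. A $\mathbb Q$-factorial good minimal model of $\tilde{\Aa}/U$ also serves for $\Aa/U$, so I may assume that $X$ is $\mathbb Q$-factorial.

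Next, fix any ample$/U$ $\mathbb R$-divisor $A$ on $X$. By Lemma \ref{lem: perturbation for q-factorial klt big boundary to +ample}, there exist positive rational numbers $\epsilon,\delta$ and a klt algebraically integrable AFS $\Aa'=(X,\Ff,B',(1-\epsilon)\Mm,t)$ satisfying
\[
K_{\Aa}\sim_{\mathbb R,U} K_{\Aa'}+\delta A.
\]
Promote $\Aa'$ to the AFS $(\Aa',\overline{\delta A})$ by adjoining the $\bb$-divisor $\overline{\delta A}$ to the nef part; this preserves kltness, since $\overline{\delta A}$ descends to $X$ and so the associated boundary divisor on any foliated log resolution is unchanged. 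The canonical class $K_{\Aa'}+\delta A\sim_{\mathbb R,U} K_{\Aa}$ is pseudo-effective$/U$, and $\delta A$ is ample$/U$. Theorem \ref{thm: eogmm lc plus ample case}, applied to $\Aa'/U$ with the ample$/U$ $\mathbb R$-divisor $\delta A$, therefore produces a $\mathbb Q$-factorial good minimal model $\phi\colon X\dashrightarrow X'$ of $(\Aa',\overline{\delta A})/U$.

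Finally, I transfer this conclusion back to $\Aa$. Since $K_{\Aa}$ and $K_{(\Aa',\overline{\delta A})}$ differ by the pullback of an $\mathbb R$-Cartier divisor from $U$, on any common resolution $p\colon W\to X$, $q\colon W\to X'$ the identity
\[
p^{\ast} K_{(\Aa',\overline{\delta A})}=q^{\ast}\phi_{\ast} K_{(\Aa',\overline{\delta A})}+F,\qquad F\geq 0,\ \Supp p_{\ast}F=\Exc(\phi),
\]
rewrites as $p^{\ast} K_{\Aa}=q^{\ast}\phi_{\ast} K_{\Aa}+F$ with exactly the same $F$. Hence $\phi$ is $K_{\Aa}$-negative, and $\phi_{\ast} K_{\Aa}\sim_{\mathbb R,U}\phi_{\ast} K_{(\Aa',\overline{\delta A})}$ remains semi-ample$/U$. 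Consequently, $\phi_{\ast}\Aa/U$ is a $\mathbb Q$-factorial good minimal model of $\Aa/U$.

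Because Theorem \ref{thm: eogmm lc plus ample case} already subsumes all of the analytic work (via the foliated log smooth case of Theorem \ref{thm: log smooth existence of good minimal model}), no serious obstacle remains; the only step requiring care is the linear-equivalence bookkeeping in the final paragraph, which identifies $\phi$ as a minimal model of $\Aa$ itself rather than of the perturbed AFS $(\Aa',\overline{\delta A})$.
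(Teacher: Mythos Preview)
Your proof is correct and follows essentially the same route as the paper: pass to a small $\mathbb Q$-factorialization via Theorem \ref{thm: klt afs implies potentially klt}, perturb using Lemma \ref{lem: perturbation for q-factorial klt big boundary to +ample} to write $K_{\Aa}\sim_{\mathbb R,U}K_{\Aa'}+\delta A$ with $A$ ample$/U$, and then invoke Theorem \ref{thm: eogmm lc plus ample case}. The paper compresses your final paragraph into a single clause, but your explicit bookkeeping is correct; one small wording point is that $K_{\Aa}$ and $K_{(\Aa',\overline{\delta A})}$ are $\mathbb R$-linearly equivalent over $U$ rather than literally differing by a pullback from $U$, though this still yields the same $F$ on a common resolution, so the conclusion stands.
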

\begin{proof}
By Theorem \ref{thm: klt afs implies potentially klt}, possibly replacing $X$ with a small $\mathbb Q$-factorialization, we may assume that $X$ is $\mathbb Q$-factorial. By Lemma \ref{lem: perturbation for q-factorial klt big boundary to +ample}, $K_{\Aa}\sim_{\mathbb R}K_{\Aa'}+A$ for some klt algebraically integrable adjoint foliated structure $\Aa'/U$ and ample$/U$ $\Rr$-divisor $A$. By Theorem \ref{thm: eogmm lc plus ample case}, $(\Aa',\overline{A})/U$ has a good minimal model, hence $\Aa/U$ has a good minimal model.
\end{proof}

 \begin{thm}[{$=$Theorem \ref{thm: bpf ai afs}(1)}]\label{thm: semi-ampleness theorem}
    Let $\Aa/U=(X,\Ff,B,\Mm,t)/U$ be an lc algebraically integrable adjoint foliated structure and let $H$ be a nef$/U$ $\Rr$-Cartier $\Rr$-divisor such that $H-K_{\Aa}$ is ample$/U$ and $X$ is potentially klt. Then $H$ is semi-ample$/U$.
\end{thm}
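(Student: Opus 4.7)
The plan is to recast the base-point-freeness statement as an instance of the existence of good minimal models (Theorem \ref{thm: eogmm lc plus ample case}) for an auxiliary polarized structure, and then read off semi-ampleness via Lemma \ref{lem: g-pair version bir12 2.7}.

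Concretely, I would set $A := H - K_{\Aa}$, an ample$/U$ $\Rr$-divisor by hypothesis, and consider the adjoint foliated structure $(\Aa,\bar A)/U$, where $\bar A$ is the $\bb$-Cartier $\bb$-divisor descending to $A$. Its canonical divisor is $K_{(\Aa,\bar A)} = K_{\Aa}+A = H$, which is nef$/U$ and in particular pseudo-effective$/U$. The key elementary observation is that since $\bar A$ descends to $A$ on $X$, we have $h^*A = \bar A_{X'}$ for every birational morphism $h\colon X' \to X$; comparing the expressions for $h^*K_{(\Aa,\bar A)}$ and $h^*K_{\Aa}$ then shows that $(\Aa,\bar A)$ has exactly the same boundary part (and hence the same discrepancies) as $\Aa$. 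In particular $(\Aa,\bar A)$ is lc on the potentially klt ambient variety $X$.

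These are precisely the hypotheses of Theorem \ref{thm: eogmm lc plus ample case}, which produces a $\mathbb Q$-factorial good minimal model $\phi\colon X \dashrightarrow X_{\min}$ of $(\Aa,\bar A)/U$; denote the resulting structure by $\Aa_{\min}/U$. By definition $K_{\Aa_{\min}} = \phi_*H$ is semi-ample$/U$. Since $H$ is already nef$/U$, the identity map exhibits $X$ itself as a weak lc model$/U$ of $(\Aa,\bar A)/U$, so both $X$ and $X_{\min}$ are weak lc models$/U$ of the same lc adjoint foliated structure. Applying Lemma \ref{lem: g-pair version bir12 2.7}(2) to this pair transfers the semi-ampleness of $K_{\Aa_{\min}}$ back to $K_{(\Aa,\bar A)} = H$, completing the proof.

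I do not expect a serious obstacle: the only point that requires a moment of thought is the invariance of discrepancies under adding the descended $\bb$-divisor $\bar A$ to the moduli part, which follows directly from the identity $\bar A_{X'} = h^*A$. Everything else is a clean packaging of Theorem \ref{thm: eogmm lc plus ample case} with the uniqueness statement in Lemma \ref{lem: g-pair version bir12 2.7}.
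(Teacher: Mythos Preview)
Your proposal is correct and follows essentially the same route as the paper: set $A:=H-K_{\Aa}$, apply Theorem \ref{thm: eogmm lc plus ample case} to $(\Aa,\overline{A})/U$ to obtain a good minimal model, and then use Lemma \ref{lem: g-pair version bir12 2.7}(2) together with the nefness of $K_{(\Aa,\overline{A})}=H$ to pull semi-ampleness back to $X$. The only cosmetic difference is that the paper first perturbs $\Aa$ by a klt pair $(X,\Delta)$ to make $\Aa$ itself klt, but since Theorem \ref{thm: eogmm lc plus ample case} already applies to lc structures on potentially klt varieties, this step is not needed and your direct argument is fine.
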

\begin{proof}
Let $(X,\Delta)$ be a klt pair. Possibly replacing $\Aa$ with $(1-s)\Aa+s(X,\Ff,\Delta,\bm{0},0)$ for some $0<s\ll 1$, we may assume that $\Aa$ is klt. Let $A:=H-K_{\Aa}$ and let $\Bb:=(\Aa,\overline{A})$. By Theorem \ref{thm: eogmm lc plus ample case}, $\Bb/U$ has a $\mathbb Q$-factorial good minimal model $\Bb'/U$. Since $K_{\Bb}$ is nef$/U$, by Lemma \ref{lem: g-pair version bir12 2.7}, $K_{\Bb}\sim_{\mathbb R,U}H$ is  semi-ample$/U$.
\end{proof}

\begin{thm}[{$=$Theorem \ref{thm: cont intro}(1)}]\label{thm: contraciton theorem potentially klt}
     Let $\Aa/U$ be an lc algebraically integrable adjoint foliated structure such that $X$ is potentially klt. Let $\pi: X\rightarrow U$ be the associated morphism. Let $R$ be a $K_{\Aa}$-negative extremal ray$/U$. 
     
     Then there exists a
contraction$/U$ $\cont_R: X\rightarrow T$, such that for any integral curve $C$ such that $\pi(C)$ is a point, $\cont_R(C)$ is a point if and only $[C]\in R$.
\end{thm}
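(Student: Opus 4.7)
The strategy is to derive the contraction from the base-point-freeness theorem (Theorem \ref{thm: semi-ampleness theorem}) just established, via the standard supporting-divisor argument. I would first fix an ample/$U$ $\mathbb R$-divisor $A$ on $X$ and let $\lambda_R > 0$ be the unique number with $(K_{\Aa} + \lambda_R A)\cdot R = 0$, and set $H := K_{\Aa} + \lambda_R A$. Assuming that $H$ is nef/$U$ and that $H^{\perp} \cap \overline{NE}(X/U) = R$, the proof concludes quickly: since $H - K_{\Aa} = \lambda_R A$ is ample/$U$ and $H$ is nef/$U$, Theorem \ref{thm: semi-ampleness theorem} yields that $H$ is semi-ample/$U$; choosing $m \gg 0$ sufficiently divisible so that $mH$ is Cartier and $|mH/U|$ is base-point-free/$U$ produces a morphism $\phi\colon X \to Z$ over $U$, and the first factor $\cont_R\colon X \to T$ of its Stein factorization is the desired contraction. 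A curve $C$ with $\pi(C)$ a point is contracted by $\cont_R$ if and only if $\phi(C)$ is a point, if and only if $H\cdot C = 0$, which by construction of $H$ is equivalent to $[C]\in R$.

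The remaining task is to verify that $H$ is nef/$U$ and $H^{\perp} \cap \overline{NE}(X/U) = R$. This is the content of a cone theorem for lc algebraically integrable adjoint foliated structures on potentially klt varieties, whose analogue in the $\mathbb Q$-factorial klt setting is \cite[Theorem 1.4]{CHLMSSX24}. To bridge the gap, I would use Lemma \ref{lem: combine lc afs} to form the convex combination $\Aa' := (1-\epsilon)\Aa + \epsilon(X,\Ff,\Delta,\mathbf{0},0)$, where $(X,\Delta)$ is a klt pair witnessing that $X$ is potentially klt; this makes $\Aa'$ klt for $0 < \epsilon \ll 1$. Then Theorem \ref{thm: klt afs implies potentially klt} applied to $\Aa'$ supplies a small $\mathbb Q$-factorialization $f\colon \tilde X \to X$, on which \cite[Theorem 1.4]{CHLMSSX24} applies to $f^*\Aa$. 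The extremal ray $R$ on $X$ lifts to a $K_{f^*\Aa}$-negative extremal ray $\tilde R$ on $\tilde X$, and the supporting nef divisor on $\tilde X$ constructed via the cone theorem pushes down under $f_*$ to the candidate $H$ on $X$.

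The main obstacle will be this descent step: verifying that the lift of $R$ to $\tilde X$ genuinely behaves as an extremal ray and that the resulting nef supporting divisor on $\tilde X$ descends correctly. This relies on the compatibility $f^*K_{\Aa} = K_{f^*\Aa}$ together with the projection formula $f^*D \cdot C = D \cdot f_*C$ for curves $C$ on $\tilde X$ and $\mathbb R$-Cartier divisors $D$ on $X$, which holds because $f$ is an isomorphism in codimension one; a short convex-geometric argument then transfers the extremal ray structure between the two ambient varieties.
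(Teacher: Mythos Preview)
Your overall strategy—produce a nef$/U$ supporting divisor $H$ for $R$ with $H-K_{\Aa}$ ample$/U$, invoke Theorem~\ref{thm: semi-ampleness theorem} to make $H$ semi-ample$/U$, and take the induced contraction—is exactly the paper's. The divergence is in how you obtain the supporting divisor, and here your proposal is both more complicated than necessary and contains a small gap.

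The gap: for a \emph{pre-chosen} ample $A$, the divisor $H:=K_{\Aa}+\lambda_R A$ satisfies $H\cdot R=0$, but there is no reason $H^{\perp}\cap\overline{NE}(X/U)$ should equal $R$; another $K_{\Aa}$-negative extremal ray $R'$ could lie in the same hyperplane. The cone theorem only says $R$ is \emph{exposed}, and one must then choose $A$ to realize a supporting function of the required form—this is precisely what the paper extracts from \cite[Lemma 8.4.1]{CHLX23}. The unnecessary detour: the cone theorem you need is \cite[Theorem 1.2]{CHLMSSX24} (not Theorem~1.4, which is the contraction theorem), and it already applies to lc algebraically integrable adjoint foliated structures with no $\mathbb Q$-factoriality or klt hypothesis on $X$. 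The paper simply applies it on $X$ to conclude that $R$ is exposed, uses \cite[Lemma 8.4.1]{CHLX23} to write the supporting function as $K_{\Aa}+A$ with $A$ ample$/U$, and finishes with Theorem~\ref{thm: semi-ampleness theorem}. The ``potentially klt'' hypothesis enters only in the semi-ampleness step, so there is nothing to descend from a small $\mathbb Q$-factorialization, and your lifting/descent argument (which, as you yourself note, involves real work to check that a lift of $R$ is again extremal) is avoided entirely.
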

\begin{proof}
By the cone theorem \cite[Theorem 1.2]{CHLMSSX24}, $R$ is a $K_{\Aa}$-negative exposed ray$/U$. By \cite[Lemma 8.4.1]{CHLX23}, the supporting function of $R$ is $K_{\Aa}+A$ for some ample$/U$ $\Rr$-divisor $A$. By Theorem \ref{thm: semi-ampleness theorem}, $K_{\Aa}+A$ is semi-ample$/U$, so there exists a unique contraction $\cont_R: X\rightarrow T$ such that $K_{\Aa}+A\sim_{\mathbb R,U}\cont_R^*H$ for some ample$/U$ $\Rr$-divisor $H$. Thus, $\cont_R$ satisfies our requirements.
\end{proof}

\section{Finiteness of models}\label{sec: fom}

In this section we prove the finiteness of minimal models (Theorem \ref{thm: finiteness of minimal models intro}). The first part of Theorem \ref{thm: finiteness of minimal models intro} will be proven in Theorem \ref{thm: finiteness of log minimal model part i} and the second part of Theorem \ref{thm: finiteness of minimal models intro} will be proven in Theorem \ref{thm: finiteness of log minimal model part ii}. The ideas of the proofs can mainly be referred to \cite[Section 10]{HK10} as a simplified version of \cite[Section 7]{BCHM10}. Since the conditions of Theorem \ref{thm: finiteness of log minimal model part ii} are very complicated, we summarize them in the following set-up which we will use throughout this section. 

\begin{setup}\label{setup: finiteness of models}
$\pi, X,U,\MM,V,\Ff,t,\mathcal{C}$ are as follows:
\begin{enumerate}
    \item $\pi: X\rightarrow U$ is a projective morphism between normal quasi-projective varieties.
    \item $\MM$ is a tuple of nef$/U$ $\bb$-Cartier $\bb$-divisors.
    \item $V$ is a finite dimensional $\mathbb Q$-affine subspace of $\Weil_{\mathbb R}(X)\times\Span_{\mathbb R}(\MM)$.
    \item $\Ff$ is an algebraically integrable foliation on $X$.
    \item $t\in [0,1]$ is a rational number. 
    \item $\mathcal{C}\subset\mathcal{L}(V_{(X,\Ff,t)})$ is a rational polytope, such that for any $\Aa:=(X,\Ff,B,\Mm,t)\in\mathcal{C}$, $\Aa$ is klt and $B+\Mm_X$ is big$/U$.
\end{enumerate}
\end{setup}

\begin{lem}\label{lem: relative good minimal model neighborhood is good minimal model}
Notation and conditions as in Set-up \ref{setup: finiteness of models}.

Assume that $X$ is $\mathbb Q$-factorial and there exist $\Aa_0\in\mathcal{C}$ and a contraction$/U$ $f: X\rightarrow Z$, such that $K_{\Aa_0}\sim_{\mathbb R,U}f^*H$ for some ample$/U$ $\Rr$-divisor $H$. Let $\phi: X\dashrightarrow Y$ be a birational map$/Z$. Then there exists a neighborhood $\mathcal{C}_{\phi}$ of $\Aa_0$ in $\mathcal{C}$, such that for any $\Aa\in\mathcal{C}_{\phi}$, $\phi_*\Aa/U$ is a $\mathbb Q$-factorial good minimal model of $\Aa/U$ if and only if $\phi_*\Aa/Z$ is a $\mathbb Q$-factorial good minimal model of $\Aa/Z$. 
\end{lem}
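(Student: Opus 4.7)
Since $\phi\colon X\dashrightarrow Y$ is a birational map$/Z$, there is a morphism $h\colon Y\to Z$ with $h\circ\phi=f$ as rational maps. Because $\phi$ extracts no divisors, a computation on a common resolution $p\colon W\to X$, $q\colon W\to Y$ yields $\phi_\ast f^\ast H=q_\ast p^\ast f^\ast H=q_\ast q^\ast h^\ast H=h^\ast H$, whence $K_{\phi_\ast \Aa_0}=\phi_\ast K_{\Aa_0}\sim_{\mathbb R, U}h^\ast H$, which is automatically semi-ample$/U$. Since the conditions ``$Y$ is $\mathbb Q$-factorial'' and ``$\phi$ is $K_\Aa$-negative'' make no reference to the base, the lemma reduces to proving that, for $\Aa$ in a suitable neighborhood $\mathcal C_\phi$ of $\Aa_0$ in $\mathcal C$, the divisor $K_{\phi_\ast\Aa}$ is semi-ample$/U$ if and only if it is semi-ample$/Z$.

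For the direction good minimal model$/U$ $\Rightarrow$ good minimal model$/Z$, semi-ampleness$/U$ gives nefness$/Z$ at once, since every curve contracted$/Z$ is contracted$/U$. Moreover $\phi_\ast\Aa$ is klt because $\phi$ is $K_\Aa$-negative, and its generalized boundary $\phi_\ast B+\Mm_Y$ is big$/Z$: indeed $B+\Mm_X$ big$/U$ decomposes as ample$/U$ plus effective, ample$/U$ descends to ample$/Z$ (fibers of $f$ lie inside fibers of $\pi$), and the non-extracting pushforward $\phi$ preserves bigness. The relative base-point-freeness theorem over $Z$, that is, a relative version of Theorem \ref{thm: semi-ampleness theorem}, then upgrades nefness$/Z$ to semi-ampleness$/Z$.

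For the converse direction, I would assume $K_{\phi_\ast\Aa}$ is semi-ample$/Z$ and aim to show it is nef$/U$; once this is established, applying Theorem \ref{thm: semi-ampleness theorem} over $U$ to $\phi_\ast\Aa$ (klt with big$/U$ generalized boundary, as above) gives semi-ampleness$/U$. Working in $N^1(Y/U)_{\mathbb R}$, I would exploit that $h^\ast H$ lies in the relative interior of the face $\overline{\Nef}(Y/U)\cap h^\ast N^1(Z/U)_{\mathbb R}$ whose dual face in $\overline{\NE}(Y/U)$ is $\overline{\NE}(Y/Z)$, on which $h^\ast H$ vanishes identically. By Kleiman's criterion applied to the ample$/U$ divisor $H$ on $Z$, one has $h^\ast H\cdot C\ge c\,\|h_\ast C\|$ for some uniform $c>0$ and all $C\in\overline{\NE}(Y/U)$. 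Since the class of $K_{\phi_\ast\Aa}$ tends to $h^\ast H$ in $N^1(Y/U)_{\mathbb R}$ as $\Aa\to\Aa_0$, and nefness$/Z$ forces $K_{\phi_\ast\Aa}$ to pair non-negatively with the entire face $\overline{\NE}(Y/Z)$, a standard face-structure argument shows that shrinking $\mathcal C_\phi$ sufficiently places $K_{\phi_\ast\Aa}$ inside $\overline{\Nef}(Y/U)$.

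The main obstacle is precisely this convex-geometric argument in the converse direction: one must combine the uniform positivity of $h^\ast H$ transverse to the face $\overline{\NE}(Y/Z)$ with the non-negativity of $K_{\phi_\ast\Aa}-h^\ast H$ on that face coming from the nef$/Z$ hypothesis, so that a sufficiently small perturbation of $h^\ast H$ still lies in $\overline{\Nef}(Y/U)$. Balancing these two inputs uniformly over $\mathcal C_\phi$ is the core of the argument, and is what determines the size of the neighborhood.
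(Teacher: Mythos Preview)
Your reduction is correct: the conditions ``$Y$ is $\mathbb Q$-factorial'' and ``$\phi$ is $K_\Aa$-negative'' are base-independent, so the content is the equivalence of semi-ampleness (equivalently nefness, given base-point-freeness) over $U$ and over $Z$. The forward direction is even easier than you make it: semi-ample$/U$ implies semi-ample$/Z$ directly, since every fiber of $Y\to Z$ sits inside a fiber of $Y\to U$. No appeal to Theorem~\ref{thm: semi-ampleness theorem} is needed there.

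The genuine gap is in the backward direction. Your ``standard face-structure argument'' does not work as stated: the cone $\overline{\NE}(Y/U)$ is in general not polyhedral, and extremal rays can accumulate onto the face $\overline{\NE}(Y/Z)$. For such a sequence of rays $R_k\to R\subset\overline{\NE}(Y/Z)$ one has $h^\ast H\cdot R_k\to 0$, so the ``uniform positivity transverse to the face'' you invoke degenerates exactly where it is needed. A perturbation $D$ of $h^\ast H$ that is nef on the entire face $\overline{\NE}(Y/Z)$ can still be negative on some $R_k$, no matter how close $D$ is to $h^\ast H$ in $N^1(Y/U)_{\mathbb R}$. There is no purely convex-geometric mechanism to rule this out; one needs structural input about which rays can actually be $K_{\phi_\ast\Aa}$-negative.

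The paper supplies this input via the cone theorem and the length of extremal rays for adjoint foliated structures. Two ingredients are combined. First, since $h^\ast H$ is semi-ample$/U$ (pullback of ample), its intersection numbers with curves on $Y$ lie in a set bounded away from zero when nonzero: there exists $\delta>0$ with $h^\ast H\cdot C\in\{0\}\cup(\delta,\infty)$ for every curve $C$. Second, after shrinking $\mathcal C$ so that all vertex pushforwards $\Aa_{i,Y}$ are klt, every $K_{\phi_\ast\Aa}$-negative extremal ray$/U$ is spanned by a rational curve $C$ with $K_{\Aa_{i,Y}}\cdot C\ge -2\dim X$ for each $i$. Choosing the neighborhood so that the weight on $\Aa_0$ exceeds $1-\tfrac{\delta}{2\dim X+1}$, a direct computation shows that any such $C$ must satisfy $h^\ast H\cdot C=0$, i.e.\ $C$ is contracted over $Z$. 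Hence nef$/Z$ forces nef$/U$. This boundedness of extremal curves is precisely what your argument is missing.
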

\begin{proof}
Let $\Aa_{0,Y}:=\phi_*\Aa_0$. Since $\phi$ is $K_{\Aa_0}$-trivial, $\Aa_{0,Y}$ is klt. Since $V$ is finite dimensional, there exist a positive integer $n$ and $\Rr$-divisors $D_1,\dots,D_n$ on $X$, such that
\begin{itemize}
    \item for any $\Aa\in\mathcal{C}$, $K_{\Aa}=K_{\Aa_0}+\sum_{i=1}^n a_iD_i$ for some real numbers $a_i\geq 0$, and
    \item for any $1\leq i\leq n$, there exists $\Aa_i\in\mathcal{C}$ such that $K_{\Aa_i}=K_{\Aa_0}+D_i$.
\end{itemize}
By Lemma \ref{lem: add divisor not containing lc center}, possibly replacing $D_i$ with $\epsilon D_i$ and replacing $\mathcal{C}$ with $\{(1-\epsilon)\Aa_0+\epsilon\Aa\mid\Aa\in\mathcal{C}\}$ for some $0<\epsilon\ll 1$, we may assume that $\Aa_{i,Y}:=\phi_*\Aa_i$ is klt for any $i$. Let $\pi_Y: Y\rightarrow Z$ be the induced morphism. Since $H$ is ample$/U$, $\pi_Y^*H$ is semi-ample$/U$, so there exists $\delta\in (0,1)$, such that for any curve $C$ on $Y$, either $K_{\Aa_{0,Y}}\cdot C=0$ or $K_{\Aa_{0,Y}}\cdot C>\delta$.

Pick $$\Aa=\left(1-\frac{\delta}{2\dim X+1}\right)\Aa_0+\frac{\delta}{2\dim X+1}\sum_{i=1}^nb_i\Aa_i$$ where $b_i\in (0,1]$ and $\sum_{i=1}^nb_i=1$. Let $\Aa_Y:=\phi_*\Aa$, then $\Aa_Y$ is klt as it is contained in the convex hull spanned by $\Aa_{i,Y}$, $0\leq i\leq n$. For any $K_{\Aa_{Y}}$-negative extremal ray$/U$ $R$, by the cone theorem \cite[Theorem 1.3]{CHLMSSX24}, $R$ is spanned by a rational curve $C$. Possibly replacing $C$, we may assume that $C$ is of minimal length, i.e. for any rational curve $C'$ such that $[C']=R\in\overline{NE}(Y/U)$, $C'\equiv_U\lambda C$ for some $\lambda\geq 1$. By the length of extremal rays \cite[Theorem 1.3]{CHLMSSX24}, for any $i$, either $-2\dim X\leq K_{\Aa_{i,Y}}\cdot C<0$, or $K_{\Aa_{i,Y}}\cdot C\geq 0$. Therefore, if $K_{\Aa_{0,Y}}\cdot C>0$, then
\begin{align*}
    K_{\Aa_Y}\cdot C&=\left(1-\frac{\delta}{2\dim X+1}\right)(K_{\Aa_{0,Y}}\cdot C)+\frac{\delta}{2\dim X+1}\sum_{i=1}^nb_i(K_{\Aa_{i,Y}}\cdot C)\\
    &>\left(1-\frac{\delta}{2\dim X+1}\right)\cdot\delta-\frac{2\dim X}{2\dim X+1}\delta>0,
\end{align*}
which is not possible. Hence $K_{\Aa_{0,Y}}\cdot C=0$. In particular, if $K_{\Aa_Y}$ is nef$/Z$, then $K_{\Aa_Y}$ is nef$/U$. Therefore, we may take 
$$\mathcal{C}_{\phi}:=\left\{\left(1-\frac{\delta}{2\dim X+1}\right)\Aa_0+\frac{\delta}{2\dim X+1}\sum_{i=1}^nb_i\Aa_i\middle|b_i\in (0,1],\sum_{i=1}^nb_i=1\right\}.$$
\end{proof}

\begin{thm}\label{thm: finiteness of log minimal model part i}
Notation and conditions as in Set-up \ref{setup: finiteness of models}.

There are finitely birational maps$/U$ $\phi_j: X\dashrightarrow Y_j$, $1\leq j\leq k$, such that for any $\Aa\in\mathcal{C}\cap\mathcal{E}_{\pi}(V_{(X,\Ff,t)})$, there exists $1\leq j\leq k$ such that $(\phi_j)_*\Aa/U$ is a $\mathbb Q$-factorial good minimal model of $\Aa/U$.
\end{thm}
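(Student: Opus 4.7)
The plan is to adapt the strategy of \cite[Theorem E]{BCHM10} (in the simplified form of \cite[Section 10]{HK10}) to the adjoint foliated setting, with Theorem \ref{thm: eogmm boundary big case} providing the indispensable existence input. First, by Theorem \ref{thm: klt afs implies potentially klt}, we may replace $X$ by a small $\mathbb Q$-factorialization and thus assume $X$ is $\mathbb Q$-factorial. Since $\mathcal C$ is a compact rational polytope, the statement reduces to proving a \emph{local finiteness} claim: for every $\Aa_0\in\mathcal C$ there exist an open neighborhood $\mathcal C_0\subset\mathcal C$ of $\Aa_0$ and finitely many birational maps$/U$ $\phi_1,\dots,\phi_{k_0}$ such that every $\Aa\in\mathcal C_0\cap\mathcal E_\pi(V_{(X,\Ff,t)})$ admits some $(\phi_i)_*\Aa/U$ as a $\mathbb Q$-factorial good minimal model.

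Fix $\Aa_0\in\mathcal C$. If $K_{\Aa_0}$ is not pseudo-effective$/U$, then a small neighborhood of $\Aa_0$ in $\mathcal C$ is disjoint from $\mathcal E_\pi(V_{(X,\Ff,t)})$ and contributes nothing. Otherwise, Theorem \ref{thm: eogmm boundary big case} produces a $\mathbb Q$-factorial good minimal model $\phi_0\colon X\dashrightarrow Y_0$ of $\Aa_0/U$, and the semi-ample$/U$ divisor $K_{(\phi_0)_*\Aa_0}$ defines a contraction$/U$ $f_0\colon Y_0\to Z_0$ with $K_{(\phi_0)_*\Aa_0}\sim_{\mathbb R,U}f_0^*H_0$ for some ample$/U$ $\mathbb R$-divisor $H_0$. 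By Lemma \ref{lem: nz for lc divisor}, every $\phi_0$-exceptional divisor belongs to $\Supp N_\sigma(X/U,K_{\Aa_0})$; combining this with Lemma \ref{lem: limit of nakayama-zariski decomposition}, with the linearity of $K_\Aa$ in $\Aa$, and with the negativity lemma applied on a common resolution, one sees that after shrinking $\mathcal C_0$ the map $\phi_0$ remains $K_\Aa$-negative for every $\Aa\in\mathcal C_0$, and every $\mathbb Q$-factorial minimal model$/U$ of such an $\Aa$ must factor through $\phi_0$.

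After a further shrinking of $\mathcal C_0$, Lemma \ref{lem: relative good minimal model neighborhood is good minimal model} then identifies ``$\mathbb Q$-factorial good minimal model$/U$'' with ``$\mathbb Q$-factorial good minimal model$/Z_0$'' for any birational map out of $Y_0$. The question is thereby reduced to bounding the number of $\mathbb Q$-factorial good minimal models$/Z_0$ arising from the pushforward rational polytope $(\phi_0)_*\mathcal C_0$ on $Y_0$. Since $K_{(\phi_0)_*\Aa_0}\sim_{\mathbb R,Z_0}0$, combining Lemma \ref{lem: perturbation for q-factorial klt big boundary to +ample} with Proposition \ref{prop: lc Aat implies lc Aa0} converts each element of $(\phi_0)_*\mathcal C_0$ into a $\mathbb Q$-factorial klt generalized pair on $Y_0/Z_0$ with big$/Z_0$ generalized boundary, whose canonical class is a positive rational multiple of $K_{(\phi_0)_*\Aa}$. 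One then concludes by the generalized-pair analogue of \cite[Theorem E]{BCHM10}, which can be established by a Shokurov-polytope argument built on top of the existence of good minimal models for klt generalized pairs with big boundary \cite[Lemma 4.4]{BZ16}. Patching the local pieces together by compactness of $\mathcal C$ gives the global finiteness.

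The main obstacle is precisely this last step: a finiteness-of-models statement for rational polytopes of klt generalized pairs (with a nef $\bb$-divisor part) is not explicitly in the literature, and must be deduced by carefully tracking the nef $\bb$-divisor through the finiteness arguments of \cite[Section 7]{BCHM10} / \cite[Section 10]{HK10}. A secondary technical point is to verify that the pushforward of the rational polytope $\mathcal C_0$ under $\phi_0$ and the subsequent generalized-pair transformation preserve both the rationality of the affine span and the uniform klt / big-boundary properties, so that the generalized-pair finiteness result can be invoked directly.
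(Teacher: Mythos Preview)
Your opening moves---compactness reduction, small $\mathbb Q$-factorialization, taking a $\mathbb Q$-factorial good minimal model $\phi_0$ of $\Aa_0$ via Theorem \ref{thm: eogmm boundary big case}, and invoking Lemma \ref{lem: relative good minimal model neighborhood is good minimal model} to pass to $Z_0$---match the paper's proof. The divergence, and the gap, is in the final step.

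After reducing to the situation $K_{\Aa_0}\sim_{\mathbb R,U}0$ (with $U$ replaced by $Z_0$), the paper does \emph{not} leave the adjoint foliated setting. Instead it argues by induction on $\dim\mathcal C$: for any $\Aa\in\mathcal C$ the segment from $\Aa_0$ to $\Aa$ hits $\partial\mathcal C$ at some $\Aa'$, and since $K_{\Aa_0}\sim_{\mathbb R,U}0$ one has $K_{\Aa}\sim_{\mathbb R,U}\lambda K_{\Aa'}$ for some $\lambda>0$. Hence any $\mathbb Q$-factorial good minimal model of $\Aa'/U$ is also one for $\Aa/U$, and one is reduced to the faces of $\mathcal C$, which are rational polytopes of strictly smaller dimension. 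This is clean and self-contained.

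Your proposed route---convert each $(\phi_0)_*\Aa$ to a klt generalized pair via Proposition \ref{prop: lc Aat implies lc Aa0} and Lemma \ref{lem: perturbation for q-factorial klt big boundary to +ample} and then appeal to a generalized-pair finiteness theorem---has two real problems. First, Proposition \ref{prop: lc Aat implies lc Aa0} produces a generalized pair $(Y_0,B',\Mm)$ whose canonical class $K_{Y_0}+B'+\Mm_{Y_0}$ is \emph{not} a positive multiple of $K_{(\phi_0)_*\Aa}$; the two differ by $t(K_{\Ff}-K_{Y_0}-B'^{\inv})$, so minimal models do not transfer in the way you need, and it is unclear how to make this conversion an affine map on the whole polytope $\mathcal C_0$. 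Second, as you yourself note, the finiteness-of-models statement for rational polytopes of generalized pairs with a nef $\bb$-divisor is not in the literature; establishing it would require exactly the Shokurov-polytope induction on $\dim\mathcal C$ that the paper carries out directly, so you have not actually shortened the argument. (A minor point: the claim that every $\mathbb Q$-factorial minimal model of $\Aa$ ``must factor through $\phi_0$'' is both stronger than needed here and not justified by the lemmas you cite; the paper only uses that $\phi_0$ is $K_{\Aa}$-negative for $\Aa$ near $\Aa_0$, so that good minimal models of $(\phi_0)_*\Aa$ are good minimal models of $\Aa$.)
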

\begin{proof}
We apply induction on $\dim\mathcal{C}$. If $\dim\mathcal{C}=0$, then $\mathcal{C}$ consists of finitely many points, and the theorem follows from Theorem \ref{thm: eogmm boundary big case}. Since the cone of pseudo-effective$/U$ $\Rr$-divisors on $X$ is closed, $\mathcal{C}\cap\mathcal{E}_{\pi}(V_{(X,\Ff,t})$ is compact, so it suffices to work locally around $\Aa_0\in\mathcal{C}\cap\mathcal{E}_{\pi}(V_{(X,\Ff,t)})$. We are therefore free to replace $\mathcal{C}$ by the closure of an appropriate
neighborhood of $\Aa_0$ inside $\mathcal{C}$.

\medskip

\noindent\textbf{Step 1.} In this step we reduce to the case when $X$ is $\mathbb Q$-factorial and $K_{\Aa_0}$ is semi-ample$/U$.

By Theorem \ref{thm: eogmm boundary big case}, there exists a $\mathbb Q$-factorial good minimal model $\Aa_{0,Y}/U$ of $\Aa_0/U$ associated with a birational map $\phi\colon X\dashrightarrow Y$. Since $\phi$ is $K_{\Aa_0}$-negative, possibly shrinking $\mathcal{C}$, we may assume that $a(E,\Aa)<a(E,\phi_*\Aa)$ for any prime divisor $E$ on $X$ that is exceptional$/Y$ and any $\Aa\in\mathcal{C}$. Since $\Aa_{0,Y}$ is klt, possibly shrinking $\mathcal{C}$ further, we may assume that $\phi_*\Aa$ is klt for any $\Aa\in\mathcal{C}$.
Therefore, $\mathbb Q$-factorial good minimal model of $\phi_*\Aa/U$ is a $\mathbb Q$-factorial good minimal model of $\Aa/U$. Possibly replacing $(X,\Ff)$ with $(Y,\phi_*\Ff)$, $V$ with $\{\phi_*v\mid v\in V\}$, $\pi$ with the induced birational map $Y\rightarrow U$, $\Aa_0$ with $\Aa_{0,Y}$, and $\mathcal{C}$ with $\{\phi_*\Aa\mid\Aa\in\mathcal{C}\}$, we may assume that $X$ is $\mathbb Q$-factorial and $K_{\Aa_0}$ is semi-ample$/U$.

\medskip

\noindent\textbf{Step 2.}  In this step we reduce to the case when $K_{\Aa_0}\sim_{\mathbb R,U}0$.

Since $K_{\Aa_0}$ is semi-ample$/U$, there exists a contraction$/U$ $f: X\rightarrow Z$ such that $K_{\Aa_0}\sim_{\mathbb R,U}f^*H$ for some ample$/U$ $\Rr$-divisor $H$. In particular, $K_{\Aa_0}\sim_{\mathbb R,Z}0$.

Assume that there are finitely birational maps$/Z$ $\phi_j: X\dashrightarrow Y_j$ such that for any $\Aa\in\mathcal{C}\cap\mathcal{E}_{\pi}(V_{(X,\Ff,t)})$, there exists $j$ such that $(\phi_j)_*\Aa/U$ is a good minimal model of $\Aa/U$. By Lemma \ref{lem: relative good minimal model neighborhood is good minimal model}, possibly shrinking $\mathcal{C}$ to a neighborhood of $\Aa_0$, we may assume that for any $\phi_j$ and any $\Aa\in\mathcal{C}$, $(\phi_j)_*\Aa/U$ is a good minimal model of $\Aa/U$ if and only if $(\phi_j)_*\Aa/Z$ is a good minimal model of $\Aa/Z$. Thus, possibly shrinking $\mathcal{C}$ to a neighborhood of $\Aa_0$, these $\phi_j$ satisfy our requirements. Therefore, we may replace $\pi: X\rightarrow U$ with $f\colon X\rightarrow Z$ and assume that $K_{\Aa_0}\sim_{\mathbb R,U}0$.

\medskip

\noindent\textbf{Step 3.} In this step we conclude the proof. 

For any $\Aa\in\mathcal{C}$, there exists $\Aa'\in\partial\mathcal{C}$ (the boundary of $\mathcal{C}$) such that
$$\Aa-\Aa_0=\lambda (\Aa'-\Aa_0)$$
for some real number $\lambda>0$. Note that $\Aa-\Aa_0$ and $(\Aa'-\Aa_0)$ are considered as subsets of the affine space spanned by $\mathcal C$.
Thus, we have
$$K_{\Aa}\sim_{\mathbb R,U}\lambda K_{\Aa'}.$$
In particular, $\Aa\in\mathcal{E}_{\pi}(V_{(X,\Ff,t)})$ if and only if $\Aa'\in\mathcal{E}_{\pi}(V_{(X,\Ff,t)})$, and if $\bar\Aa/U$ is a $\mathbb Q$-factorial good minimal model of $\Aa/U$ with induced birational map $\phi: X\dashrightarrow\bar X$, then $\phi_*\Aa'/U$ is a good minimal model of $\Aa'/U$. Since the boundary of $\mathcal{C}$ is the intersection of finitely many $\mathbb Q$-affine hyperplanes,
we are done by induction on $\dim\mathcal{C}$.
\end{proof}

\begin{lem}\label{lem: first translation to greater than ample}
Notation and conditions as in Set-up \ref{setup: finiteness of models}.

Assume that $X$ is $\mathbb Q$-factorial and $\mathcal{C}$ is a simplex. Let $A$ be an ample$/U$ Cartier divisor on $X$ such that $\overline{A}\not\in\Span_{\mathbb R}(\MM)$, and let $\MM':=(\MM,\overline{A})$. 
Then there exist a positive rational number $\delta$, a finite dimensional subspace $V'\subset\Weil_{\mathbb R}(X)\times\Span_{\mathbb R}(\MM')$, and $\mathbb Q$-affine function $L: \mathcal{C}\rightarrow V'$ satisfying the following: 

For any $\Aa\in\mathcal{C}$, let $\Aa'=L(\Aa):=(X,\Ff,B',\Mm',t)\in V'$. Then $K_{\Aa'}\sim_{\mathbb R}K_{\Aa}$, $\Aa'$ is klt, and $\Mm'-\delta\overline{A}\in\Span_{\mathbb R_{\geq 0}}(\MM')$.
\end{lem}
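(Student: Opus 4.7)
The plan is to define $L$ by specifying its values on the finitely many vertices of the rational simplex $\mathcal{C}$ and then extending it affinely; the key technical input is the relative Kodaira lemma, which lets us absorb a uniform multiple of $\overline{A}$ into the boundary while keeping $K_{\Aa}$ unchanged up to $\mathbb{R}$-linear equivalence over $U$. Concretely, let $\Aa_1,\dots,\Aa_r$ be the vertices of $\mathcal{C}$, with $\Aa_i=(X,\Ff,B_i,\Mm_i,t)$; by rationality of $\mathcal{C}$, each $B_i$ is a $\mathbb{Q}$-divisor and $\Mm_i\in\Span_{\mathbb{Q}_{\geq 0}}(\MM)$. Since every $B_i+(\Mm_i)_X$ is big$/U$ and $A$ is ample$/U$, for all sufficiently small rational $c>0$ I can find effective $\mathbb{Q}$-divisors $E_i\geq 0$ such that
$$B_i+(\Mm_i)_X-cA\sim_{\mathbb{Q},U}E_i,\qquad 1\leq i\leq r.$$
Fix one such $c$ uniformly, and then pick a rational $0<\epsilon\ll 1$ small enough that the perturbation
$$\Aa_i':=\bigl(X,\Ff,(1-\epsilon)B_i+\epsilon E_i,\;(1-\epsilon)\Mm_i+\epsilon c\,\overline{A},\;t\bigr)$$
is still klt for every $i$. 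Set $\delta:=\epsilon c\in\mathbb{Q}_{>0}$, and let $V'$ be the rational finite-dimensional affine span of the pairs $(B_i',\Mm_i')$ inside $\Weil_{\mathbb{R}}(X)\times\Span_{\mathbb{R}}(\MM')$.

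For a general $\Aa=\sum\lambda_i\Aa_i\in\mathcal{C}$ written in barycentric coordinates ($\lambda_i\geq 0$, $\sum\lambda_i=1$), define $L(\Aa):=\sum\lambda_i\Aa_i'=(X,\Ff,B',\Mm',t)$; this is manifestly $\mathbb{Q}$-affine and lands in $V'$. Expanding, $B'=(1-\epsilon)B+\epsilon E$ with $B:=\sum\lambda_iB_i$ and $E:=\sum\lambda_iE_i\geq 0$, while $\Mm'=(1-\epsilon)\Mm+\delta\,\overline{A}$ with $\Mm:=\sum\lambda_i\Mm_i$. The three desired properties now follow directly: summing the vertex relations gives
$$K_{\Aa'}-K_{\Aa}=\epsilon\sum\lambda_i\bigl(E_i+cA-B_i-(\Mm_i)_X\bigr)\sim_{\mathbb{R},U}0;$$
$\Aa'$ is klt by Lemma \ref{lem: combine lc afs} as a convex combination of the klt $\Aa_i'$ (with $\sum\lambda_i=1$, at least one $\lambda_j$ is positive); and finally
$$\Mm'-\delta\overline{A}=(1-\epsilon)\Mm\in\Span_{\mathbb{R}_{\geq 0}}(\MM)\subset\Span_{\mathbb{R}_{\geq 0}}(\MM'),$$
as required.

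The only delicate point is uniformity across the finitely many vertices: the same $c$ must make $B_i+(\Mm_i)_X-cA$ big$/U$ for every $i$, and the same $\epsilon$ must keep each $\Aa_i'$ klt. Both are routine because bigness$/U$ and klt are open conditions and $\mathcal{C}$ has only finitely many vertices, so one takes the minimum of finitely many admissible thresholds. The klt perturbation at an individual vertex is itself standard: pass to a common foliated log resolution on which $\Mm_i$ and $\overline{A}$ descend, and observe that the discrepancy of each relevant prime divisor is a linear function of $\epsilon$ which is strictly positive at $\epsilon=0$ by the klt-ness of $\Aa_i$; this is essentially the content of Lemma \ref{lem: add divisor not containing lc center} (combined with a scaling argument for the $(1-\epsilon)$-factor on $B_i$ and $\Mm_i$, noting that scaling the boundary down and the nef part down by the same factor can only improve singularities after compensating for the added ample piece).
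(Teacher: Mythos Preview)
Your proof is correct and follows essentially the same approach as the paper: both define $L$ on the vertices of the simplex by trading a small fraction of the big generalized boundary $B_i+(\Mm_i)_X$ for a multiple of $\overline{A}$ (the paper packages this single-vertex step as Lemma \ref{lem: perturbation for q-factorial klt big boundary to +ample}, which you have essentially re-derived inline), and then extend by barycentric coordinates, invoking Lemma \ref{lem: combine lc afs} for klt-ness of the convex combinations. The only cosmetic difference is that you choose a single pair $(c,\epsilon)$ uniformly across the vertices, whereas the paper allows per-vertex constants $(\epsilon_i,\delta_i)$ and then sets $\delta:=\min_i\delta_i$.
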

\begin{proof}
    Let $\Aa_i:=(X,\Ff,B_i,\Mm_i,t)$ be the vertices of $\mathcal{C}$. By Lemma \ref{lem: perturbation for q-factorial klt big boundary to +ample}, there exist rational numbers $\epsilon_i,\delta_i$ and $\mathbb Q$-divisors $B_i'$ on $X$ such that $\Aa_i':=(X,\Ff,B_i',(1-\epsilon_i)\Mm_i+\delta_i\overline{A},t)$ is klt and $K_{\Aa_i}\sim_{\mathbb Q,U}K_{\Aa_i'}$. For any $\Aa\in\mathcal{C}$, there exist unique real numbers $a_i\in (0,1]$ such that $\sum a_i=1$ and $\sum a_i\Aa_i=\Aa$, and we define $L(\Aa):=\sum a_i\Aa_i'$. Let $\delta:=\min\{\delta_i\}$, $V_0\subset\Weil_{\mathbb R}(X)$ the subspace spanned by all components of $B_i'$, and $V':=V_0\times\Span_{\mathbb R}(\MM')$. Then $\delta$, $V'$ and $L$ satisfy our requirements.
\end{proof}

\begin{thm}\label{thm: finiteness of log minimal model part ii}
Notation and conditions as in Set-up \ref{setup: finiteness of models}.

Then there are finitely birational maps$/U$ $\phi_j: X\dashrightarrow Y_j$, $1\leq j\leq k$, such that for any $\Aa\in\mathcal{C}\cap\mathcal{E}_{\pi}(V_{(X,\Ff,t)})$ and any $\mathbb Q$-factorial good minimal model $\Aa_Y/U$ of $\Aa/U$ with induced birational map$/U$ $\phi\colon X\dashrightarrow Y$, there exists $1\leq j\leq k$ such that $(\phi_j)_*\Aa/U$ is a $\mathbb Q$-factorial good minimal model of $\Aa/U$ and $\phi_j\circ\phi^{-1}$ is an isomorphism.
\end{thm}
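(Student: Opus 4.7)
The plan is to adapt the induction on $\dim\mathcal{C}$ used in the proof of Theorem \ref{thm: finiteness of log minimal model part i}, enhanced to track each $\mathbb Q$-factorial good minimal model up to isomorphism as a birational map from $X$, rather than only up to isomorphism of the semi-ample target.

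After reducing to the case where $X$ is $\mathbb Q$-factorial via Theorem \ref{thm: klt afs implies potentially klt}, and using compactness of $\mathcal{C}\cap\mathcal{E}_{\pi}(V_{(X,\Ff,t)})$, I would work locally around a chosen $\Aa_0\in\mathcal{C}\cap\mathcal{E}_{\pi}(V_{(X,\Ff,t)})$ and induct on $\dim\mathcal{C}$. Mirroring Steps 1 and 2 of the proof of Theorem \ref{thm: finiteness of log minimal model part i}, I would reduce to the case where $K_{\Aa_0}$ is semi-ample$/U$ and in fact $K_{\Aa_0}\sim_{\mathbb R,U}0$. For the former, Theorem \ref{thm: eogmm boundary big case} provides a $\mathbb Q$-factorial good minimal model of $\Aa_0$ with induced birational map $\phi_0\colon X\dashrightarrow X_0$; replacing $X$ by $X_0$ preserves the set of $\mathbb Q$-factorial good minimal models bijectively (by pre-composition with $\phi_0$), a fact which follows from Lemma \ref{lem: g-pair version bir12 2.7}(1) together with a check that no exceptional divisor of $\phi_0$ is extracted by the candidate minimal model of the perturbed $\Aa$. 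For the latter, letting $f\colon X\to Z$ be the contraction arising from the semi-ampleness of $K_{\Aa_0}$, Lemma \ref{lem: relative good minimal model neighborhood is good minimal model} shows that $\mathbb Q$-factorial good minimal models over $U$ coincide with those over $Z$ in a neighborhood of $\Aa_0$, so I may replace $U$ by $Z$.

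In the reduced setup, for any $\Aa\in\mathcal{C}$ in a neighborhood of $\Aa_0$ with $\Aa\neq\Aa_0$, I write $\Aa-\Aa_0=\lambda(\Aa'-\Aa_0)$ for some $\Aa'\in\partial\mathcal{C}$ and $\lambda>0$, giving $K_{\Aa}\sim_{\mathbb R,U}\lambda K_{\Aa'}$. Then every $\mathbb Q$-factorial good minimal model of $\Aa$ is also one of $\Aa'$, so applying the inductive hypothesis to each of the finitely many $\mathbb Q$-affine facets of $\partial\mathcal{C}$ (each of dimension $<\dim\mathcal{C}$) produces a finite list covering all such $\Aa$. For $\Aa=\Aa_0$ itself, since $K_{\Aa_0}\sim_{\mathbb R,U}0$ every $\mathbb Q$-factorial good minimal model of $\Aa_0/U$ is a small $\mathbb Q$-factorial modification of $X$; I would add these to the list.

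The main obstacle is the base case: showing finiteness of the small $\mathbb Q$-factorial modifications of $X$ arising from good minimal models of $\Aa_0$. Since $\Aa_0$ is klt with $B+\Mm_X$ big, Proposition \ref{prop: lc Aat implies lc Aa0} yields an associated klt generalized pair structure on $X$ with big generalized boundary, for which the finiteness of small $\mathbb Q$-factorial modifications follows from the generalized pair analogue of \cite{BCHM10} (cf.\ \cite{BZ16,HL22}). Every small $\mathbb Q$-factorial modification arising from a good minimal model of $\Aa_0/U$ lies in this finite set, completing the argument.
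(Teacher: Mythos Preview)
Your inductive scheme has a genuine gap at the base case, and a related circularity in the Step~2 reduction.

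For the base case: after reducing to $K_{\Aa_0}\sim_{\mathbb R,U}0$, you correctly note that every $\mathbb Q$-factorial good minimal model of $\Aa_0/U$ is a small $\mathbb Q$-factorial modification of $X$, but your finiteness argument does not work. Merely exhibiting a klt generalized pair structure on $X$ with big generalized boundary (via Proposition~\ref{prop: lc Aat implies lc Aa0}) does not bound the small $\mathbb Q$-factorial modifications of $X$: a Calabi--Yau threefold $X$ with infinitely many flops carries klt pairs $(X,\epsilon H)$ with $H$ ample, yet has infinitely many SQMs. What would suffice is that $X$ be of Fano type over $U$, but the available route to that (Theorem~\ref{thm: precise fano type afs to x}) relies on results downstream of the very statement you are proving. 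Note also that the generalized pair produced by Proposition~\ref{prop: lc Aat implies lc Aa0} does not have canonical divisor equal to $K_{\Aa_0}$, so there is no reason the SQMs in question are minimal models of that generalized pair.

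For Step~2: Lemma~\ref{lem: relative good minimal model neighborhood is good minimal model} furnishes, for each \emph{fixed} $\phi$, a neighborhood $\mathcal{C}_\phi$ of $\Aa_0$ on which being a good minimal model over $U$ and over $Z$ agree. To pass from $U$ to $Z$ uniformly you would need to intersect the $\mathcal{C}_\phi$ over all candidate $\phi$, but you do not yet know these are finite in number---that is exactly the conclusion sought. (In the proof of Theorem~\ref{thm: finiteness of log minimal model part i} this is harmless because one only needs \emph{one} good minimal model per $\Aa$, so the finite list obtained over $Z$ is fixed in advance.)

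The paper avoids both issues by a different, non-inductive device (the standard \cite{BCHM10}/\cite{HK10} trick): enlarge $\mathcal{C}$ to a rational polytope $\mathcal{C}'$ that also permits small perturbations by the components of a reduced divisor $S$ whose components span $N^1(X/U)$, and apply Theorem~\ref{thm: finiteness of log minimal model part i} to $\mathcal{C}'$ to obtain finitely many $\psi_i$. Given any $\mathbb Q$-factorial good minimal model $\phi\colon X\dashrightarrow Y$ of some $\Aa\in\mathcal{C}$, pick a small ample $H$ on $Y$, write $(\phi^{-1})_*H\equiv_U\sum c_iC_i$ in the components of $S$, and obtain $\bar\Aa\in\mathcal{C}'$ with $K_{\bar\Aa}\equiv_U K_{\Aa}+\tau(\phi^{-1})_*H$; since $\phi_*K_{\bar\Aa}$ is ample$/U$, $\phi$ is the \emph{unique} $\mathbb Q$-factorial good minimal model of $\bar\Aa/U$, hence $\phi=\psi_i$ for some $i$. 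This uniqueness-by-ample-perturbation idea is what your proposal is missing.
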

\begin{proof}
    Possibly replacing $X$ with a small $\mathbb Q$-factorialization, we may assume that $X$ is $\mathbb Q$-factorial. Possibly shrinking $\mathcal{C}$, we may assume that $\mathcal{C}$ is a simplex. 

    Let $A$ be a general very ample divisor on $X$. Then $\overline{A}\not\in\Span_{\mathbb R}(\MM)$. By Lemma \ref{lem: first translation to greater than ample}, possibly replacing $\mathcal{C}$, we may assume that there exists a positive real number $\delta$, such that $\MM=(\MM_0,\overline{A})$ for some $\MM_0$ with $\overline{A}\not\in\Span_{\mathbb R}(\MM)$, and $\Mm-\delta\overline{A}\in\Span_{\mathbb R_{\geq 0}}(\MM)$ for any $(X,\Ff,B,\Mm,t)\in\mathcal{C}$. 
    
    Let $S\geq 0$ be a reduced divisor such that the components of $S$ span $N^1(X/U)$. Let $0<\epsilon\ll\delta$ be a rational number such that $H:=\delta A-\epsilon S$ is ample$/U$, and $(X,\Ff,B+2\epsilon S,\Mm,t)$ is klt for any $(X,\Ff,B,\Mm,t)\in\mathcal{C}$ (which is possible as we only need $(X,\Ff,B_i+2\epsilon S,\Mm_i,t)$ is klt for the vertices $(X,\Ff,B_i,\Mm_i,t)$
    of $\mathcal{C}$). Let
    $$\mathcal{C}':=\{(X,\Ff,B+\epsilon S+C,\Mm-\delta A+\overline{H},t)\mid (X,\Ff,B,\Mm,t)\in\mathcal{C},\Supp C\subset\Supp S, ||C||\leq\epsilon\}.$$
    Then $\mathcal{C}'$ is a rational polytope contained in $\mathcal{L}(V'_{(X,\Ff,t)})$, where $$V'\subset\Weil_{\mathbb R}(X)\times\Span_{\mathbb R}(\MM,l\overline{H})$$ for some $l\gg 0$ sufficiently divisible, and for any $\Aa'=(X,\Ff,B',\Mm',t)\in\mathcal{C}'$, $\Aa'$ is klt and $B'+\Mm_{X}'$
    is big. 
    
    By Theorem \ref{thm: finiteness of log minimal model part i}, there are finitely many birational maps $\psi_i: X\dashrightarrow Z_i$, $1\leq i\leq k$, such that for any $\Aa'\in\mathcal{C}'$ such that $K_{\Aa'}$ is pseudo-effective$/U$, $(\psi_i)_*\Aa'/U$ is a $\mathbb Q$-factorial good minimal model of $\Aa'/U$.

    Now let $\{\phi_j: X\dashrightarrow Y_j\}_{j\in\Lambda}$ be all birational maps such that $(\phi_j)_*\Aa_j/U$ is a good minimal model of $\Aa_j/U$ for some $\Aa_j=(X,\Ff,B_j,\Mm_j,t)\in\mathcal{C}$ and let $H_j\geq 0$ be an ample$/U$ $\Qq$-divisor on $Y_j$. Then for any fixed $j$ and any $0<\tau\ll 1$, 
    $$\widehat{\Aa}_j:=\left(X,\Ff,B_j+\tau(\phi_j^{-1})_*H_j,\Mm_j,t\right)$$ is klt and $\phi_j$ is $K_{\widehat{\Aa}_j}$-negative. Since $(\phi_j)_*K_{\widehat{\Aa}_j}$ is ample$/U$, $(\phi_j)_*K_{\widehat{\Aa}_j}/U$ is the unique $\mathbb Q$-factorial good minimal model of $\widehat{\Aa}_j/U$. 

    We may write $(\phi_j^{-1})_*H_j\equiv_U\sum c_iC_i$ where $C_i$ are the irreducible components of $S$. Then for $0<\tau\ll 1$, 
    $$\bar\Aa_j:=\left(X,\Ff,B_j+\epsilon S+\tau\sum c_iC_i,\Mm_j-\delta A+\overline{H},t\right)\in\mathcal{C'}$$
and
$$K_{\bar\Aa_j}\equiv K_{\widehat{\Aa}_j},$$
hence $(\phi_j)_*K_{\bar{\Aa}_j}/U$ is the unique $\mathbb Q$-factorial good minimal model of $\bar{\Aa}_j/U$. Therefore, $\phi_j=\psi_i$ and $Y_j=Z_i$ for some $1\leq i\leq k$.
In particular, $\#\Lambda\leq k$. The theorem follows.
\end{proof}

\begin{proof}[Proof of Theorem \ref{thm: finiteness of minimal models intro}]
    (1) follows from Theorem \ref{thm: finiteness of log minimal model part i}. 
    
    (2) By (1), for any $\Aa\in\mathcal{C}\cap\mathcal{E}_{\pi}(V_{(X,\Ff,t)})$, $\Aa/U$ has a good minimal model, hence any minimal model of $\Aa/U$ is good. Thus (2) follows from Theorem \ref{thm: finiteness of log minimal model part ii}.
\end{proof}

\section{MMP with scaling}\label{sec: mmp with scaling}

In this section we study the MMP with scaling for algebraically integrable adjoint foliated structures. We first show that we can run an MMP with scaling (Lemma \ref{lem: ai afs can run mmp with scaling}), and then show that many of them terminate (Theorem \ref{thm: nqc mmp with scaling posssible not ample}).  

\begin{lem}\label{lem: non-isomorphism lemma}
    Let $\Aa/U$ be an lc adjoint foliated structure with ambient variety $X$. Let $f_i: X_i\dashrightarrow X_{i+1}$ be a sequence of steps of a $K_{\Aa}$-MMP$/U$ with $X_1:=X$. Then the induced birational map $X_n\dashrightarrow X_m$ is not an isomorphism for any $m>n$.
    \end{lem}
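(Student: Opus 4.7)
The plan is to use the standard monotonicity of discrepancies along an MMP: at each step, discrepancies of every divisorial valuation are non-decreasing, and they strictly increase for at least one valuation. Combined with the observation that any isomorphism identifying the AFS structures preserves all discrepancies, this immediately yields the result.

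First I would establish the following claim: for every step $f_j\colon X_j\dashrightarrow X_{j+1}$ of the MMP, and for every prime divisor $E$ over $X_j$, one has $a(E,\Aa_j)\le a(E,\Aa_{j+1})$, with strict inequality for at least one such $E$. Monotonicity is a standard application of the negativity lemma: on a common resolution $p\colon W\to X_j$, $q\colon W\to X_{j+1}$, the difference $p^{\ast}K_{\Aa_j}-q^{\ast}K_{\Aa_{j+1}}$ is effective (using that $K_{\Aa_{j+1}}$ is nef or trivial in the direction of the contraction, while $f_j$ is $K_{\Aa_j}$-negative), from which $a(E,\Aa_{j+1})\ge a(E,\Aa_j)$ follows. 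For strictness, split into two cases. If $f_j$ is a divisorial contraction with exceptional divisor $E$, then $E$ becomes exceptional over $X_{j+1}$ and $a(E,\Aa_{j+1})>a(E,\Aa_j)$ by $K_{\Aa_j}$-negativity. If $f_j$ is a flip, the flipping locus is non-empty of codimension at least two, and any prime divisor $E$ over $X_j$ with center contained in the flipping locus has strict inequality, again by the negativity lemma.

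Next, suppose for contradiction that the composed birational map $\phi\colon X_n\dashrightarrow X_m$ is an isomorphism for some $m>n$. By construction, each $f_i$ pushes $\Aa_i$ forward to $\Aa_{i+1}$, so $\phi_{\ast}\Aa_n=\Aa_m$. Since $\phi$ is an isomorphism, $a(E,\Aa_n)=a(\phi_{\ast}E,\Aa_m)$ for every prime divisor $E$ over $X_n$; in particular, identifying a valuation on $X_n$ with its image on $X_m$, all discrepancies with respect to $\Aa_n$ and $\Aa_m$ coincide. On the other hand, picking any index $j$ with $n\le j<m$ and the divisor $E_j$ furnished by the claim, applying monotonicity to each intervening step yields
\[
a(E_j,\Aa_n)\ \le\ a(E_j,\Aa_j)\ <\ a(E_j,\Aa_{j+1})\ \le\ a(E_j,\Aa_m),
\]
contradicting the previous equality.

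The only real subtlety is in verifying the strict-inequality portion of the claim for flips, and in checking that the pushforward $\phi_{\ast}\Aa_n=\Aa_m$ does follow formally from the definition of a step of an MMP on adjoint foliated structures (so that $\phi$ being an isomorphism really does preserve all discrepancies). Both are routine; everything else is transparent, and no hypothesis beyond the lc property and the formal set-up of an MMP is needed.
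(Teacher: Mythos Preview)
Your proposal is correct and follows essentially the same approach as the paper's proof: both exploit the strict increase of discrepancies along each MMP step (a consequence of $K_{\Aa_i}$-negativity via the negativity lemma) to produce a divisor $E$ with $a(E,\Aa_n)<a(E,\Aa_m)$, which is incompatible with $\phi$ being an isomorphism since then $\phi^*K_{\Aa_m}=K_{\Aa_n}$. The paper's argument is simply a terse version of yours, applying the strict inequality at the single step $f_n$ rather than at an arbitrary intermediate $j$.
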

    \begin{proof}
        Let $\Aa_i$ be the image of $\Aa$ on $X_i$ for each $i$. Since $f_i$ is $K_{\Aa_i}$-negative for any $i$, there exists a prime divisor $E$ over $X_n$ such that
        $$a(E,\Aa_n)<a(E,\Aa_{n+1})\leq a(E,\Aa_m)$$
        for any $m\geq n+1$, and the lemma follows.
    \end{proof}

\begin{lem}\label{lem: mmp preseves +a property}
    Let $\Aa/U:=(X,\Ff,B,\Mm,t)/U$ be an lc adjoint foliated structure. Let $A$ be an ample$/U$ $\Rr$-divisor and let $\phi: X\dashrightarrow X'$ be a sequence of steps of a $(K_{\Aa}+A)$-MMP$/U$. Then there exists an ample$/U$ $\Rr$-divisor $A'$ on $X'$ and a nef$/U$ $\Rr$-divisor $\Nn$ on $X'$ satisfying the following. 
    \begin{enumerate}
        \item $\Aa':=(X',\Ff',B',\Nn,t)$ is lc, where $\Ff':=\phi_*\Ff$ and $B':=\phi_*B$.
        \item $\Nn_{X'}+A'\sim_{\mathbb R,U}\Mm_{X'}+\phi_*A$.
        \item $\Nn-\Mm$ is nef$/U$.
        \item If $\Ff$ is algebraically integrable and $\Aa$ is $\mathbb Q$-factorial qdlt (\cite[Definition 3.1]{CHLMSSX24}), then $\Aa'$ is $\Qq$-factorial qdlt.
    \end{enumerate}
\end{lem}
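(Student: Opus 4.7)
The plan is to reduce this to a clean statement about pushing forward an adjoint foliated structure augmented by a carefully chosen nef $\bb$-divisor. First I would choose an ample$/U$ $\mathbb R$-divisor $H'$ on $X'$, which exists since $X'\to U$ is projective, and set $H := \phi^{-1}_*H'$, the strict transform of $H'$ to $X$ (well-defined as a Weil divisor since $\phi$ does not extract divisors; we may assume $X$ is $\mathbb Q$-factorial so that $H$ is $\mathbb R$-Cartier, passing to a small $\mathbb Q$-factorialization if needed, which is possible in the setting of the paper). For $\epsilon > 0$ sufficiently small, $A - \epsilon H$ is ample on $X$. I would then verify that the same MMP $\phi$ can be viewed as a sequence of steps of a $(K_\Aa + A - \epsilon H)$-MMP$/U$: each step of $\phi$ contracts an extremal ray $R_k$ on $X_k$ with $(K_{\Aa_k} + A_k) \cdot R_k < 0$, and $(K_{\Aa_k} + A_k - \epsilon H_k) \cdot R_k$ remains negative for $\epsilon$ small enough, uniformly across the finitely many steps of $\phi$, where $H_k$ denotes the pushforward of $H$ to $X_k$.

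Next I would rewrite $K_\Aa + A - \epsilon H = K_{\Aa_1}$ where $\Aa_1 := (\Aa, \overline{A - \epsilon H}) = (X, \Ff, B, \Mm + \overline{A - \epsilon H}, t)$, with $\overline{A - \epsilon H}$ the $\bb$-Cartier $\bb$-divisor descending to $X$ with trace $A - \epsilon H$. Since $A - \epsilon H$ is ample on $X$, the $\bb$-divisor $\overline{A - \epsilon H}$ is nef$/U$, and adding it to the nef part of $\Aa$ does not change any discrepancies (the contribution $h^*(A - \epsilon H)$ is absorbed identically into $h^*K$ on every birational model), so $\Aa_1$ is lc (resp.\ $\mathbb Q$-factorial qdlt) whenever $\Aa$ is. The $(K_\Aa + A - \epsilon H)$-MMP is thus a $K_{\Aa_1}$-MMP$/U$, and since MMP steps preserve lc (resp.\ $\mathbb Q$-factorial qdlt, in the algebraically integrable setting, by the theory in \cite{CHLMSSX24}) and do not extract divisors, the pushforward $\phi_*\Aa_1 = (X', \Ff', B', \Mm + \overline{A - \epsilon H}, t)$ is lc (resp.\ $\mathbb Q$-factorial qdlt).

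Finally I would set $\Nn := \Mm + \overline{A - \epsilon H}$ and $A' := \epsilon H'$. Then $A'$ is ample$/U$ on $X'$, and $\Aa' := (X', \Ff', B', \Nn, t) = \phi_*\Aa_1$ is lc (resp.\ $\mathbb Q$-factorial qdlt), establishing (1) and (4). The $\bb$-divisor $\Nn - \Mm = \overline{A - \epsilon H}$ is nef$/U$ since its trace on $X$ is ample, giving (3). For (2), using that $\phi_*H = H'$, we compute $\Nn_{X'} + A' = \Mm_{X'} + \phi_*(A - \epsilon H) + \epsilon H' = \Mm_{X'} + \phi_*A - \epsilon H' + \epsilon H' = \Mm_{X'} + \phi_*A$, as required. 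The main technical point will be ensuring $H$ is $\mathbb R$-Cartier (handled by the $\mathbb Q$-factorial reduction above) and choosing a single $\epsilon > 0$ that works uniformly for all MMP steps; both are minor consequences of the finiteness of the sequence of steps of $\phi$.
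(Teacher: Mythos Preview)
Your approach is genuinely different from the paper's and, when $X$ is $\mathbb Q$-factorial, gives a clean one-shot argument that avoids induction on the number of steps. The paper instead reduces to a single step $g\colon X\to T\leftarrow X'\colon h$, chooses $H$ ample$/T$ on $X$ with $K_{\Aa}+A+H\sim_{\mathbb R,T}0$ (so $H$ is automatically $\mathbb R$-Cartier, and $-H':=-\phi_*H$ is $h$-ample), picks an ample$/U$ divisor $C$ on $T$ with $A-g^*C$ ample$/U$, and sets $L:=A-g^*C+\epsilon H$, $A':=h^*C-\epsilon H'$, $\Nn:=\Mm+\overline{L}$ for $0<\epsilon\ll 1$. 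The advantage of the paper's route is that all auxiliary divisors are built from $K_{\Aa}+A$ and pullbacks from $T$, so their $\mathbb R$-Cartier-ness is automatic.

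There is a genuine gap in your argument at exactly the point you flag. The lemma does not assume $X$ is $\mathbb Q$-factorial (and for (1)--(3) does not even assume $\Ff$ is algebraically integrable or $X$ is potentially klt), so the strict transform $H:=\phi^{-1}_*H'$ need not be $\mathbb R$-Cartier on $X$; then $A-\epsilon H$ has no meaning as an $\mathbb R$-Cartier divisor and $\overline{A-\epsilon H}$ is not a nef $\bb$-divisor. Your proposed fix---pass to a small $\mathbb Q$-factorialization $\pi\colon\tilde X\to X$---does not repair this: even when such $\pi$ exists, $\pi^*A$ is only big and nef, not ample, so $\pi^*A-\epsilon\tilde H$ will not be ample on $\tilde X$ and the $\bb$-closure $\overline{\pi^*A-\epsilon\tilde H}$ is not nef$/U$; moreover $\tilde\phi:=\phi\circ\pi$ is not a sequence of MMP steps in the required sense, so you lose the ``MMP preserves lc/qdlt'' input as well. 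Note that for part~(4) your argument is fine as stated, since there $X$ is $\mathbb Q$-factorial by hypothesis; the problem is only with (1)--(3) in the non-$\mathbb Q$-factorial case, which is exactly the generality in which the paper applies the lemma.
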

\begin{proof}
    We may assume that $\phi$ is a single step of a MMP$/U$, and we have the following diagram$/U$
    \begin{center}$\xymatrix{
   X\ar@{->}[rd]^{g}\ar@{-->}[rr]^{\phi} & & X'\ar@{->}[dl]^{h}\\
    & T &
}$
\end{center}
such that either $g$ is a divisorial contraction and $h$ is a small modification, or $g$ is the flipping contraction and $h$ is the flipped contraction. In either case, there exists an ample$/T$ divisor $H$ on $X$ such that $K_{\Aa}+A+H\sim_{\mathbb R,T}0$. Let $H':=\phi_*H$, then $-H'$ is ample$/T$. Since $A$ is ample$/U$, there exists an ample$/U$ $\Rr$-divisor $C$ on $T$ such that $A-g^*C$ is ample$/U$. 

Let $0<\epsilon\ll 1$ be a real number. Then $A':=h^*C-\epsilon H'$ and $L:=A-g^*C+\epsilon H$ are ample$/U$, and $\phi$ is a step of a $(K_{\Aa}+A+\epsilon H)$-MMP$/T$. 

Let $\Nn:=\Mm+\overline{L}$ and let $\widehat{\Aa}:=(X,\Ff,B,\Nn,t)$. Since
$$K_{\Aa}+A+\epsilon H\sim_{\mathbb R,T}K_{\widehat{\Aa}},$$
we have that $\phi$ is a step of a $K_{\widehat{\Aa}}$-MMP$/T$, hence a step of a $K_{\widehat{\Aa}}$-MMP$/U$. 

Let $L':=\phi_*L$, then
$$\phi_*L+A'=\phi_*A.$$ 
By our construction, $\Nn$ and $\tilde A'$ satisfy (2) and (3). 

Since $\Nn-\Mm$ descends to $X$ and $\Aa$ is lc,  $\widehat{\Aa}$ is lc. Since $\phi$ is a step of a $K_{\widehat{\Aa}}$-MMP$/U$ and $\phi_*\widehat{\Aa}=\Aa'$, $\Aa'$ is lc, which implies (1). 

Finally, if $\Aa$ is $\mathbb Q$-factorial qdlt, then $\widehat{\Aa}$ is $\mathbb Q$-factorial qdlt. By \cite[Proposition 3.5]{CHLMSSX24}, $\Aa'$ is $\mathbb Q$-factorial qdlt, which implies (4).
\end{proof}

\begin{lem}\label{lem: ai afs scaling number obtained}
    Let $\Aa/U$ be an lc algebraically integrable adjoint foliated structure, $A$ an ample$/U$ $\Rr$-divisor, and let $D$ be a nef$/U$ $\Rr$-divisor on $X$. Let
    $$\lambda:=\sup\{t\geq 0\mid D+t(K_{\Aa}+A)\text{ is nef}/U\}.$$
    Then there exists a rational curve $C$ which spans a $(K_{\Aa}+A)$-negative extremal ray$/U$, such that $(D+t(K_{\Aa}+A))\cdot C=0$.
\end{lem}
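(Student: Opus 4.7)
The plan is to reduce the statement to the standard observation that the minimum of finitely many numerical ratios is attained. First I would rule out the trivial case: if $K_{\Aa}+A$ is nef$/U$ then $\lambda=+\infty$ and the conclusion is vacuous, so we may assume $K_{\Aa}+A$ is not nef$/U$, and in particular $(K_{\Aa}+A)$-negative extremal rays$/U$ exist.

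The main input is the cone theorem together with the bound on the length of extremal rays for lc algebraically integrable adjoint foliated structures (see \cite[Theorems 1.2 and 1.3]{CHLMSSX24}). Combined with the ampleness$/U$ of $A$, these yield a decomposition
$$\overline{NE}(X/U)=\overline{NE}(X/U)_{(K_{\Aa}+A)\geq 0}+\sum_{j=1}^N R_j,$$
where each $R_j$ is a $(K_{\Aa}+A)$-negative extremal ray$/U$ spanned by a rational curve $C_j$ with $0<-(K_{\Aa}+A)\cdot C_j\leq 2\dim X$, and $N<\infty$. The finiteness $N<\infty$ is the standard consequence of the cone theorem when paired with an ample divisor: only finitely many $(K_{\Aa}+A)$-negative extremal rays can exist when $A$ is ample$/U$, since for a minimal generator $C$ of such a ray one has $(-K_{\Aa})\cdot C/(A\cdot C)>1$ while $-K_{\Aa}\cdot C\leq 2\dim X$.

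For any $t\geq 0$, the divisor $D+t(K_{\Aa}+A)$ is automatically non-negative on $\overline{NE}(X/U)_{(K_{\Aa}+A)\geq 0}$, since $D$ is nef$/U$ and $t(K_{\Aa}+A)\geq 0$ on this subcone. Hence $D+t(K_{\Aa}+A)$ is nef$/U$ if and only if $(D+t(K_{\Aa}+A))\cdot C_j\geq 0$ for every $1\leq j\leq N$. Setting
$$\mu_j:=\frac{D\cdot C_j}{-(K_{\Aa}+A)\cdot C_j}\geq 0,$$
the nefness condition reads $t\leq \mu_j$ for all $j$, so
$$\lambda=\min_{1\leq j\leq N}\mu_j.$$
Since the minimum is taken over a finite set, it is attained by some $j_0$, and $C:=C_{j_0}$ is a rational curve spanning the $(K_{\Aa}+A)$-negative extremal ray$/U$ $R_{j_0}$ satisfying $(D+\lambda(K_{\Aa}+A))\cdot C=0$, as required.

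Once the cone-theoretic inputs are in hand, the argument is just the standard finite-minimum trick and no substantive obstacle arises. The only conceptual point to verify is that the cone theorem for lc algebraically integrable adjoint foliated structures produces finitely many $(K_{\Aa}+A)$-negative extremal rays when $A$ is ample$/U$, together with rational generators of bounded length; both ingredients are available from \cite{CHLMSSX24}.
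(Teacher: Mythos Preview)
Your proof is correct and follows essentially the same approach as the paper: invoke the cone theorem and length of extremal rays from \cite{CHLMSSX24} to obtain finitely many $(K_{\Aa}+A)$-negative extremal rays$/U$ spanned by rational curves, then observe that $\lambda$ is the minimum of the finitely many ratios $\frac{D\cdot C_j}{-(K_{\Aa}+A)\cdot C_j}$. The paper's proof is terser (it omits the explicit check on the $(K_{\Aa}+A)\geq 0$ subcone and the trivial case), but the argument is the same.
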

\begin{proof}
    By the length of extremal rays \cite[Theorem 1.3(2)]{CHLMSSX24}, there are finitely many $(K_{\Aa}+A)$-negative extremal rays $R_i$, $1\leq i\leq k$, and each $R_i$ is spanned by a rational curve $C_i$. Thus
    $$\lambda=\min_{1\leq i\leq k}\left\{-\frac{D\cdot C_i}{(K_{\Aa}+A)\cdot C_i}\geq 0\right\}.$$
\end{proof}

\begin{lem}\label{lem: ai afs can run mmp with scaling}
    Let $\Aa/U$ be an lc algebraically integrable adjoint foliated structure with potentially klt ambient variety $X$. Let $A$ be an ample$/U$ $\Rr$-divisor, and let $C$ be an $\Rr$-divisor on $X$ such that $K_{\Aa}+A+C$ is nef$/U$. Then we may run a $(K_{\Aa}+A)$-MMP$/U$ with scaling of $C$.
\end{lem}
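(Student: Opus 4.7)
The plan is to construct the steps of the MMP inductively, handling the scaling data along the way. Setting $(\Aa_1,A_1,C_1,X_1):=(\Aa,A,C,X)$ as initial data, I would maintain at the $i$-th stage the following inductive hypothesis: $\Aa_i/U$ is an lc algebraically integrable adjoint foliated structure on a potentially klt variety $X_i$, $A_i$ is an ample$/U$ $\Rr$-divisor on $X_i$, and $K_{\Aa_i}+A_i+C_i$ is nef$/U$, where $C_i$ is obtained by successive pushforward of $C$. Define the scaling number
\[\mu_i:=\inf\{s\ge 0\mid K_{\Aa_i}+A_i+sC_i\text{ is nef}/U\}\in[0,1].\]
If $\mu_i=0$ then $K_{\Aa_i}+A_i$ is already nef$/U$ and the MMP has terminated. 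Otherwise, setting $D:=K_{\Aa_i}+A_i+C_i$ (nef$/U$) and noting the identity $D+t(K_{\Aa_i}+A_i)=(1+t)(K_{\Aa_i}+A_i)+C_i$, Lemma \ref{lem: ai afs scaling number obtained} furnishes a $(K_{\Aa_i}+A_i)$-negative extremal ray$/U$ $R_i$ satisfying $(K_{\Aa_i}+A_i+\mu_i C_i)\cdot R_i=0$.

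Second, I would invoke the contraction theorem (Theorem \ref{thm: contraciton theorem potentially klt}) to produce a contraction$/U$ $g_i\colon X_i\to T_i$ of $R_i$. If $g_i$ is of fiber type the MMP terminates; if $g_i$ is divisorial, set $X_{i+1}:=T_i$ and push the data forward. If $g_i$ is small (a flipping contraction), I would construct the flip $\phi_i\colon X_i\dashrightarrow X_{i+1}$ by transferring $A_i$ into the nef $\bb$-part: the enlarged structure $\widehat{\Aa}_i:=(\Aa_i,\overline{A_i})$ satisfies $K_{\widehat{\Aa}_i}=K_{\Aa_i}+A_i$, so $g_i$ becomes the flipping contraction of a $K_{\widehat{\Aa}_i}$-negative extremal ray of an lc algebraically integrable adjoint foliated structure. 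The flip is then produced by applying Theorem \ref{thm: eogmm lc plus ample case} relatively over $T_i$ to a small klt perturbation of $\widehat{\Aa}_i$ obtained via Lemma \ref{lem: combine lc afs} and the Bertini-type theorem (Theorem \ref{thm: Bertini type theorem}) after absorbing a fraction of the ample divisor into the boundary.

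To close the induction, Lemma \ref{lem: mmp preseves +a property} applies to $\phi_i$ in each of the three cases and supplies both an ample$/U$ $\Rr$-divisor $A_{i+1}$ on $X_{i+1}$ and a nef$/U$ $\bb$-divisor witnessing $K_{\Aa_{i+1}}+A_{i+1}\sim_{\Rr,U}(\phi_i)_*(K_{\Aa_i}+A_i)$ up to an auxiliary nef contribution; in particular $\Aa_{i+1}:=(\phi_i)_*\Aa_i$ is lc. Since $\phi_i$ is trivial against $K_{\Aa_i}+A_i+\mu_i C_i$, the pushforward $K_{\Aa_{i+1}}+A_{i+1}+\mu_i C_{i+1}$ remains nef$/U$, so $\mu_{i+1}\le\mu_i$, which is precisely the defining property of the MMP with scaling. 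Potential klt-ness of $X_{i+1}$ persists because the step is simultaneously an MMP step for an auxiliary klt pair $(X_i,\Delta_i)$ on a small $\mathbb{Q}$-factorialization, obtained by absorbing $A_i$ into the boundary via the Bertini-type theorem.

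The main obstacle is the flipping case: producing the flip of the small contraction $g_i$. The essential idea is Lemma \ref{lem: mmp preseves +a property}, which recasts the floating ample divisor $A_i$ as part of a genuine nef $\bb$-part, so that the flip we seek is the flip of an lc algebraically integrable adjoint foliated structure, to which the existence of good minimal models (Theorem \ref{thm: eogmm lc plus ample case}) applies relatively over the flipping base. The rest of the proof is routine bookkeeping needed to keep $A_i$ ample$/U$ and $X_i$ potentially klt through each step.
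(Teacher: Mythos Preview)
Your inductive scheme is essentially the same as the paper's: use Lemma~\ref{lem: ai afs scaling number obtained} to locate the extremal ray carrying the scaling number, Theorem~\ref{thm: contraciton theorem potentially klt} to contract it, and Lemma~\ref{lem: mmp preseves +a property} to restore an lc structure with a genuine ample$/U$ divisor on the new model so that the process can continue. The bookkeeping you outline for the scaling numbers and for the persistence of potential klt-ness is accurate.

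The one substantive difference is how you obtain the flip. The paper simply cites \cite[Theorem 1.5]{CHLMSSX24}, the existence of flips for lc algebraically integrable adjoint foliated structures, and moves on. Your route---absorbing $A_i$ into the nef part, perturbing to klt, and invoking Theorem~\ref{thm: eogmm lc plus ample case} over $T_i$ to get a good minimal model whose ample model is the flip---also works and is more self-contained, but it is longer than necessary. In particular, the klt perturbation step is redundant: Theorem~\ref{thm: eogmm lc plus ample case} already handles the lc case directly (it only needs $X_i$ potentially klt plus an ample piece, which you have by splitting $A_i$). Also, your phrase ``up to an auxiliary nef contribution'' slightly understates Lemma~\ref{lem: mmp preseves +a property}: conclusion (2) there gives $K_{\Aa_{i+1}}+A_{i+1}\sim_{\Rr,U}(\phi_i)_*(K_{\Aa_i}+A_i)$ on the nose (with the modified nef part $\Nn$ absorbed into $\Aa_{i+1}$), which is exactly what the induction needs.
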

\begin{proof}
    The first step of such MMP exists by Lemma \ref{lem: ai afs scaling number obtained}, Theorem \ref{thm: contraciton theorem potentially klt}, and \cite[Theorem 1.5]{CHLMSSX24}. Suppose we have already constructed a sequence of $(K_{\Aa}+A)$-MMP$/U$ $\phi: X\dashrightarrow X'$. Let $\Aa':=\phi_*\Aa$, $A':=\phi_*A$, and $C':=\phi_*C$. By Lemma \ref{lem: mmp preseves +a property}, there exists an lc algebraically integrable adjoint foliated structure $\Aa'$ on $X'$ and an ample$/U$ $\Rr$-divisor $A'$, such that $$K_{\Aa'}+A'\sim_{\mathbb R,U}\phi_*(K_{\Aa}+A).$$
    Thus we may keep running this MMP.
\end{proof}

\begin{thm}\label{thm: nqc mmp with scaling posssible not ample}
  Let $\Aa/U:=(X,\Ff,B,\Mm,t)/U$ be a $\mathbb Q$-factorial NQC lc algebraically integrable adjoint foliated structure such that $X$ is klt, $A$ an ample$/U$ $\Rr$-divisor, and let $C\equiv_U D+\Nn_X$ be an $\Rr$-divisor on $X$ such that $K_{\Aa}+A+C$ is nef$/U$, $(X,\Ff,B+D,\Mm+\Nn,t)$ is lc, and $\Nn$ is NQC$/U$. Assume that $K_{\Aa}+A$ is pseudo-effective$/U$.
  
  Then we may run a $(K_{\Aa}+A)$-MMP$/U$ with scaling of $C$ and any such MMP terminates with a $\mathbb Q$-factorial good minimal model of $(X,\Ff,B,\Mm+\overline{A},t)/U$.
\end{thm}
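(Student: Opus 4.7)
The plan is to combine Lemma~\ref{lem: ai afs can run mmp with scaling} with the finiteness of minimal models (Theorem~\ref{thm: finiteness of minimal models intro}). Since $X$ is $\mathbb Q$-factorial klt (hence potentially klt) and $K_\Aa + A + C$ is nef$/U$, Lemma~\ref{lem: ai afs can run mmp with scaling} first ensures that we may run a $(K_\Aa+A)$-MMP$/U$ with scaling of $C$. I would denote its steps by $f_i : X_i \dashrightarrow X_{i+1}$ (with $X_1 = X$), scaling numbers $\lambda_1 \ge \lambda_2 \ge \cdots > 0$, and write $\Aa_i, A_i, C_i, D_i, \Nn_i$ for the images on $X_i$; by Theorem~\ref{thm: mmp potentially klt ambient algint afs} each $X_i$ is $\mathbb Q$-factorial potentially klt. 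The contracted extremal ray $R_j$ at step $j$ satisfies $R_j \cdot (K_{\Aa_j}+A_j+\lambda_j C_j) = 0$ and $R_j \cdot (K_{\Aa_j}+A_j) < 0$; a short calculation gives $R_j \cdot C_j > 0$ and $R_j \cdot (K_{\Aa_j}+A_j+\lambda C_j) = (\lambda - \lambda_j)(R_j \cdot C_j)$, so setting $\lambda^* := \lim \lambda_i$, the composition $X \dashrightarrow X_i$ is $(K_\Aa+A+\lambda C)$-negative for every $\lambda \in (\lambda^*, \lambda_{i-1})$, while $K_{\Aa_i}+A_i+\lambda C_i$ is nef$/U$ for every $\lambda \ge \lambda_i$.

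Assuming for contradiction that the MMP does not terminate, I would set $\Bb := (\Aa, \lambda^* D, \lambda^* \overline{\Nn})$, which is lc by Lemma~\ref{lem: combine lc afs}, and apply Lemma~\ref{lem: perturbation to q coefficient no m} to $(\Bb, A)$, using potential klt-ness of $X$, to produce a klt algebraically integrable adjoint foliated structure $\Bb' = (X, \Ff, B', t')$ with $B'$ rational, $t' \in \mathbb Q \cap [0,1)$, and an ample$/U$ $\mathbb R$-divisor $H$ satisfying $K_\Bb + A \sim_{\mathbb R, U} K_{\Bb'} + H$. For $u \in \mathbb R$ I would then define $\Bb'(u) := (\Bb', u D, \overline H + u\overline \Nn, t')$, whose canonical divisor is $\sim_{\mathbb R, U} K_\Aa + A + (\lambda^*+u) C$, and use Lemma~\ref{lem: add divisor not containing lc center} to find $\delta > 0$ such that $\Bb'(u)$ is klt for all $u \in [-\delta, \delta]$; the generalized boundary is big$/U$ since $H$ is ample$/U$. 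Expanding $\Mm$ and $\Nn$ through their NQC decompositions produces a rational polytope $\mathcal{C} \subset \mathcal{L}(V_{(X,\Ff,t')})$ to which Theorem~\ref{thm: finiteness of minimal models intro} applies, yielding finitely many birational maps $\psi_j : X \dashrightarrow Y_j$ exhausting all $\mathbb Q$-factorial minimal models of the pseudo-effective members of $\mathcal C$. For each $i$ so large that $\lambda_i - \lambda^* \in [0, \delta)$, choose $u_i \in (\lambda_i - \lambda^*, \min\{\lambda_{i-1}-\lambda^*, \delta\})$; then $X \dashrightarrow X_i$ is $K_{\Bb'(u_i)}$-negative and the pushforward of $K_{\Bb'(u_i)}$ to $X_i$ is nef$/U$ (using the $\sim_{\mathbb R,U}$-equivalence with $K_\Aa + A + (\lambda^*+u_i) C$), so each $X_i$ is a $\mathbb Q$-factorial minimal model of $\Bb'(u_i)/U$. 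By Lemma~\ref{lem: non-isomorphism lemma} the $X_i$ are pairwise non-isomorphic, giving infinitely many such minimal models, a contradiction. Hence the MMP terminates at some $X_n$.

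Finally, at termination $K_{\Aa_n}+A_n$ is nef$/U$, $(\Aa_n, \overline{A_n})$ is lc (preserved by the MMP via Lemma~\ref{lem: mmp preseves +a property}), $X_n$ is potentially klt, and $(K_{\Aa_n}+A_n) - K_{\Aa_n} = A_n$ is ample$/U$, so Theorem~\ref{thm: semi-ampleness theorem} forces $K_{\Aa_n}+A_n$ to be semi-ample$/U$. As $X \dashrightarrow X_n$ is $(K_{(\Aa,\overline A)})$-negative by construction, this identifies $(X_n, \Aa_n, \overline{A_n})/U$ as a $\mathbb Q$-factorial good minimal model of $(\Aa, \overline A) = (X, \Ff, B, \Mm + \overline A, t)/U$, as claimed. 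The hardest part will be the middle step, namely engineering a \emph{rational klt} polytope of adjoint foliated structures around the accumulation value $\lambda^*$: the NQC hypothesis on $\Nn$ (together with that of $\Mm$) is precisely what allows one to combine Lemma~\ref{lem: perturbation to q coefficient no m} with the finiteness-of-minimal-models machinery of Theorem~\ref{thm: finiteness of minimal models intro}, and some care is required to verify that $X_i$ is indeed a minimal model (not merely a weak lc model) of the perturbed klt structure $\Bb'(u_i)$.
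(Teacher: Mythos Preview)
Your overall strategy---run the MMP with scaling via Lemma~\ref{lem: ai afs can run mmp with scaling}, then derive termination from the finiteness of minimal models (Theorem~\ref{thm: finiteness of minimal models intro})---is exactly the paper's approach. The paper's execution differs in one respect: rather than perturbing at the limit value $\lambda^*$ via Lemma~\ref{lem: perturbation to q coefficient no m}, the paper perturbs once at the very beginning, replacing $\Aa$ by $(1-\epsilon)\Aa+\epsilon(X,\Ff,0,\bm 0,0)$ and $A$ by $A+\epsilon(K_\Aa-K_X)$ with $t(1-\epsilon)\in\Qq$, so that $\Aa$ becomes klt with rational parameter. One can then work directly with the one-parameter family $\Aa(\lambda)=(X,\Ff,B+\lambda D,\Mm+\overline A+\lambda\Nn,t)$ for $\lambda\in[0,1]$, which already sits in a rational polytope and is klt for $\lambda<1$. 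Your route via $\Bb'(u)$ is more circuitous but not wrong in spirit.

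There are, however, two genuine gaps. The more serious one is in the final paragraph: the assertion ``$A_n$ is ample$/U$'' is false. The strict transform of an ample divisor under a flip or a divisorial contraction need not be ample, so you cannot apply Theorem~\ref{thm: semi-ampleness theorem} with $H-K_{\Aa_n}=A_n$. What Lemma~\ref{lem: mmp preseves +a property} actually gives you (applied inductively along the MMP) is a \emph{new} lc structure $\Aa'_n$ on $X_n$ together with a genuinely ample$/U$ divisor $A'$ satisfying $K_{\Aa'_n}+A'\sim_{\Rr,U}K_{\Aa_n}+A_n$; apply Theorem~\ref{thm: semi-ampleness theorem} to \emph{that} pair. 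Alternatively, and more cleanly, observe that $(\Aa,\overline A)/U$ already has a $\Qq$-factorial good minimal model by Theorem~\ref{thm: eogmm lc plus ample case}, so Lemma~\ref{lem: g-pair version bir12 2.7}(2) makes any minimal model good.

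The second gap is subtler. Your choice of $u_i\in(\lambda_i-\lambda^*,\min\{\lambda_{i-1}-\lambda^*,\delta\})$ presupposes that this interval is nonempty, i.e.\ that $\lambda_{i-1}>\lambda_i$. If the scaling numbers eventually stabilize at $\lambda^*$ (say $\lambda_i=\lambda^*$ for all $i\ge N$), then for $i>N$ the interval is empty and you produce no minimal model for $X_i$; the contradiction with Theorem~\ref{thm: finiteness of minimal models intro} evaporates. The paper confronts essentially this phenomenon through an explicit case split: it separates the case $\lambda_n=1$ for all $n$ and disposes of it by a further perturbation of $\Aa(1)$ to a klt structure $\Aa'$ with rational parameter, arguing that every $X_i$ is then a $\Qq$-factorial good minimal model of the \emph{fixed} structure $\Aa'/U$, which again contradicts Lemma~\ref{lem: non-isomorphism lemma}. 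You will need an analogous argument for the stabilizing case.
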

\begin{proof}
Possibly replacing $\Aa$ with $(1-\epsilon)\Aa+\epsilon(X,\Ff,0,0,0)$
and $A$ with $A+\epsilon(K_{\Aa}-K_{X})$ for some $0<\epsilon\ll 1$ such that $t(1-\epsilon)\in\mathbb Q$, we may assume that $t\in\mathbb Q$ and $\Aa$ is klt. 

    By Lemma \ref{lem: ai afs can run mmp with scaling}, we can run the $(K_{\Aa}+A)$-MMP$/U$ with scaling of $C$. Let $f_i: X_i\dashrightarrow X_{i+1}$ be this MMP with $X_1:=X$ and let $\Ff_i,B_i,D_i,C_i,\Aa_i$ be the images of $\Ff,B,D,C,\Aa$ on $X_i$ respectively. Let
    $$\lambda_i:=\sup\{s\geq 0\mid K_{\Aa_i}+A_i+sC_i\text{ is nef}/U\}$$
    be the scaling numbers. Let $\phi_i: X\dashrightarrow X_i$ be the induced birational maps. Then $$\Aa_{i}(\lambda_i)/U:=(X_i,\Ff_i,B_i+\lambda_iD_i,\Mm+\overline{A}+\lambda_i\Nn,t)/U$$
    is a $\mathbb Q$-factorial good minimal model$/U$ of $$\Aa(\lambda_i)/U:=(X,\Ff,B+\lambda_i D,\Mm+\overline{A}+\lambda_i\Nn,t)/U.$$
    Let $V_0\subset\Weil_{\mathbb R}(X)$ be the finite dimensional vector subspace spanned by all the components of $B,D$. Since $\Mm,\Nn,\overline{A}$ are NQC$/U$, there exists a tuple of nef$/U$ $\bb$-Cartier $\bb$-divisors $\MM$ such that $\Mm+\overline{A}+\Nn$ and $\Mm+\overline{A}$ are contained in $\Span_{\mathbb R_{\geq 0}}(\MM)$. Let $V:=V_0\times\Span_{\mathbb R}(\MM)$.

    Assume that $\lambda_n<1$ for some $n$. Since $\Aa$ is klt, then $\Aa(\lambda_i)$ is klt for any $i\geq n$. By our construction and Theorem \ref{thm: finiteness of log minimal model part ii}, there are finitely many birational maps$/U$ $\psi_j: X\dashrightarrow Y_j$ such that for any $i\geq n$, $\psi_j\circ\phi_i^{-1}$ is the identity morphism. Thus either this MMP terminates, or there exist $i_1>i_2\geq n$ such that the induced birational map $X_{i_1}\dashrightarrow X_{i_2}$ is an isomorphism. The latter is not possible as it contradicts Lemma \ref{lem: non-isomorphism lemma}.

    Thus, $\lambda_n=1$ for any $n$, hence $\Aa_i(1)/U$ is a $\mathbb Q$-factorial good minimal model of $\Aa(1)/U:=(X,\Ff,B+D,\Mm+\overline{A}+\Nn,t)/U$. Let $0<\tau\ll 1$ be a rational number, then
    $$\Aa':=(X,\Ff,(1-\tau)(B+D),(1-\tau)\Mm+(1-\tau)\Nn+\overline{H},t(1-\tau))$$
    is klt, where $H:=A+\tau(K_{\Aa(1)}-K_X)$ is ample$/U$. Now $\Aa_i'/U$ is a $\mathbb Q$-factorial good minimal model of $\Aa'/U$ for any $i$, where $\Aa_i'$ is the image of $\Aa$ on $X_i$. This contradicts Lemma \ref{lem: non-isomorphism lemma} again.
 \end{proof}

\section{Proof of the main theorems}\label{sec: proof of the main theorems}

In this section we prove all the main theorems except Theorems \ref{thm: log foliated BAB} and \ref{thm: ft ai afs mds} (the more general cases of Theorems \ref{thm: BAB analogue} and \ref{thm: ft ai is ft}). Theorem \ref{thm: extract non-terminal place intro} was proven in Section \ref{sec: extract non-terminal place} while Theorem \ref{thm: finiteness of minimal models intro} was proven in Section \ref{sec: fom} and so we  are free to use them. For all the other main theorems, we shall prove them in order except Theorem \ref{thm: flop ai afs} as its proof needs Theorem \ref{thm: shokurov type polytope}.

\begin{proof}[Proof of Theorem \ref{thm: eogmm ai afs general case}]
By Theorem \ref{thm: klt afs implies potentially klt}, there exists a small $\mathbb Q$-factorialization $h: X'\rightarrow X$. Let $\Aa':=h^*\Aa$, then either $K_{\Aa'}$ is big$/U$ or the generalized boundary of $\Aa'$ is big$/U$. By Lemma \ref{lem: perturbation for q-factorial klt big boundary to +ample} and \cite[Lemma 6.6]{CHLMSSX24}, we may assume that $\Mm=\Nn+\overline{A}$ where $\Nn$ is NQC$/U$ and $A$ is an ample$/U$ $\Rr$-divisor on $X'$.  Since $\overline{H}$ is NQC$/U$ and 
$$K_{\Aa'}+lh^*H=h^*(K_{\Aa}+lH)$$ 
is nef$/U$ for $l\gg 0$, by Theorem \ref{thm: nqc mmp with scaling posssible not ample}, any $K_{\Aa'}$-MMP$/U$ with scaling of $h^*H$ terminates.

By Lemma \ref{lem: ai afs can run mmp with scaling}, we may run a $K_{\Aa}$-MMP$/U$ with scaling of $H$. Let $f_i: X_i\dashrightarrow X_{i+1}$ be this MMP with $X_1:=X$ and $\Aa_i$ and $H_i$ the images of $\Aa$ and $H$ on $X_i$ respectively.

Fix an integer $n\geq 0$. Assume that there exists a small  $\mathbb Q$-factorialization $h_i: X_i'\rightarrow X_i$ for any $1\leq i\leq n+1$, and there exist birational maps $f_i': X_i'\dashrightarrow X_{i+1}'$  for any $1\leq i\leq n$, such that
\begin{itemize}
    \item $h_1=h$,
    \item $f_i\circ h_{i-1}=h_i\circ f'_{i-1}$, and
    \item $f'_i$ is a non-trivial sequence of steps of a $K_{\Aa_i'}$-MMP$/U$ with scaling of $H_i'$, where $H_i'$ and $\Aa_i'$ are the images of $h^*H$ and $\Aa_i$ on $X_i'$ for each $i$.
\end{itemize}
We construct $f'_{n+1}$ and $h_{n+1}$ in the following way. Let $X_n\rightarrow T_n\leftarrow X_{n+1}$ be the $n$-th step of this MMP, where either $X_n\rightarrow T_n$ is a flipping contraction and $X_{n+1}\rightarrow T_n$ is a flipped contraction, or $X_n\rightarrow T_n$ is a divisorial contraction and  $X_{n+1}\rightarrow T_n$ is a small modifications. Let 
$$\lambda_i:=\inf\{s\geq 0\mid K_{\Aa_i}+sH_i\text{ is nef}/U\}$$
be the scaling numbers of this MMP, and $H_i$ the image of $H$ on $X_i$ for each $i$. Then $K_{\Aa_n}+\lambda_nH_n\sim_{\mathbb R,T_n}0$. Since $h_n$ is small, $K_{\Aa_n'}+\lambda_nH_n'\sim_{\mathbb R,T_n}0$. Now we run a $K_{\Aa_n'}$-MMP$/T_n$ with scaling of an ample divisor which terminates with a $\mathbb Q$-factorial good minimal model$/T_n$ by Theorem \ref{thm: nqc mmp with scaling posssible not ample}. This MMP is non-trivial because $K_{\Aa_n'}$ is not nef$/T_n$.

Let $f_n': X_n'\dashrightarrow X_{n+1}'$ be this MMP. This MMP is also a $K_{\Aa_n'}$-MMP$/T_n$ with scaling of $H_n'$, hence a $K_{\Aa_n'}$-MMP$/U$ with scaling of $H_n'$. Since $X_{n+1}$ is the ample model$/T_n$ of $K_{\Aa_n}$,  $X_{n+1}$ is the ample model$/T_n$ of $K_{\Aa_n'}$, hence the induced birational map $h_{n+1}: X_{n+1}'\dashrightarrow X_{n+1}$ is a morphism and $K_{\Aa_{n+1}'}=h_{n+1}^*K_{\Aa_{n+1}}$, where $\Aa_{n+1}'$ is the image of $\Aa_{n}'$ on $X_{n+1}'$. Moreover, if $f_n$ contracts a divisor $E$, then
$$a(E,\Aa_n')=a(E,\Aa_n)<a(E,\Aa_{n+1})=a(E,\Aa'_{n+1}),$$
hence $E$ is also contracted by $f_n'$. Therefore, $h_{n+1}$ is small.

We may continue this process, which shows that any infinite sequence of $K_{\Aa}$-MMP$/U$ with scaling of $H$ induces an infinite sequence of $K_{\Aa'}$-MMP$/U$ with scaling of $H'$. Since any $K_{\Aa'}$-MMP$/U$ with scaling of $h^*H$ terminates, any  sequence of $K_{\Aa}$-MMP$/U$ with scaling of $H$ terminates.

The $K_{\Aa}$-MMP$/U$ with scaling of $H$ must terminates with a minimal model of $\Aa/U$, which is a good minimal model of $\Aa/U$ by Lemma \ref{lem: g-pair version bir12 2.7} and Theorem \ref{thm: eogmm boundary big case}. The ``in particular part" also follows from Theorem \ref{thm: eogmm boundary big case}.
\end{proof}

The following theorem is an important variation of Theorem \ref{thm: eogmm ai afs general case}.

\begin{thm}\label{thm: eogmm ai afs +a case}
    Let $\Aa/U$ be an lc algebraically integrable adjoint foliated structure with potentially klt ambient variety $X$. Let $A$ be an ample$/U$ $\Rr$-divisor on $X$ such that $K_{\Aa}+A$ is pseudo-effective$/U$.

    Then we may run a $(K_{\Aa}+A)$-MMP$/U$ with scaling of an ample$/U$ $\Rr$-divisor and any such MMP terminates with a good minimal model of $(\Aa,\overline{A})/U$.
\end{thm}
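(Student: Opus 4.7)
The plan is to reduce Theorem \ref{thm: eogmm ai afs +a case} to Theorem \ref{thm: eogmm ai afs general case} by absorbing the ample divisor $A$ into the nef part of a klt structure with big generalized boundary, and then transferring the MMP back to $(\Aa,\overline{A})$ via an $\mathbb R$-linear equivalence over $U$.

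First I would apply Lemma \ref{lem: perturbation to q coefficient no m} to $\Aa/U$ and $A$ to obtain an ample$/U$ $\mathbb R$-divisor $H$ on $X$ and a klt algebraically integrable adjoint foliated structure $\Aa'/U=(X,\Ff,B',t')/U$ with $B'$ rational and $t'\in\mathbb Q\cap[0,1)$ satisfying $K_{\Aa}+A\sim_{\mathbb R,U}K_{\Aa'}+H$. Set $\Bb:=(\Aa',\overline{H})$, so that $K_{\Bb}=K_{\Aa'}+H\sim_{\mathbb R,U}K_{\Aa}+A$. Since $\overline{H}$ descends to $X$, adding it to the nef part does not alter any discrepancy; hence $\Bb$ is klt. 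Its generalized boundary is $B'+H$, which is big$/U$ because $H$ is ample$/U$, and $K_{\Bb}$ is pseudo-effective$/U$ by hypothesis. Theorem \ref{thm: eogmm ai afs general case} then asserts that for any ample$/U$ $\mathbb R$-divisor $C$, any $K_{\Bb}$-MMP$/U$ with scaling of $C$ terminates with a $\mathbb Q$-factorial good minimal model $\phi\colon X\dashrightarrow Y$ of $\Bb/U$.

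Next I would transfer this MMP to $(\Aa,\overline{A})$. Since $\Aa$ is lc and $\overline{A}$ descends to $X$, the structure $(\Aa,\overline{A})$ is lc with potentially klt ambient variety, so Lemma \ref{lem: ai afs can run mmp with scaling} permits running a $(K_{\Aa}+A)$-MMP$/U$ with scaling of $C$. The key observation is that the equivalence $K_{(\Aa,\overline{A})}\sim_{\mathbb R,U}K_{\Bb}$ forces the two structures to share the same cone of curves$/U$, the same negative extremal rays, the same extremal contractions and flips, and the same scaling numbers at every stage. Thus any $(K_{\Aa}+A)$-MMP$/U$ with scaling of $C$ coincides step-by-step with a $K_{\Bb}$-MMP$/U$ with scaling of $C$, and so it must terminate at some $\phi\colon X\dashrightarrow Y$.

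Finally, I would verify that $\phi$ is a good minimal model of $(\Aa,\overline{A})/U$. On a common resolution $p\colon W\to X$, $q\colon W\to Y$, a short computation shows that the ``excess divisor'' $F$ in $p^*D=q^*\phi_*D+F$ depends only on the $\sim_{\mathbb R,U}$-class of $D$: principal divisors and pullbacks from $U$ cancel out between $p^*$ and $q^*\phi_*$. Consequently, $(K_{\Aa}+A)$-negativity of $\phi$ follows from $K_{\Bb}$-negativity, and semi-ampleness$/U$ of $\phi_*(K_{\Aa}+A)$ follows from semi-ampleness$/U$ of $\phi_*K_{\Bb}$. The main obstacle is precisely this compatibility check between the two MMPs, which rests on the perturbation supplied by Lemma \ref{lem: perturbation to q coefficient no m} keeping us inside the $\mathbb R$-linear equivalence class of $K_{\Aa}+A$; once this is verified, the statement of the theorem follows immediately from Theorem \ref{thm: eogmm ai afs general case}.
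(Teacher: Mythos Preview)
Your proposal is correct and follows essentially the same approach as the paper's proof: apply Lemma \ref{lem: perturbation to q coefficient no m} to replace $(\Aa,A)$ by a klt structure $\Bb=(\Aa',\overline{H})$ with big$/U$ generalized boundary and $K_{\Bb}\sim_{\mathbb R,U}K_{\Aa}+A$, then invoke Theorem \ref{thm: eogmm ai afs general case}. The paper compresses this into two sentences, whereas you spell out the transfer of the MMP and of the good-minimal-model property across the $\mathbb R$-linear equivalence; your additional verification is sound and is exactly what the paper is implicitly using.
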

\begin{proof}
    By Lemma \ref{lem: perturbation to q coefficient no m} we may assume that $\Aa$ is klt and $B+\Mm_X$ is big$/U$. Theorem \ref{thm: eogmm ai afs +a case} follows from Theorems \ref{thm: eogmm ai afs general case}.
\end{proof}

\begin{proof}[Proof of Theorem \ref{thm: bchm analogue}]
 It is a special case of Theorem \ref{thm: eogmm ai afs general case}.
\end{proof}

\begin{proof}[Proof of Theorem \ref{thm: gmm klt algint foliation gt on cy variety}]
    Let $\Ff$ be an algebraically integrable foliation
    of general type on a normal projective variety $X$ such that $(X,\Delta)$ is klt and $K_X+\Delta\sim_{\mathbb Q}0$ for some $\mathbb Q$-divisor $\Delta$. Then $(X,\Ff,(1-t)\Delta,t)$ is klt and of general type for some $0<t\ll 1$. By Theorem \ref{thm: eogmm ai afs general case}, $(X,\Ff,(1-t)\Delta,t)$ has a good minimal model, hence $(X,\Ff,(1-t)\Delta,t)$ has a canonical model. Since
    $$tK_{\Ff}+(1-t)K_X+(1-t)\Delta\sim_{\mathbb R} tK_{\Ff},$$
    the canonical model of $(X,\Ff,(1-t)\Delta,t)$ is also the canonical model of $\Ff$.
\end{proof}

\begin{proof}[Proof of Theorem \ref{thm: bpf ai afs}]
(1) It follows from Theorem \ref{thm: semi-ampleness theorem}. 

(2) By Theorem \ref{thm: klt afs implies potentially klt}, possibly replacing $X$ with a small $\mathbb Q$-factorialization, we may assume that $X$ is $\mathbb Q$-factorial. We write $\Aa:=(X,\Ff,B,\Mm,t)$ and let $A:=aH-K_{\Aa}$. Then $A$ is nef$/U$ and big$/U$, so there exist ample$/U$ $\Rr$-divisors and an $\Rr$-divisor $E\geq 0$ on $X$ such that $A=A_k+\frac{1}{k}E$ for any positive integer $k$. By Lemma \ref{lem: add divisor not containing lc center}, possibly replacing $A$ with $A_k$ and $B$ with $B+\frac{1}{k}E$ for some $k\gg 0$, we may assume that $A$ is ample$/U$.

Let $l:=2\dim X+1$, 
$$\Aa_s:=\left(X,\Ff,B^{\ninv}+\frac{1-s}{1-t}B^{\inv},\Mm,s\right),$$
and
$$\Bb_s:=\left(X,\Ff,B^{\ninv}+\frac{1-s}{1-t}B^{\inv},\Mm+\overline{A}+l\overline{H},s\right)$$ 
for any $s\in [0,1]$. By Theorem \ref{thm: non-pseudo-effective for afs}, $K_{\Aa_s}+A$ is pseudo-effective$/U$ for any $s\in [t,1]$, hence $K_{\Bb_s}=K_{\Aa_s}+A+lH$
is pseudo-effective$/U$ for any $s\in [t,1]$. Since $\Aa_t$ is klt, $\Bb_t$ is klt. By \cite[Lemma 3.2]{CHLMSSX24}, there exists a real number $0<\tau\ll 1$ such that $\Bb_{t+\tau}$ is klt. By Theorem \ref{thm: eogmm ai afs general case}, we may run a
$$K_{\Bb_{t+\tau}}\text{-MMP}/U$$
with scaling of an ample divisor which terminates with a good minimal model. This MMP is also a $(K_{\Aa_{t+\tau}}+A+lH)$-MMP$/U$. By the length of extremal rays and the contraction theorem on $\mathbb Q$-factorial klt varieties \cite[Theorem 1.3(2), Theorem 1.4(2)]{CHLMSSX24}, this MMP is $H$-trivial. Let $\phi: X\dashrightarrow X'$ be this MMP, $\Aa_s'=\phi_*\Aa_s,\Bb_s:=\phi_*\Bb_s$ for any $s\in [0,1]$, $A':=\phi_*A$, and $H':=\phi_*H$. By Proposition \ref{prop: lc Aat implies lc Aa0}, $\Bb_0'/U$ is a klt generalized pair with big$/U$ generalized boundary. Since $(a+l)H'=K_{\Bb_t'}$, we have
$$\frac{(t+\tau)(a+l)}{\tau}H'=K_{\Bb_0'}+\frac{t}{\tau}K_{\Bb_{t+\tau}'}.$$
By the base-point-freeness theorem for klt generalized pairs -- 
cf. \cite[Lemma 3.4]{HL22} plus the base-point-freeness theorem for klt pairs or \cite[Theorem 2.2.6]{CHLX23} -- $\mathcal{O}_{X'}(mH')$ is globally generated$/U$ for any integer $m\gg 0$. Since $\phi$ is $H$-trivial, $\mathcal{O}_{X}(mH)$ is globally generated$/U$ for any integer $m\gg 0$ and we are done.
\end{proof}
The following theorem is an important variation of Theorem \ref{thm: bpf ai afs}.

\begin{thm}\label{thm: bpf ai afs lc+a}
   Let $\Aa/U=(X,\Ff,B,\Mm,t)/U$ be an lc algebraically integrable adjoint foliated structure such that $X$ is potentially klt and $H$ a nef$/U$ Cartier divisor on $X$ such that $aH-K_{\Aa}$ is ample$/U$ for some positive real number $a$. Then $\mathcal{O}_X(mH)$ is globally generated$/U$ for any integer $m\gg 0$.
\end{thm}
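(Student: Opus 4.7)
The plan is to reduce Theorem \ref{thm: bpf ai afs lc+a} to its klt counterpart, Theorem \ref{thm: bpf ai afs}(2), via a perturbation argument. The key tool is Lemma \ref{lem: perturbation to q coefficient no m}, which is tailor-made to convert an lc adjoint foliated structure on a potentially klt variety into a klt one at the cost of absorbing a small ample divisor.

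Concretely, I would first fix a small rational number $\epsilon>0$ and set $A:=\epsilon(aH-K_{\Aa})$. Since $aH-K_{\Aa}$ is ample$/U$ by hypothesis, so is $A$. Applying Lemma \ref{lem: perturbation to q coefficient no m} to $\Aa/U$ and $A$ produces an ample$/U$ $\Rr$-divisor $H'$ on $X$ and a klt algebraically integrable adjoint foliated structure
$$\Aa'/U=(X,\Ff,B',t')/U,$$
with rational coefficients and rational parameter $t'\in [0,1)$, such that
$$K_{\Aa}+A\sim_{\mathbb R,U}K_{\Aa'}+H'.$$

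Next I would rearrange this relation to obtain
$$aH-K_{\Aa'}\sim_{\mathbb R,U}aH-K_{\Aa}-A+H'=(1-\epsilon)(aH-K_{\Aa})+H',$$
which is the sum of two ample$/U$ $\Rr$-divisors, hence ample$/U$, and in particular big$/U$ and nef$/U$. Since $H$ remains a nef$/U$ Cartier divisor on $X$ (the perturbation does not touch $X$ or $H$), all the hypotheses of Theorem \ref{thm: bpf ai afs}(2) are now met for $\Aa'/U$ and $H$.

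Finally, applying Theorem \ref{thm: bpf ai afs}(2) to $\Aa'/U$ yields that $\mathcal O_X(mH)$ is globally generated$/U$ for every integer $m\gg 0$, which is exactly the conclusion of Theorem \ref{thm: bpf ai afs lc+a} since the statement depends only on $X$ and $H$ and not on any chosen adjoint foliated structure. I do not anticipate a serious obstacle in this argument; the content is entirely in Lemma \ref{lem: perturbation to q coefficient no m} and Theorem \ref{thm: bpf ai afs}(2), so the main task is simply checking that the perturbation preserves the ``$aH-K$ ample'' inequality, which the computation above does.
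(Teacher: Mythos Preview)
Your proposal is correct and follows essentially the same strategy as the paper: perturb the lc structure to a klt one while keeping $aH-K_{\Aa'}$ ample$/U$, then apply Theorem~\ref{thm: bpf ai afs}(2). The only difference is cosmetic: the paper does the perturbation by hand via the convex combination $(1-s)\Aa+s(X,\Ff,\Delta,\bm{0},0)$ with a klt pair $(X,\Delta)$, whereas you invoke the heavier Lemma~\ref{lem: perturbation to q coefficient no m} (whose extra conclusion about rational coefficients is not needed here), but the underlying mechanism is the same.
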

\begin{proof}
    Let $(X,\Delta)$ be a klt pair. Possibly replacing $\Aa$ with $(1-s)\Aa+s(X,\Ff,\Delta,\bm{0},0)$ for some $0<s\ll 1$, we may assume that $\Aa$ is klt. Theorem \ref{thm: bpf ai afs lc+a} follows from Theorem \ref{thm: bpf ai afs}.
\end{proof}

\begin{proof}[Proof of Theorem \ref{thm: cont intro}]
(1) It follows from Theorem \ref{thm: contraciton theorem potentially klt}.

(2) Since $L-K_{\Aa}$ is ample$/T$, by Theorem \ref{thm: bpf ai afs lc+a}, $\mathcal{O}_X(mL)$ is globally generated$/T$ for any integer $m\gg 0$. Since $L\cdot C=0$ for any curve C contracted by $\cont_R$, $\cont_R$ is defined
by $|mL/U|$ for any integer $m\gg 0$. Thus, $mL\cong f^\ast L_{m,T}$ for some line bundle $L_{m,T}$ on $T$ and any integer $m\gg 0$. We may let $L_T:=L_{m+1,T}-L_{m,T}$ for some integer $m\gg 0$.
\end{proof}

\begin{proof}[Proof of Theorem \ref{thm: mmp potentially klt ambient algint afs}]
    It follows from \cite[Theorems 1.3,1.5]{CHLMSSX24} and Theorem \ref{thm: cont intro}.
\end{proof}

\begin{proof}[Proof of Theorem \ref{thm: eomfs ai afs}]
    Let $H$ be an ample$/U$ $\Rr$-divisor on $X$ and let 
    $$\mu:=\sup\{s\geq 0\mid K_{\Aa}+sH\text{ is not pseudo-effective}/U\}.$$
    Then $\mu>0$ and $K_{\Aa}+\mu H$ is pseudo-effective$/U$. 
    
    Let $f_i: X_i\dashrightarrow X_{i+1}$ with $X_1:=X$ be a sequence of $K_{\Aa}$-MMP$/U$ with scaling of $H$, whose existence is guaranteed by Theorem \ref{thm: eogmm ai afs +a case}, and let $\Aa_i$ and $H_i$ be the images of $\Aa$ and $H$ on $X_i$ respectively for each $i$. Let
    $$\lambda_i:=\inf\{s\geq 0\mid K_{\Aa_i}+sH_i\text{ is nef}/U\}$$
    be the scaling numbers. Then $\lambda_i\geq\mu$ for each $i$ so $f_i\colon X_i\dashrightarrow X_{i+1}$ is a sequence of a$(K_{\Aa}+\mu H)$-MMP$/U$ with scaling of $H$. By Theorem \ref{thm: eogmm ai afs +a case}, this MMP terminates.
\end{proof}

\begin{proof}[Proof of Theorem \ref{thm: abundance ai afs}]
We may assume that $K_{\Aa}$ is pseudo-effective$/U$, otherwise the theorem is trivial. Since the MMP preserves the invariant Iitaka dimension, the numerical Iitaka dimension, effectivity, and finite generation, Theorem \ref{thm: abundance ai afs} is a special case of Theorem \ref{thm: eogmm ai afs general case}.
\end{proof}

\begin{proof}[Proof of Theorem \ref{thm: mmp ai potentially klt ambient variety}]
    Theorem \ref{thm: mmp ai potentially klt ambient variety}(1) is a special case of Theorem \ref{thm: mmp potentially klt ambient algint afs}.  Theorem \ref{thm: mmp ai potentially klt ambient variety}(2) is a special case of Theorem \ref{thm: eogmm ai afs +a case}. Theorem \ref{thm: mmp ai potentially klt ambient variety}(3) is a special case of Theorem \ref{thm: eomfs ai afs}.  Theorem \ref{thm: mmp ai potentially klt ambient variety}(4) follows from  Lemma \ref{lem: ai afs can run mmp with scaling} and \cite[Theorem 1.11(1)]{LMX24}.
\end{proof}

\begin{proof}[Proof of Theorem \ref{thm: shokurov type polytope}]
Let $h: W\rightarrow X$ be a foliated log resolution of $\Aa$. Let $\mathcal{S}$ be the set of $h$-exceptional prime divisors union all components of $B$. Then for any prime divisor $D\in\mathcal{S}$,
$$(\bm{v},t)\rightarrow a(D,\Aa(\bm{v},t))\text{ and }(\bm{v},t)\rightarrow  a(D,\Aa(\bm{v},t))+t$$
are both $\mathbb Q$-affine functions $V\rightarrow\mathbb R$. Therefore, if $a(D,\Aa)=1$ (resp. $a(D,\Aa)+t_0=1$), then $a(D,\Aa(\bm{v},t))=1$ (resp. $a(D,\Aa(\bm{v},t))+t=1$) for any $(\bm{v},t)\in\mathbb R^{m+n+1}$.

Since $\mathcal{S}$ is a finite set, there exists an open neighborhood $U_1$ of $\bm{v}_0$, such that for any $D\in\mathcal{S}$, if $a(D,\Aa)<1$ (resp. $a(D,\Aa)+t<1$), then $a(D,\Aa(\bm{v},t))<1$ (resp. $a(D,\Aa(\bm{v},t))+t<1$) for any $(\bm{v},t)\in U_1$, and $t\leq 1$ for any $(\bm{v},t)\in U_1$. Therefore, for any $(\bm{v},t)\in U_1$, $\Aa(\bm{v},t)$ is lc.

Let $c:=\dim U_1$. Pick $\bm{v}_1,\dots,\bm{v}_{c+1}\in U_1\cap\mathbb Q^{m+n+1}$ such that $\bm{v}_0$ is contained in the convex hull $U_2$ spanned by $\bm{v}_1,\dots,\bm{v}_{c+1}$.  Then there exist positive real numbers $a_1,\dots,a_{c+1}$ such that $\sum_{i=1}^{c+1}a_i\bm{v}_i=\bm{v}_0$ and $\sum_{i=1}^{c+1}a_i=1$. We let $I$ be a positive integer such that $IK_{\Aa(\bm{v}_i)}$ is Cartier for each $i$. Let $d:=\dim X$ and $a_0:=\min_{1\leq i\leq c+1}\{a_i\}$. 

Consider the set
    $$\Ii:=\left\{\sum a_i\gamma_i\middle|\gamma_i\in [-2dI,+\infty)\cap\mathbb Z\right\}\cap (0,+\infty).$$
    We have $\gamma_0:=\inf\{\gamma\in\Ii\}>0$. We let $U$ be the interior of the set
    $$\left\{\frac{1}{2d+\gamma_0}(2d\bm{v}_0+\gamma_0\bm{v})\Bigg| \bm{v}\in U_2\right\}.$$

    We show that $U$ satisfies our requirement. By our construction, $\Aa(\bm{v})$ is lc for any $\bm{v}\in U$ so we only need to show that $K_{\Aa(\bm{v})}$ is nef$/Z$ for any $\bm{v}\in U$. We let $R$ be an extremal ray in $\overline{NE}(X/U)$. There are three cases.

    \medskip

    \noindent\textbf{Case 1}. $K_{\Aa}\cdot R=0$. In this case, $K_{\Aa}\cdot R=0$ for any $\bm{v}\in U_1$, so $K_{\Aa}\cdot R=0$ for any $\bm{v}\in U$.

\medskip

    \noindent\textbf{Case 2}. $K_{\Aa(\bm{v}_i)}\cdot R\geq 0$ for any $i$. In this case, $K_{\Aa(\bm{v})}\cdot R\geq 0$ for any $\bm{v}\in U_1$, so $K_{\Aa(\bm{v})}\cdot R\geq 0$ for any $\bm{v}\in U$.

    \medskip

     \noindent\textbf{Case 3}. $K_{\Aa}\cdot R>0$ and $K_{\Aa(\bm{v}_j)}\cdot R<0$ for some $j$. In this case, by the cone theorem \cite[Theorem 1.3]{CHLMSSX24}, $R$ is spanned by a curve $C$ such that $K_{\Aa(\bm{v}_i)}\cdot C\geq -2d$ for any $i$. Thus $$IK_{\Aa(\bm{v}_i)}\cdot C\in [-2dI,+\infty)\cap\mathbb Z,$$
    so
    $$IK_{\Aa(\bm{v}_0)}\cdot C\in\Ii_0.$$
    Then for any $\bm{v}\in U$, there exists $\bm{v}'\in U_2$ such that $(2d+\gamma_0)\bm{v}=2d\bm{v}_0+\gamma_0\bm{v}'$. We have
    \begin{align*}
        IK_{\Aa(\bm{v})}\cdot C&=\frac{\gamma_0}{2d+\gamma_0}IK_{\Aa(\bm{v}')}\cdot C+\frac{2d}{2d+\gamma_0}IK_{\Aa(\bm{v}_0)}\cdot C\\
        &\geq \frac{\gamma_0}{2d+\gamma_0}\cdot (-2d)+\frac{\gamma_0}{2d+\gamma_0}\cdot\gamma_0=0,
    \end{align*}
    so $IK_{\Aa(\bm{v})}\cdot R\geq 0$. The theorem follows.
\end{proof}

\begin{proof}[Proof of Theorem \ref{thm: nqc of ka}]
    It immediately follows from Theorem \ref{thm: shokurov type polytope}.
\end{proof}

\begin{proof}[Proof of Theorem \ref{thm: flop ai afs}]
The idea of the proof is similar to the ones in \cite{Kaw08,CLW24}. Let $X,X_1,X_2$ be the ambient varieties of $\Aa,\Aa_1,\Aa_2$ respectively. Let $\phi_i: X\dashrightarrow X_i$ be the induced birational maps. By Lemma \ref{lem: nz for lc divisor}, the divisors contracted by $\phi_i$ are $\Supp N_{\sigma}(X/U,K_{\Aa})$. Thus $\phi$ is an isomorphism in codimension $1$. Let $L_2\geq 0$ be an ample$/U$ divisor on $X$ and let $L_1:=\phi^{-1}_*L_2$. Then $L_1\geq 0$ is big$/U$. 

If $\Aa$ is klt then also $\Aa_1,\Aa_2$ are klt. By Lemma \ref{lem: add divisor not containing lc center}, there exists $\epsilon_0>0$ such that $(\Aa_1,\epsilon L_1)$ and $(\Aa_2,\epsilon L_2)$ are klt for any $\epsilon\in (0,\epsilon_0]$.

 Since $\phi$ is an isomorphism in codimension $1$, $\phi$ is then unique $\mathbb Q$-factorial good minimal model of $(\Aa_1,\epsilon L_1)$ for any $\epsilon>0$. We run a $(K_{\Aa_1}+\epsilon L_1)$-MMP$/U$ with scaling of an ample$/U$ divisor. Since $L_1$ is big$/U$, by Theorem \ref{thm: eogmm ai afs general case}, this MMP terminates with the good minimal model $(\Aa_2,\epsilon L_2)/U$ of $(\Aa_1,\epsilon L_1)/U$.

By Theorem \ref{thm: shokurov type polytope}, $K_{\Aa_1}$ is NQC$/U$. Since
$$\frac{\epsilon_0}{\epsilon}(K_{\Aa_1}+\epsilon L_1)=K_{\Aa_1}+\epsilon_0L_1+\frac{\epsilon_0-\epsilon}{\epsilon}K_{\Aa_1},$$
by the length of extremal rays and the contraction theorem \cite[Theorem 1.3(2), Theorem 1.4(2)]{CHLMSSX24}, for any $0<\epsilon\ll\epsilon_0$, any sequence of $(K_{\Aa_1}+\epsilon L_1)$-MMP$/U$ is $K_{\Aa_1}$-trivial, and hence the induced birational map $\phi: X_1\dashrightarrow X_2$ is a sequence of $K_{\Aa_1}$-flops$/U$.
\end{proof}

\section{Boundedness of Fano adjoint foliated structures}\label{sec: bdd fano}

In this section, we prove Theorems \ref{thm: log foliated BAB} and \ref{thm: ft ai afs mds}, which imply Theorems \ref{thm: BAB analogue} and \ref{thm: ft ai is ft}. We begin with the following proposition as a stronger version of Proposition \ref{prop: lc Aat implies lc Aa0}.

\begin{prop}\label{prop: algint afs elc implies x elc}
    Let $\epsilon$ be a non-negative real number, $t\in [0,1)$ a real number, and $\Aa:=(X,\Ff,B^{\ninv}+(1-t)B^{\inv},\Mm,t)$ a $\mathbb Q$-factorial $\epsilon$-lc (resp. $\epsilon$-klt) algebraically integrable adjoint foliated structure. Then $(X,B,\Mm)$ is $\epsilon$-lc  (resp. $\epsilon$-klt).
\end{prop}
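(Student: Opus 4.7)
The plan is to reduce the statement to a divisorial inequality, exploit linearity of discrepancies coming from a natural decomposition of $K_{\Aa}$, and invoke the standard comparison between foliated and usual log discrepancies. By definition of $\epsilon$-lc (resp.\ $\epsilon$-klt) for generalized pairs, it suffices to show that for every prime divisor $E$ over $X$,
$$a(E,X,B,\Mm)+1 \geq \epsilon \quad (\text{resp.\ }>\epsilon).$$

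First I would observe the divisorial identity
$$K_{\Aa} = t\bigl(K_{\Ff}+B^{\ninv}\bigr) + (1-t)\bigl(K_X+B+\Mm_X\bigr),$$
which follows formally by writing $B^{\ninv}+(1-t)B^{\inv} = tB^{\ninv}+(1-t)B$. Since $X$ is $\mathbb Q$-factorial, all three canonical divisors appearing on the right are $\mathbb R$-Cartier, so by linearity of discrepancies
$$a(E,\Aa) = t\cdot a(E,\Ff,B^{\ninv}) + (1-t)\cdot a(E,X,B,\Mm)$$
for every prime divisor $E$ over $X$. Translating the $\epsilon$-lc hypothesis with this identity and rearranging yields
$$t\bigl[a(E,\Ff,B^{\ninv})+\epsilon_{\Ff}(E)\bigr] + (1-t)\bigl[a(E,X,B,\Mm)+1\bigr] \geq \epsilon. \qquad(\star)$$

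The heart of the argument will be the comparison
$$a(E,\Ff,B^{\ninv}) + \epsilon_{\Ff}(E) \leq a(E,X,B,\Mm) + 1, \qquad (\dagger)$$
i.e.\ the foliated log discrepancy is bounded above by the usual log discrepancy. To prove $(\dagger)$ I would pass to a common foliated log resolution $h\colon Y\to X$ of $(X,\Ff)$ and $(X,B,\Mm)$, as in Definition \ref{defn: foliated log smooth}, so that everything becomes toroidal. Writing $K_{\Ff_Y}=h^{\ast}K_{\Ff}+\sum f_iE_i$ and $K_Y=h^{\ast}K_X+\sum d_iE_i$, the toroidal structure yields $f_i+\epsilon_{\Ff_Y}(E_i)\leq d_i+1$ for each exceptional $E_i$ (the standard foliated-versus-usual log discrepancy comparison on a foliated log smooth model); combined with $B^{\ninv}\leq B$, which makes the boundary contribution to the foliated discrepancy no larger than to the usual one, this gives $(\dagger)$ for exceptional divisors. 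For non-exceptional $D\subset X$ the inequality is elementary: when $D$ is $\Ff$-invariant both sides read $0\leq 1-\mult_D B$, which follows from the $\epsilon$-lc bound on $\Aa$ at $D$, and when $D$ is not $\Ff$-invariant both sides equal $1-\mult_D B$.

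Finally, I would combine $(\star)$ with $(\dagger)$: since $t\in[0,1)$ and the first bracket in $(\star)$ is at most the second, the left-hand side of $(\star)$ is bounded above by $a(E,X,B,\Mm)+1$, and therefore $a(E,X,B,\Mm)+1\geq\epsilon$, as desired. The $\epsilon$-klt case goes through with strict inequalities preserved at every step. The main obstacle is establishing $(\dagger)$ on exceptional divisors; this is essentially a toroidal computation comparing $K_{\Ff_Y}-h^{\ast}K_{\Ff}$ with $K_Y-h^{\ast}K_X$ on a foliated log smooth model, and should follow from the foliation-theoretic framework of \cite{ACSS21,CS21} plus the explicit structure granted by Definition \ref{defn: foliated log smooth}.
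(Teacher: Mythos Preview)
Your reduction via the linear decomposition to the comparison $(\dagger)$ is correct in spirit and matches the paper's strategy; the minor slip is that $\Mm_X$ must sit in both summands, so the identity is $K_{\Aa}=t(K_{\Ff}+B^{\ninv}+\Mm_X)+(1-t)(K_X+B+\Mm_X)$ and the foliated discrepancy in $(\dagger)$ should be $a(E,\Ff,B^{\ninv},\Mm)$. The substantive problem is that your sketch of $(\dagger)$ does not work as stated.

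First, the boundary step goes the wrong way. Since $B\ge B^{\ninv}$, the contribution $-\mult_Eh^*B$ on the right of $(\dagger)$ is \emph{more} negative than $-\mult_Eh^*B^{\ninv}$ on the left, so unwinding shows that $(\dagger)$ is equivalent to the stronger inequality $f_E+\epsilon_{\Ff}(E)\le d_E+1-\mult_Eh^*B^{\inv}$; the canonical-divisor inequality alone does not imply it whenever $B^{\inv}\neq 0$. Second, and more fundamentally, even the canonical-divisor inequality is not a routine toroidal computation. The toroidal/ACSS structure on a foliated log resolution $W$ only yields a relation of the form $K_{\Ff_W}\sim_Z K_W+G_W$ as a linear equivalence over the base $Z$ of the fibration; this says nothing, divisor-by-divisor, about how $h^*K_{\Ff}$ compares to $h^*K_X$ as pullbacks from the singular $X$. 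The paper bridges this gap by running a $(K_{\Ff_W}+B_W^{\ninv}+\Mm_W)$-MMP over $X$ to a model $Y$ and then splitting into three cases according to whether $E$ lies on $X$, survives to $Y$, or is exceptional over $Y$. In the last and hardest case one uses that this MMP is simultaneously a usual-pair MMP over $Z$, together with the negativity lemma and the comparison $B_Y^{\ninv}+G_Y\ge B_Y'$, to convert the ``over $Z$'' relation into the required ``over $X$'' discrepancy bound. That MMP step is the real content of the proposition; it is exactly what you have labeled ``the main obstacle,'' and it is not available from the toroidal structure on a single static model.
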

\begin{proof}
    We may assume $\epsilon>0$, otherwise the proposition follows from Proposition \ref{prop: lc Aat implies lc Aa0}. We may also assume that $t>0$, otherwise the proposition is trivial.

    We only need to show that for any prime divisor $E$ over $X$, $a(E,X,B,\Mm)\geq -1+\epsilon$ (resp. $a(E,X,B,\Mm)>-1+\epsilon$). In the following, we fix such a divisor $E$.

 Let $h: W\rightarrow X$ be a foliated log resolution of $\Aa$ such that $E$ is a divisor on $W$. Let $\Ff_W:=h^{-1}\Ff$ and $B_W:=h^{-1}_*B+\Exc(h)$. Then there exists an $\Ff$-invariant divisor $G_W\geq 0$ on $W$ and a contraction $f: W\rightarrow Z$ such that $(W,\Ff_W,B_W^{\ninv},\Mm;G_W)/Z$ is $\mathbb Q$-factorial ACSS and $\Supp G_W\supset \Supp B_W^{\inv}$.

    We write $K_{\Ff_W}+B_W^{\ninv}+\Mm_W\sim_{\mathbb R,X}F_1-F_2$ where $F_1,F_2\ge 0$ are exceptional$/X$ and $F_1\wedge F_2=0$. By \cite[Theorem 9.4.1]{CHLX23} we may run a $(K_{\Ff_W}+B_W^{\ninv}+\Mm_W)$-MMP$/X$ which contracts $F_1$ after finitely many steps $\phi\colon W\dashrightarrow Y$. By \cite[Lemma 9.1.4]{CHLX23}, $\phi$ is also a sequence of steps of a $(K_{W}+B_W^{\ninv}+G_W+\Mm_W)$-MMP$/X$. 
    
    Let $g\colon Y\rightarrow X$ be the induced birational morphism, $\Ff_Y:=\phi_*\Ff_W$, $B_Y:=\phi_*B_W$, and $G_Y:=\phi_*G_W$. Let $\Aa_W:=(W,\Ff_W,B_W,\Mm,t)$ and let $\Aa_Y:=\phi_*\Aa_W$.

    There are three possibilities:

    \medskip

\noindent\textbf{Case 1.} $E$ is not exceptional$/X$. In this case,
\begin{align*}
  & -(t\epsilon_{\Ff}(E)+(1-t))+\epsilon\leq\text{(resp. }<\text{) } a(E,\Aa)=-\mult_E(B^{\ninv}+(1-t)B^{\inv})\\
   =&-(t\epsilon_{\Ff}(E)+(1-t))\mult_EB=(t\epsilon_{\Ff}(E)+(1-t))a(E,X,B,\Mm).
\end{align*}
Hence $$a(E,X,B,\Mm)\geq\text{(resp. }>\text{) }  -1+\frac{\epsilon}{t\epsilon_{\Ff}(E)+(1-t)}\geq -1+\epsilon$$
and we are done. 

\medskip

\noindent\textbf{Case 2.} $E$ is exceptional$/X$ but not exceptional$/Y$. In this case, since
$$K_{\Ff_Y}+B_Y^{\ninv}+\Mm_Y\sim_{\mathbb R,X}-\phi_*F_2\leq 0$$
and $\mult_EB_Y^{\ninv}=\epsilon_{\Ff}(E)$, we have $$a(E,X,\Ff,B^{\ninv},\Mm)\leq -\epsilon_{\Ff}(E).$$
Thus,
\begin{align*}
  & -(t\epsilon_{\Ff}(E)+(1-t))+\epsilon\leq a(E,\Aa)=ta(E,X,\Ff,B^{\ninv},\Mm)+(1-t)a(E,X,B,\Mm)\\
   \leq &-t\epsilon_{\Ff}(E)+(1-t)a(E,X,B,\Mm).
\end{align*}
Therefore, 
$$a(E,X,B,\Mm)\geq -1+\frac{\epsilon}{1-t}>-1+\epsilon$$
and we are done.
\medskip

\noindent\textbf{Case 3.} $E$ is exceptional$/Y$. In this case, by \cite[Proposition 7.3.6, Lemma 9.1.4]{CHLX23},  $\phi$ is a sequence of steps of a $(K_{\Ff_W}+B_W^{\ninv}+\Mm_W)$-MMP$/Z$ as well as a sequence of steps of $(K_W+B_W^{\ninv}+G_W+\Mm_W)$-MMP$/Z$, and 
$$K_{\Ff_W}+B_W^{\ninv}+\Mm_W\sim_{Z}K_W+B_W^{\ninv}+G_W+\Mm_W.$$
Therefore, by the negativity lemma, we have
\begin{align*}
    &a(E,W,B_W^{\ninv}+G_W,\Mm)-a(E,Y,B_Y^{\ninv}+G_Y,\Mm)\\
    =&a(E,W,\Ff_W,B_W^{\ninv},\Mm)-a(E,Y,\Ff_Y,B_Y^{\ninv},\Mm).
\end{align*}
Since $E$ is a component of $\Exc(h)$, we have $\mult_E(B_W^{\ninv}+G_W)=1$ and $\mult_EB_W^{\ninv}=\epsilon_{\Ff}(E)$, hence
$$a(E,Y,\Ff_Y,B_Y^{\ninv},\Mm)=a(E,Y,B_Y^{\ninv}+G_Y,\Mm)+(1-\epsilon_{\Ff}(E)).$$
Let
$$K_Y+B_Y'+\Mm_Y:=g^*(K_X+B+\Mm_X).$$
By Proposition \ref{prop: lc Aat implies lc Aa0}, $(X,B,\Mm)$ is lc, so 
$$B_Y^{\ninv}+G_Y=\phi_*(B_W^{\ninv}+G_W)\geq\phi_*B_W=\phi_*(h^{-1}_*B+\Exc(h))=g^{-1}_*B+\Exc(g)\geq B_Y'.$$
Therefore,
$$a(E,Y,B_Y^{\ninv}+G_Y,\Mm)\leq a(E,Y,B_Y',\Mm).$$
By our construction of $h$, we have
$$h^*(K_{\Ff}+B^{\ninv}+\Mm_X)\geq K_{\Ff_Y}+B_Y^{\ninv}+\Mm_X,$$
hence
$$a(E,X,\Ff,B^{\ninv},\Mm)\leq a(E,Y,\Ff_Y,B_Y^{\ninv},\Mm).$$
By linearity of discrepancies and combining the inequalities above, we have
\begin{align*}
-(t\epsilon_{\Ff}(E)+(1-t))+\epsilon&\leq \text{(resp. }<\text{) } a(E,\Aa)\\
&=ta(E,X,\Ff,B^{\ninv},\Mm)+(1-t)a(E,X,B,\Mm)\\
&\leq ta(E,Y,\Ff_Y,B_Y^{\ninv},\Mm)+(1-t)a(E,X,B,\Mm)\\
&=ta(E,Y,B_Y^{\ninv}+G_Y,\Mm)+t(1-\epsilon_{\Ff}(E))+(1-t)a(E,X,B,\Mm)\\
&\leq ta(E,Y,B_Y',\Mm)+t(1-\epsilon_{\Ff}(E))+(1-t)a(E,X,B,\Mm)\\
&= ta(E,X,B,\Mm)+t(1-\epsilon_{\Ff}(E))+(1-t)a(E,X,B,\Mm)\\
&= a(E,X,B,\Mm)+t(1-\epsilon_{\Ff}(E)).
\end{align*}
Therefore, $a(E,X,B,\Mm)\geq \text{(resp. }>\text{) } -1+\epsilon$ and we are done.
\end{proof}

\begin{thm}\label{thm: precise fano type afs to x}
    Let $\epsilon\in (0,1)$ be a real number. Let $\Aa/U:=(X,\Ff,B,\Mm,t)/U$ be an algebraically integrable $\epsilon$-lc (resp. $\epsilon$-klt) adjoint foliated structure such that $B+\Mm_X$ is big$/U$ and $K_{\Aa}\sim_{\mathbb R,U}0$. Then there exists an $\epsilon$-lc (resp. $\epsilon$-klt) pair $(X,\Delta)$ such that $\Delta$ is big$/U$ and $K_X+\Delta\sim_{\mathbb R,U}0$. In particular, $X$ is of Fano type$/U$.
\end{thm}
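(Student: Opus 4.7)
The plan is to use Proposition \ref{prop: algint afs elc implies x elc} to recast $\Aa$ as a generalized pair, then combine $K_\Aa\sim_{\mathbb R,U}0$ with non-pseudo-effectivity and non-vanishing to produce an effective representative of $-(K_{X}+\widetilde B+\Mm_X)$, and finally perturb to an ordinary pair with the required singularities.

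Since $\epsilon>0$ forces $\Aa$ to be klt, Theorem \ref{thm: klt afs implies potentially klt} supplies a small $\mathbb Q$-factorialization $h\colon X'\to X$; all hypotheses are preserved upon passing to $\Aa':=h^*\Aa$, and the conclusion on $X$ is recovered by pushing forward. Assuming $t<1$ (the case $t=1$ is handled by a separate continuity argument), I set $\widetilde B:=(B')^{\ninv}+\tfrac{1}{1-t}(B')^{\inv}$, so that $\Aa'$ takes the form required by Proposition \ref{prop: algint afs elc implies x elc}; that proposition then yields $(X',\widetilde B,\Mm)$ as an $\epsilon$-lc (resp.\ $\epsilon$-klt) generalized pair with $\widetilde B+\Mm_{X'}\ge B'+\Mm_{X'}$ big$/U$.

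Next, I would exploit the identity
\begin{equation*}
K_{\Aa'}=(1-t)(K_{X'}+\widetilde B+\Mm_{X'})+t(K_{\Ff'}+(B')^{\ninv}+\Mm_{X'})
\end{equation*}
together with $K_{\Aa'}\sim_{\mathbb R,U}0$. Applying Theorem \ref{thm: non-pseudo-effective for afs} to the family $\Aa'_s:=(X',\Ff',(B')^{\ninv}+\tfrac{1-s}{1-t}(B')^{\inv},\Mm,s)$ at $s=1$ shows that $Q:=K_{\Ff'}+(B')^{\ninv}+\Mm_{X'}$ is pseudo-effective$/U$. For $s$ slightly greater than $t$, $\Aa'_s$ remains klt with big$/U$ generalized boundary (by openness of bigness) and $K_{\Aa'_s}=\tfrac{s-t}{1-t}Q$ is pseudo-effective$/U$; Theorem \ref{thm: abundance ai afs}(2) therefore produces an effective $D'\sim_{\mathbb R,U}Q$, and $N_0:=\tfrac{t}{1-t}D'\ge 0$ satisfies $N_0\sim_{\mathbb R,U}-(K_{X'}+\widetilde B+\Mm_{X'})$.

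Finally, I would apply an $\epsilon$-preserving refinement of Lemma \ref{lem: perturb gklt to klt}, obtained by writing $\widetilde B+\Mm_{X'}\sim_{\mathbb R,U}A+E$ with $A$ ample and $E\ge 0$ and using Theorem \ref{thm: Bertini type theorem} to substitute a general effective representative for the perturbed ample-nef class; this produces an $\epsilon$-lc (resp.\ $\epsilon$-klt) pair $(X',\Delta_0)$ with $\Delta_0$ big$/U$ and $K_{X'}+\Delta_0\sim_{\mathbb R,U}K_{X'}+\widetilde B+\Mm_{X'}$. Choosing $D'$ sufficiently general, via semi-ampleness of $K_{\Aa'_s}$ on the good minimal model provided by Theorem \ref{thm: eogmm ai afs general case}, the pair $(X',\Delta_0+N_0)$ inherits the singularity bound, has big$/U$ boundary, and satisfies $K_{X'}+\Delta_0+N_0\sim_{\mathbb R,U}0$; pushing forward by $h$ yields the desired $\Delta$ on $X$. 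The ``in particular'' claim follows by writing $\Delta\sim_{\mathbb R,U}A+E$ ample plus effective and replacing $\Delta$ with $(1-\delta)\Delta+\delta E$ for $0<\delta\ll 1$, giving $-(K_X+(1-\delta)\Delta+\delta E)\sim_{\mathbb R,U}\delta A$ ample while preserving klt. The main technical obstacle is preserving the bound $\epsilon$ through both perturbations, which requires the effective divisors involved to be chosen sufficiently general.
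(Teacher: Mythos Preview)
Your approach differs from the paper's and has a real gap at the step you flag as the ``main technical obstacle.'' Both of your perturbations---replacing the nef part $\Mm$ by an effective divisor, and adding $N_0$---can strictly destroy the $\epsilon$-lc bound, and neither can be fixed by ``choosing general'' in the way you suggest. For the first, there is no $\epsilon$-preserving version of Lemma~\ref{lem: perturb gklt to klt}: the nef part $\Mm$ need not descend to $X'$, and on a higher model it contributes to discrepancies in a way that an effective replacement cannot match; the Bertini-type theorem in the paper only produces (sub-)lc or (sub-)klt, not $\epsilon$-lc. For the second, the divisor $D'\sim_{\mathbb R,U}Q$ you obtain from non-vanishing is effective on $X'$, but $Q$ is only pseudo-effective there, not semi-ample---semi-ampleness holds on the \emph{minimal model} of $\Aa'_s$, and transporting a general member back to $X'$ forces you to add the fixed part $N_\sigma(X'/U,Q)$, over which you have no control.

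The paper avoids both issues by a different route: it runs a $(-K_X)$-MMP $\phi\colon X\dashrightarrow X'$ (which is possible since $X$ is a Mori dream space over $U$), and uses Theorem~\ref{thm: non-pseudo-effective for afs} to rule out a Mori fiber space outcome, so $-K_{X'}$ is big and genuinely semi-ample$/U$. Since $\Aa$ and $\phi_*\Aa$ are crepant (as $K_\Aa\sim_{\mathbb R,U}0$), Proposition~\ref{prop: algint afs elc implies x elc} gives that $X'$ itself is $\epsilon$-lc; then ordinary Bertini on the semi-ample $|-K_{X'}|$ produces an $\epsilon$-lc pair $(X',\Delta')$ with no moduli part and no fixed component in sight. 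Finally, the pullback $\Delta$ to $X$ is effective precisely because $\phi$ was $(-K_X)$-negative, hence $\Delta$-negative. This last step---using the negativity of the $(-K_X)$-MMP to guarantee effectivity of the crepant pullback---is the key idea your approach is missing.
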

\begin{proof}
    By our assumptions, $\Aa$ is klt. In particular, $t<1$. Possibly replacing $\Aa$ with a small $\mathbb Q$-factorialization, we may assume that $X$ is $\mathbb Q$-factorial. 

We first show that, for any $\Rr$-divisor $D$ on $X$, we may run a $D$-MMP$/U$ which terminates with either a good minimal model$/U$ or a Mori fiber space$/U$. We write $B+\Mm_X=A+E$ where $A$ is ample$/U$ and $E\geq 0$. By Lemma \ref{lem: add divisor not containing lc center}, we may pick $\tau>0$ such that $\widehat{\Aa}:=(X,\Ff,(1-\tau)B+\tau E,(1-\tau)\Mm)$ is klt. For any $\Rr$-divisor $D$ on $X$, let $r$ be a positive real number such that $\tau A+rD$ is ample$/U$. By Theorems \ref{thm: eogmm ai afs general case} and \ref{thm: eomfs ai afs}, we may run a $(K_{\Aa}+(\tau A+rD))$-MMP$/U$,  which is also a $D$-MMP$/U$, and we may choose such MMP so that it terminates with either a good minimal model$/U$ or a Mori fiber space$/U$.

We run a $(-K_X)$-MMP$/U$ $\phi: X\dashrightarrow X'$ which terminates with either a Mori fiber space$/U$ or a good minimal model$/U$. If the MMP terminates with a Mori fiber space$/U$ then we let $f: X'\rightarrow Z$ be the $(-K_{X'})$-Mori fiber space$/U$. If the MMP terminates with a good minimal model$/U$, then we let $f: X'\rightarrow Z$ be the ample model$/U$ of $-K_{X'}$. Let $\Aa':=\phi_*\Aa$ and $\Ff':=\phi_*\Ff$. Since $K_{\Aa}\sim_{\mathbb R,U}0$, $\Aa$ and $\Aa'$ are crepant and $K_{\Aa'}\sim_{\mathbb R,U}0$. In particular, $\Aa'$ is $\epsilon$-lc (resp. $\epsilon$-klt). By Proposition \ref{prop: algint afs elc implies x elc}, $X'$ is $\epsilon$-lc (resp. $\epsilon$-klt). 

Since $B+\Mm_X$ is big$/U$, $-tK_{\Ff}-(1-t)K_X$ is big$/U$, hence $-tK_{\Ff'}-(1-t)K_{X'}$ is big$/U$, and so $-tK_{\Ff'}-(1-t)K_{X'}$ is big$/Z$. By our construction, $K_{X'}$ is pseudo-effective$/Z$. By Theorem \ref{thm: non-pseudo-effective for afs}, $tK_{\Ff'}+(1-t)K_{X'}$ is pseudo-effective$/Z$. Therefore, $f$ is birational, so $-K_{X'}$ is big$/U$ and semi-ample$/U$. 

Since $X'$ is $\epsilon$-lc (resp. $\epsilon$-klt) and $\epsilon<1$, we may choose $\Delta'\in |-K_{X'}|_{\mathbb Q/U}$ such that $(X',\Delta')$ is $\epsilon$-lc (resp. $\epsilon$-klt) and $K_{X'}+\Delta'\sim_{\mathbb R,Z}0$. Let $p: W\rightarrow X$ and $q: W\rightarrow X'$ be a resolution of indeterminacy of $\phi$ and let
$$K_X+\Delta:=p_*q^*(K_{X'}+\Delta').$$
By the negativity lemma, $(X,\Delta)$ and $(X',\Delta')$ are crepant, so $(X,\Delta)$ is sub-$\epsilon$-lc (resp. sub-$\epsilon$-klt) and $K_X+\Delta\sim_{\mathbb R,Z}0$. Since $\phi$ is $(-K_X)$-negative, $\phi$ is also $\Delta$-negative, hence
$$p^*\Delta=q^*\Delta'+F$$
for some $F\geq 0$, and so
$$\Delta=p_*q^*\Delta'+p_*F\geq 0$$
is big$/U$. In particular, $(X,\Delta)$ is $\epsilon$-lc (resp. $\epsilon$-klt) and we are done.
\end{proof}

\begin{deflem}\label{deflem: fano type afs}
    Let $\pi: X\rightarrow U$ be a projective morphism between normal quasi-projective varieties, $\Ff$ a foliation on $X$, and $t\in [0,1]$, $\epsilon\in [0,1)$ real numbers. Then the following conditions are equivalent:
    \begin{enumerate}
        \item There exists an $\epsilon$-klt $\Aa_1:=(X,\Ff,B_1,\Mm_1,t)$ such that $-K_{\Aa_1}$ is ample$/U$. 
        \item There exists an $\epsilon$-klt $\Aa_2:=(X,\Ff,B_2,\Mm_2,t)$ such that $-K_{\Aa_2}$ is big$/U$ and nef$/U$.
        \item There exists an $\epsilon$-klt $\Aa_3:=(X,\Ff,B_3,\Mm_3,t)$ such that $K_{\Aa_3}\sim_{\mathbb R,U}0$ and $-tK_{\Ff}-(1-t)K_X$ is big$/U$.
    \end{enumerate}
    We say that $(X,\Ff,t)/U$ is of \emph{$\epsilon$-Fano type} if one of the conditions above (equivalently, all conditions above) holds. We say that $(X,\Ff,t)/U$ is of \emph{Fano type} if $(X,\Ff,t)/U$ is of $0$-Fano type. If $U=\{pt\}$ then we omit $U$.
\end{deflem}
\begin{proof}
    (1)$\Rightarrow$(2): We let $\Aa_2:=\Aa_1$.
    
    (2)$\Rightarrow$(3): Since $-K_{\Aa_2}$ is big$/U$,  $-tK_{\Ff}-(1-t)K_X$ is big$/U$. We let $\Aa_3:=(\Aa_2,\overline{-K_{\Aa_2}})$.

    (3)$\Rightarrow$(1): Let $h: (X',\Ff',B_3',\Mm_3,t)\rightarrow (X,\Ff,B_3,\Mm_3,t)$ be a small $\mathbb Q$-factorialization of $X$. Write $B_3+\Mm_{3,X}=A+E$ for some $E\geq 0$ and ample$/U$ $\Rr$-divisor $A$. Since $(X,\Ff,B_3,\Mm_3,t)$ is $\epsilon$-klt, $(X',\Ff',B_3',\Mm_3,t)$ is $\epsilon$-klt, hence $(X',\Ff',B_3',\Mm_3,t)$ is $((1+\alpha)\epsilon)$-lc for some $\alpha>0$. By Lemma \ref{lem: add divisor not containing lc center},
    $$\Aa':=(X',\Ff',(1-\tau)B_3'+\tau h^*E,(1-\tau)\Mm_3,t)$$
    is klt for some $0<\tau\ll 1$. Possibly replacing $\tau$ with $\frac{\alpha\tau}{1+\alpha}$, we may assume that $\Aa'$ is $\epsilon$-klt. Moreover, $-K_{\Aa'}\sim_{\mathbb R,U}\tau h^*A$, so $h_*K_{\Aa'}$ is anti-ample$/U$. Thus, (3) implies (1) by letting $\Aa_1:=h_*\Aa'$.
\end{proof}

\begin{proof}[Proof of Theorem \ref{thm: ft ai afs mds}] 
Let $(X,\Delta_0)$ be a klt pair. Possibly replacing $\Aa$ with $(1-s)\Aa+s(X,\Delta_0)$ for some $0<s\ll 1$, we may assume that $\Aa$ is klt. By Definition-Lemma \ref{deflem: fano type afs}, there exists a klt adjoint foliated structure $\Aa'/U$ with big$/U$ generalized boundary. The main part follows from Theorem \ref{thm: precise fano type afs to x} and the ``moreover'' part follows from \cite[Corollary 1.3.1]{BCHM10}.
\end{proof}

\begin{proof}[Proof of Theorem \ref{thm: ft ai is ft}]
  This is a special case of Theorem \ref{thm: ft ai afs mds}.
\end{proof}

\begin{proof}[Proof of Theorem \ref{thm: log foliated BAB}]
Let $(X,\mathcal F)\in \mathcal P$. 
By Definition-Lemma \ref{deflem: fano type afs} and Theorem \ref{thm: precise fano type afs to x} there exists an $\epsilon$-klt pair $(X,\Delta)$ such that $\Delta$ is big and $K_X+\Delta\sim_{\mathbb R}0$. By \cite[Corollary 1.4]{Bir21}, $X$ belongs to a bounded family. 

Therefore, there exists a positive integer $r$ which does not depend on $X$, and a very ample divisor $A$ on $X$, such that $A^d\leq r$ and $-K_X\cdot A^{d-1}\leq r$. Since $-tK_{\Ff}-(1-t)K_X$ is big,
$$0\leq (-tK_{\Ff}-(1-t)K_X)\cdot A^{d-1}\leq r(1-t)-t(K_{\Ff}\cdot A^{d-1}),$$
so
$$K_{\Ff}\cdot A^{d-1}\leq \frac{r(1-t)}{t}\leq\frac{r(1-\epsilon)}{\epsilon}.$$

Since $X$ is bounded, there exists a positive real number $s$ which does not depend on $X$, such that $K_X+sA$ is pseudo-effective. Let $h: X'\rightarrow X$ be a small $\mathbb Q$-factorialization of $X$. Then $(X',0,\overline{A})$ is klt and $K_{X'}+sh^*A$ is pseudo-effective. By Theorem \ref{thm: non-pseudo-effective for afs}, $K_{h^{-1}\Ff}+sh^*A$ is pseudo-effective, hence $K_{\Ff}+sA$ is pseudo-effective. Therefore,
$$(K_{\Ff}+sA)\cdot A^{d-1}\geq 0,$$
so
$$K_{\Ff}\cdot A^{d-1}\geq -sA^d=-sr.$$

In summary, there exists a positive real number $r_0$ such that
$$-r_0\leq K_{\Ff}\cdot A^{d-1}\leq r_0.$$

Thus, we may assume that there exist a flat morphism  
$f\colon \mathscr{X} \to T$ 
between normal varieties with normal fibers
and a $\mathbb Q$-Cartier divisor $K$ on $\mathscr{X}$ such that if $(X,\mathcal F)\in \mathcal P$ then 
 there exist a closed point $t\in T$ and an isomorphism $\phi\colon X\to X_t$, where $X_t$ is the fiber of $f$ over $t$, such that $K_{\mathcal F}\sim \phi^*(K|_{X_t})$. Therefore, 
Proposition \ref{prop_pfaff_to_fol} implies that $\mathcal P$ is bounded.
\end{proof}

\begin{proof}[Proof of Theorem \ref{thm: BAB analogue}]
 It is a special case of Theorem \ref{thm: log foliated BAB}.   
\end{proof}

\begin{rem}
In the statements of Theorem \ref{thm: BAB analogue} and Theorem \ref{thm: log foliated BAB}, we require that $t\geq\epsilon$. This is a necessary condition, as shown by considering $X=\mathbb P^2$ and $\Ff$ a foliation on $X$ such that $\mathcal{O}_X(K_{\Ff})=\mathcal{O}(n)$ with arbitrary large $n$, as it does not form a bounded family.

On the other hand, in the statements of Theorem \ref{thm: log foliated BAB}, we do not have any requirement on the upper bound of the parameter $t$ and we actually allow $t=1$. However, the only values of $t$ which usually make sense are those $t$ such that $t\leq 1-\epsilon$. Indeed, let $L$ be an $\Ff$-invariant divisor, then
$$-(1-t)+\epsilon\leq a(L,X,\Ff,B,\Mm,t)=-\mult_LB^{\inv}\leq 0,$$
which implies that $t\leq 1-\epsilon$. Since $\mathcal F$ is algebraically integrable, such $L$ always exists 
unless $\Ff=T_X$. Thus, we automatically have the condition $t\leq 1-\epsilon$ when we assume that $(X,\Ff,B,\Mm,t)$ is $\epsilon$-lc.
\end{rem}

\end{document}